\documentclass[a4paper,10pt]{article}
\usepackage[utf8]{inputenc}

\usepackage[T1]{fontenc}
\usepackage{amsfonts}
\usepackage{amsmath}
\usepackage{amssymb}
\usepackage{amsthm}
\usepackage{hyperref}
\bibliographystyle{plain}
\usepackage{color}
\setlength{\hoffset}{-18pt}     
\setlength{\oddsidemargin}{0pt}  
\setlength{\evensidemargin}{9pt} 
\setlength{\marginparwidth}{54pt}   
\setlength{\textwidth}{481pt}   
\setlength{\voffset}{-18pt}     
\setlength{\marginparsep}{7pt}
\setlength{\topmargin}{0pt}    
\setlength{\headheight}{13pt}  
\setlength{\headsep}{10pt}  
\setlength{\footskip}{27pt}     
\setlength{\textheight}{708pt} 

\renewcommand{\H}{{\textbf{[H]}}}
\newcommand{\HT}{{\textbf{[HT]}}}

\renewcommand{\P}{{\mathbb P}}

\newcommand{\C}{{\mathbb C}}
\newcommand{\R}{{\mathbb R}}

\newcommand{\M}{{\mathbb M}}

\newcommand{\N}{{\mathbb N}}

\newcommand{\E}{{\mathbb E}}
\newcommand{\proba}{{\mathbb P}}

\newcommand{\T}{{\mathbb T}}

\newcommand{\ind}{{\textbf{1}}}
\newcommand{\I}{{\textbf{1}}}

\newcommand{\leftB}{{{[\![}}}
\newcommand{\rightB}{{{]\!]}}}

\theoremstyle{plain}
\newtheorem{theorem}{Theorem}[section]
\newtheorem{lemma}[theorem]{Lemma}
\newtheorem{proposition}[theorem]{Proposition}
\newtheorem{corollary}[theorem]{Corollary}

\theoremstyle{definition}
\newtheorem{remark}{Remark}[section]

\newcommand{\mysection}{\setcounter{equation}{0} \section}

\title{A Parametrix Approach for some Degenerate Stable Driven SDEs}
\author{Lorick Huang \\ Universit\'e Paris 7 - Diderot
\footnote{Laboratoire de Probabilit\'es et Mod\`eles Al\'eatoires,  B\^atiment Sophie Germain, 5 rue Thomas Mann, 75205 Paris CEDEX 13, France.  huang@math.univ-paris-diderot.fr}
        \and St\'ephane Menozzi \\ Universit\'e d'Evry Val-d'Essonne \footnote{Laboratoire de Mod\'elisation Math\'ematique d'Evry, UMR CNRS 8071, 23 Boulevard de France, 91037 Evry, France and Laboratory of Stochastic Analysis, HSE, Moscow. stephane.menozzi@univ-evry.fr} 
        }

\begin{document}

\newpage

\maketitle

\begin{abstract}

We consider a stable driven degenerate stochastic differential equation, whose coefficients satisfy a kind of weak H\"ormander condition. Under mild smoothness assumptions we prove the uniqueness of the martingale problem for the associated generator under some dimension constraints. Also, when the driving noise is scalar and tempered, we establish density bounds reflecting the multi-scale behavior of the process.
\end{abstract}


\mysection{Introduction.}

The aim of this paper is to study degenerate stable driven stochastic differential equations of the following form:
\begin{eqnarray}\label{EDS 1}
dX_t^1 &=& \left(a_t^{1,1}X_t^1 + \cdots +a_t^{1,n}X_t^n\right) dt+ \sigma(t,X_{t^-})dZ_t\\
dX_t^2 &=& \left(a_t^{2,1}X_t^1 + \cdots +a_t^{2,n} X_t^n \right) dt\nonumber \\
dX_t^3 &=& \left(a_t^{3,2}X_t^2 + \cdots +a_t^{3,n} X_t^n \right) dt\nonumber \\
&\vdots&\nonumber\\
dX_t^n &=& \left(a^{n,n-1}_t X_t^{n-1} +a_t^{n,n}X_t^n\right)dt,\ X_0=x\in \R^{nd},\nonumber
\end{eqnarray}
where $Z$ is an $\R^{d}$ valued symmetric $\alpha$ stable process (possibly tempered and with $\alpha \in (0,2)$), $\sigma : \R^+\times\R^{nd} \rightarrow \R^d \otimes \R^d$, $a^{i,j}:\R^+\rightarrow \R^d\otimes \R^d,\ i\in \leftB 1,n\rightB,\ j\in \leftB (i-1)\vee 1,n\rightB$.  Observe that $(X_t)_{t\ge 0}=(X_t^1,\cdots,X_t^n)_{t\ge 0}$ is $\R^{nd}$ valued. 
We will often use the shortened form:
\begin{equation}\label{EDS 1 short}
dX_t = A_t X_tdt + B\sigma(t,X_{t^-})dZ_t,\ X_0=x,
\end{equation}
where $B=\left( I_{d\times d}\ 0_{(n-1)d\times d}\right)^*$ denotes the injection matrix from $\R^d$ into $\R^{nd}$, with $*$ standing for the transposition, and $A_t$ is the matrix :
$$
A_t = 
\begin{pmatrix}
a_t^{1,1} & \dots & \dots & \dots &a_t^{1,n}\\
a^{2,1}_t  & \ddots & & &a_t^{2,n}\\
0 &a_t^{3,2} & \ddots & &a_t^{3,n}\\
\vdots& \ddots&\ddots & \ddots &\vdots \\
0& \hdots & 0& a_t^{n,n-1}
 &a_t^{n,n}
\end{pmatrix}.
$$
The previous system appears in many applicative fields.  It is for instance related for $n=2$ to the pricing of Asian options in jump diffusions models (see e.g. Jeanblanc \textit{et al} \cite{jean:yor:ches:09} or Barucci \textit{et al} \cite{baru:poli:vesp:01} in the Brownian case). The Hamiltonian formulation in mechanics can lead to systems corresponding to the drift part of \eqref{EDS 1} (still with $n=2$). The associated Brownian perturbation has been thoroughly studied, see e.g. Talay \cite{tala:02} or Stuart \textit{et al.} \cite{matt:stua:high:02} for the convergence of approximation schemes to equilibrium, but to the best of our knowledge other perturbations, like the current stable one, have not yet been much considered. For a general $n$, equation \eqref{EDS 1} can be seen as the linear dynamics of $n$ coupled oscillators in dimension $d$ perturbed by a stable anisotropic noise. Observe also that in the diffusive case these oscillator chains naturally appear in statistical mechanics, see e.g Eckman \textit{et al.} \cite{eckm:99}.

Equation \eqref{EDS 1} is degenerate in the sense that the noise only acts on the first component of the system. Additionally to the non-degeneracy of the \textit{volatility}  $\sigma$, we will assume a kind of weak H\"ormander condition on the drift component in order to allow the noise propagation into the system. 

A huge literature exists on degenerate Brownian diffusions under the \textit{strong} H\"ormander condition, i.e. when the underlying space is spanned by the diffusive vector fields and their iterated Lie brackets. The major works in that framework have been obtained in a series of papers by Kusuoka ans Stroock, \cite{kusu:stro:84}, \cite{kusu:stro:85}, \cite{kusu:stro:87}, using a Malliavin calculus approach.

For the weak H\"ormander case, many questions are still open even in the Brownian setting. Let us mention in this framework the papers \cite{D&M}, \cite{menozzi2011parametrix} and \cite{kona:meno:molc:10} dealing respectively with density estimates, martingale problems and random walk approximations for systems of type \eqref{EDS 1} or that can be linearized around such systems. In those works a global multi-scale Gaussian regime holds. For highly non-linear first order vector fields, Franchi \cite{fran:12} and Cinti \textit{et al.} \cite{cint:meno:poli:12}  address issues for which there is not a single regime anymore. A specificity of the weak H\"ormander condition is the unbounded first order term which does not lead to a time-space separation
in the off-diagonal bounds for the density estimates as in the sub-Riemannian setting, see e.g. \cite{kusu:stro:87}, Ben Arous and L\'eandre \cite{bena:lean:91} and references therein. The energy of the associated deterministic control problem has to be considered instead, see e.g. \cite{D&M}. We have a similar feature in our current stable setting.  

In this work we are first interested in proving the uniqueness of the martingale problem associated with the generator $(L_t)_{t\ge } $ of \eqref{EDS 1}, i.e. for all $\varphi\in C_0^2(\R^{nd},\R)$ (twice continuously differentiable functions with compact support) 
\begin{equation}
\label{GEN}
\begin{split}
\forall x\in \R^{nd},\ L_t \varphi(x)&= \langle A_t x,\nabla\varphi(x)\rangle\\
&+\int_{\R^d} \biggl(\varphi(x+B\sigma(t,x)z)-\varphi(x)-\frac{\langle\nabla \varphi(x), B\sigma(t,x) z\rangle}{1+|z|^2} \biggr)g(|z|) \nu(dz),
\end{split}
\end{equation}
under some mild assumptions on the volatility $\sigma $,  the L\'evy measure $\nu$ of a symmetric $\alpha $ stable process and the tempering function $g $ (which is set to $1$ in the stable case).
To this end, the key tool consists in exploiting some properties of the joint densities of (possibly tempered) stable processes and their iterated integrals, corresponding to the proxy model in a \textit{parametrix} continuity technique (see e.g. Friedman \cite{frie:64} or McKean and Singer \cite{McKeanSinger}). Following the strategies developed in \cite{bass_perkins_martingale}, \cite{menozzi2011parametrix} we then derive uniqueness exploiting the smoothing properties of the parametrix kernel. For this approach to work, we anyhow consider some restrictions on the dimensions $n,d$.  
Let us indeed emphasize that the density of a $d$-dimensional $\alpha $-stable process and its $n-1$ iterated integrals behaves as the density of an $\alpha $ stable process in dimension $nd$ with a modified L\'evy measure and different time-scales. The first point can be checked through Fourier arguments (see Proposition \ref{LAB_EDFR} and Remark \ref{REM_MS}).
Also, the typical time scale of the initial stable process is $t^{1/\alpha} $ and $t^{(i-1)+1/\alpha} $ for the associated $(i-1)^{{\rm th}} $ integral.  
One of the difficulties is now that the associated L\'evy measures (on $\R^{nd} $) have spectral parts that are either not equivalent or singular with respect to the Lebesgue measure of $S^{nd-1} $. The link between the behavior of the stable density and the corresponding spectral measure is discussed intensively in Watanabe \cite{wata:07} and can lead to rather subtle phenomena.  Roughly speaking, the lower is the dimension of the support of the spectral measure, the heavier the tail. This is what leads us to consider some restrictions on the dimensions.
Also, using the resolvent\footnote{We carefully mention that we use the term \textit{resolvent} in the sense of ordinary differential equations.} associated with the ordinary differential equation obtained from \eqref{EDS 1} setting $\sigma=0$, i.e.  $\frac{d}{dt}R_t=A_t R_t,\ R_0=I_{nd\times nd}$, the mean of the process is $R_tx $ at time $t$ (transport of the initial condition by the resolvent). The process will deviate from its mean accordingly to the associated component wise time scales.

When turning to density estimates, one of the dramatic differences with the Gaussian case is the lack of integrability of the driving process. For non-degenerate stable driven SDEs, this difficulty can be bypassed to derive two-sided pointwise bounds for the SDE that are homogeneous to the density of the driving process. Kolokoltsov \cite{kolo:00} establishes in the stable case the analogue of the \textit{Aronson} bounds for diffusions, see e.g. Sheu \cite{sheu:91} or \cite{aron:67}. For approximation schemes of non-degenerate stable-driven SDEs we also mention \cite{K&M}.
In our current degenerate framework, working under somehow minimal assumptions to derive pointwise density bounds, that is H\"older continuity of the coefficients, we did not succeed to get rid of those integrability problems. We are also faced with a new difficulty due to the degeneracy and the non-local character of $L$. Namely, we have a disturbing \textit{rediagonalization phenomenon}:
when the 
density $\tilde p(t,x,\cdot) $ of $\tilde X_t=x+\int_0^t A_s \tilde X_sds +BZ_t$ is in a \textit{large deviation regime}, estimating the non local part of $L_t\tilde p $ (which is the crucial quantity to control in a parametrix approach), the very large jumps can lead to integrate $\tilde p $ on a set where it is in its typical regime. This phenomenon already appears in the non-degenerate case, but the difficulty here is that there is a dimension mismatch between the tail behavior of $\tilde p(t,x,\cdot )  $, density of $\tilde X_t $, multi-scale stable process of dimension $nd$, and the one of the jump, stable process of dimension $d$.

This quite tricky phenomenon leads us to temper the driving noise in order to obtain density estimates through a parametrix 
continuity technique.
For technical reasons that will appear later on, we establish when $d=1,n=2 $ (scalar non-degenerate diffusion and associated non-degenerate integral) the expected upper-bound up to an additional logarithmic contribution, when the coefficient $\sigma(t,x)=\sigma(t,x^2) $ depends on the \textit{fast} variable. This constraint appears in order to compensate an additional time singularity deriving from the rediagonalization. Roughly speaking, the dependence on the fast variable only gives a better smoothing effect for the parametrix kernel.
Eventually, we derive the expected diagonal lower bound, see Theorem \ref{MTHM}. To this end we use a parametrix approach similar to the one of Mc Kean and Singer \cite{McKeanSinger}. Working with smoother coefficients would have allowed to consider Malliavin calculus type techniques. In the jump case, this approach has been investigated to establish existence/smoothness of the density for SDEs by Bichteler \textit{et al.} in the non-degenerate case \cite{BGJ}, and L\'eandre in the degenerate one, see \cite{lean:85},\cite{leandre1988regularite}. Also, we mention the recent work of Zhang \cite{zhan:14} who obtained existence and smoothness results for the density of equations of type \eqref{EDS 1} in arbitrary dimension for  smooth coefficients, and a possibly non linear drift, still satisfying a weak H\"ormander condition. His approach relies on the  \textit{subordinated} Malliavin calculus, which consists in applying the \textit{usual} Malliavin calculus techniques on a Brownian motion observed along the path of an $\alpha $-stable subordinator.

Let us eventually mention some related works. Priola and Zabczyk  establish in \cite{priola2009densities} existence of the density for processes of type \eqref{EDS 1}, under the same kind of weak H\"ormander assumption and when  $ \sigma$ is constant,  for a general driving L\'evy process $Z$ provided its L\'evy measure is infinite and has itself a density on compact sets. Also, Picard, \cite{picard1996existence} investigates similar problems for singular L\'evy measures. Other results concerning the smoothness of the density of L\'evy driven SDEs have been obtained by Ishikawa and Kunita \cite{ishi:kuni:06} in the non-degenerate case but with mild conditions on the L\'evy measure and by Cass \cite{cass:09} who gets smoothness in the weak H\"ormander framework under technical restrictions. Also, we refer to the work of Watanabe \cite{wata:07} for two-sided heat-kernel estimates for stable processes with very general spectral measures. Those estimates have been extended to the tempered stable case by Sztonyk \cite{szto:10}.\\

The article is organized as follows. 
We state our main results in Section \ref{Assumptions, and Main Result}. In Section \ref{Continuity techniques}, we explain the procedure to derive those results and also state the density estimates on the process in \eqref{EDS 1} when $\sigma(t,x)=\sigma(t) $ (\textit{frozen process}). We then prove the uniqueness of the martingale problem in Section \ref{uniqueness for MP}. A non linear extension is discussed in Appendix \ref{NON_LIN_MART}. Sections \ref{TEC_SEC1} and \ref{SECTION_TECH} are the technical core of the paper. In particular, we prove there the existence of the density and the associated estimates for the frozen process and establish the smoothing properties of the parametrix kernel. Appendices \ref{DIAG_LB} and \ref{controles_phi} are dedicated to the derivation of stable density bounds and kernels combining the approaches of \cite{kolo:00} and \cite{wata:07}, \cite{szto:10} in our current degenerate setting. 
We emphasize that the tempering procedure allows to get rid of the integrability problems but does not prevent
from the rediagonalization phenomenon. This difficulty would occur even in the truncated case, thoroughly studied in the non-degenerate case by Chen \textit{et al.} \cite{chen:kim:kuma:08}. The truncation would certainly \textit{relocalize} the operator but the rediagonalization would still perturb the parametrix iteration in the stable regime.

\mysection{Assumptions, and Main Result.}
\label{Assumptions, and Main Result}


We will make the following assumptions:
\begin{trivlist}
\item[\textbf{About the Coefficients:}]
The coefficients are assumed to be bounded and measurable in time and also to satisfy the conditions below. 
\item[\textbf{[H-1]}:]  (\textbf{H\"older regularity in space}) $\exists H>0,\  \eta \in (0,1],\ \forall x,y \in \R^{nd}$ and $\forall t\ge 0$,
$$
||\sigma(t,x) - \sigma(t,y)|| \leq H |x-y|^{\eta}.
$$

\item[\textbf{[H-2]:}] (\textbf{Ellipticity}) $\exists$ $\kappa\ge 1$, $\forall \xi \in \R^{d}$, $\forall z \in \R^{nd}$ and $\forall t \ge 0$,
\begin{equation}\label{ellipticity}
\kappa^{-1} |\xi|^2 \leq \langle \xi,\sigma\sigma^*(t,z) \xi \rangle \leq \kappa |\xi|^2.
\end{equation}

\item[\textbf{[H-3]:}](\textbf{H\"ormander-like condition for $(A_t)_{t\ge 0}$})  $\exists \overline{\alpha},\ \underline{\alpha}\in \R^{+*}$, $\forall \xi\in \R^{nd} $ and $\forall t\ge 0$, $\underline{\alpha}|\xi|^2 \leq \langle a_t^{i,i-1} \xi, \xi \rangle \leq \overline{\alpha} |\xi|^2, \ \forall i\in \leftB 2, n-1\rightB $. Also, for all $ (i,j)\in \leftB 1,n\rightB^2, \ \|a_t^{i,j}\|\le \overline{\alpha}$.\\

\item[\textbf{About the Driving Noise.}]
\item[\textit{Stable Case:}]
Let us first consider $(Z_t)_{t\geq0}$ to be an $\alpha$ stable symmetric process, defined on some filtered probability space  $(\Omega,\mathcal{F},  (\mathcal{F}_t)_{t\geq 0},\proba)$, that is a L\'evy process with Fourier exponent:
$$
\E{e^{i \langle p, Z_t \rangle}} = \exp \left(-t\int_{S^{d-1}} |\langle p,  \varsigma \rangle|^\alpha \mu(d\varsigma) \right),\ \forall p\in \R^d.
$$
In the above expression, we denote by $S^{d-1}$ the unit sphere in $\R^d$, and by $\mu$ the spectral measure of $Z$.
This measure is related to the L\'evy measure of $Z$ as follows.
If $\nu$ is the L\'evy measure of $Z$, its decomposition in polar coordinates writes:
\begin{equation}
\label{POLA_MES}
\nu(dz) = \frac{d\rho}{\rho^{1+\alpha}} \tilde{\mu}(d\varsigma),\ z= \rho \varsigma,\ (\rho,\varsigma)\in \R^+\times S^{d-1}. 
\end{equation}
Then, $\mu = C_{\alpha,d} \tilde{\mu}$ (
see Sato \cite{sato} for the exact value of $C_{\alpha,d}$). In that case we suppose
\item[\textbf{[H-4]}:]  (\textbf{Non degeneracy of the spectral measure}) We assume that $\mu $ is  absolutely continuous w.r.t. 
to the Lebesgue measure of $S^{d-1} $ 
with  
Lipschitz density $h$ and that there exists $ \lambda\ge 1$ ,
s.t. for all $ u \in \R^d$, 

\begin{equation}
\label{ND}
\lambda^{-1} |u|^\alpha \leq \int_{S^{d-1}} | \langle u ,\varsigma \rangle|^\alpha \mu(d\varsigma) \leq \lambda |u|^\alpha.
\end{equation}
\item[\textit{Tempered Case:}]
In the tempered case we simply assume that $(Z_t)_{t\ge 0}$ has generator:
\begin{equation}
\label{GEN_TEMP}
L_Z \phi(x)=\int_{\R^d}\bigl\{\phi(x+z)-\phi(x)-\frac{\langle \nabla \phi(x),z\rangle}{1+|z|^2} \bigr\}  g(|z|)\nu(dz),\ \phi\in C_0^2(\R^d,\R),
\end{equation}
where the measure $\nu$
is as in the stable case and the tempering function $g:\R^{+*}\rightarrow \R^{+*}$ satisfies
\begin{trivlist}
\item[\textbf{[T]}:] \textbf{(Smoothness, Doubling property and Decay associated with the tempering function $g$)}
We first assume that $g\in C^1(\R^+,\R^{+*}) $ and that there exists $a>0$ s.t.  $g\in C^2([0,a],\R^{+*})$ if $\alpha \in [1,2) $.
We also suppose that there exists $c>0$ s.t. for all $r>0, g(r)+r\sup_{ u \in [\kappa^{-1},\kappa]} g'(u r)\le c \theta(r)$ for $\kappa $ as in \textbf{[H-2]} and where $\theta:\R^{+*}\rightarrow \R^{+*}$ is a bounded non-increasing function satisfying: 
$$ \exists D\ge 1,\ \forall r>0,\ \theta(r)\le D\theta(2r),\ (1+r)\theta(r):=\Theta(r)\underset{r\rightarrow+\infty}{\rightarrow} 0.$$
\end{trivlist} 
Typical examples of tempering functions satisfying \textbf{[T]} are for instance $r\rightarrow g(r)=\exp(-cr),c>0, g(r)=(1+r)^{-m},\ m\ge 2$.
\end{trivlist}

We say that \textbf{[HS]} (resp. \textbf{[HT]}) holds if conditions \textbf{[H-1]} to \textbf{[H-4]} are fulfilled and the driving noise $Z$ is a symmetric stable process (resp. a tempered stable process satisfying \textbf{[T]}).
We say that \textbf{[H]} is satisfied if \textbf{[HS]} or \textbf{[HT]} holds, i.e. the results under \textbf{[H]} hold for both the stable and the tempered stable driving process.

Our main results are the following. 

\begin{theorem}[\textbf{Weak Uniqueness}]
\label{MART_PB_THM}
Under  \textbf{[H]}, i.e. in both the stable and the tempered stable case, the martingale problem associated with the generator $(L_t)_{t\ge 0}$, defined in \eqref{GEN}, of the degenerate equation \eqref{EDS 1}:
$$
dX_t = A_t X_tdt + B\sigma(t,X_{t-})dZ_t,
$$
admits a unique solution provided $d(1-n)+1+\alpha >0$. 
That is, for every $x\in\R^{nd}$,
there exists a unique probability measure $\proba$ on $\Omega = \mathcal{D}(\R^+, \R^{nd})$ the space of c\`adl\`ag functions, such that for all $f \in 
 C^{1,2}_0(\R^+\times \R^{nd},\R)$, denoting by $(X_t)_{t \ge 0}$ the canonical process, we have:
$$
\proba(X_0=x ) =1\ \mbox{ and } \ f(t,X_t) - \int_0^t(\partial_u+L_u) f(u,X_u)du \   \mbox{  is a $\proba$- martingale.}
$$
Hence, weak uniqueness holds for \eqref{EDS 1}.

Now, if $d=1,n=2 $ and $\alpha>1 $, the well-posedness of the martingale problem extends to the case of a non-linear Lipschitz drift satisfying a H\"ormander-like non-degeneracy condition. Namely, weak uniqueness holds for:
\begin{eqnarray}
\label{NON_LIN_SDE}
X_t^1&=& x^1+\int_0^t F_1(s,X_s)ds+\int_0^t\sigma(s,X_{s^-})dZ_s,\nonumber\\
X_t^2&=& x^2+\int_0^t F_2(s,X_s)ds,
\end{eqnarray}
provided $F=(F_1,F_2)^*:\R^+\times \R^2\rightarrow \R^2$ is measurable and bounded in time, uniformly Lipschitz in space and such that $\partial_{x^1}F_2\in [c_0,c_0^{-1}], c_0\in (0,1] $ and $\partial_{x^1} F_2$ is $\eta $-H\"older continuous, $\eta \in (0,1] $.
\end{theorem}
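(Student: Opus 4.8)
The plan is to establish weak uniqueness for the martingale problem associated with $(L_t)_{t\ge 0}$ by the parametrix-based continuity method of Bass--Perkins and \cite{menozzi2011parametrix}, using as proxy the \emph{frozen} process obtained by freezing the volatility at a fixed spatial point. First I would fix a freezing point $y\in\R^{nd}$ and a terminal time, and consider the Gaussian-type (here: stable-type) frozen SDE $\tilde X_t = x + \int_0^t A_s \tilde X_s\,ds + B\sigma(s,y)\,dZ_s$, whose generator $\tilde L_t^{y}$ has the same structure as $L_t$ but with $\sigma(s,\cdot)\equiv\sigma(s,y)$. The key input is the explicit knowledge of the joint density $\tilde p^{y}(t,x,\cdot)$ of the frozen process together with its $n-1$ iterated integrals: by Proposition~\ref{LAB_EDFR} (the Fourier computation announced in the introduction) this density behaves like that of an $\alpha$-stable process on $\R^{nd}$ with a modified (possibly lower-dimensional) spectral measure and the anisotropic time scales $t^{1/\alpha},t^{1+1/\alpha},\dots,t^{(n-1)+1/\alpha}$; the transport of the initial condition is handled by the ODE resolvent $R_t$. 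From these representations one reads off the smoothing bounds on $\tilde p^{y}$ and, crucially, on the parametrix kernel
$$
H(s,t,x,y) = (L_s - \tilde L_s^{y})\,\tilde p^{y}(s,t,x,y),
$$
i.e. the off-diagonal control of the non-local part of $L_s$ acting on $\tilde p^{y}$.

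The second step is to show that $H$ has an integrable-in-time singularity. Here the H\"older continuity \textbf{[H-1]} of $\sigma$ is used to gain a factor $|x - \theta_{s,t}(y)|^{\eta}$ (with $\theta$ the appropriate deterministic shift), while the anisotropic stable density gives a compensating negative power of the (rescaled) spatial argument; one then integrates against the density itself. The delicate point — and I expect this to be the main obstacle, exactly as the authors flag in the introduction — is the \emph{rediagonalization phenomenon}: when estimating the non-local part $\int(\tilde p^{y}(\cdot,x+B\sigma z) - \tilde p^{y}(\cdot,x) - \cdots)g(|z|)\nu(dz)$, the very large jumps of $Z$ (a $d$-dimensional stable noise) can move the argument from a large-deviation regime of $\tilde p^{y}$ into its typical regime, and there is a dimension mismatch between the $nd$-dimensional tail of $\tilde p^{y}$ and the $d$-dimensional jump measure. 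Controlling this requires splitting the jump integral into small/medium/large jumps, using the scaling of the multi-scale stable density together with the spectral non-degeneracy \textbf{[H-4]}, and it is precisely this step that forces the dimensional constraint $d(1-n)+1+\alpha>0$ — the condition under which the resulting time singularity of $H$ stays integrable (so that the parametrix series in time-convolution powers converges).

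Granting the kernel bound, the remaining steps are routine in this framework: one writes the formal parametrix series $p(s,t,x,y) = \sum_{k\ge 0} \tilde p\,\otimes\, H^{(k)}(s,t,x,y)$, shows it converges and defines a genuine density solving the forward/backward Kolmogorov equation in the weak sense, and hence produces, for every starting point $x$, a solution to the martingale problem with that transition density. For \emph{uniqueness}, one takes any solution $\proba$ of the martingale problem, tests it against $f(u,z)=\int g\,\tilde p^{y}(u,t,z,\cdot)$ type functions (or directly against $\int_s^t \int p(u,t,\cdot,y)\,du$ built from the series), and uses the smoothing properties of the parametrix kernel $H$ — i.e. the fact that $\mathrm{Id} + H + H^{(2)} + \cdots$ is invertible in the relevant space — to conclude that the one-dimensional time-marginals of $\proba$ are uniquely determined; a standard argument (Stroock--Varadhan, via regular conditional probabilities and induction on finitely many times) upgrades this to uniqueness of $\proba$ on $\mathcal D(\R^+,\R^{nd})$. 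Finally, for the non-linear extension with $d=1,n=2,\alpha>1$, one linearizes \eqref{NON_LIN_SDE} along a deterministic flow: the assumption $\partial_{x^1}F_2\in[c_0,c_0^{-1}]$ plays the role of \textbf{[H-3]} for the linearized drift and its $\eta$-H\"older continuity keeps the frozen density close enough; the argument of Appendix~\ref{NON_LIN_MART} then runs the same parametrix scheme with the frozen coefficients now taken along the proxy flow, the Lipschitz drift contributing only lower-order terms that are absorbed since $\alpha>1$ makes the corresponding time singularities integrable.
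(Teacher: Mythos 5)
There is a genuine gap in the route you propose for uniqueness. You plan to write the parametrix series $\sum_{k\ge 0}\tilde p_\alpha\otimes H^{(k)}$, prove its convergence, obtain a transition density for the original SDE, and then deduce uniqueness from the invertibility of ${\rm Id}+H+H^{(2)}+\cdots$. But under the sole dimension constraint $d(1-n)+1+\alpha>0$ (general $d,n$, and in particular under \textbf{[HS]}) the convergence of that series is precisely what cannot be established: the rediagonalization produces, already at the first convolution $\tilde p_\alpha\otimes H$, the extra contribution $\check p_\alpha$ of Lemma \ref{CTRL_PRELIM_NON_FONCTIONNEL}, which carries an additional time singularity and a loss of concentration and does not reproduce under iteration. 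The paper only controls the full series (Lemma \ref{LEMME_IT_KER}, Proposition \ref{CTR_SUM}) in the much more restrictive setting $d=1$, $n=2$, tempered noise \textbf{[HT]}, $\sigma(t,x)=\sigma(t,x^2)$ and $\eta>((\alpha\wedge1)(1+\alpha))^{-1}$ — that is the content of Theorem \ref{MTHM}, not of Theorem \ref{MART_PB_THM}. So your argument, as stated, proves weak uniqueness only in that restricted case and not in the generality claimed.

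The correct argument, and the one the paper uses, needs only the one-step estimates: the pointwise kernel bound of Lemma \ref{CTR_KER_PTW} and the smoothing Lemma \ref{lemme} (this is where the constraint $d(1-n)+1+\alpha>0$ enters, making the worst tail $|x|^{-(d+1+\alpha)}$ integrable in $\R^{nd}$). Given two solutions $\P^1,\P^2$ one sets $S^\Delta f=\E^1\int_t^T f(s,X_s)ds-\E^2\int_t^T f(s,X_s)ds$, builds the Green function $G^{\varepsilon,y}f(t,x)=\int_t^T ds\int dz\,\tilde p_\alpha^{s+\varepsilon,y}(t,s,x,z)f(s,z)$ and the perturbation $\Psi_\varepsilon$, and computes $(\partial_t+L_t)\Psi_\varepsilon=I_1^\varepsilon+I_2^\varepsilon$, where $I_1^\varepsilon\to -h$ (Lemma \ref{convergence_dirac}, which is itself delicate since the freezing point is the integration variable) and $|I_2^\varepsilon|\le \tfrac12|h|_\infty$ for $T-t$ small, by exactly the two lemmas above; since $S^\Delta((\partial_\cdot+L_\cdot)\Psi_\varepsilon)=0$, this gives $\|S^\Delta\|\le\tfrac12\|S^\Delta\|$, hence $S^\Delta=0$, and regular conditional probabilities extend to $\R^+$. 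No convergence of the series, and no density of the original SDE, is needed at this stage; existence, moreover, is obtained by compactness (adapting Stroock), not from the parametrix construction. Your sketch of the nonlinear extension (linearization along the backward flow, $\partial_{x^1}F_2$ playing the role of \textbf{[H-3]}) is in the right spirit, but the restriction $d=1$, $\alpha>1$ comes specifically from the integrability of the new gradient-type term in the kernel (the contribution $G(t,T,x,y)$ of Appendix \ref{NON_LIN_MART}), not from a generic absorption of lower-order terms.
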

\begin{remark}
The dimension constraint comes from the \textit{worst} asymptotic behavior of the stable densities in our current case.
Viewing the density of the stable process $Z$ and its iterated integrals as the density of an $nd$-dimensional multi-scale stable process yields to consider a L\'evy measure on $\R^{nd} $ for which the support of the spectral measure has dimension $(d-1)+1=d $. Thus, from Theorem 1.1 in Watanabe \cite{wata:07} we have that, at time $1$ (to get rid of the multi-scale feature), the tails will behave at least as $|x|^{-(d+1+\alpha)}$ for large values of $|x|, x\in \R^{nd}$. The condition in the previous Theorem is imposed in order to have the integrability of the worst bound in $\R^{nd}$. We refer to Section \ref{Estimates on the frozen density} for details.  In practice the condition is fulfilled for:
\begin{trivlist}
\item[-] $d=1,n=2$ for $\alpha \in (0,2)$.
\item[-] $d=1,n=3$ for $\alpha \in (1,2) $.
\item[-] $d=2,n=2$ for $\alpha \in (1,2) $.
\end{trivlist}
\end{remark}
\begin{remark}
We point out that the tempering function $g$ does not play any role in the proof of the previous theorem. Furthermore, it cannot be used to weaken the previous dimension constraints. Indeed, it can be seen from the estimates in Proposition \ref{EST_DENS_GEL_T} that the additional multiplicative term in $\theta $ makes the worst bound integrable but also yields an explosive contribution in small time.
\end{remark}


Also, when $d=1$ and $n=2$ in \eqref{EDS 1} we are able to prove the following density estimates in the tempered case.

\begin{theorem}[\textbf{Density Estimates}]
\label{MTHM}
Assume that $d=1,\ n=2$. Under \textbf{[HT]} and for $\sigma(t,x):=\sigma(t,x^2) $, i.e. the diffusion coefficient depends on the fast component,  provided $1\ge \eta>\frac{1}{(1\wedge \alpha)(1+\alpha)} $, the unique weak solution of \eqref{EDS 1} has for every $s>0$ a density with respect to the Lebesgue measure. Precisely, for all $0\le t<s $ and $ x\in \R^{2}$,  
\begin{equation}\label{VraiDens} \proba(X_s \in dy | X_t =x)=p(t,s,x,y) dy.\end{equation}
Also, for a deterministic time horizon $T>0$, and a fixed threshold $K>0$,
there exists
$ C_{\ref{MTHM}}:=C_{\ref{MTHM}}(\HT,T,K)\ge 1,\ {s.t.} \ \forall 0\le t<s\le T,\ \forall (x,y)\in (\R^{2})^2,$ 
\begin{equation}\label{borne theoreme}
p(t,s,x,y) \leq C_{\ref{MTHM}} 
\bar {p}_{\alpha,\Theta}(t,s,x,y)\left(1+\log( K \vee |(\T_{s-t}^{\alpha})^{-1}(y-R_{s,t}x)| \right),
\end{equation}
where for all $u\in \R^+,\ \T_u^\alpha:={\rm Diag}\bigl((u^{1/\alpha},u^{1+1/\alpha})  \bigr),\ \M_u:={\rm Diag}\bigl(1,u)$ and 
\begin{equation*}
\bar p_{\alpha,\Theta}(t,s,x,y)=\bar C_{\alpha,\Theta}\frac{{\rm det}(\T_{s-t}^\alpha)^{-1}}{\{K \vee |(\T_{s-t}^{\alpha})^{-1}(y-R_{s,t}x)|\}^{2+\alpha}}\Theta(|\M_{s-t}^{-1}(y-R_{s,t}x) |).
\end{equation*}
Here, $R_{s,t} $ stands for the resolvent associated with the deterministic part of \eqref{EDS 1}, i.e. $\frac{d}{ds}R_{s,t}=A_s R_{s,t},\ R_{t,t}=I_{2\times 2} $,
and $\bar C_{\alpha,\Theta}$ is s.t. $\int_{\R^{2}}\bar p_{\alpha,\Theta}(t,s,x,y)dy=1 $. 

Eventually for $0<T\le T_0 :=T_0(\HT,K)$ small enough, the following diagonal lower bound holds:
\begin{equation}
\forall 0\le t<s\le T, \ \forall (x,y)\in (\R^2)^2\ {s.t.}\ |(\T_{s-t}^{\alpha})^{-1}(y-R_{s,t}x)|\le K,\ p(t,s,x,y)\ge C_{\ref{MTHM}}^{-1}{\rm det}(\T_{s-t}^{\alpha})^{-1}.
\end{equation}
\end{theorem}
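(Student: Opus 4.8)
The plan is to run a McKean--Singer parametrix expansion (as in \cite{McKeanSinger}, see also \cite{menozzi2011parametrix}) with the \emph{frozen process} as proxy. For a fixed terminal couple $(s,y)$, freeze the diffusion coefficient along the backward deterministic flow of the linear drift, i.e. replace $\sigma(u,\cdot)=\sigma(u,\cdot^{2})$ by $\sigma(u,\tilde\theta_{u,s}(y)^{2})$, where $\tilde\theta_{u,s}(y)$ solves the ODE part of \eqref{EDS 1} backward from $y$; since $d=1$ the resulting SDE is linear and driven by the scalar tempered stable $Z$, so $\tilde X^{(s,y)}$ is a time-inhomogeneous affine process whose density $\tilde p^{(s,y)}(u,s,z,y)$, together with the two-sided bounds and the space/time smoothness needed below, is established in Sections \ref{TEC_SEC1}--\ref{SECTION_TECH} (via the Fourier identification of Proposition \ref{LAB_EDFR} and Remark \ref{REM_MS}, exhibiting $\tilde X^{(s,y)}_s$ as a multi-scale stable variable), with $\tilde p^{(s,y)}(u,s,z,y)\le C\,\bar p_{\alpha,\Theta}(u,s,z,y)$ by Proposition \ref{EST_DENS_GEL_T} and a diagonal lower bound from Appendix \ref{DIAG_LB}. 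Since weak uniqueness holds here (Theorem \ref{MART_PB_THM}, as $d(1-n)+1+\alpha=\alpha>0$), it suffices to construct the transition density of the canonical process as
$$
p(t,s,x,y)=\sum_{k\ge 0}\big(\tilde p\otimes H^{\otimes k}\big)(t,s,x,y),
$$
where $H^{\otimes 0}=\mathrm{Id}$, $\otimes$ is the space-time convolution $(\phi\otimes\psi)(t,s,x,y)=\int_t^s\int_{\R^{2}}\phi(t,u,x,z)\psi(u,s,z,y)\,dz\,du$, and $H(u,s,z,y):=(L_u-\tilde L_u^{(s,y)})\tilde p^{(s,y)}(u,s,z,y)$ with $\tilde L_u^{(s,y)}$ the generator of the frozen proxy and the operators acting on $z$; one must then prove convergence of the series and identify its sum, through the associated Kolmogorov equation, with the unique martingale-problem solution.

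The heart of the matter is the control of the one-step kernel $H$. As $A_u$ cancels in $L_u-\tilde L_u^{(s,y)}$, only the non-local parts remain, and $H(u,s,z,y)$ is the $g(|\zeta|)\nu(d\zeta)$-integral of a second-order increment of $\tilde p^{(s,y)}(u,s,\cdot,y)$ weighted by $\sigma(u,z^{2})-\sigma(u,\tilde\theta_{u,s}(y)^{2})$; by \textbf{[H-1]} this weight is $O(|z^{2}-\tilde\theta_{u,s}(y)^{2}|^{\eta})$, hence governed by the \emph{fast} coordinate, whose deviation scale is $(s-u)^{1+1/\alpha}$ rather than $(s-u)^{1/\alpha}$ — this is exactly why $\sigma$ is assumed to depend on $x^{2}$. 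Combining this with the smoothing estimates for $\tilde p^{(s,y)}$ of Section \ref{SECTION_TECH} and the kernel bounds of Appendix \ref{controles_phi}, one reaches
$$
|H(u,s,z,y)|\le C\,(s-u)^{-1+\delta}\,\bar p_{\alpha,\Theta}(u,s,z,y)\,\big(1+\log(K\vee|(\T_{s-u}^{\alpha})^{-1}(y-R_{s,u}z)|)\big),
$$
with $\delta=\delta(\eta,\alpha)>0$ precisely when $\eta>\tfrac{1}{(1\wedge\alpha)(1+\alpha)}$. The logarithm is the trace of the \emph{rediagonalization phenomenon}: when $y-R_{s,u}z$ is in a large deviation regime for $\tilde p^{(s,y)}(u,s,z,\cdot)$, the very large jumps of $Z$ force the integration of $\tilde p$ over a region where it sits in its on-diagonal regime, and the dimensional mismatch between the $2$-dimensional multi-scale tail of $\tilde p$ and the $1$-dimensional tail of the jump makes the cut-off of the jump integral at the relevant scale produce a $\log$ rather than a bounded factor; the fast-variable dependence is what converts the otherwise non-integrable time weight into the integrable $(s-u)^{-1+\delta}$.

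It remains to iterate. One proves a stability lemma: the class of kernels dominated by $(s-u)^{-1+\delta}\bar p_{\alpha,\Theta}(u,s,z,y)\big(1+\log(K\vee|(\T_{s-u}^{\alpha})^{-1}(y-R_{s,u}z)|)\big)$ is stable under $\otimes$, up to a constant and a Beta time-factor, the multi-scale structure of $\bar p_{\alpha,\Theta}$ (including the resolvent transport) being preserved by the flow property $R_{s,u}=R_{s,v}R_{v,u}$ and a Chapman--Kolmogorov computation for $\bar p_{\alpha,\Theta}$, while the two logarithms produced by two consecutive kernels merge into a single one by sub-additivity of $\log$ along the scaled variables. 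Summing the resulting convergent series over $k$ then yields the global estimate \eqref{borne theoreme} with a \emph{single} logarithm, and the convergence itself provides the existence of the density $p(t,s,x,\cdot)$ of \eqref{VraiDens}. For the diagonal lower bound, on the set $|(\T_{s-t}^{\alpha})^{-1}(y-R_{s,t}x)|\le K$ one retains the principal term $\tilde p^{(s,y)}(t,s,x,y)\ge c\,\mathrm{det}(\T_{s-t}^{\alpha})^{-1}$ (Appendix \ref{DIAG_LB}) and absorbs the remainder $\sum_{k\ge1}\tilde p\otimes H^{\otimes k}$ into a fraction of it by choosing the horizon $T\le T_0$ small, since the series bound carries an overall factor $T^{\delta}$.

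I expect the decisive obstacle to be the sharp estimate of $H$ in the large deviation regime — quantifying the rediagonalization loss as exactly one logarithm while checking that $\delta(\eta,\alpha)>0$ holds under the stated lower bound on $\eta$ — with the bookkeeping that keeps a single $\log$ through the whole parametrix series as the second delicate point.
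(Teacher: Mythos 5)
Your overall architecture (frozen proxy along the backward transported terminal point, parametrix series, identification via the well-posed martingale problem, diagonal lower bound by absorbing the series remainder for small $T$) is the paper's, but the two steps you yourself flag as decisive are where the proposal has genuine gaps. First, the claimed one-step bound $|H(u,s,z,y)|\le C(s-u)^{-1+\delta}\,\bar p_{\alpha,\Theta}(u,s,z,y)\bigl(1+\log(K\vee|(\T_{s-u}^{\alpha})^{-1}(y-R_{s,u}z)|)\bigr)$ is not what holds, and it cannot hold. The correct pointwise estimate (Lemma \ref{CTR_KER_PTW}) keeps the full singularity $(s-u)^{-1}$ multiplied by the spatial H\"older modulus, and, crucially, it contains the additional rediagonalization term $\breve p_\alpha$, whose tails are \emph{not} dominated by $\bar p_{\alpha,\Theta}$: in the regime where the slow component carries the off-diagonal energy while the fast one is diagonal, $\breve p_\alpha$ decays like $|\zeta^1|^{-(1+\alpha)}$ versus $|\zeta^1|^{-(2+\alpha)}$ for $\bar p_\alpha$ (this is the loss of concentration discussed in Appendix \ref{controles_phi}). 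The time gain $\delta$ and the logarithm do not live at the level of $H$; they only appear after integration, i.e.\ in $\tilde p_\alpha\otimes H$ (Lemma \ref{premier coup}), the $\log$ coming from integrating $(T-\tau)^{-1}$ up to the critical time $\tau_0$ in the slow-dominated off-diagonal regime. Moreover, the role of the fast-variable dependence of $\sigma$ is not to upgrade the pointwise kernel bound but to produce the enhanced smoothing $(1+\tfrac1\alpha)\eta(\alpha\wedge1)$ of \eqref{lemme forward spec}, which — combined with a H\"older-in-time splitting as in \eqref{HOLDER} — is what absorbs the extra singularity generated by $\breve p_\alpha$ (otherwise one is stuck with the non-iterable term $\check p$ of \eqref{CHECK_P}); this is precisely where $\eta>\frac{1}{(\alpha\wedge1)(1+\alpha)}$ enters, a computation your proposal does not supply.

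Second, the iteration step ``the class with a single log is stable under $\otimes$, the two logs merging by sub-additivity'' is an assertion, not a proof, and it is not how the series can be closed. The paper instead isolates $\bar q_{\alpha,\Theta}$ (H\"older modulus in the fast-variable form $\delta\wedge\{(T-t)|(x-R_{t,T}y)^1|+|(x-R_{t,T}y)^2|\}^{\eta(\alpha\wedge1)}$ times $\bar p_{\alpha,\Theta}$ times the log) and proves the two convolution estimates of Lemma \ref{LEMME_IT_KER}: $|\tilde p_\alpha\otimes H|\lesssim (T-t)^\omega\bar p_{\alpha,\Theta}+\bar q_{\alpha,\Theta}$ and $|\bar q_{\alpha,\Theta}\otimes H|\lesssim (T-t)^\omega(\bar p_{\alpha,\Theta}+\bar q_{\alpha,\Theta})$; the geometric factor in the second bound comes from the retained H\"older weight (via Lemmas \ref{lemme} and \ref{lemme2}, where the log is beaten by an $r^{\varepsilon}$ loss) and from the semigroup property of Corollary \ref{SG_PROP}, which is itself only available because $d=1,n=2$. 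Without tracking this H\"older factor through the iteration there is no source of the per-step $(T-t)^{\omega}$ gain, and ``merging logarithms'' alone does not yield convergence of the series nor the single-log bound \eqref{borne theoreme}. The remaining parts of your plan (existence of the density from absolute convergence plus the Feller transition of Theorem \ref{MART_PB_THM}, and the diagonal lower bound) do match the paper once these two estimates are in place.
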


Under the current assumptions, Theorem \ref{MART_PB_THM} is proved following the lines of \cite{bass_perkins_martingale} and \cite{menozzi2011parametrix}. In the Gaussian framework, those assumptions are sufficient to derive homogeneous two-sided multi-scale Gaussian bounds, see \cite{D&M}. However, in the current context, we only managed to obtain the expected upper bound up to a logarithmic factor and a diagonal lower bound for $d=1$ and $n=2$ for a \textit{tempered} driving noise and $\sigma(t,x)=\sigma(t,x^2) $. This is mainly due to a lack of integrability of the stable process and the rediagonalization phenomenon which becomes really delicate to handle in the degenerate case. 
Precisely, the parametrix technique consists in applying the difference of two non-local generators of the form \eqref{GEN} to the density of some process
which is meant to locally behave as \eqref{EDS 1} and for which estimates are available. Such a process is known as the \textit{parametrix} or \textit{proxy}. The density of the stable $nd$-dimensional process we will use in the degenerate setting as \textit{parametrix} will have decays of order $d+1+\alpha$ in the \textit{large deviation} regime. It is indeed delicate to use other bounds than the worst one in a global approach like the parametrix. Let us mention that this is not the decay of a rotationally invariant stable process in dimension $nd $ (which would be $nd+\alpha$) except if $n=2,d=1$. 
Observe now from \eqref{GEN}, \eqref{POLA_MES} that we have a dimension mismatch between the decays of the densities of the parametrix and those of the jump measure $\nu$, which are in $d+\alpha $. 
We recall that the large jumps can lead to integrate the density on a set on which it is in its diagonal regime, when applying the non-local generator to the density. This is what we actually call \textit{rediagonalization} and  leads in our degenerate framework to additional time-singularities in the parametrix kernel. We manage to handle those singularities when $\sigma $ depends on the fast component, yielding a better smoothing property in time, see Section \ref{SECTION_TECH}. 

Observe that, in the non-degenerate context,  the decays of the rotationally invariant stable densities and the jump measure in \eqref{POLA_MES} correspond. This allows 
Kolokoltsov \cite{kolo:00} to successfully give two sided bounds for the density of the SDE which are homogeneous to those of the rotationally stable case provided the density of the spectral measure is positive. The technical reasons leading to the restriction of Theorem \ref{MTHM} will be discussed thoroughly in the dedicated sections (see Sections \ref{CTR_ITER_KER} and \ref{SECTION_TECH}). Let us mention that 
 the above results could be extended to 
the case of a $d$-dimensional non-degenerate SDE driven by a tempered stable process and the integral of \textbf{one} of its components. We emphasize as well, 
that our estimates still hold if we had a non-linear bounded drift in the dynamics of $X^1 $ if $\alpha>1 $ (see Remark \ref{drift borne}).
We conclude this paragraph saying
that the uniqueness of the martingale problem and the estimates  of Section \ref{SECTION_TECH} allow to extend in the non-degenerate case, the stable two-sided \textit{Aronson like} estimates of \cite{kolo:00} for H\"older coefficients. \\

\textbf{Constants and usual notations:}
\begin{itemize}
\item The capital letter $C$ will denote a constant whose value may change from line to line, and can depend on the hypotheses \textbf{[H]}. Other dependencies (in particular in time), will be specified, using explicit under scripts.
\item We will often use the notation $\asymp$ to express equivalence between functions. If $f$ and $g$ are two real valued nonnegative functions, we denote $f(x) \asymp g(x),\ x\in I\subset\R^p,\ p\in \N $, when there exists a constant $C\ge 1$, possibly depending on \textbf{[H]}, $I$ s.t. $C^{-1} f(x) \leq g(x) \leq C f(x),\ \forall x \in I$.
\item For $x=(x_1,\cdots,x_{nd})\in \R^{nd}$ and for all $k\in \leftB 1,n \rightB$, we define $ x^k:=(x_{(k-1)d+1},\cdots,x_{kd})\in \R^d$. Accordingly, $x=(x^1,\cdots, x^n) $. Also $x^{2:n}:=(x^2,\cdots,x^n) $. 
 \end{itemize}

From now on, we assume \textbf{[H]} to be in force, specifying when needed, which results are valid under {\HT} only.

\mysection{Continuity techniques : the Frozen equation and the parametrix series.}
\label{Continuity techniques}

\indent 

For density estimates, a continuity technique consists in considering a simpler equation as \textit{proxy} model for the initial equation.
The \textit{proxy} will be significant if it achieves two properties:
\begin{trivlist}
\item[-] It admits an explicit density or a density that is well estimated.
\item[-] The difference between the density of the initial SDE and the one of the proxy can be well controlled. 
\end{trivlist}

For the last point a usual strategy consists in expressing the difference of the densities through the difference of the generators of the two SDEs, using Kolmogorov's equations. This approach is known as the \textit{parametrix} method. In the current work, we will use the procedure developed by Mc Kean and Singer \cite{McKeanSinger}, which turns out to be well-suited to handle coefficients with mild smoothness properties.

We first introduce the \textit{proxy} model in Section \ref{SEC_FROZ}, and give some associated density bounds. We then analyze in Sections \ref{SEC_PARAM}, \ref{CTR_ITER_KER} how this choice can formally lead through a parametrix expansion to a density estimate, exploiting some suitable regularization properties in time. These arguments can be made rigorous provided that the initial SDE admits a Feller transition function.
The uniqueness of the martingale problem will actually give this property.  

\subsection{The Frozen Process.}
\label{SEC_FROZ}

\indent

In this section, we give results that hold in any dimension $d$, and for any fixed number of oscillators $n$.
Let $T>0$ (arbitrary deterministic time) and  $y\in \R^{nd}$ (final freezing point) be given.
Heuristically, $y$ is the point where we want to estimate the density of \eqref{EDS 1} at time $T$ provided it exists.
We introduce the \textit{frozen} process as follows:
\begin{equation}\label{FrozenSDE}
d\tilde{X}_s^{T,y} = A_s\tilde{X}_s^{T,y} ds+ B\sigma(s,R_{s,T}y)dZ_s.
\end{equation}
In this equation, $R_{s,T}y$ is the resolvent of the  associated deterministic equation, i.e. it satisfies $\frac{d}{ds}R_{s,T} = A_s R_{s,T}$, with $R_{T,T} = I_{nd\times nd}$ in $\R^{nd}\otimes \R^{nd}$. 
Let us emphasize that the previous choice can seem awkward at first sight. Indeed, a very natural approach
for a proxy model would consist in freezing the diffusion coefficient at the terminal point, see e.g. Kolokoltsov \cite{kolo:00}. In our current weak H\"ormander setting we need to take into account the backward transport of the final point by the deterministic differential
system. This particular choice is actually imposed by the natural metric appearing in the density of the frozen process, see Proposition \ref{EST_DENS_GEL}. This allows the comparison of the singular parts of the generators of \eqref{EDS 1} and \eqref{FrozenSDE} applied to the  frozen density, see Proposition \ref{serie parametrix} and Lemma \ref{LEMME_IT_KER}.


\begin{proposition}
\label{PROP_ProcGel}
Fix $(t,x) \in [0,T]\times \R^{nd}$. The unique solution of \eqref{FrozenSDE} starting from $x$ at time $t$ writes:
\begin{eqnarray}\label{ProcGel}
\tilde{X}_s^{t,x,T,y} &=& R_{s,t}x+ \int_t^s R_{s,u} B\sigma (u,R_{u,T}y) dZ_u.
\end{eqnarray}
\end{proposition}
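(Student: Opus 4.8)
Recognizing \eqref{FrozenSDE} with initial datum $(t,x)$ as a \emph{linear} SDE with additive noise whose volatility $s\mapsto B\sigma(s,R_{s,T}y)$ is deterministic, bounded and measurable (boundedness of $\sigma$ from \textbf{[H-2]} together with the continuity of $s\mapsto R_{s,T}$), the plan is simply to check that the right-hand side of \eqref{ProcGel} is adapted, càdlàg, and solves the equation (existence), and then to argue uniqueness. For uniqueness, any two solutions have a difference $\Delta_s$ for which the stochastic term cancels, so $\Delta_s$ is (pathwise) a solution of the linear ODE $\frac{d}{ds}\Delta_s=A_s\Delta_s$ with $\Delta_t=0$, hence $\Delta_s\equiv 0$ by uniqueness for linear ODEs and a Gronwall argument using the boundedness of $(A_s)_{s\ge 0}$ from \textbf{[H-3]}.

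To verify the formula I would first record the flow property of the resolvent: for all $t,u,s$ one has $R_{s,t}R_{t,u}=R_{s,u}$, because both sides, as functions of $s$, solve $\frac{d}{ds}M_s=A_sM_s$ with the common value $R_{t,u}$ at $s=t$, and therefore coincide by uniqueness for the linear matrix ODE; in particular, taking $u=s$, $R_{s,t}$ is invertible with $R_{s,t}^{-1}=R_{t,s}$. Applying this with $u\in[t,s]$ and pulling the deterministic matrix $R_{s,t}$ out of the stochastic integral in \eqref{ProcGel}, I rewrite the candidate as
\[
\tilde X_s^{t,x,T,y}=R_{s,t}N_s,\qquad N_s:=x+\int_t^s R_{t,u}B\sigma(u,R_{u,T}y)\,dZ_u,
\]
where $(N_s)_{s\ge t}$ is a well-defined $\R^{nd}$-valued càdlàg semimartingale since its integrand is deterministic and bounded.

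Now $s\mapsto R_{s,t}$ is absolutely continuous, hence of finite variation, with $dR_{s,t}=A_sR_{s,t}\,ds$, so the integration-by-parts formula for the product of a finite-variation process and a semimartingale (with vanishing covariation term) gives
\[
d\tilde X_s^{t,x,T,y}=(dR_{s,t})N_s+R_{s,t}\,dN_s=A_sR_{s,t}N_s\,ds+R_{s,t}R_{t,s}B\sigma(s,R_{s,T}y)\,dZ_s=A_s\tilde X_s^{t,x,T,y}\,ds+B\sigma(s,R_{s,T}y)\,dZ_s,
\]
together with $\tilde X_t^{t,x,T,y}=R_{t,t}x=x$. Thus the right-hand side of \eqref{ProcGel} solves \eqref{FrozenSDE} from $x$ at time $t$, and by the uniqueness noted above it is \emph{the} solution.

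The only genuinely technical point is the justification of extracting the deterministic matrix $R_{s,t}$ from the stochastic integral over $[t,s]$, i.e. the associativity/stochastic-Fubini identity $\int_t^s R_{s,u}(\cdot)\,dZ_u=R_{s,t}\int_t^s R_{t,u}(\cdot)\,dZ_u$, and the attendant integration-by-parts formula; both are standard for Lévy (in particular stable, possibly tempered) integrators acting on bounded deterministic integrands, but should be invoked explicitly rather than taken for granted.
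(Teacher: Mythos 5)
Your argument is correct and follows essentially the same route as the paper, which simply notes that \eqref{FrozenSDE} is a linear SDE with deterministic diffusion coefficient (hence admits a unique strong solution) and obtains \eqref{ProcGel} via It\^o's formula; your write-up merely makes the It\^o/integration-by-parts computation and the Gronwall uniqueness step explicit. Note also that extracting $R_{s,t}$ from the stochastic integral is, for each fixed $s$, just linearity of the integral with a constant matrix combined with the flow property $R_{s,u}=R_{s,t}R_{t,u}$, so no stochastic Fubini argument is actually needed.
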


\begin{proof}
Equation \eqref{FrozenSDE} is a linear SDE, with deterministic diffusion coefficient. As such, it admits a unique strong solution. The representation \eqref{ProcGel} follows from It\^o's formula.
%
%
\end{proof}

Introduce for all $u\in \R^+ $, the diagonal time scale matrix:
\begin{equation}\label{scale_matrix}
\T_{u}^\alpha=
\begin{pmatrix}
u^{\frac{1}{\alpha}} I_{d\times d}& &  &0 \\
0 & u^{1+\frac{1}{\alpha}} I_{d\times d} &  &0 \\
 & & \ddots&\\
0&& & u^{n-1+\frac{1}{\alpha}} I_{d\times d} 
\end{pmatrix},\ \M_{u}=u^{-\frac{1}\alpha}\T_u^\alpha
\begin{pmatrix}
 I_{d\times d}&        &  & 0 \\
0     & u I_{d\times d}& &  0\\
 & & \ddots&\\
0&  & & u^{n-1} I_{d\times d} 
\end{pmatrix}.
\end{equation}
 This extends the definitions of Theorem \ref{MTHM} for $n=2$. 
 The entries of the matrix $\T_u^\alpha $ correspond to the \textit{intrinsic} time scales of the iterated integrals of a stable process with index $\alpha $ observed at time $u$. They reflect the multi-scale behavior of our system. The matrix $\M_u$ appears in the tempered case.
 We first give an expression of the density of $\tilde X_s^{t,x,T,y}$ in terms of its inverse Fourier transform. We refer to Section \ref{Estimates on the frozen density} for the proof of this result.

\begin{proposition} \label{FROZEN_DENSITY}
The frozen process $(\tilde{X}_s^{t,x,T,y})_{s\ge t}$ has for all $s>t $ a density w.r.t. the Lebesgue measure, that is:
$$
\proba( \tilde{X}_s^{T,y}  \in dz | \tilde{X}_t^{T,y} = x) =\tilde{p}^{T,y}_\alpha(t,s,x,z) dz.
$$
For $0< T-t\le T_0:=T_0(\H)\le 1$ we have: 
\begin{eqnarray}\label{density}
 \tilde{p}^{T,y}_\alpha(t,s,x,z)
 =  \frac{\det {  (\M_{s-t})  }^{-1}  }{(2 \pi) ^{nd}}\nonumber\\
 \int_{\R^{nd}}  e^{-i\langle q, { ( \M^{\alpha}_{s-t})  }^{-1}(  z-R_{s,t}x) \rangle}
  \exp \left( - (s-t)\int_{\R^{nd}} \{1-\cos(\langle q,\xi\rangle) \} \nu_S(d\xi) \right) dq,
\end{eqnarray}
where $\nu_S:=\nu_S(t,T,s,y) $ is a symmetric measure on $\R^{nd} $ s.t. uniformly in $s\in (t,t+T_0]$ for all $A\subset \R^{nd} $:
\begin{equation*}
\nu_S(A)\le \int_{\R^+}\frac{d\rho}{\rho^{1+\alpha}}\int_{S^{nd-1}}\I_{A}(s\xi) g(c\rho)\bar \mu(d\xi),
\end{equation*}
with $\bar \mu $ satisfying \textbf{[H-4]}
 and ${\rm dim}({\rm supp}(\bar \mu))=d $. In the stable case, i.e. $g=1 $, we have the equality in the above equation, so that $\nu_S $ indeed corresponds to a stable L\'evy measure.
\end{proposition}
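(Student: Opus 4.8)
The plan is to obtain \eqref{density} by computing the characteristic function of the linear equation \eqref{FrozenSDE}, showing it is integrable after the anisotropic rescaling of \eqref{scale_matrix}, and inverting it. From the representation \eqref{ProcGel} I would write $\tilde X_s^{t,x,T,y}=R_{s,t}x+\int_t^s\Phi_u\,dZ_u$ with the deterministic, bounded (by \textbf{[H-2]} and the continuity of the resolvent) integrand $\Phi_u:=R_{s,u}B\sigma(u,R_{u,T}y)\in\R^{nd}\otimes\R^{d}$. Writing $\Psi_Z(\xi):=\int_{\R^{d}}(1-\cos\langle\xi,z\rangle)g(|z|)\,\nu(dz)$ for the (symmetric) L\'evy symbol attached to \eqref{GEN_TEMP} --- the compensating drift of \eqref{GEN_TEMP} disappears by symmetry of $\nu$, and $\Psi_Z(\xi)=\int_{S^{d-1}}|\langle\xi,\varsigma\rangle|^{\alpha}\mu(d\varsigma)$ in the stable case --- the exponential formula for a stochastic integral of a deterministic integrand against a L\'evy process gives, for all $p\in\R^{nd}$,
$$
\E\bigl[e^{i\langle p,\tilde X_s^{t,x,T,y}\rangle}\bigr]=e^{i\langle p,R_{s,t}x\rangle}\exp\Bigl(-\int_t^s\Psi_Z(\Phi_u^{*}p)\,du\Bigr).
$$

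The key point will be to show that $p\mapsto\exp(-\int_t^s\Psi_Z(\Phi_u^{*}p)\,du)$ is integrable over $\R^{nd}$, \emph{uniformly} for $0<s-t\le T_0$ with $T_0=T_0(\textbf{[H]})$ small. The obstruction is that $\Phi_u^{*}p=\sigma^{*}(u,R_{u,T}y)B^{*}R_{s,u}^{*}p$ only involves the first $d$ coordinates of $R_{s,u}^{*}p$, so the one-scale, $d$-dimensional coercivity of $\Psi_Z$ coming from \textbf{[H-2]} and \textbf{[H-4]} must be upgraded, by integrating in $u$ over $(t,s)$ and using the non-degeneracy of the subdiagonal blocks $a_u^{i,i-1}$ from \textbf{[H-3]}, into full $nd$-dimensional coercivity at the anisotropic scale. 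Concretely, the chain structure of $A$ makes the $k$-th block of $(\M^{\alpha}_{s-t})^{-1}R_{s,u}B\sigma(u,\cdot)$ comparable to $[(s-u)/(s-t)]^{k-1}(s-t)^{-1/\alpha}$ times an order-one nondegenerate matrix; integrating over $u$ amounts to integrating over $(s-u)/(s-t)\in(0,1)$, so a Vandermonde/moment argument together with the non-degeneracy of the $a_u^{i,i-1}$ shows that $\int_t^s\Psi_Z(\Phi_u^{*}p)\,du$ vanishes only at $p=0$ and, being $\alpha$-homogeneous in $p$, is $\asymp|q|^{\alpha}$ (in the stable case; $\gtrsim|q|^{\alpha}$ in general) with $q:=\M^{\alpha}_{s-t}p$, uniformly in $s-t\le T_0$; the smallness of $T_0$ serves precisely to keep $R_{s,u}$ close, block by block and at the correct power of $s-t$, to the free nilpotent dynamics. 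Integrability of the characteristic function then yields a bounded continuous density via Fourier inversion:
$$
\tilde p^{T,y}_\alpha(t,s,x,z)=\frac1{(2\pi)^{nd}}\int_{\R^{nd}}e^{-i\langle p,z-R_{s,t}x\rangle}\exp\Bigl(-\int_t^s\Psi_Z(\Phi_u^{*}p)\,du\Bigr)dp.
$$

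It then remains to recognise this as \eqref{density}. Substituting $p=(\M^{\alpha}_{s-t})^{-1}q$ produces the prefactor $\det(\M_{s-t})^{-1}$ and the phase $\langle q,(\M^{\alpha}_{s-t})^{-1}(z-R_{s,t}x)\rangle$, and rescaling time by $u=t+v(s-t)$, $v\in(0,1)$, one then defines $\nu_S=\nu_S(t,T,s,y)$ to be the image of $dv\otimes g(|\cdot|)\nu(\cdot)$ on $(0,1)\times\R^{d}$ under $(v,\xi)\mapsto(\M^{\alpha}_{s-t})^{-1}R_{s,t+v(s-t)}B\sigma(t+v(s-t),R_{\cdot,T}y)\,\xi$; it is symmetric since $\nu$ is, and by construction $\int_t^s\Psi_Z(\Phi_u^{*}p)\,du=(s-t)\int_{\R^{nd}}(1-\cos\langle q,\zeta\rangle)\nu_S(d\zeta)$, which is \eqref{density}. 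The announced bound on $\nu_S$ follows from the control of $(\M^{\alpha}_{s-t})^{-1}R_{s,u}B$ just used (its $k$-th block is $O((s-u)^{k-1}(s-t)^{-(k-1)-1/\alpha})=O((s-t)^{-1/\alpha})$ on $(t,s)$, with the first block bounded below via \textbf{[H-2]}): passing to polar coordinates $\xi=\rho\varsigma$ and absorbing the bounded radial renormalisation into a dilation $\rho\mapsto c\rho$ --- the doubling property and monotonicity of $\theta$ from \textbf{[T]} being used here to dominate the argument of $g$ --- one gets $\nu_S(A)\le\int_{\R^{+}}\rho^{-1-\alpha}\,d\rho\int_{S^{nd-1}}\I_A(\rho\varsigma)\,g(c\rho)\,\bar\mu(d\varsigma)$, with $\bar\mu$ the pushforward of $\tilde\mu\otimes dv$ onto $S^{nd-1}$. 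Its support has dimension $(d-1)+1=d$ ($d-1$ from $S^{d-1}$, one more from the time parameter $v$) while still spanning $\R^{nd}$, and \textbf{[H-3]}--\textbf{[H-4]} ensure $\bar\mu$ satisfies the non-degeneracy \textbf{[H-4]} on that support. When $g\equiv1$, $\alpha$-homogeneity turns the above inequalities into equalities, so $\nu_S$ is a bona fide stable L\'evy measure on $\R^{nd}$.

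The main difficulty is the coercivity estimate of the second step --- converting the $d$-dimensional ellipticity of $\sigma$ together with the H\"ormander chain structure of $A$ into genuine $nd$-dimensional coercivity of $p\mapsto\int_t^s\Psi_Z(\Phi_u^{*}p)\,du$ at the correct anisotropic scale, uniformly in $s-t\le T_0$, which is a quantitative Kalman/H\"ormander-type estimate and is also exactly what pins down the scaling matrices of \eqref{scale_matrix}. Once it is available, the L\'evy--Khintchine computation and the change of variables defining $\nu_S$ are essentially bookkeeping.
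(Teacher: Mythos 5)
Your proposal is correct and follows essentially the same route as the paper: characteristic function of the linear SDE with deterministic integrand, the block structure $R^{\cdot,1}_{s,u}=\M_{s-t}\bar R_v$ with nondegenerate blocks coming from \textbf{[H-3]} (the paper's Lemma \ref{Form of the resolvent}), a Vandermonde/compactness coercivity estimate giving $\int_0^1|\bar R_v^*\M_{s-t}p|^\alpha dv\gtrsim|\M_{s-t}p|^\alpha$ (the paper's Lemma \ref{bound}), Fourier inversion, and identification of $\nu_S$ as the symmetrized pushforward of $dv\otimes g(|\cdot|)\nu$ with the stated polar domination and $d$-dimensional spectral support. The only slight overstatement is the tempered case, where the time-integrated symbol is only bounded below by $c\,|\T^\alpha_{s-t}p|^\alpha-C(s-t)$ (the paper's introduction of $g(0)$ and Taylor expansion of $g(|z|)-g(0)$), but this does not affect integrability or the inversion.
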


\begin{remark}
The above proposition is important in that it shows in the stable case \textbf{[HS]} why the density of a $d $-dimensional stable process with index $\alpha\in(0,2) $ and its $n-1$ iterated integrals actually behaves as the density of an $nd$-dimensional \textit{multi-scale} stable process, where the various scales are read through the matrix $\T^\alpha $. Also, the fact that the associated spectral measure is either non equivalent or singular w.r.t. the Lebesgue measure of $S^{nd-1}$ leads to consider delicate asymptotics for the tails of the density which yields the dimension constraints in Theorem \ref{MART_PB_THM} and the restrictions of Theorem \ref{MTHM}.
\end{remark}

From the previous remark and the dimension of the support of $\bar \mu $ in Proposition \ref{FROZEN_DENSITY} we derive from points \textit{i)} and \textit{iii)} in Theorem 1.1 in Watanabe \cite{wata:07} the following estimate in the stable case. 

\begin{proposition}[Density Estimates for the frozen process under \textbf{[HS]}]\label{EST_DENS_GEL}
Fix $T>0$, a threshold $K>0$ and $y\in \R^{nd} $. For all $(t,x)\in [0,T)\times \R^{nd} $, 
the density  $\tilde{p}^{T,y}_\alpha(t,s,x,z)$ of the frozen process  $ (\tilde X_s^{t,x,T,y})_{s\in (t,T]}$ 
in \eqref{ProcGel}
satisfies the following estimates.
There exists $C_{\ref{EST_DENS_GEL}}:=C_{\ref{EST_DENS_GEL}}$ (\textbf{H-2,H-3,H-4},K)$\ge 1$, s.t. for all $0\le t<s\le T,\  (x,z)\in (\R^{nd})^2$: 
\begin{equation}
\label{BORNE_COMPACTE}
C_{\ref{EST_DENS_GEL}}^{-1}\underline{p}_\alpha(t,s,x,z)\le \tilde p_\alpha^{T,y}(t,s,x,z)\le C_{\ref{EST_DENS_GEL}}\bar p_\alpha(t,s,x,z),
\end{equation}
where 
we write:
\begin{equation}
\label{BAR_P}
\bar p_\alpha(t,s,x,z)= C_{\alpha}\frac{{\rm det}(\T_{s-t}^\alpha)^{-1}}{\{K \vee |(\T_{s-t}^{\alpha})^{-1}(z-R_{s,t}x)|\}^{d+1+\alpha}},
\end{equation}
and also
\begin{equation*}
\underline p_\alpha(t,s,x,z)= C_{\alpha}^{-1}\frac{{\rm det}(\T_{s-t}^\alpha)^{-1}}{\{K \vee |(\T_{s-t}^{\alpha})^{-1}(z-R_{s,t}x)|\}^{n d(1+\alpha)}},\ C_\alpha:=C_\alpha(\H)\ge 1.
\end{equation*}
\end{proposition}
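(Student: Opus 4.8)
The plan is to deduce Proposition~\ref{EST_DENS_GEL} from the Fourier representation \eqref{density} of Proposition~\ref{FROZEN_DENSITY} together with Watanabe's two-sided estimates for stable densities with general spectral measures (Theorem~1.1 in \cite{wata:07}). First I would reduce to the \emph{normalized} density: by the change of variables $z\mapsto w:=(\T_{s-t}^\alpha)^{-1}(z-R_{s,t}x)$ (so $dz=\det(\T_{s-t}^\alpha)\,dw$ and, since $g\equiv 1$ in the stable case, $\M_{s-t}=(s-t)^{-1/\alpha}\T_{s-t}^\alpha$ acts by the scalar $(s-t)^{1/\alpha}$), formula \eqref{density} becomes, up to the explicit prefactor $\det(\T_{s-t}^\alpha)^{-1}$, the value at $w$ of the density at time $1$ of the $nd$-dimensional stable process with L\'evy measure $\bar\nu$, whose spectral measure $\bar\mu$ is supported on a $d$-dimensional linear subspace of $S^{nd-1}$ and satisfies \textbf{[H-4]} on that subspace. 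Thus it suffices to show that this fixed density $\bar q(w)$ satisfies $C^{-1}\{K\vee|w|\}^{-nd(1+\alpha)}\le \bar q(w)\le C\{K\vee|w|\}^{-(d+1+\alpha)}$, uniformly over the (compact) range of the rescaled coefficient matrix $R_{s,u}B\sigma(u,R_{u,T}y)$ and of $s-t\in(0,T_0]$.

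Next I would invoke the relevant parts of Watanabe's classification. For the near-diagonal regime $|w|\le K$ one has $\bar q(w)\asymp \bar q(0)\asymp 1$ because the nondegeneracy \eqref{ND} of $\bar\mu$ on its support guarantees the Fourier integral in \eqref{density} converges and is bounded below near $q=0$; this gives both bounds with the constant $K$. For the large-deviation regime $|w|>K$, the key input is points \textit{i)} and \textit{iii)} of Watanabe's Theorem~1.1: the \emph{upper} bound of order $|w|^{-(d+1+\alpha)}$ reflects the worst tail produced by a spectral measure supported in dimension $d$ (heuristically, a single ``jump direction'' inside the $d$-dimensional support costs $|w|^{-(d+\alpha)}$ for the radial part but one gains an extra power from the remaining directions, yielding exponent $d+1+\alpha$ — this is exactly the $d+1+\alpha$ announced and the source of the dimension constraint $d(1-n)+1+\alpha>0$ elsewhere), while the \emph{lower} bound degrades to order $|w|^{-nd(1+\alpha)}$ because along the ``most degenerate'' directions, where the support of $\bar\mu$ is least aligned with $w$, the density can only be lower bounded by iterating $n$ small jumps across the $nd$ coordinates, each contributing a factor $|w|^{-(1+\alpha)}$-type loss. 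I would make this precise by checking that $\bar\mu$, being Lipschitz and nondegenerate on a $d$-dimensional subspace, meets the hypotheses under which Watanabe's bounds apply with these exact exponents, and that the constants depend on $\bar\mu$ only through the parameters $\lambda$ in \eqref{ND}, the Lipschitz constant of $h$, and $K$ --- hence only through \textbf{[H-2]}, \textbf{[H-3]}, \textbf{[H-4]}, $K$ as claimed.

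Then I would transport these estimates back through the scaling. Writing $z-R_{s,t}x=\T_{s-t}^\alpha w$ and $\tilde p_\alpha^{T,y}(t,s,x,z)=\det(\T_{s-t}^\alpha)^{-1}\bar q\bigl((\T_{s-t}^\alpha)^{-1}(z-R_{s,t}x)\bigr)$ immediately yields \eqref{BORNE_COMPACTE} with $\bar p_\alpha$ and $\underline p_\alpha$ as in \eqref{BAR_P} and the displayed lower bound, after absorbing the normalizing constant $C_\alpha$. One subtlety to address is that the measure $\nu_S$ in Proposition~\ref{FROZEN_DENSITY} depends on $(t,T,s,y)$ through the coefficient $R_{s,u}B\sigma(u,R_{u,T}y)$ evaluated along $u\in[t,s]$; I would observe that by \textbf{[H-2]}, \textbf{[H-3]} and boundedness of $A$, this coefficient and its ``angular'' content stay in a compact set, bounded away from degeneracy, for $s-t\le T_0(\H)$, so the constant in Watanabe's estimate can be taken uniform. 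I expect the \emph{main obstacle} to be precisely this last uniformity point combined with correctly identifying which exponents Watanabe's theorem delivers for a spectral measure that is absolutely continuous and nondegenerate \emph{within} a $d$-dimensional subspace but singular in $\R^{nd}$: one must track that the relevant ``dimension of the support'' entering Watanabe's tail exponent is $d$ (not $nd$ and not $d-1$), which is where Proposition~\ref{FROZEN_DENSITY}'s statement $\mathrm{dim}(\mathrm{supp}(\bar\mu))=d$ is used in an essential way; getting the exponents $d+1+\alpha$ (upper) and $nd(1+\alpha)$ (lower) exactly right is the crux and requires a careful reading of \cite{wata:07} rather than a soft argument.
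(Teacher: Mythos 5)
Your overall route is the paper's own: reduce, via the Fourier representation and scaling matrices of Proposition \ref{FROZEN_DENSITY}, to a multi-scale stable density in dimension $nd$ whose spectral measure has $d$-dimensional support satisfying \textbf{[H-4]}, then read off the off-diagonal exponents $d+1+\alpha$ (upper) and $nd(1+\alpha)$ (lower) from points \textit{i)} and \textit{iii)} of Theorem~1.1 in \cite{wata:07}, with uniformity coming from the fact that the frozen coefficients $R_{s,u}B\sigma(u,R_{u,T}y)$ stay in a compact, non-degenerate set (this is exactly what Lemma \ref{Form of the resolvent} and Lemma \ref{bound} provide). Your heuristic for why the tail exponent is $d+1+\alpha$ is only a heuristic, but you correctly defer that point to a careful reading of \cite{wata:07}, as the paper itself does.

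The genuine gap is the near-diagonal \emph{lower} bound. You assert $\bar q(w)\asymp \bar q(0)\asymp 1$ for $|w|\le K$ ``because the nondegeneracy \eqref{ND} guarantees the Fourier integral converges and is bounded below near $q=0$''. That reasoning does not work: the inverse Fourier integral is oscillatory in $w$, and a lower bound on the Fourier exponent (equivalently, on the integrand near $q=0$) gives integrability and hence the diagonal \emph{upper} bound, but no positivity of the integral at $w\neq 0$, let alone uniformly over the family of symbols indexed by $(t,s,T,y)$. The paper proves this step separately in Appendix \ref{DIAG_LB}, following \cite{kolo:00}: writing the density in polar coordinates, Taylor-expanding $\cos\bigl(|q|\,|(\T^{\alpha}_{s-t})^{-1}z|\,\tau\bigr)$, and controlling the coefficients of the expansion through the two-sided bounds $\exp(-\bar c |q|^\alpha)$ and $\exp(-\bar c^{-1}|q|^\alpha)$ on the Fourier factor, so that the zeroth-order term dominates; note this yields the lower bound only for a sufficiently small threshold, the remaining range of $|w|$ being covered by the $|w|^{-nd(1+\alpha)}$ tail lower bound from \cite{wata:07}. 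Without such an argument (or an equally explicit positivity-plus-compactness argument uniform in the freezing parameters), the left-hand inequality in \eqref{BORNE_COMPACTE} is not established.
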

We refer to Section \ref{Estimates on the frozen density} for the proof of this result. Observe that $\bar p_\alpha(t,s,x,.)$ can be identified with a probability density only under the condition $d(1-n)+1+\alpha >0 $ appearing in Theorem \ref{MART_PB_THM}. Roughly speaking the upper-bound in \eqref{BORNE_COMPACTE} is the worst possible considering the underlying dimension of the support of the spectral measure in $S^{nd-1} $, which is here $d$. On the other hand, the lower bound corresponds to the highest possible concentration of a spectral measure on $S^{nd-1}$ satisfying \textbf{[H-4]}, see again Section \ref{Estimates on the frozen density}. This control would for instance correspond to the product at a given point of the one-dimensional stable asymptotics in each direction. 

From Proposition \ref{FROZEN_DENSITY} and Theorem 1 in Sztonyk \cite{szto:10} we also derive the following result in the tempered case.
\begin{proposition}[Density Estimates for the frozen process under \textbf{[HT]}]\label{EST_DENS_GEL_T}
Fix $T>0$, a threshold $K>0$ and $y\in \R^{nd} $. For all $(t,x)\in [0,T)\times \R^{nd} $, 
the density  $\tilde{p}^{T,y}_\alpha(t,s,x,z)$ of the frozen process  $ (\tilde X_s^{t,x,T,y})_{s\in (t,T]}$ 
in \eqref{ProcGel}
satisfies the following estimates.
There exists $C_{\ref{EST_DENS_GEL_T}}:=C_{\ref{EST_DENS_GEL_T}}$ (\textbf{H-2,H-3,H-4},K)$\ge 1$, s.t. for all $0\le t<s\le T,\  (x,z)\in (\R^{nd})^2$: 
\begin{equation}
\label{BORNE_COMPACTE_T}
 \tilde p_\alpha^{T,y}(t,s,x,z)\le C_{\ref{EST_DENS_GEL_T}}\bar p_\alpha(t,s,x,z),
\end{equation}
where:
\begin{equation}
\label{BAR_PT}
\bar p_\alpha(t,s,x,y)= C_{\alpha}\frac{{\rm det}(\T_{s-t}^\alpha)^{-1}}{\{K \vee |(\T_{s-t}^{\alpha})^{-1}(y-R_{s,t}x)|\}^{d+1+\alpha}} \theta(|(\M_{s-t})^{-1}(y-R_{s,t}x)|
).
\end{equation}
\end{proposition}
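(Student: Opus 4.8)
The proof will combine the Fourier representation of Proposition \ref{FROZEN_DENSITY} with the upper heat-kernel estimate for tempered stable processes of Sztonyk \cite{szto:10}, Theorem 1. I would first use Proposition \ref{FROZEN_DENSITY}: for $0<s-t\le T_0\le 1$ it writes $\tilde p_\alpha^{T,y}(t,s,x,z)$ as $\det(\M_{s-t})^{-1}$ times the density, evaluated at the anisotropically rescaled point $(\T_{s-t}^{\alpha})^{-1}(z-R_{s,t}x)$, of the time-$(s-t)$ $\R^{nd}$-valued L\'evy process with symmetric L\'evy measure $\nu_S$. By the same Proposition together with \textbf{[T]}, $\nu_S$ is dominated by a tempered $\alpha$-stable L\'evy measure whose spectral part $\bar\mu$ satisfies \textbf{[H-4]} on a support of dimension $d$ in $S^{nd-1}$ and whose radial tempering is controlled by $\theta$ (bounded, non-increasing, doubling and decaying at infinity, as in \textbf{[T]}). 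Modulo the explicit change of variables by $\T_{s-t}^{\alpha}$, the problem is thus reduced to an upper estimate, at a time horizon $\le T_0\le 1$, for the density of a tempered $\alpha$-stable process in $\R^{nd}$ with a $d$-dimensional spectral measure; the case of a general horizon $s-t\le T$ is then recovered from the semigroup (linear) structure of the frozen equation and the boundedness of the coefficients.

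Next I would apply Sztonyk's Theorem 1 to this density. Since the dominating spectral measure is supported on a set of dimension $d$ in $S^{nd-1}$, his estimate produces exactly the worst Watanabe-type decay exponent $d+1+\alpha$ already appearing in the purely stable bound of Proposition \ref{EST_DENS_GEL}, now weighted by the radial tempering factor. In the diagonal region, where $|(\T_{s-t}^{\alpha})^{-1}(z-R_{s,t}x)|\le K$, one uses the boundedness of the density furnished by Sztonyk together with $\theta$ bounded; off the diagonal one uses his tail bound. Reassembling, the prefactor $\det(\M_{s-t})^{-1}$ and the Jacobian of the rescaling by $\T_{s-t}^{\alpha}$ recombine --- here one uses $\M_u=u^{-1/\alpha}\T_u^{\alpha}$ --- into $\det(\T_{s-t}^{\alpha})^{-1}$ multiplied by the polynomial factor read in the $\T_{s-t}^{\alpha}$-metric, while the radial part of $\nu_S$, which lives at the un-weighted $\rho$-scale but still carries the ODE-transport scales $u^{i-1}$, transforms into $\theta(|(\M_{s-t})^{-1}(z-R_{s,t}x)|)$. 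This is precisely \eqref{BAR_PT}, and the same scaling accounts for why the polynomial decay is measured through $\T^{\alpha}$ whereas the tempering is measured through $\M$.

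I expect the main obstacle to be the bookkeeping of uniformity and anisotropy rather than any single hard estimate. Sztonyk's theorem is stated for a fixed tempered stable L\'evy measure, whereas here $\nu_S=\nu_S(t,T,s,y)$ depends on all the parameters and, under \textbf{[HT]}, is only controlled by domination rather than equality; one must therefore verify that the constants in his bound depend only on the quantities that are uniform in the present setting (namely $\lambda$ and the Lipschitz norm of $h$ from \textbf{[H-4]}, $\kappa$ from \textbf{[H-2]}, the doubling constant $D$ and the constant $c$ of \textbf{[T]}, and $n,d$), which amounts to revisiting the structure of his hypotheses --- and this is exactly why, in contrast with the stable case, only an upper bound is claimed in \eqref{BORNE_COMPACTE_T}, domination being insufficient for a matching lower bound. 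One must also carry the two distinct scalings $\T_{s-t}^{\alpha}$ and $\M_{s-t}$ correctly through a non-isotropic change of variables, so that the final estimate genuinely has the polynomial evaluated in the $\T^{\alpha}$-metric and $\theta$ in the $\M$-metric; a subsidiary point, already prepared in the proof of Proposition \ref{FROZEN_DENSITY}, is to confirm that the dominating measure meets Sztonyk's spectral non-degeneracy requirement on its $d$-dimensional support and that \textbf{[T]} matches his assumptions on the tempering, which again rests on the doubling property of $\theta$.
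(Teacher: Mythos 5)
Your proposal is correct and follows essentially the same route as the paper: reduce via the Fourier representation and the domination of $\nu_S$ in Proposition \ref{FROZEN_DENSITY} to a (tempered) multi-scale stable density in $\R^{nd}$ whose spectral part has $d$-dimensional support, then invoke Sztonyk-type upper bounds and track the two scalings $\T_{s-t}^{\alpha}$ and $\M_{s-t}$, with only the upper estimate surviving since $\nu_S$ is merely dominated. The only difference is one of packaging: where you quote Theorem 1 of \cite{szto:10} as a black box, the paper's detailed derivation in Appendix \ref{controles_phi} re-proves the bound through the small/large-jump decomposition, combining the Schwartz-type estimate for the small-jump density (Lemma \ref{EST_DENS_MART}) with the Poisson-measure ball control (Lemma \ref{EXP_POISSON}, taken from Corollary 6 of \cite{szto:10}) and the doubling property of $\theta$ --- precisely the tail-plus-tempering mechanism and uniformity bookkeeping you anticipate.
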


As a corollary, we have the following important property.
\begin{corollary}[\textbf{``Semigroup" property}]\label{SG_PROP} Under \H,
when $d=1,n=2$, which is the only case for which $nd+\alpha=d+1+\alpha $ so that $\bar p_\alpha $ can be identified with the density of a, possibly tempered, multi-scale stable process in dimension $nd$ whose spectral measure is absolutely continuous with respect to the Lebesgue measure of $S^{nd-1} $, there exists $C_{\ref{SG_PROP}}:=C_{\ref{SG_PROP}}$ (\textbf{H-2,H-3,H-4},K) $\ge 1$ s.t. for all $0\le t<\tau<s,\ (x,y)\in (\R^{nd})^{2} $:
$$\int_{\R^{nd}}\bar p_\alpha(t,\tau,x,z)\bar p_\alpha(\tau,s,z,y) dz\le C_{\ref{SG_PROP}}\bar p_{\alpha}(t,s,x,y).$$
\end{corollary}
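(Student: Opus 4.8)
We need to show that the convolution of two "multi-scale stable" kernels $\bar p_\alpha$ in the case $d=1$, $n=2$ is bounded, up to a constant, by a single such kernel. The plan is to exploit the fact that in this special case $\bar p_\alpha(t,\tau,x,\cdot)$ really *is* (comparable to) the density of a genuine $\mathbb{R}^{2}$-valued, possibly tempered, multi-scale stable process whose spectral measure is absolutely continuous on $S^{1}$ — precisely the content of the parenthetical remark in the statement, which we may invoke via Propositions \ref{EST_DENS_GEL} and \ref{EST_DENS_GEL_T}. So the heuristic is: the Chapman–Kolmogorov identity for the frozen process $\tilde X$ should be transferred, through the two-sided comparison $\tilde p_\alpha^{T,y}\asymp \bar p_\alpha$, into the desired convolution inequality for $\bar p_\alpha$.

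First I would reduce to the frozen-density level. Pick a common freezing point, say $y$ itself, and write the exact semigroup identity $\tilde p_\alpha^{T,y}(t,s,x,y)=\int_{\R^2}\tilde p_\alpha^{T,y}(t,\tau,x,z)\,\tilde p_\alpha^{T,y}(\tau,s,z,y)\,dz$, valid because $\tilde X^{T,y}$ is a (time-inhomogeneous) Markov process with the densities of Proposition \ref{FROZEN_DENSITY}. Applying the upper bound of Proposition \ref{EST_DENS_GEL} (resp. \ref{EST_DENS_GEL_T} in the tempered case) to each factor in the integrand and the lower bound to the left-hand side, one gets $\int_{\R^2}\bar p_\alpha(t,\tau,x,z)\bar p_\alpha(\tau,s,z,y)\,dz \le C\,\bar p_\alpha(t,s,x,y)$ with $C$ depending only on $C_{\ref{EST_DENS_GEL}}$ (resp. $C_{\ref{EST_DENS_GEL_T}}$) — but only when the endpoint difference $y-R_{s,t}x$ and the intermediate differences land in the right regimes; one has to be careful that the two-sided bound in Proposition \ref{EST_DENS_GEL} holds globally, with the thresholded norm $K\vee|\cdot|$, so this is actually clean. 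The key structural inputs that make the lower bound of the left side match the needed exponent $d+1+\alpha = nd+\alpha = 3$ are exactly the identities $nd+\alpha = d+1+\alpha$ and that $\underline p_\alpha$ and $\bar p_\alpha$ share the same exponent when $nd(1+\alpha)$… — wait, they don't; here I must instead use that for $d=1,n=2$ the kernel $\bar p_\alpha$ is itself a bona fide stable density, so I would rather argue as follows.

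Second, the cleaner route: invoke directly that $\bar p_\alpha(t,\tau,x,z) = q_{\tau-t}^\alpha\big(z - R_{\tau,t}x\big)$ (up to constants), where $q_u^\alpha$ is the time-$u$ density of a fixed two-dimensional, possibly tempered, stable process $\mathcal{Z}$ with absolutely continuous spectral measure on $S^1$ — this is the Watanabe/Sztónyk characterization and is exactly why the case $d=1,n=2$ is singled out. Then the convolution $\int_{\R^2}\bar p_\alpha(t,\tau,x,z)\bar p_\alpha(\tau,s,z,y)\,dz$ becomes, using $R_{s,t}=R_{s,\tau}R_{\tau,t}$ and the change of variables absorbing the resolvent, exactly $\int q_{\tau-t}^\alpha(w)\,q_{s-\tau}^\alpha\big(y-R_{s,t}x - R_{s,\tau}w\big)\,dw$. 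Because $R_{s,\tau}$ is a bounded linear map with bounded inverse on the relevant time horizon and commutes appropriately with the scale matrices $\T^\alpha$ up to constants (this is the Hörmander scaling, \textbf{[H-3]}), this integral is comparable to the genuine Chapman–Kolmogorov convolution $q_{\tau-t}^\alpha * q_{s-\tau}^\alpha = q_{s-t}^\alpha$ evaluated at $y-R_{s,t}x$, which is $\bar p_\alpha(t,s,x,y)$ up to a constant. The tempering factor $\Theta$ (or $\theta$) rides along because it is the subexponential profile of the same process $\mathcal{Z}$ and is thus already built into $q^\alpha$.

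The main obstacle, and the step requiring genuine care rather than bookkeeping, is controlling the interaction between the resolvent transport $R_{s,\tau}$ and the anisotropic scaling $\T^\alpha_u$: one must verify that conjugating $\bar p_\alpha$'s argument by $R_{s,\tau}$ and then by $(\T^\alpha_{s-\tau})^{-1}$ only perturbs the thresholded norm $K\vee|(\T^\alpha_{s-t})^{-1}(\cdot)|$ by a multiplicative constant, uniformly for $0\le t<\tau<s\le T$. This is precisely the kind of scaling lemma that underlies Proposition \ref{EST_DENS_GEL}, and I expect it is either established there or in the technical Section \ref{Estimates on the frozen density}; granting it, the corollary follows. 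A secondary subtlety is the behavior of the subexponential factor $\Theta$ under convolution — here the doubling property in \textbf{[T]} ($\Theta(r)\le D\,\Theta(2r)$-type controls on $\theta$) is exactly what is needed to dominate $\theta(|w|)\theta(|v-w|)$ by $C\,\theta(|v|)$ after integration, mirroring the standard argument for tempered stable convolutions in Sztónyk \cite{szto:10}.
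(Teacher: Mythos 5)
Your second route is indeed the argument the paper has in mind: Corollary \ref{SG_PROP} is not given a separate proof, and the parenthetical in its statement (for $d=1,n=2$ one has $d+1+\alpha=nd+\alpha$, so $\bar p_\alpha$ is, up to constants, the transition density of a genuine possibly tempered multi-scale stable process) is the proof, Chapman--Kolmogorov for that process then transferring to $\bar p_\alpha$. You were also right to abandon your first route: Proposition \ref{EST_DENS_GEL} only gives a lower bound $\underline p_\alpha$ with exponent $nd(1+\alpha)\neq d+1+\alpha$, and Proposition \ref{EST_DENS_GEL_T} gives no lower bound at all, so the two-sided comparison between $\tilde p_\alpha^{T,y}$ and $\bar p_\alpha$ cannot be read off those statements; it is exactly the ``identification'' invoked in the Corollary (via Watanabe/Kolokoltsov in the stable case, Sztonyk-type bounds in the tempered one) that supplies it, and any write-up must say so explicitly.

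The one step in your route that would not survive scrutiny as written is the reduction to a plain convolution. You posit a \emph{fixed} two-dimensional process $\mathcal Z$ with densities $q^\alpha_u$ satisfying $q^\alpha_{\tau-t}*q^\alpha_{s-\tau}=q^\alpha_{s-t}$ and then claim that $\int q^\alpha_{\tau-t}(w)\,q^\alpha_{s-\tau}\bigl(y-R_{s,t}x-R_{s,\tau}w\bigr)dw$ is comparable to this plain convolution because $R_{s,\tau}$ is bounded with bounded inverse and ``commutes with $\T^\alpha$ up to constants''. That justification fails uniformly in $t<\tau<s$: for instance when $s-\tau\ll\tau-t$, a typical $w$ under $q^\alpha_{\tau-t}$ has $|w^1|\sim(\tau-t)^{1/\alpha}$, so the perturbation $\bigl((R_{s,\tau}-I)w\bigr)^2\sim(s-\tau)(\tau-t)^{1/\alpha}$ is enormous compared with the fast scale $(s-\tau)^{1+1/\alpha}$ of $q^\alpha_{s-\tau}$, and the rescaled argument $(\T^\alpha_{s-\tau})^{-1}(R_{s,\tau}-I)w$ is not $O(1)$; Lemma \ref{ScalingLemma} conjugates $R$ by $\T^\alpha$ at a \emph{single} time scale and does not bridge two very different scales. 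Moreover, a genuine L\'evy process cannot have transition kernel of the form $\det(\T^\alpha_u)^{-1}f((\T^\alpha_u)^{-1}\,\cdot\,)$ with $f\asymp(K\vee|\cdot|)^{-(2+\alpha)}$: the paper's ``multi-scale stable process'' is the stable noise together with its integral, i.e.\ an \emph{affine} Markov process (Proposition \ref{FROZEN_DENSITY}, Remark 3.2), not a L\'evy one. The repair is immediate and is what the paper intends: take as reference the affine process itself (e.g.\ the frozen process with a constant, elliptic diffusion coefficient and the same drift matrix $A_t$), whose exact Chapman--Kolmogorov identity already carries the transport, $\int \tilde q(t,\tau,x,z)\tilde q(\tau,s,z,y)dz=\tilde q(t,s,x,y)$ with $\tilde q(t,s,x,y)=p_{\Lambda_s}(y-R_{s,t}x)$, and then use the two-sided comparison $\tilde q\asymp\bar p_\alpha$ (valid precisely because for $d=1,n=2$ the spectral measure is absolutely continuous on $S^{1}$, with the doubling property of $\theta$ absorbing the tempering factor) to sandwich your convolution; nothing then needs to be ``absorbed''.
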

The above control is important since it allows to give estimates on the convolution of the frozen densities with possible different freezing points.
%
Namely, for all $T_1,T_2 > 0$, $y_1,y_2 \in \R^{nd}$, for all $t<\tau< s$ and $x,y \in \R^{nd}$:
\begin{equation}\label{SG_DIFF_SPEC}
\int_{\R^{nd}} \tilde{p}^{T_1,y_1}_\alpha(t,\tau,x,z)\tilde{p}^{T_2,y_2}_\alpha(\tau,s,z,y) dz\le C_{\ref{SG_PROP}} \bar{p}_{\alpha}(t,s,x,y).
\end{equation}


\subsection{The Parametrix Series.}
\label{SEC_PARAM}
We assume here that the generator $(L_t)_{t\ge 0} $ of \eqref{EDS 1} generates a  two-parameter Feller semigroup $(P_{t,s})_{0\le t\le  s}$.
Using the Chapman-Kolmogorov equations satisfied by the semigroup and the pointwise Kolmogorov equations for the \textit{proxy} model, we derive a formal representation of the semigroup in terms of a series, involving the difference of the generators of the initial and frozen processes.  
Let $L_t$ (already defined in \eqref{GEN}) and $\tilde{L}_t^{T,y}$ denote the generators of $X^{t,x}$ and $\tilde{X}^{t,x,T,y}$ at time $t$ respectively.  For $\varphi \in {C}^2_0(\R^{nd},\R)$, from \eqref{GEN_TEMP} (setting $g=1$ in the stable case), 
we have for all $ x\in \R^{nd}$: 
\begin{equation*} 
L_t\varphi(x) =\langle \nabla \varphi(x), A_t x \rangle +
\int_{\R^{d}} \left( \varphi(x + B\sigma(t,x)z) - \varphi(x) - \dfrac{\langle \nabla \varphi(x), B\sigma(t,x)z \rangle }{1+ |z|^2 }  \right)g(|z|)\nu(dz),
\end{equation*}
\begin{equation}\label{frozen generator}
\tilde{L}^{T,y}_t\varphi(x) =\langle \nabla \varphi(x) ,A_t x \rangle + 
\int_{\R^{d}} \left(\varphi(x+ B\sigma(t,R_{t,T}y)z) -\varphi(x) - \frac{\langle \nabla \varphi(x), B\sigma(t,R_{t,T}y)z \rangle}{1+|z|^2} \right)g(|z|)\nu(dz).
\end{equation}

Observe that for $\tilde{X}^{t,x,T,y}_s$ defined in \eqref{ProcGel}, its density $\tilde{p}^{T,y}_\alpha(t,s,x, \cdot)$ exists and is smooth under  \textbf{[H]} for $s>t$ (see Proposition \ref{FROZEN_DENSITY} above). 

\begin{proposition}\label{serie parametrix}
Suppose that there exists a unique weak solution $(X_s^{t,x})_{0\le t\le s}$ to \eqref{EDS 1} which  has a two-parameter Feller semigroup $(P_{t,s})_{0\le t\le s}$. We have the following \textit{formal} representation. For all $0\le t<T,\ (x,y)\in (\R^{nd})^2 $ and any bounded measurable $f:\R^{nd}\rightarrow \R$:
\begin{equation}
\label{SG_PARAM}
P_{t,T}f(x)=\E[f(X_T)|X_t=x]=\int_{\R^{nd}} \left(\sum_{r=0}^{+\infty} (\tilde{p}_{\alpha} \otimes H^{(r)}) (t,T,x,y) \right)f(y)dy,
\end{equation}
where $H$ is the parametrix kernel:
\begin{equation}\label{noyau}
\forall 0\le t<T,\ (x,y)\in (\R^{nd})^2,\ H(t,T,x,y) := (L_t-\tilde{L}_t^{T,y})\tilde{p}_\alpha
(t,T,x,y).
\end{equation}
In equations \eqref{SG_PARAM}, \eqref{noyau}, we denote for all $0\le t<u\le T,\ (x,z)\in (\R^{nd})^2,\ \tilde p_\alpha(t,u,x,z):=\tilde p_\alpha^{u,z}(t,u,x,z)$, i.e. we omit the superscript when the freezing terminal time and point are  those where the density is considered. 
Also, the notation $\otimes  $ stands for the time space convolution: 
$$
f \otimes h(t,T,x,y) = \int_t^T du\int_{\R^{nd}} dz f(t,u,x,z)h(u,T,z,y).
$$
Besides, $H^{(0)} = I$ and $\forall r \in \N, \ H^{(r)} (t,T,x,y) =H^{(r-1)} \otimes H(t,T,x,y)$.

Furthermore, when the above representation can be justified, it yields the existence as well as a representation for the density of the initial process. Namely $\P[X_T\in dy|X_t=x]=p(t,T,x,y)dy $ where :
\begin{equation} \label{parametrix}
\forall 0\le t<T,\ (x,y)\in (\R^{nd})^2,\ p(t,T,x,y) = \sum_{r=0}^{+\infty} (\tilde{p}_{\alpha} \otimes H^{(r)}) (t,T,x,y).
\end{equation}
\end{proposition}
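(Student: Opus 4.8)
\textbf{Proof strategy for Proposition \ref{serie parametrix}.}

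The plan is to run the classical McKean--Singer parametrix expansion, treating all identities as formal until the convergence estimates of Sections \ref{CTR_ITER_KER} and \ref{SECTION_TECH} justify them. First I would start from the Chapman--Kolmogorov (semigroup) property of $(P_{t,s})$ and the pointwise Kolmogorov backward equation $\partial_t \tilde p_\alpha^{T,y}(t,T,x,y) = -\tilde L_t^{T,y}\tilde p_\alpha^{T,y}(t,T,x,\cdot)(x)$ for the frozen density, which holds for $s>t$ by Proposition \ref{FROZEN_DENSITY} and the explicit Fourier representation \eqref{density}. The standard device is to fix $y$, introduce the function $u\mapsto P_{t,u}\tilde p_\alpha^{T,y}(u,T,\cdot,y)(x)$ on $(t,T)$, and differentiate it in $u$. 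Formally the derivative equals $P_{t,u}\bigl[(L_u-\tilde L_u^{T,y})\tilde p_\alpha^{T,y}(u,T,\cdot,y)\bigr](x)$, because the $L_u$-term comes from differentiating the Feller semigroup and the $\tilde L_u^{T,y}$-term from the Kolmogorov equation of the proxy, and the $A_u x$ transport parts cancel. Here one must be slightly careful that we freeze at the \emph{running} time $u$: $\tilde p_\alpha(u,T,\cdot,y) = \tilde p_\alpha^{T,y}(u,T,\cdot,y)$, so the generator of the proxy at time $u$ is $\tilde L_u^{T,y}$, which is exactly $\tilde L_u^{u,y}$ evaluated with the correct backward-transported freezing point $R_{u,T}y$; this is what makes $H(u,T,z,y)=(L_u-\tilde L_u^{T,y})\tilde p_\alpha(u,T,z,y)$ the right kernel in \eqref{noyau}.

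Next I would integrate this identity in $u$ from $t$ to $T$. The boundary term at $u=T$ is $\tilde p_\alpha(T,T,\cdot,y) = \delta_y$ acted on by $P_{t,T}$, which produces exactly $p(t,T,x,y)$ (once a density is known to exist), while the boundary term at $u=t$ is $\tilde p_\alpha(t,T,x,y)$, the zeroth-order parametrix term. This yields the integral (Volterra-type) equation
\begin{equation*}
p(t,T,x,y) = \tilde p_\alpha(t,T,x,y) + \int_t^T du \int_{\R^{nd}} dz\, p(t,u,x,z)\, H(u,T,z,y),
\end{equation*}
i.e. $p = \tilde p_\alpha + p\otimes H$ with the time-space convolution $\otimes$ as defined in the statement. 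Iterating this fixed-point relation — substituting the right-hand side into itself repeatedly — gives the Neumann series $p = \sum_{r\ge 0}\tilde p_\alpha\otimes H^{(r)}$ with $H^{(0)}=I$ and $H^{(r)}=H^{(r-1)}\otimes H$, which is \eqref{parametrix}; applying it against a bounded measurable $f$ and integrating in $y$ gives \eqref{SG_PARAM}. The identity $H^{(r)}=H^{(r-1)}\otimes H=H\otimes H^{(r-1)}$ and the associativity of $\otimes$ are routine once the integrals are absolutely convergent.

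The main obstacle — and the reason the statement is flagged as \emph{formal} — is purely a matter of justifying the manipulations: differentiation under the Feller semigroup, applying $L_u$ (an unbounded integro-differential operator) to $\tilde p_\alpha(u,T,\cdot,y)$, interchanging $\int du$ with $P_{t,u}$, and summing the series all require quantitative control of $H(u,T,z,y)$ and of the iterated convolutions $\tilde p_\alpha\otimes H^{(r)}$. This is precisely where the smoothing-in-time estimates of the parametrix kernel come in: one needs a bound of the form $|H(u,T,z,y)|\le C(T-u)^{-1+\delta}\bar q(u,T,z,y)$ for some integrable profile $\bar q$ and some $\delta>0$ (this is the step where the H\"older regularity \textbf{[H-1]}, the dependence of $\sigma$ on the fast variable, and the "semigroup" property of Corollary \ref{SG_PROP} and \eqref{SG_DIFF_SPEC} are used to absorb the convolutions), together with the fact that the Beta-type time integrals $\int_t^T (u-t)^{\cdots}(T-u)^{-1+\delta}du$ produce factors decaying fast enough in $r$ (through $\Gamma$-function asymptotics) for the series to converge locally uniformly. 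At the level of the present proposition, however, I would only record the formal derivation as above and defer these bounds, noting that they are established in Sections \ref{CTR_ITER_KER} and \ref{SECTION_TECH} and that the existence of the Feller semigroup — hence the legitimacy of the whole scheme — is supplied a posteriori by the uniqueness of the martingale problem, Theorem \ref{MART_PB_THM}.
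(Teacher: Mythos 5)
Your derivation is correct as a formal argument and rests on the same ingredients as the paper (Kolmogorov backward equation for the frozen density, the forward equation $\partial_u P_{t,u}\varphi=P_{t,u}L_u\varphi$, the kernel $H$ with the running-time freezing convention, cancellation of the common linear drift), but it is organised differently. You fix $y$, differentiate $u\mapsto P_{t,u}\bigl[\tilde p_\alpha(u,T,\cdot,y)\bigr](x)$, and integrate in $u$ to obtain the Volterra equation $p=\tilde p_\alpha+p\otimes H$, which you then solve by Neumann iteration. The paper instead works at the level of bounded test functions: it introduces the auxiliary family $\tilde P_{t,s}f(x)=\int\tilde p_\alpha(t,s,x,y)f(y)dy$ (which is \emph{not} a semigroup, since the freezing point is the integration variable), writes $(P_{t,T}-\tilde P_{t,T})f(x)=\int_t^T\partial_u\{P_{t,u}\tilde P_{u,T}f(x)\}du$, and iterates the resulting identity $P_{t,T}f=\tilde P_{t,T}f+\int_t^T P_{t,u}(\mathcal H_{u,T}f)du$ on the factor $P_{t,u}$, identifying the iterates with $\tilde p_\alpha\otimes H^{(r)}$ by induction. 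Two consequences of this difference are worth noting. First, your boundary term at $u=T$ is $P_{t,T}[\delta_y](x)=p(t,T,x,y)$ and your right-hand side involves $p(t,u,x,\cdot)$ at all intermediate times, so your scheme presupposes (even formally) the existence of the density of $X$, whereas the proposition is precisely meant to \emph{deliver} that existence once the series converges; the paper's operator-level iteration never invokes a density of $X$, only $\tilde p_\alpha$ and $H$, and the density is read off a posteriori from the absolutely convergent series, at the price of the delicate limit $\tilde P_{u,T}f\to f$ as $u\uparrow T$ (Lemma \ref{convergence_dirac}), which is nontrivial exactly because the freezing point coincides with the integration variable. Second, closing your Neumann iteration requires either showing the remainder $p\otimes H^{(r)}\to 0$ or uniqueness for the Volterra equation, an extra step the paper's expansion avoids since it directly produces the full series. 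Both routes are acceptable at the declared formal level and are justified by the same estimates of Sections \ref{CTR_ITER_KER} and \ref{SECTION_TECH}; also note that in this paper the convergence in small time comes from geometric factors $(T-t)^{k\omega}$ in Lemma \ref{LEMME_IT_KER} rather than from Gamma-function asymptotics of Beta integrals.
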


\begin{proof}
Let us first emphasize that the density $\tilde{p}_\alpha^{T,y}(t,s,x,z)$ of $\tilde X_{s}^{t,x,T,y} $ at point $z$ solves the Kolmogorov \textit{backward} equation:
\begin{equation}\label{KFE}
\frac{\partial \tilde{p}_\alpha^{T,y}}{\partial t}(t,s,x,z) = -\tilde{L}^{T,y}_t\tilde{p}_\alpha^{T,y}(t,s,x,z), \mbox{ for all $t<s$, $(x,z) \in \R^{nd}\times \R^{nd}$, $\lim_{t\uparrow s}\tilde{p}_\alpha^{T,y}(t,s,\cdot,z) = \delta_{z}(\cdot)$}.
\end{equation}
Here, $\tilde{L}^{T,y}_t $ acts on the variable $x$. Let us now introduce the family of operators $(\tilde P_{t,s})_{0\le t\le s} $. For $0\le t\le s $ and any bounded measurable function $f:\R^{nd}\rightarrow \R$:
\begin{eqnarray}
\label{SG_TILDE}
\tilde P_{t,s}f(x):=\int_{\R^{nd}}\tilde p_\alpha(t,T,x,y)f(y)dy:=\int_{\R^{nd}}\tilde p_\alpha^{T,y}(t,T,x,y)f(y)dy.
\end{eqnarray}
Observe that the family $(\tilde P_{t,s})_{0\le t\le s}  $ is not a two-parameter semigroup. Anyhow, we can still establish, see Lemma \ref{convergence_dirac}, that for a continuous $f$:
\begin{equation}
\label{LIM_SG_TILDE}
\lim_{s\rightarrow t}\tilde P_{s,t}f(x)=f(x).
\end{equation}
This convergence is not a direct consequence of the bounded convergence theorem since the freezing parameter is also the integration variable.

The boundary condition \eqref{LIM_SG_TILDE} and the Feller property yield:
$$
(P_{t,T}-\tilde P_{t,T})f(x) = \int_t^T du \frac{\partial}{\partial u}\biggl\{P_{t,u}(\tilde P_{u,T}f(x))\biggr\}.
$$
Computing the derivative under the integral leads to:
$$
(P_{t,T}-\tilde P_{t,T})f(x) = \int_t^T du \biggl\{\partial_u P_{t,u}(\tilde P_{u,T}f(x))+P_{t,u}(\partial_u(\tilde P_{u,T}f(x)))\biggr\}.
$$
Using the Kolmogorov equation \eqref{KFE} and the Chapman-Kolmogorov relation $\partial_{u}P_{t,u}\varphi(x) =P_{t,u}(L_u \varphi(x)),\ \forall \varphi\in C_b^{2}(\R^{nd},\R) $  we get:
\begin{eqnarray*}
(P_{t,T}-\tilde P_{t,T})f(x) = \int_t^T du  P_{t,u}\Big(L_u \tilde{P}_{u,T}f\Big)(x) - P_{t,u}\left(\int_{\R^{nd}}f(y) \tilde L_u^{T,y}\tilde p_\alpha(u,T,\cdot,y) dy \right)(x).
\end{eqnarray*}
Define now the operator:
\begin{equation}
\label{H_CUR}
\mathcal{H}_{u,T}\varphi(z) := \int_{\R^{nd}}\varphi(y)(L_u - \tilde L_u^{T,y})\tilde p_\alpha(u,T,z,y)dy=\int_{\R^{nd}}\varphi(y) H(u,T,z,y)dy.
\end{equation}
We can thus rewrite:
$$
P_{t,T}f(x) = \tilde P_{t,T}f(x) + \int_t^T P_{t,u}\big(\mathcal{H}_{u,T}(f) \big)(x)du.
$$
The idea is now to reproduce this procedure for $P_{t,u} $ applied to $\mathcal{H}_{u,T}(f) $.
This recursively yields the formal representation:  
$$
P_{t,T}f(x) = \tilde P_{t,T}f(x)+ \sum_{r \ge 1} \int_t^T du_1 \int_t^{u_1} du_2 \dots \int_t^{u_{r-1}} du_r \tilde{P}_{t,u_r}
\big(\mathcal{H}_{u_r,u_{r-1}}\circ \dots \circ \mathcal{H}_{u_1,T}\big)(f)(x).
$$
Equation \eqref{SG_PARAM} then formally follows from the following identification. For all $r\in \N^*$:
 $$
 \int_t^T du_1 \int_t^{u_1} du_2 \dots \int_t^{u_{r-1}} du_r \tilde{P}_{t,u_r}
\big(\mathcal{H}_{u_r,u_{r-1}}\circ \dots \circ \mathcal{H}_{u_1,T}\big)(f)(x)du = \int_{\R^{nd}}f(y) \tilde p_\alpha \otimes H^{(r)}(t,T,x,y)dy.
 $$
 We can proceed by immediate induction:
 \begin{eqnarray}
 &&\int_t^T du_1 \int_t^{u_1} du_2 \dots \int_t^{u_{r-1}} du_r \tilde{P}_{t,u_r}
\big(\mathcal{H}_{u_r,u_{r-1}}\circ \dots \circ \mathcal{H}_{u_1,T}\big)(f)(x)du \nonumber\\
&=& 
\int_t^T du_1 \int_t^{u_1} du_2 \dots \int_t^{u_{r-1}} du_r \int_{\R^{nd}}dz\mathcal{H}_{u_r,u_{r-1}}\circ \dots \circ \mathcal{H}_{u_1,T}(f)(z) \tilde p_\alpha(t,u_{r},x,z) \label{POUR_BOUCLER}\\
&\overset{\eqref{H_CUR}}{=}&\int_t^T du_1 \int_t^{u_1} du_2 \dots \int_t^{u_{r-1}} du_r \int_{\R^{nd}}dz\int_{\R^{nd}}dy \mathcal{H}_{u_{r-1},u_{r-2}}\circ \dots \circ \mathcal{H}_{u_1,T}(f)(y) H(u_r,u_{r-1},z,y)\tilde p_\alpha(t,u_{r},x,z) \nonumber \\
&=&\int_t^T du_1 \int_t^{u_1} du_2 \dots \int_t^{u_{r-2}} du_{r-1}\int_{\R^{nd}}dy \mathcal{H}_{u_{r-1},u_{r-2}}\circ \dots \circ \mathcal{H}_{u_1,T}(f)(y) \tilde p_\alpha \otimes H (t,u_{r-1},x,y) .\nonumber
 \end{eqnarray}
 
Thus, we can iterate the procedure from \eqref{POUR_BOUCLER} with $\tilde p_\alpha \otimes H$ instead of $\tilde p_\alpha$.

\end{proof}
Observe that in order to make the identification above, we have exchanged various integrals.
Hence, so far the representation \eqref{parametrix} is  \textit{formal}.
It will become rigorous provided that we manage to show the convergence of the series and get integrable bounds on its sum.
To achieve these points, one needs to give precise bounds on the iterated time-space convolutions appearing in the series. Such controls are stated in Section \ref{CTR_ITER_KER} and proved in Section \ref{SECTION_TECH} below.

\subsection{Controls on the iterated kernels.}
\label{CTR_ITER_KER}

\indent

From now on, we assume w.l.o.g. that $0<T\le T_0:=T_0(\H)\le 1$. The choice of $T_0$ depends on the constants appearing in $\H$ and will be clear from the proof of Lemma \ref{Form of the resolvent}.
Theorems \ref{MART_PB_THM} and \ref{MTHM} can anyhow be obtained for an arbitrary fixed finite $T>0$, from the results for $T$ sufficiently small. Indeed, the uniqueness of the martingale problem simply follows from the Markov property whereas the upper density estimate stems from the \textit{semigroup} property  of $\bar p_\alpha $ (see Corollary \ref{SG_PROP} and Lemma \ref{LEMME_CONV_LOG} for the convolutions involving the logarithmic correction). From now on, we consider that the threshold $K>0$ appearing in Proposition \ref{EST_DENS_GEL} is fixed.

We first give pointwise results on the convolution kernel, that hold in any dimension $d$, and for any number of oscillators $n$.
\begin{lemma}[\textbf{Control of the kernel}]
\label{CTR_KER_PTW}
Fix $K, \delta>0 $.  There exists $C_{\ref{CTR_KER_PTW}}:=C_{\ref{CTR_KER_PTW}}(\H,K,\delta) >0
$  s.t. for all $T\in (0,T_0]$ and $(t,x,y)\in [0,T)\times (\R^{nd})^2 $: 
\begin{equation}\label{majoration H}
|H(t,T,x,y)| \leq C_{\ref{CTR_KER_PTW}}\frac{\delta \wedge |x-R_{t,T}y|^{\eta(\alpha\wedge 1)}}{T-t} \left\{\bar p_\alpha(t,T,x,y)+\breve{p}_\alpha(t,T,x,y) \right\},
\end{equation}
where $\bar p_\alpha $ is as in \eqref{BAR_P} in the stable case \textbf{[HS]} and as in \eqref{BAR_PT} in the tempered case \textbf{[HT]}. Also,
\begin{eqnarray*}
\breve{p}_\alpha(t,T,x,y)=\frac{\I_{|(x-R_{t,T}y)^1|/(T-t)^{1/\alpha}| \asymp |(\T_{T-t}^{\alpha})^{-1}(x-R_{t,T}y)|\ge K}}{(T-t)^{d/\alpha}(1+\frac{|(x-R_{t,T}y)^1|}{(T-t)^{1/\alpha}})^{d+\alpha}}\\
\times \frac{1}{(T-t)^{\frac{(n-1)d}\alpha+\frac{n(n-1)d}2}(1+|((\T_{T-t}^{\alpha})^{-1}(x-R_{t,T}y)^{2:n}|)^{1+\alpha}} \theta( |\M_{T-t}^{-1}(x-R_{t,T}y)|
),
\end{eqnarray*}
recalling that under \textbf{[HS]}, $g(r)=1, r>0$.
\end{lemma}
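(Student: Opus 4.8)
The plan is to estimate $H(t,T,x,y) = (L_t - \tilde L_t^{T,y})\tilde p_\alpha(t,T,x,y)$ directly from the integral representation of the generators in \eqref{frozen generator}. Writing $\sigma_0 := \sigma(t,x)$ and $\tilde\sigma_0 := \sigma(t,R_{t,T}y)$, the first-order transport terms $\langle \nabla\tilde p_\alpha, A_t x\rangle$ cancel, so
$$
H(t,T,x,y) = \int_{\R^d}\Bigl[\Delta_{B\sigma_0 z}\tilde p_\alpha(t,T,x,y) - \Delta_{B\tilde\sigma_0 z}\tilde p_\alpha(t,T,x,y)\Bigr] g(|z|)\nu(dz),
$$
where $\Delta_w \phi(x) := \phi(x+w) - \phi(x) - \langle \nabla\phi(x),w\rangle/(1+|w|^2)$. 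The first step is to introduce the interpolation $\sigma_\lambda := \tilde\sigma_0 + \lambda(\sigma_0 - \tilde\sigma_0)$, $\lambda\in[0,1]$, and write the difference as $\int_0^1 \frac{d}{d\lambda}\,\Delta_{B\sigma_\lambda z}\tilde p_\alpha\, d\lambda$, which produces a factor $\sigma_0 - \tilde\sigma_0$; by \textbf{[H-1]} this is bounded by $H|x - R_{t,T}y|^\eta$ (the H\"older exponent in the claim, $\eta(\alpha\wedge 1)$, will come from combining this with the $\alpha\wedge 1$ regularity of the stable density in a way explained below). Since the coefficients are also globally bounded by \textbf{[H-2]}, one gets the $\delta\wedge(\cdot)$ truncation for free by taking the minimum of the ``H\"older'' estimate and a crude ``bounded'' estimate, which is why two separate regimes $|x-R_{t,T}y|$ small vs.\ large must be treated.

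The second and main step is to bound the $\lambda$-derivative, namely terms of the shape $\int_{\R^d}\langle \nabla_x \tilde p_\alpha(t,T,x + B\sigma_\lambda z,y) - \nabla_x\tilde p_\alpha(t,T,x,y) - (\text{correction}), B(\sigma_0-\tilde\sigma_0)z\rangle g(|z|)\nu(dz)$. Here the plan is to split the $z$-integral at the intrinsic scale: for $|z| \le c(T-t)^{1/\alpha}$ (small jumps) use a second-order Taylor expansion of $\tilde p_\alpha$ in the direction $B\sigma_\lambda z$, picking up two spatial derivatives; for $|z| > c(T-t)^{1/\alpha}$ (large jumps) use only the zeroth/first-order increments and the integrability of $\nu$ away from the origin. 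The spatial derivative estimates on $\tilde p_\alpha$ must come from differentiating the Fourier representation \eqref{density} in Proposition \ref{FROZEN_DENSITY}: each derivative in the first (non-degenerate) coordinate costs $(T-t)^{-1/\alpha}$ and each derivative in the $i$-th coordinate costs $(T-t)^{-(i-1)-1/\alpha}$, consistent with the scaling matrix $\T_{T-t}^\alpha$; crucially, $B\sigma_\lambda z$ only hits the \emph{first} $d$ coordinates, so only the cheap $(T-t)^{-1/\alpha}$ derivatives appear, and the density bound itself is $\bar p_\alpha$ of Proposition \ref{EST_DENS_GEL} (or \ref{EST_DENS_GEL_T} in the tempered case). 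Since $\alpha\in(0,2)$, the relevant homogeneity means one genuinely has only $(\alpha\wedge 1) + (\alpha\wedge 1) = $ ``$\alpha$ worth'' of smoothing to spend against $\int_{|z|\le c(T-t)^{1/\alpha}} |z|^2 \nu(dz) \asymp (T-t)^{(2-\alpha)/\alpha}$ for $\alpha<2$ (resp.\ $\int |z|\,\nu(dz)$-type terms for $\alpha \le 1$), and summing the small- and large-jump contributions gives the overall $(T-t)^{-1}$ prefactor in \eqref{majoration H}. The large-jump part is exactly where $\breve p_\alpha$ is born: when $|z|$ is large the argument $x + B\sigma_\lambda z$ is pushed away from its diagonal regime \emph{only in the first $d$ coordinates}, so the density there factorizes into a ``large deviation'' first block $(T-t)^{-d/\alpha}(1 + |(x-R_{t,T}y)^1|/(T-t)^{1/\alpha})^{-(d+\alpha)}$ (inherited from the $d$-dimensional jump measure's tail, hence the $d+\alpha$ exponent, not $d+1+\alpha$) times a ``typical regime'' block for the remaining $(n-1)d$ degenerate coordinates, whose normalization produces the $(T-t)^{-(n-1)d/\alpha - n(n-1)d/2}$ power — this is the quantitative form of the rediagonalization phenomenon described in the introduction.

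The third step is bookkeeping: collect the tempering factor $\theta$, which in the tempered case \textbf{[HT]} is carried along through the frozen-density bound of Proposition \ref{EST_DENS_GEL_T} and, via the doubling and monotonicity in \textbf{[T]}, survives the shift by $B\sigma_\lambda z$ up to a constant (using $\sup_{u\in[\kappa^{-1},\kappa]}g'(ur)$ in \textbf{[T]} to control the $\lambda$-derivative hitting $g$ itself); in the stable case $g=1$, $\theta\equiv 1$ and this factor is vacuous. I would also explicitly record the equivalence $|(x-R_{t,T}y)^1|/(T-t)^{1/\alpha} \asymp |(\T_{T-t}^\alpha)^{-1}(x-R_{t,T}y)|$ valid precisely on the event appearing in the indicator of $\breve p_\alpha$ (large first block dominates), using the structure of $\T_{T-t}^\alpha$ and elementary properties of the resolvent from Lemma \ref{Form of the resolvent}. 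The main obstacle is the second step: correctly separating, in the large-jump regime, which coordinates are moved and producing the sharp decay $d+\alpha$ in the moved block together with the exact degenerate-scaling power — any slack here propagates fatally into the convergence of the parametrix series later. A secondary technical nuisance is justifying the differentiation under the Fourier integral and the $\lambda$-interpolation simultaneously with the small-$|z|$ Taylor expansion when $\alpha\ge 1$, where the principal-value-type correction term $\langle\nabla\tilde p_\alpha, B\sigma_\lambda z\rangle/(1+|B\sigma_\lambda z|^2)$ does not integrate absolutely near $z=0$ and must be handled together with the symmetry of $\nu$.
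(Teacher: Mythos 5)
Your global architecture is recognisably the same as the paper's: the drift terms cancel, the jump integral is split at the intrinsic scale $(T-t)^{1/\alpha}$, the H\"older continuity of $\sigma$ supplies the factor $\delta\wedge|x-R_{t,T}y|^{\eta(\alpha\wedge 1)}$, and you correctly identify the large-jump/rediagonalization region as the source of $\breve p_\alpha$, with the $d+\alpha$ tail coming from the $d$-dimensional jump measure and the remaining $(n-1)d$ coordinates staying in the typical regime (in the paper this is the integration over the ball $B((R_{t,T}y-x)^1,\varepsilon|(x-R_{t,T}y)^1|)$ in \eqref{REDIAG_INT}; the ``factorization'' is a property of that $z$-integral, i.e.\ of a marginal of $\tilde p_\alpha$, not of the density at a single shifted point). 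The genuine gap is in your central device, the linear $\lambda$-interpolation in $\sigma$. It forces the increment $\Delta\sigma:=|\sigma(t,x)-\sigma(t,R_{t,T}y)|$ to enter at the first power, always paired with $\nabla_x\tilde p_\alpha(\cdot+B\sigma_\lambda z,y)$ times a factor $z$. For $\alpha\le 1$ this cannot yield \eqref{majoration H}: in the stable case $\int_{|z|>c(T-t)^{1/\alpha}}|z|\,\nu(dz)=+\infty$, so the companion term carrying $\nabla_x\tilde p_\alpha(x,y)$ survives only as a principal value, and the shifted-gradient term must be saved by spatial decay of $\nabla_x\tilde p_\alpha$; but even granting an off-diagonal gradient bound homogeneous to $(T-t)^{-1/\alpha}\bar p_\alpha$, the extra factor $|z|$ against the $|z|^{-(d+\alpha)}$ tail produces an excess of order $|(x-R_{t,T}y)^1|^{1-\alpha}$ (equivalently a $(T-t)^{-1/\alpha}$ instead of $(T-t)^{-1}$ singularity when $\Delta\sigma$ is of order one), which is not dominated by the right-hand side of \eqref{majoration H} and would ruin Lemma \ref{lemme} and the whole parametrix iteration. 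The exponent $\eta(\alpha\wedge1)$ is not cosmetic: the statement requires the increment of $\sigma$ to enter at the power $\alpha\wedge1$, which a linearization cannot produce.

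The paper achieves this with two devices absent from your plan. In the diagonal regime it works on the Fourier side (Lemma \ref{expression des symboles}) and Taylor-expands the difference of cosines with a radial cut-off at the $\Delta\sigma$-adapted threshold $1/\{|p^1|\Delta\sigma\}$, so the increment appears as $|p^1|^\alpha(\Delta\sigma)^\alpha$ for $\alpha<1$ and the $p$-integration gives exactly $(T-t)^{-1}\det(\T_{T-t}^\alpha)^{-1}$. In the off-diagonal regime it rewrites the difference of generators through the image measures $\nu(x,dz)-\nu(R_{t,T}y,dz)$, whose density is bounded by $C(\delta\wedge|x-R_{t,T}y|^{\eta(\alpha\wedge1)})\,\theta(|z|)|z|^{-(d+\alpha)}$ (no gradient, no extra factor $|z|$, so the tail integral gives precisely $(T-t)^{-1}$), and then controls the action on $\tilde p_\alpha$ through the Watanabe--Sztonyk decomposition $p_S=p_M\ast P_N$ of the frozen density (Lemmas \ref{EST_DENS_MART} and \ref{EXP_POISSON}), in which derivatives only ever hit the small-jump part $p_M$. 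This matters also for your $\alpha>1$ case: differentiating the Fourier representation \eqref{density} only yields diagonal-type bounds $(T-t)^{-|\beta|/\alpha}\det(\T_{T-t}^\alpha)^{-1}$ with no spatial decay, whereas your large-jump estimate needs derivative bounds homogeneous to $\bar p_\alpha+\breve p_\alpha$; such bounds are neither stated in the paper nor supplied by your argument, and obtaining them is essentially the same difficulty the $p_M\ast P_N$ decomposition is designed to bypass. So either adopt the measure-difference formulation (and the adaptive cut-off for the diagonal part), or add and prove the missing off-diagonal derivative estimates for $\tilde p_\alpha$; as written, the plan fails for $\alpha\le 1$ and is incomplete for $\alpha>1$.
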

The contribution in $\breve{p}_\alpha  $ comes from the \textit{rediagonalization} phenomenon which is specific to the degenerate, non-local case and only appears when the rescaled first (slow) component is equivalent to the energy $|(\T_{T-t}^{\alpha})^{-1}(x-R_{t,T}y)| $.
Observe that if $|(\T_{T-t}^{\alpha})^{-1}(x-R_{t,T}y)|\le K $, diagonal regime, both contributions $\bar p $ and $\breve{p} $ can be upper-bounded by $(T-t)^{-nd/\alpha+n(n-1)d/2} $.  In the off-diagonal case, we also have that if there exists $i\in \leftB 2,n\rightB$ s.t.
 $|((\T_{T-t}^{\alpha})^{-1}(x-R_{t,T}y))^{i}|\asymp |((\T_{T-t}^\alpha)^{-1}(x-R_{t,T}y))^{1}| $ then $\breve{p}_\alpha(t,T,x,y)\le \bar p_\alpha(t,T,x,y) $.

Once integrated in space, under the dimension constraints of Theorem \ref{MART_PB_THM}, this pointwise estimate yields the following \textit{smoothing} property in time.
\begin{lemma}\label{lemme}
Assume that $d(1-n)+1+\alpha >0 $. Then, there exists $C_{\ref{lemme}}:=C_{\ref{lemme}}(\H,K)$ and $\omega:=\omega(d,n,\alpha)>0 $ s.t. for all $T\in (0,T_0], (x,y)\in (\R^{nd})^2$, $\tau \in [t,T)$, we have the estimate
\begin{eqnarray}
\int_{\R^{nd}} \delta \wedge |z-R_{\tau,T}y|^{\eta(\alpha \wedge 1)}(\bar{p}_\alpha+\breve{p}_\alpha)(\tau,T,z,y) dz &\le & C_{\ref{lemme}} (T-\tau)^{\omega},\\ \label{lemme backward}
\int_{\R^{nd}} \delta \wedge |z-R_{\tau,t}x|^{\eta(\alpha \wedge 1)}
\bar{p}_\alpha
(t,\tau,x,z) dz &\le & C_{\ref{lemme}} (\tau-t)^{\omega}. \label{lemme forward}
\end{eqnarray}
Also, when $d=1,n=2$ one has the following better smoothing property for the \textit{fast} variable:
\begin{eqnarray}
\int_{\R^{nd}} \delta \wedge |(z-R_{\tau,T}y)^2|^{\eta(\alpha \wedge 1)}(\bar{p}_\alpha+\breve{p}_\alpha)(\tau,T,z,y) dz &\le & C_{\ref{lemme}} (T-\tau)^{\tilde \omega},\\ \label{lemme backward spec}
\int_{\R^{nd}} \delta \wedge \{(\tau-t)|(z-R_{\tau,t}x)^1|+ |(z-R_{\tau,t}x)^2|\}^{\eta(\alpha \wedge 1)}
\bar{p}_\alpha
(t,\tau,x,z) dz &\le & C_{\ref{lemme}} (\tau-t)^{\tilde \omega}, \label{lemme forward spec}
\end{eqnarray}
with $\tilde \omega=(1+1/\alpha)\eta(\alpha \wedge 1) $.
\end{lemma}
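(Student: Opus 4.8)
The plan is to reduce everything to explicit scalings of the density bounds from Propositions \ref{EST_DENS_GEL} and \ref{EST_DENS_GEL_T}, plus the off-diagonal decay, and then integrate. First I would perform the anisotropic change of variables $w=(\T_{T-\tau}^\alpha)^{-1}(z-R_{\tau,T}y)$, whose Jacobian is $\det(\T_{T-\tau}^\alpha)$, so that the $\det(\T_{T-\tau}^\alpha)^{-1}$ prefactor in $\bar p_\alpha$ disappears and one is left with $\int_{\R^{nd}} \bigl(\delta\wedge |z-R_{\tau,T}y|^{\eta(\alpha\wedge 1)}\bigr)\{K\vee |w|\}^{-(d+1+\alpha)}\,dw$ (times the tempering factor $\theta$ in the \textbf{[HT]} case). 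The key observation is that $|z-R_{\tau,T}y|$ is controlled componentwise: writing the $i$-th block, $|(z-R_{\tau,T}y)^i| = (T-\tau)^{(i-1)+1/\alpha}|w^i| \le (T-\tau)^{1/\alpha}|w|$ for all $i$ since $T-\tau\le T_0\le 1$. Hence on the region $|w|\le R$ (any fixed $R$) we have $|z-R_{\tau,T}y|\lesssim (T-\tau)^{1/\alpha}|w|$, giving a factor $(T-\tau)^{\eta(\alpha\wedge 1)/\alpha}$ out front; on the complementary region $|w|\ge R$ the minimum with $\delta$ caps the first factor by $\delta$, and one just needs the tail $\int_{|w|\ge R}\{K\vee|w|\}^{-(d+1+\alpha)}\,dw<\infty$, which is exactly where the dimension constraint $d(1-n)+1+\alpha>0$, i.e. $nd < d+1+\alpha$, enters: it guarantees that $d+1+\alpha$ exceeds the ambient dimension $nd$ so the tail is integrable. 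Combining the two regions yields the bound $C(T-\tau)^{\omega}$ with, say, $\omega=\eta(\alpha\wedge 1)/\alpha$ (any positive exponent suffices for the parametrix; one can optimize $R$ in terms of $T-\tau$ to sharpen it, but that is not needed). The term $\breve p_\alpha$ is handled the same way: after the same rescaling its spatial integral is dominated by $\int_{|w^1|\ge K}(1+|w^1|)^{-(d+\alpha)}dw^1 \int_{\R^{(n-1)d}}(1+|w^{2:n}|)^{-(1+\alpha)}dw^{2:n}$, finite since $d+\alpha>d$ and $1+\alpha>(n-1)d$ again by the dimension constraint, and the prefactor powers of $(T-\tau)$ recombine with the Jacobian to leave only a nonnegative power; the extra factor $\delta\wedge|z-R_{\tau,T}y|^{\eta(\alpha\wedge1)}$ again produces the gain $(T-\tau)^{\eta(\alpha\wedge1)/\alpha}$ on the bounded-$w$ part. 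The forward estimate \eqref{lemme forward} is identical with $R_{\tau,t}x$ in place of $R_{\tau,T}y$ and $\tau-t$ in place of $T-\tau$, using only the upper bound on $\bar p_\alpha$.

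For the improved estimates when $d=1,n=2$, the point is that the relevant test weight only involves the fast component, for which the time scale is $(T-\tau)^{1+1/\alpha}$ rather than $(T-\tau)^{1/\alpha}$. Concretely, in \eqref{lemme backward spec} one has $|(z-R_{\tau,T}y)^2| = (T-\tau)^{1+1/\alpha}|w^2| \le (T-\tau)^{1+1/\alpha}|w|$, so repeating the above argument produces the larger exponent $\tilde\omega=(1+1/\alpha)\eta(\alpha\wedge1)$ on the bounded-$w$ region, while the tail contribution is unchanged and, being bounded by a nonnegative power of $T-\tau$ as well, does not spoil the estimate. For \eqref{lemme forward spec} the weight is $\{(\tau-t)|(z-R_{\tau,t}x)^1| + |(z-R_{\tau,t}x)^2|\}^{\eta(\alpha\wedge1)}$; here $(\tau-t)|(z-R_{\tau,t}x)^1| = (\tau-t)\cdot(\tau-t)^{1/\alpha}|w^1| = (\tau-t)^{1+1/\alpha}|w^1|$ and $|(z-R_{\tau,t}x)^2| = (\tau-t)^{1+1/\alpha}|w^2|$, so the bracket equals $(\tau-t)^{1+1/\alpha}(|w^1|+|w^2|) \asymp (\tau-t)^{1+1/\alpha}|w|$, and one again extracts the factor $(\tau-t)^{(1+1/\alpha)\eta(\alpha\wedge1)}$ on the bounded region. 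This explains why the diffusion coefficient is frozen along $R_{u,T}y$ and why, in Theorem \ref{MTHM}, $\sigma$ is required to depend on the fast variable: the H\"older increment of $\sigma$ is then measured in $x^2$, which carries the favorable scaling $\tilde\omega>\omega$, and it is precisely this extra gain that compensates the time singularity produced by the rediagonalization in the subsequent convolution estimates.

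The main obstacle is bookkeeping rather than a genuine difficulty: one must track all the powers of $(T-\tau)$ coming from (i) the Jacobian $\det(\T_{T-\tau}^\alpha)=(T-\tau)^{\sum_{i=1}^n((i-1)+1/\alpha)}=(T-\tau)^{nd/\alpha + n(n-1)d/2}$ (with $d=1$ or in the general block form), (ii) the explicit negative powers already present in $\breve p_\alpha$, and (iii) the gain $(T-\tau)^{\eta(\alpha\wedge1)/\alpha}$ (or $\tilde\omega$) from the test weight, and verify that the net exponent $\omega$ (resp. $\tilde\omega$) is strictly positive — which is immediate since the first two cancel by construction (this is exactly the normalization in $\bar p_\alpha$ and the design of $\breve p_\alpha$) and only the third survives. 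A secondary point requiring a little care in the tempered case is that the factor $\theta(|\M_{T-\tau}^{-1}(z-R_{\tau,T}y)|)$ is bounded (by \textbf{[T]}, $\theta$ is bounded), so it may simply be discarded for the upper bound; it is not needed to make anything converge here, the dimension constraint alone suffices. Finally, one should note the equivalences of the form $|(x-R_{t,T}y)^1|/(T-t)^{1/\alpha}\asymp |(\T_{T-t}^\alpha)^{-1}(x-R_{t,T}y)|$ that appear in the indicator defining $\breve p_\alpha$ are automatically respected by the change of variables, so the integration region in $w$ is simply $\{|w^1|\asymp|w|\ge K\}$, which is contained in $\{|w^1|\ge K/2\}$ up to constants, justifying the product form of the tail integral above.
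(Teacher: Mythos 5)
Your overall reduction is the same as the paper's: split into the diagonal regime (where the density is bounded by $\det(\T_{T-\tau}^\alpha)^{-1}$ and the weight gives $(T-\tau)^{\eta(\frac1\alpha\wedge1)}$, the two power identities $(T-\tau)^{\eta(\alpha\wedge1)/\alpha}=(T-\tau)^{\eta(\frac1\alpha\wedge1)}$ agreeing) and an off-diagonal regime handled after the anisotropic rescaling, with the dimension constraint $d(1-n)+1+\alpha>0$ ensuring integrability of the worst tail. But there is a genuine gap in your treatment of the off-diagonal (tail) region. With a \emph{fixed} radius $R$, capping $\delta\wedge|z-R_{\tau,T}y|^{\eta(\alpha\wedge1)}$ by $\delta$ on $\{|w|\ge R\}$ gives a contribution $C\delta\int_{|w|\ge R}\{K\vee|w|\}^{-(d+1+\alpha)}dw$, which is a constant independent of $T-\tau$; the lemma requires the \emph{whole} integral to be $\le C(T-\tau)^{\omega}$ with $\omega>0$, so a constant tail term destroys the estimate (your remark that the tail is ``bounded by a nonnegative power of $T-\tau$'' and hence harmless is exactly the misstep: power zero is not enough). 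If instead you keep the bound $(T-\tau)^{\eta(\frac1\alpha\wedge1)}|w|^{\eta(\alpha\wedge1)}$ on the whole tail, the radial integral $\int^{\infty}\xi^{\eta(\alpha\wedge1)+nd-1-(d+1+\alpha)}d\xi$ diverges precisely when $\eta(\alpha\wedge1)\ge d(1-n)+1+\alpha$ (e.g. $d=1$, $n=2$, $\alpha\le1$, $\eta=1$), so neither variant of your argument closes in the critical cases.

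The missing ingredient is exactly what you dismiss as unnecessary: a $T-\tau$-dependent cutoff. The paper splits the off-diagonal region at $\xi\sim K(T-\tau)^{-1/\alpha}$ (sets $D_{2,1}$, $D_{2,2}$) and distinguishes the cases $\beta:=(1-n)d+2+\alpha-\eta(\alpha\wedge1)>1$, $=1$, $<1$: below the cutoff one keeps the weight and pays $\xi^{\eta(\alpha\wedge1)}$, above it one caps by $\delta$ and the tail mass itself is a positive power of $T-\tau$ thanks to the dimension constraint, yielding exponents such as $((1-n)d+1+\alpha)/\alpha$ (with a logarithm at $\beta=1$), generally smaller than your claimed $\eta(\frac1\alpha\wedge1)$. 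The same remark applies to your handling of $\breve p_\alpha$ and to the improved estimates: for \eqref{lemme backward spec}--\eqref{lemme forward spec} with $d=1$, $n=2$ the correct fix is to keep the weight on the tail (the integral then converges since $\eta(\alpha\wedge1)<1+\alpha$), not to cap by $\delta$ over a fixed tail. Your scaling computation giving $\tilde\omega=(1+\frac1\alpha)\eta(\alpha\wedge1)$ on the bounded region is correct, and the rest of the bookkeeping (Jacobian cancellation, tensorization of $\breve p_\alpha$ using $|w^1|\asymp|w|$) matches the paper, but as written the proof does not establish the stated time-smoothing.
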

The proof of these results will be given in Section \ref{Estimates on the kernel} and Appendix \ref{controles_phi}.
\begin{remark}
We can now justify from this Lemma our previous choice for the \textit{proxy} model. Indeed, the contributions $|z-R_{\tau,T}y|^{\eta(\alpha \wedge 1)} $, $|z-R_{\tau,t}x|^{\eta(\alpha \wedge 1)} $ come from the difference of the generators and turn out to be compatible, up to using the Lipschitz property of the flow, with the bounds appearing in Proposition \ref{EST_DENS_GEL} for the frozen density. This is what gives this smoothing property and thus allows to get rid of  the diagonal singularities coming from the bound \eqref{majoration H}.    
\end{remark}
\begin{remark}
The l.h.s. of equations \eqref{lemme backward spec}, \eqref{lemme forward spec} naturally appear in the case $\sigma(t,x)=\sigma(t,x^2) $ which is the one considered for the density estimates in Theorem \ref{MTHM}. Intuitively, the higher smoothing effect in this case permits to compensate the difficulties arising from the rediagonalization in the degenerate case.
\end{remark}

When dealing with convolutions of the kernel and the frozen density we restrict to the case $d=1, n=2$ for which we have the \textit{semigroup} property, which is important to handle the off-diagonal regimes. 
In this framework, the technical computations in Section \ref{SECTION_TECH}, based on the previous controls on the kernel $H$, yield the following bound for the first  step of the parametrix procedure.
\begin{lemma}\label{CTRL_PRELIM_NON_FONCTIONNEL} 
There exist $C_{\ref{CTRL_PRELIM_NON_FONCTIONNEL}}:=C_{\ref{CTRL_PRELIM_NON_FONCTIONNEL}}(\H,K),\ \omega:=\omega(\H)\in (0,1]$ s.t.
for all $T\in (0,T_0]$ and $(t,x,y)\in [0,T)\times (\R^{nd})^2 $:
\begin{eqnarray*}
|\tilde p_\alpha\otimes H(t,T,x,y)| \le C_{\ref{CTRL_PRELIM_NON_FONCTIONNEL}}\Big (\bar p_{\alpha 
} (t,T,x,y)
\big ((T-t)^\omega+\\ 
 \delta \wedge |x-R_{t,T}y|^{\eta(\alpha \wedge 1)}
\{1+\log(K \vee |(\T_{T-t}^\alpha)^{-1}(y-R_{T,t}x)|)\} \big)+
\delta \wedge |x-R_{t,T}y|^{\eta(\alpha \wedge 1)}\check{p}_\alpha(t,T,x,y)\}\Big)
 ,
\end{eqnarray*}
where 
\begin{eqnarray}
\check p(t,T,x,y):=
\inf_{\tau\in [t,T]}\frac{\I_{\frac{(R_{\tau,t}x-R_{\tau,T}y)^1}{(T-t)^{1/\alpha}}\asymp |(\T_{T-t}^\alpha)^{-1}(R_{\tau,t}x-R_{\tau,T}y)|}}{(T-t)^{1/\alpha}(1+|(\T_{T-t}^\alpha)^{-1}(R_{\tau,t}x-R_{\tau,T}y)|)^{1+\alpha}}\theta(|\M_{T-t}^{-1}(R_{\tau,t}x-R_{\tau,T}y)|)\nonumber\\
\times
\frac{1}{(T-t)^{1+1/\alpha}(1+\frac{|(R_{\tau,t}x-R_{\tau,T}y)^2|}{(T-t)^{1+\frac 1\alpha}})^{1+\alpha}}.
\label{CHECK_P}
\end{eqnarray}
\end{lemma}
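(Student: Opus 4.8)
The plan is to estimate $\tilde p_\alpha \otimes H(t,T,x,y) = \int_t^T du \int_{\R^{nd}} dz\, \tilde p_\alpha(t,u,x,z) H(u,T,z,y)$ by inserting the pointwise bound of Lemma \ref{CTR_KER_PTW} on $H(u,T,z,y)$ and the density bound of Proposition \ref{EST_DENS_GEL_T} (here $d=1,n=2$, under \textbf{[HT]}) on $\tilde p_\alpha(t,u,x,z)$, then carrying out the $z$-integration for fixed $u$ and finally the $u$-integration. Since the kernel bound contains the factor $\delta\wedge|z-R_{u,T}y|^{\eta(\alpha\wedge 1)}$, which one wants to relate to quantities centered at the starting point $x$, I would first use the Lipschitz property of the resolvent flow to write $|z-R_{u,T}y|\le |z-R_{u,t}x|+|R_{u,t}x-R_{u,T}y|$ and split the $\delta\wedge(\cdot)$ into a part handled by the smoothing Lemma \ref{lemme} (the $|z-R_{u,t}x|^{\eta(\alpha\wedge 1)}$ piece, which against $\tilde p_\alpha(t,u,x,z)$ gains a positive power $(u-t)^{\tilde\omega}$) and a deterministic part $\delta\wedge|R_{u,t}x-R_{u,T}y|^{\eta(\alpha\wedge 1)}$ that factors out of the $z$-integral.

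The core of the $z$-integration is then the convolution of two of the multi-scale stable-type bounds, $\int_{\R^{2}}\bar p_\alpha(t,u,x,z)\bar p_\alpha(u,T,z,y)\,dz$ and the analogous convolution with $\breve p_\alpha$ in the second slot, both of which are controlled by $C\,\bar p_\alpha(t,T,x,y)$ thanks to the ``semigroup'' property (Corollary \ref{SG_PROP}, extended in \eqref{SG_DIFF_SPEC} to different freezing points) — this is precisely why we restrict to $d=1,n=2$, the only case where $\bar p_\alpha$ is a genuine multi-scale stable density. The rediagonalization term $\breve p_\alpha$ in the $H$-bound, however, is where the extra structure $\sigma(t,x)=\sigma(t,x^2)$ must be exploited: on the event where the rescaled slow component is of the order of the full energy, one cannot absorb $\breve p_\alpha$ into $\bar p_\alpha$, and its convolution with $\tilde p_\alpha$ produces, after optimizing over an intermediate time, precisely the object $\check p_\alpha(t,T,x,y)$ of \eqref{CHECK_P}, carrying the infimum over $\tau\in[t,T]$ and the adapted time-scalings. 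The logarithmic correction $1+\log(K\vee|(\T_{T-t}^\alpha)^{-1}(y-R_{T,t}x)|)$ arises from the $u$-integration against $\bar p_\alpha$ when the deterministic spatial factor is of size $\delta$: the integrand decays like $(T-u)^{-1}(u-t)^{\omega'}\bar p_\alpha$-type but with a residual $1/(T-u)$ singularity in the large-deviation regime that integrates to a logarithm rather than a power; this is handled as in \cite{menozzi2011parametrix} by splitting the time interval at the point $u-t\asymp T-u$ and treating the two halves by different scalings.

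For the $u$-integration of the ``good'' part, one uses that $\int_t^T (u-t)^{\tilde\omega-1}(T-u)^{\omega_0-1}\,du\asymp (T-t)^{\tilde\omega+\omega_0-1}$ for suitable positive exponents (Beta-function bound), with the condition $\eta>\frac{1}{(1\wedge\alpha)(1+\alpha)}$ guaranteeing $\tilde\omega=(1+1/\alpha)\eta(\alpha\wedge 1)>1$ so the time integral near $u=t$ converges and even yields the positive power $(T-t)^\omega$ in the statement. The main obstacle I expect is the precise bookkeeping of the $\breve p_\alpha \to \check p_\alpha$ transition: tracking which regime of the energy one is in when the very large jumps in $H(u,T,\cdot,y)$ rediagonalize $\tilde p_\alpha(t,u,x,\cdot)$, verifying that the resulting time-scalings match those written in \eqref{CHECK_P}, and checking that the deterministic prefactor $\delta\wedge|x-R_{t,T}y|^{\eta(\alpha\wedge 1)}$ survives correctly — all of this requires a careful case analysis on the relative sizes of the rescaled slow and fast components and is where the argument is genuinely delicate rather than routine; the remaining estimates are then assembled by combining Lemma \ref{lemme}, Corollary \ref{SG_PROP}, and elementary time-integral bounds.
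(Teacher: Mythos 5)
Your plan assembles the right ingredients (the kernel bound of Lemma \ref{CTR_KER_PTW}, the frozen-density bounds, the smoothing Lemma \ref{lemme}, Corollary \ref{SG_PROP}, a time split at $u-t\asymp T-u$), but the central step --- the $z$-integration on $I_2$, in the off-diagonal regime where the slow component dominates --- would not go through as described. You cannot simultaneously use the triangle-inequality split of $\delta\wedge|z-R_{u,T}y|^{\eta(\alpha\wedge1)}$ with Lemma \ref{lemme} on one density \emph{and} the semigroup property: Lemma \ref{lemme} consumes the weight against one density and forces you to pull the other density out of the $z$-integral, which is only legitimate after a regime analysis; conversely, invoking Corollary \ref{SG_PROP} discards the weight and leaves $\int_t^T du/(T-u)$, which diverges. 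This is exactly why the paper's proof of Lemma \ref{premier coup} introduces, on $I_2$, the spatial dichotomy \eqref{SPATIAL_PART} with parameter $\beta$ as in \eqref{def_beta} and the time threshold $\tau_0=T-\bigl(\delta_0/|(\T^\alpha_{T-t})^{-1}(y-R_{T,t}x)|\bigr)^{1/\beta}$: for $\tau\ge\tau_0$ the first density is diagonal on $D_2$ and one gains $(T-\tau)^{\omega-1}$, while the bare $1/(T-\tau)$ survives only on $[\frac{t+T}{2},\tau_0]$, whose integral is a logarithm \emph{because of this energy-dependent cutoff}. Your mechanism (``a residual $1/(T-u)$ singularity that integrates to a logarithm'') is not correct without that cutoff --- the integral up to $T$ diverges --- and without it one does not see why the log is $\log(K\vee|(\T_{T-t}^\alpha)^{-1}(y-R_{T,t}x)|)$. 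Likewise, the survival of the prefactor $\delta\wedge|x-R_{t,T}y|^{\eta(\alpha\wedge1)}$ in front of the log rests on the geometric estimate \eqref{CTR_DEL_TAULETAU0_FIRSTDOM}, i.e.\ $|z-R_{\tau,T}y|\le C|x-R_{t,T}y|$ on $D_2$ when the slow component dominates, which your plan does not supply.

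Two hypotheses you invoke are also not available: this lemma is the general-$\sigma$ bound (the first part of Lemma \ref{premier coup}), so neither $\sigma(t,x)=\sigma(t,x^2)$ nor $\eta>1/[(\alpha\wedge1)(1+\alpha)]$ may be used --- they only enter the refined bound \eqref{pxH} --- hence the improved exponent $\tilde\omega$ of \eqref{lemme forward spec} is not at your disposal; moreover your Beta-function remark misplaces the singularity (the danger is at $u=T$ through $1/(T-u)$, not at $u=t$, and $\omega>0$ suffices wherever a gain $(T-u)^\omega$ is actually available). Finally, the convolution of $\tilde p_\alpha$ with $\breve p_\alpha$ is precisely \emph{not} controlled by the semigroup property (that obstruction is what the lemma records): the paper treats it on $D_2\cap\{\tau\le\tau_0\}$ by extracting $\delta\wedge|x-R_{t,T}y|^{\eta(\alpha\wedge1)}$, tensorizing the slow and fast factors and bounding the time integral through the infimum over $\tau$ of the fast-component factor, which is how the precise scalings in \eqref{CHECK_P} arise; your ``optimize over an intermediate time'' points in the right direction but does not by itself justify that form of $\check p_\alpha$.
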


The contribution in $\check p $, comes from the bad rediagonalization which is intrinsic to the degenerate case. It first generates a loss of concentration in the estimate, which leads us to temper the driving noise. It also turns out to be very difficult to handle in the iterated convolutions of the kernel.
Up to the end of section we thus restrict under \textbf{[HT]} to the case $d=1,n=2$ and $\sigma(t,x):=\sigma(t,x^2) $, for which we have been able to refine the above results and to derive the convergence of \eqref{parametrix}.
This restriction will be discussed thoroughly in Section \ref{SECTION_TECH}. 

\begin{lemma}[\textbf{Control of the iterated kernels}]\label{LEMME_IT_KER} 
Assume under \textbf{[HT]} that $d=1, n=2, \sigma(t,x)=\sigma(t,x^2)$ and $1\ge \eta>((\alpha \wedge 1) (1+\alpha))^{-1} $.
Then there exist $C_{\ref{LEMME_IT_KER}}:=C_{\ref{LEMME_IT_KER}}(\HT,K)$, $\omega:=\omega(\HT)\in (0,1]$ s.t.
for all $T
\le T_0$ and $(t,x,y)\in [0,T)\times (\R^{2})^2 $: 
\begin{eqnarray*}
|\tilde p_\alpha\otimes H(t,T,x,y)|&\le &C_{\ref{LEMME_IT_KER}}\Big ((T-t)^\omega \bar p_{\alpha,\Theta} (t,T,x,y)+\bar q_{\alpha,\Theta}(t,T,x,y) \Big),\\
|\bar{q}_\alpha \otimes H(t,T,x,y)|&\le &C_{\ref{LEMME_IT_KER}}  (T-t)^\omega \Big( \bar p_{\alpha,\Theta}(t,T,x,y)+\bar q_{\alpha,\Theta}(t,T,x,y) \Big),
\end{eqnarray*}
where we denoted  
\begin{eqnarray*}
\bar q_{\alpha,\Theta}(t,T,x,y) &=& \delta \wedge \{(T-t)|(x-R_{t,T}y)^1|+|(x-R_{t,T}y)^2|\}^{\eta(\alpha \wedge 1)}\} \\
&& \times \Big\{ \bar{p}_{\alpha,\Theta}(t,T,x,y) \Big( 1+ \log(K \vee |(\T_{T-t}^\alpha)^{-1}(y-R_{T,t}x)|) \ \Big) 
\Big\}  .
\end{eqnarray*}
Now for all $k\ge 1$,
\begin{eqnarray*}
|\tilde p_\alpha\otimes H^{(2k)}(t,T,x,y)|\le (4C_{\ref{LEMME_IT_KER}})^{2k} (T-t)^{k\omega} \Big( (T-t)^{k\omega} \bar {p}_{\alpha,\Theta}(t,T,x,y) + (\bar{p}_{\alpha,\Theta} + \bar{q}_{\alpha,\Theta})(t,T,x,y) \Big),  \\
|\tilde p_\alpha\otimes H^{(2k+1)}(t,T,x,y)|\le (4C_{\ref{LEMME_IT_KER}})^{2k+1}(T-t)^{k\omega} \Big( (T-t)^{(k+1)\omega} \bar{p}_{\alpha,\Theta}+ (T-t)^\omega(\bar{p}_{\alpha,\Theta} + \bar{q}_{\alpha,\Theta})+ \bar{q}_{\alpha,\Theta} \Big)(t,T,x,y).
\end{eqnarray*}
\end{lemma}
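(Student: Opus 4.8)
The plan is to establish the first two displayed bounds (the ``one-step'' estimates for $\tilde p_\alpha\otimes H$ and $\bar q_{\alpha,\Theta}\otimes H$) by a direct computation exploiting Lemma~\ref{CTR_KER_PTW}, Lemma~\ref{lemme}, and the semigroup property of $\bar p_{\alpha,\Theta}$ from Corollary~\ref{SG_PROP}; the iterated bounds then follow by an induction on $k$. For the one-step bound I would split the time integral $\int_t^T du\int_{\R^2}dz$ defining $\tilde p_\alpha\otimes H$ at the midpoint $(t+T)/2$. On $[t,(t+T)/2]$ the singularity $1/(T-u)$ in $H$ is harmless, so one integrates $\tilde p_\alpha(t,u,x,z)$ against $|H(u,T,z,y)|$, uses the pointwise bound \eqref{majoration H}, then the smoothing estimate \eqref{lemme forward spec} of Lemma~\ref{lemme} (here the hypothesis $\sigma(t,x)=\sigma(t,x^2)$ is essential, since it is the ``fast variable'' bound with exponent $\tilde\omega=(1+1/\alpha)\eta(\alpha\wedge1)$ that is needed to beat the singularity), and finally the semigroup property \eqref{SG_DIFF_SPEC} to reconstitute $\bar p_{\alpha,\Theta}(t,T,x,y)$; this produces the $(T-t)^\omega\bar p_{\alpha,\Theta}$ term. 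On $[(t+T)/2,T]$ one instead integrates $|H(u,T,z,y)|$ — now with its singularity controlled because $T-u\asymp T-t$ there — and uses the backward smoothing \eqref{lemme backward spec} together with the semigroup property; the difference-of-generators factor $\delta\wedge\{(T-t)|(x-R_{t,T}y)^1|+|(x-R_{t,T}y)^2|\}^{\eta(\alpha\wedge1)}$ survives from the $u$-side and, combined with the logarithmic correction coming from $\tilde p_\alpha\otimes H$ in Lemma~\ref{CTRL_PRELIM_NON_FONCTIONNEL}, yields precisely the $\bar q_{\alpha,\Theta}$ term. Throughout, the $\breve p_\alpha$-contribution and the $\check p_\alpha$-contribution of Lemma~\ref{CTRL_PRELIM_NON_FONCTIONNEL} must be shown to be absorbed into $\bar p_{\alpha,\Theta}+\bar q_{\alpha,\Theta}$ up to the factor $(T-t)^\omega$; this is where the constraint $\eta>((\alpha\wedge1)(1+\alpha))^{-1}$ enters, ensuring enough integrable decay to tame the rediagonalization terms.

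For the bound on $\bar q_{\alpha,\Theta}\otimes H$ one proceeds analogously, but now the incoming density already carries the extra factor $\delta\wedge\{(T-t)|\cdot^1|+|\cdot^2|\}^{\eta(\alpha\wedge1)}(1+\log(\cdots))$. The key observation is that convolving this against $H$ — which contributes another difference-of-generators factor of the same homogeneity plus another $1/(T-u)$ — still gains a positive power of $T-t$: the product of two ``smoothing'' factors integrates to $(T-t)^{2\tilde\omega-\text{(loss from }1/(T-u))}$, and the hypothesis on $\eta$ is exactly what makes the net exponent $\omega>0$. The logarithmic factors are handled by a lemma of the type ``$\log$ is almost constant at the relevant scales'' (Lemma~\ref{LEMME_CONV_LOG}), so that $(1+\log)\otimes(1+\log)\lesssim(1+\log)$ after the semigroup convolution, at the cost of an arbitrarily small power of $T-t$.

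The iterated estimates are then obtained by induction. Writing $\tilde p_\alpha\otimes H^{(r+1)}=(\tilde p_\alpha\otimes H^{(r)})\otimes H$ and feeding the inductive hypothesis (a linear combination of $\bar p_{\alpha,\Theta}$ and $\bar q_{\alpha,\Theta}$ with the stated powers of $T-t$) into the two one-step bounds just proved, one checks that the coefficients obey the recursion encoded in the $(4C_{\ref{LEMME_IT_KER}})^r$ prefactor and the even/odd split: each application of $H$ to a $\bar p_{\alpha,\Theta}$-term yields $(T-t)^\omega\bar p_{\alpha,\Theta}+\bar q_{\alpha,\Theta}$, and each application to a $\bar q_{\alpha,\Theta}$-term yields $(T-t)^\omega(\bar p_{\alpha,\Theta}+\bar q_{\alpha,\Theta})$, so that $\bar q_{\alpha,\Theta}$ is ``absorbing'' after one more step — which is why the parity of $r$ matters. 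Summing a geometric series in $(T-t)^\omega$ then gives the convergence of \eqref{parametrix} for $T\le T_0$ small.

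The main obstacle I expect is the bookkeeping of the rediagonalization contributions $\breve p_\alpha$ and $\check p_\alpha$: these are the genuinely new objects in the degenerate setting, they are not of the form $\bar p_{\alpha,\Theta}$ times a smoothing factor, and showing that after one convolution with $H$ (and use of the extra decay granted by $\eta>((\alpha\wedge1)(1+\alpha))^{-1}$ and the fast-variable structure of $\sigma$) they fold back into $\bar p_{\alpha,\Theta}+\bar q_{\alpha,\Theta}$ with a strictly positive power of $T-t$ is the delicate point — indeed it is the reason for the dimensional restriction $d=1,n=2$ and for tempering the noise, since in the pure stable case the relevant integrals diverge.
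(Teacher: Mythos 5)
Your outline reproduces the paper's architecture (midpoint time split, the pointwise kernel bound of Lemma \ref{CTR_KER_PTW}, the smoothing Lemmas \ref{lemme}--\ref{lemme2}, the semigroup property of Corollary \ref{SG_PROP}, then an induction with the even/odd bookkeeping that is exactly that of Corollary \ref{KER_ITER_ET_CONV_SERIE}), but the proof has a genuine gap: the step you yourself flag as ``the main obstacle'' -- showing that the rediagonalization contributions $\breve p_\alpha$ and $\check p_\alpha$ fold back into $(T-t)^\omega(\bar p_{\alpha,\Theta}+\bar q_{\alpha,\Theta})$ -- is precisely the heart of the paper's proof of this lemma, and you give no argument for it. In the paper this is done in Lemma \ref{premier coup} (and repeated for $\Gamma_2$ in Lemma \ref{DeuxiemeCoup}): in the off-diagonal regime where the slow component dominates, the spatial partition \eqref{SPATIAL_PART} and the threshold $\tau_0$ isolate the bad region, and there the hypothesis $\sigma(t,x)=\sigma(t,x^2)$ is used to replace the full H\"older factor by $\delta\wedge|(z-R_{\tau,T}y)^2|^{\eta(\alpha\wedge1)}$, after which a H\"older-inequality splitting in the fast variable (see \eqref{HOLDER}) together with the tempering factor $\theta$ turns the extra singularity $(T-t)^{-(1+1/\alpha)}$ coming from $\breve p_\alpha$ into a gain $(T-t)^{1+(1+1/\alpha)(\eta(\alpha\wedge1)-1)}$, which is a positive power exactly when $\eta>((\alpha\wedge1)(1+\alpha))^{-1}$. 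Without this computation the first displayed bound of the lemma is not established; note also that you cannot simply start from Lemma \ref{CTRL_PRELIM_NON_FONCTIONNEL}, since its $\check p_\alpha$ term is not dominated by $\bar p_{\alpha,\Theta}+\bar q_{\alpha,\Theta}$ -- the paper states explicitly that that bound is insufficient and must be redone under the fast-variable hypothesis. The same regime is where the precise form of $\bar q_{\alpha,\Theta}$ comes from: one must show via \eqref{EQUIV_FIRST_COMP} and \eqref{CTR_DEL_TAULETAU0_FIRSTDOM_PARTIAL_DEP} that the H\"older factor can be frozen at $\delta\wedge\{(T-t)|(x-R_{t,T}y)^1|+|(x-R_{t,T}y)^2|\}^{\eta(\alpha\wedge1)}$ before integrating the bare singularity $(T-\tau)^{-1}$ over $[\frac{t+T}2,\tau_0]$, which is what produces the logarithm; your proposal does not carry out this comparison.

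Two further inaccuracies. First, on $I_2=[\frac{t+T}2,T]$ it is false that $T-u\asymp T-t$; there $u-t\asymp T-t$ while $T-u$ is arbitrarily small, so the singularity $1/(T-u)$ is \emph{not} controlled on that interval -- it is tamed either by the backward smoothing (diagonal and fast-dominant cases) or, in the slow-dominant off-diagonal case, only up to the logarithm, which is why the log appears at all. Second, the tool for absorbing the logarithmic factors inside the convolution is Lemma \ref{lemme2} (bounding $\log(K\vee r)\le C_\varepsilon r^\varepsilon$ inside the smoothing integral), together with the $D_{1,1}/D_{1,2}$ comparison of the log's argument in the proof of Lemma \ref{DeuxiemeCoup}; Lemma \ref{LEMME_CONV_LOG}, which you invoke, is a chaining estimate for $\bar q_{\alpha,\Theta}$ over macroscopic time intervals used to pass from small $T$ to arbitrary fixed $T$, not an ingredient of the present lemma. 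The induction step of your proposal is correct and matches the paper.
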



The above controls allow to derive under the sole assumption $\HT$ an upper bound for the sum of the parametrix series \eqref{parametrix} in small time. 
\begin{proposition}[\textbf{Sum of the parametrix series}]\label{CTR_SUM}
Under the assumptions of Lemma \ref{LEMME_IT_KER}, for $T_0$ small enough, there exists 
 $C_{\ref{CTR_SUM}}:=C_{\ref{CTR_SUM}}(\HT,K, T_0) $ s.t. for all $T\in (0, T_0]$  and $(t,x,y)\in [0,T)\times (\R^{2})^2 $:
\begin{eqnarray*}
\sum_{r\ge 0}|\tilde p_\alpha \otimes H^{(r)}(t,T,x,y)|&\le& C_{\ref{CTR_SUM}}\Big(\bar p_{\alpha,\Theta}(t,T,x,y) + \bar q_{\alpha,\Theta}(t,T,x,y) \Big),\\
C_{\ref{CTR_SUM}}{\rm det}(\T_{T-t}^\alpha)^{-1}&\le &\sum_{r\ge 0}\tilde p_\alpha \otimes H^{(r)}(t,T,x,y),\ {\rm for}\ |(\T_{T-t}^{\alpha})^{-1}(R_{T,t}x-y)|\le K.
\end{eqnarray*}
\end{proposition}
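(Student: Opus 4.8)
The plan is to derive the bounds in Proposition \ref{CTR_SUM} by summing the geometric-type series whose terms are controlled in Lemma \ref{LEMME_IT_KER}, and then to deduce the lower bound from the upper control on the remainder of the series together with the nondegeneracy of the frozen density. First I would treat the upper bound: splitting the sum over even and odd indices, $\sum_{r\ge 0}|\tilde p_\alpha \otimes H^{(r)}| = \tilde p_\alpha \otimes H^{(0)} + \sum_{k\ge 1}|\tilde p_\alpha \otimes H^{(2k)}| + \sum_{k\ge 0}|\tilde p_\alpha \otimes H^{(2k+1)}|$, where $\tilde p_\alpha \otimes H^{(0)} = \tilde p_\alpha \le C_{\ref{EST_DENS_GEL_T}}\bar p_{\alpha,\Theta}$ by Proposition \ref{EST_DENS_GEL_T}. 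Plugging in the estimates of Lemma \ref{LEMME_IT_KER}, each even term is bounded by $(4C_{\ref{LEMME_IT_KER}})^{2k}(T-t)^{k\omega}\{(T-t)^{k\omega}\bar p_{\alpha,\Theta} + (\bar p_{\alpha,\Theta}+\bar q_{\alpha,\Theta})\}$ and similarly for the odd ones, so choosing $T_0$ small enough that $(4C_{\ref{LEMME_IT_KER}})^{2}T_0^{\omega}\le 1/2$, the series $\sum_k (4C_{\ref{LEMME_IT_KER}})^{2k}(T-t)^{k\omega}$ converges with a sum bounded by $2$. This yields $\sum_{r\ge 0}|\tilde p_\alpha \otimes H^{(r)}|\le C_{\ref{CTR_SUM}}(\bar p_{\alpha,\Theta}+\bar q_{\alpha,\Theta})$ with $C_{\ref{CTR_SUM}}$ depending only on $C_{\ref{EST_DENS_GEL_T}}$, $C_{\ref{LEMME_IT_KER}}$, $\omega$ and $T_0$; note $\bar q_{\alpha,\Theta}$ carries the $\delta\wedge\{\cdots\}^{\eta(\alpha\wedge 1)}$ factor so it is genuinely a smaller, Hölder-type correction, and the $\log(K\vee|\cdots|)$ factor is absorbed into the stated form of the bound which matches \eqref{borne theoreme}.

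For the diagonal lower bound, I would isolate the leading term: $\sum_{r\ge 0}\tilde p_\alpha \otimes H^{(r)}(t,T,x,y) = \tilde p_\alpha(t,T,x,y) + \sum_{r\ge 1}\tilde p_\alpha \otimes H^{(r)}(t,T,x,y)$. On the diagonal regime $|(\T_{T-t}^{\alpha})^{-1}(R_{T,t}x-y)|\le K$, Proposition \ref{EST_DENS_GEL_T} (or rather its lower-bound counterpart from Proposition \ref{EST_DENS_GEL}, which also holds under \HT\ by the same Fourier representation of Proposition \ref{FROZEN_DENSITY}) gives $\tilde p_\alpha(t,T,x,y)\ge C^{-1}\det(\T_{T-t}^\alpha)^{-1}$. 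The remainder $\sum_{r\ge 1}\tilde p_\alpha \otimes H^{(r)}$ is controlled, by the same geometric summation as above applied to $r\ge 1$ only, by $C(4C_{\ref{LEMME_IT_KER}})(T-t)^\omega(\bar p_{\alpha,\Theta}+\bar q_{\alpha,\Theta})(t,T,x,y)$; in the diagonal regime $\bar p_{\alpha,\Theta}$ and $\bar q_{\alpha,\Theta}$ are both $\lesssim \det(\T_{T-t}^\alpha)^{-1}$ (using $\Theta$ bounded and the $\delta\wedge\{\cdots\}$ factor bounded by $\delta$), so the remainder is $\le C T_0^\omega \det(\T_{T-t}^\alpha)^{-1}$. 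Taking $T_0$ small enough that $C T_0^\omega \le \frac{1}{2}C_{\ref{EST_DENS_GEL}}^{-1}$ makes the remainder absorbable into half of the leading term, yielding $\sum_{r\ge 0}\tilde p_\alpha \otimes H^{(r)}\ge \frac12 C_{\ref{EST_DENS_GEL}}^{-1}\det(\T_{T-t}^\alpha)^{-1}$, as claimed.

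The main obstacle in making this rigorous is not the geometric summation, which is routine once Lemma \ref{LEMME_IT_KER} is in hand, but rather ensuring that the constants in the iterated-kernel bounds are uniform in $r$ and that the series of absolute values can be interchanged with the integrations defining the $\otimes$ convolutions — i.e. justifying Fubini throughout so that the formal identity \eqref{parametrix} in Proposition \ref{serie parametrix} becomes an honest equality. This is exactly where the absolute convergence just established does the work: once $\sum_r|\tilde p_\alpha\otimes H^{(r)}|$ is dominated by an integrable function, dominated convergence and Fubini legitimize every exchange in the proof of Proposition \ref{serie parametrix}, so that $p(t,T,x,y)=\sum_r \tilde p_\alpha\otimes H^{(r)}(t,T,x,y)$ genuinely defines the transition density, and the two displayed inequalities of Proposition \ref{CTR_SUM} then immediately transfer to $p$ itself, which is what feeds into Theorem \ref{MTHM}. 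A secondary, more delicate point is that the off-diagonal control hinges on Corollary \ref{SG_PROP} (the ``semigroup'' property of $\bar p_\alpha$, valid only for $d=1,n=2$) and on Lemma \ref{LEMME_CONV_LOG} for the logarithmic correction; I would keep careful track of how the $\log$ factors compound through the $k$-fold convolutions so that they do not grow faster than the fixed power $1+\log(K\vee|\cdots|)$ appearing in the final bound — the point being that each convolution against $H$ contributes only a bounded multiplicative constant to the logarithmic term, not an extra logarithm, which is precisely what the structure of $\bar q_{\alpha,\Theta}$ in Lemma \ref{LEMME_IT_KER} encodes.
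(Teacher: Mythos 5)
Your upper bound is fine and matches the paper: it is exactly the geometric summation of the even/odd iterated-kernel bounds of Lemma \ref{LEMME_IT_KER} for $T_0$ small, together with $\tilde p_\alpha\le C\bar p_{\alpha,\Theta}$. The gap is in the lower bound. You control the remainder $\sum_{r\ge 1}\tilde p_\alpha\otimes H^{(r)}$ by $C(T-t)^\omega(\bar p_{\alpha,\Theta}+\bar q_{\alpha,\Theta})$, but Lemma \ref{LEMME_IT_KER} does not give this for the first term: the bound on $\tilde p_\alpha\otimes H$ (and the odd-order bound at $k=0$) reads $C\bigl((T-t)^\omega\bar p_{\alpha,\Theta}+\bar q_{\alpha,\Theta}\bigr)$, i.e. the $\bar q_{\alpha,\Theta}$ contribution carries \emph{no} factor $(T-t)^\omega$. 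Having assumed that spurious prefactor, you then discard the only genuine source of smallness by bounding the H\"older factor in $\bar q_{\alpha,\Theta}$ simply by $\delta$; with the correct estimate your remainder is only $O(1)\det(\T_{T-t}^\alpha)^{-1}$ on the diagonal, which cannot be absorbed into the leading term by taking $T_0$ small, so the argument as written does not close.

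The fix, which is what the paper does, is to exploit the diagonal regime itself rather than a time prefactor: writing $\sum_{r\ge 0}\tilde p_\alpha\otimes H^{(r)}=\tilde p_\alpha+\bigl(\sum_{r\ge 0}\tilde p_\alpha\otimes H^{(r)}\bigr)\otimes H$ and using the already-proved upper bound plus one more application of Lemma \ref{LEMME_IT_KER}, the remainder is bounded by $C\{(T-t)^\omega\bar p_{\alpha,\Theta}+\bar q_{\alpha,\Theta}+(T-t)^\omega(\bar p_{\alpha,\Theta}+\bar q_{\alpha,\Theta})\}$; then, when $|(\T_{T-t}^{\alpha})^{-1}(R_{T,t}x-y)|\le K$, the logarithm in $\bar q_{\alpha,\Theta}$ is harmless and, crucially,
\begin{equation*}
\delta\wedge\{(T-t)|(x-R_{t,T}y)^1|+|(x-R_{t,T}y)^2|\}^{\eta(\alpha\wedge 1)}\le C\,|x-R_{t,T}y|^{\eta(\alpha\wedge 1)}\le (CK)^{\eta(\alpha\wedge 1)}(T-t)^{\eta(\frac1\alpha\wedge 1)},
\end{equation*}
so that $\bar q_{\alpha,\Theta}\le C(T-t)^{\eta(\frac 1\alpha\wedge 1)}\det(\T_{T-t}^\alpha)^{-1}$ and the whole remainder is $O((T-t)^{\omega'})\det(\T_{T-t}^\alpha)^{-1}$ for some $\omega'>0$. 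Combined with the diagonal lower bound for the frozen density (Proposition \ref{diagonal expansion}, which you correctly invoke and which is valid under \HT), taking $T_0$ small then yields the stated lower bound. Your structural choice (summing $r\ge 1$ directly rather than writing the remainder as $(\sum)\otimes H$) would also work, but only after inserting this diagonal-regime estimate for the H\"older factor instead of the bound by $\delta$.
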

The proofs of Lemmas \ref{CTR_KER_PTW} and \ref{LEMME_IT_KER} are postponed to Section \ref{KER_ITER_ET_CONV_SERIE}. 
Using those controls on the iterated convolutions, we can prove Proposition \ref{CTR_SUM}.
\begin{proof}
The upper-bound can be readily derived from Lemma \ref{LEMME_IT_KER} for $T_0$ small enough (sum of a geometric series).
To get the diagonal lower bound, we first write:
$$
\sum_{k \geq 0} \tilde{p}_\alpha \otimes H^{(k)}(t,T,x,y) = \tilde{p}_\alpha(t,T,x,y)+\left(\sum_{k \geq 0} \tilde{p}_\alpha \otimes H^{(k)} \right) \otimes H(t,T,x,y).
$$
Now, since 
$$
\sum_{k \geq 0}| \tilde{p}_\alpha \otimes H^{(k)}(t,T,x,y)| \leq C (\bar{p}_{\alpha,\Theta} + \bar{q}_{\alpha,\Theta})(t,T,x,y),
$$
we derive:
$$
\left|\left(\sum_{k \geq 0} \tilde{p}_\alpha \otimes H^{(k)} \right) \otimes H(t,T,x,y)\right| \leq C|(\bar{p}_{\alpha,\Theta} + \bar{q}_{\alpha,\Theta}) \otimes H(t,T,x,y)|.
$$
Using once again the first part of Lemma \ref{LEMME_IT_KER}, we thus get that
\begin{eqnarray*}
\left|\left(\sum_{k \geq 0} \bar {p}_\alpha \otimes H^{(k)} \right) \otimes H(t,T,x,y)\right| 
\leq C\big\{(T-t)^\omega \bar{p}_{\alpha,\Theta}(t,T,x,y) + \bar{q}_{\alpha,\Theta}(t,T,x,y) \\
+ (T-t)^\omega (\bar{p}_{\alpha,\Theta} + \bar{q}_{\alpha,\Theta})(t,T,x,y)\big\}.
\end{eqnarray*}
Now, if the global regime is diagonal, i.e. $|(\T_{T-t}^\alpha)^{-1}(y-R_{T,t}x)| \le K$, the logarithm contribution vanishes in $\bar q_{\alpha,\Theta} $.
Observe also that 
\begin{eqnarray*}
\delta \wedge |x-R_{t,T}y|^{\eta(\alpha \wedge 1)} &\le& C^{\eta(\alpha \wedge 1)}|R_{T,t}x-y|^{\eta(\alpha \wedge 1)}\le C^{\eta(\alpha \wedge 1)}(T-t)^{\eta(1/\alpha\wedge 1)}|(\T_{T-t}^{\alpha})^{-1}(R_{T,t}x-y)|^{\eta(\alpha \wedge 1)}\\
&\le& (CK)^{\eta(\alpha \wedge 1)}(T-t)^{\eta(1/\alpha\wedge 1)}. 
\end{eqnarray*}
Hence
 $\left|\left(\sum_{k \geq 0} \tilde{p}_\alpha \otimes H^{(k)} \right) \otimes H(t,T,x,y)\right|  \le C (T-t)^\omega \det(\T_{T-t}^\alpha)^{-1}$. Taking $T-t$ small enough yields the announced bound.
\end{proof}

We conclude anyhow the section stating a  Lemma that allows to extend the upper bound in Theorem \ref{MTHM} to an arbitrary given fixed time. The arguments for its proof would be similar to those of Lemma \ref{DeuxiemeCoup}.
\begin{lemma}[\textbf{Semigroup property for $\bar q_{\alpha,\Theta} $}]\label{LEMME_CONV_LOG}
With the notations of Proposition \ref{CTR_SUM}, for any $T\in [0,\bar T_0)$, we have that there exists $C_{\ref{LEMME_CONV_LOG}}:=C_{\ref{LEMME_CONV_LOG}}([\mathbf{HT}],\bar T_0)\ge 1$ s.t.:
\begin{equation*}
\forall (x,y)\in \R^{nd},\ \forall n\in \N,\ \int_{\R^{nd}} \bar q_{\alpha,\Theta} (0,nT,x,z)\bar q_{\alpha,\Theta}(nT,(n+1)T,z,y)dz\le C_{\ref{LEMME_CONV_LOG}}^{n+2}\bar q_{\alpha,\Theta}(0,(n+1)T,x,y).
\end{equation*}
\end{lemma}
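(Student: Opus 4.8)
\textbf{Proof plan for Lemma \ref{LEMME_CONV_LOG}.}

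The plan is to reduce the statement to a one-step (non-iterated) semigroup-type bound for the kernel $\bar q_{\alpha,\Theta}$ and then to close the induction on $n$. Recall that $\bar q_{\alpha,\Theta}(t,s,x,y)$ is, up to the H\"older prefactor $\delta \wedge \{(s-t)|(x-R_{t,s}y)^1|+|(x-R_{t,s}y)^2|\}^{\eta(\alpha\wedge 1)}$, the product of the admissible density $\bar p_{\alpha,\Theta}$ and the logarithmic correction $1+\log(K\vee |(\T_{s-t}^\alpha)^{-1}(y-R_{s,t}x)|)$. The first step is therefore to establish the elementary time-convolution estimate
\begin{equation*}
\int_{\R^{2}} \bar q_{\alpha,\Theta}(t,\tau,x,z)\,\bar q_{\alpha,\Theta}(\tau,s,z,y)\,dz \le C_{\ref{LEMME_CONV_LOG}}\,\bar q_{\alpha,\Theta}(t,s,x,y),
\end{equation*}
valid for $0\le t<\tau<s\le \bar T_0$. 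For this I would split the convolution according to which of the two time intervals $[t,\tau]$, $[\tau,s]$ carries at least half of $s-t$; on each half the argument is essentially that of Corollary \ref{SG_PROP} (the semigroup property of $\bar p_\alpha$, hence of $\bar p_{\alpha,\Theta}$ since $\Theta$ is subadditive-like through the doubling property \textbf{[T]}), with three extra ingredients to be checked: (i) the H\"older prefactor $\delta\wedge\{\cdots\}^{\eta(\alpha\wedge 1)}$ is controlled along the transport by the Lipschitz property of the resolvent flow exactly as in the proof of Lemma \ref{lemme} / the remarks following it, so that the prefactor at scale $(t,\tau)$ or $(\tau,s)$ is dominated by the one at scale $(t,s)$ up to a constant and a harmless power of $s-t\le \bar T_0$; (ii) the logarithmic factor is \emph{slowly varying}, so $\log(K\vee|(\T_{\tau-t}^\alpha)^{-1}(z-R_{\tau,t}x)|)$ integrated against $\bar p_{\alpha,\Theta}$ produces $1+\log(K\vee|(\T_{s-t}^\alpha)^{-1}(y-R_{s,t}x)|)$ up to an additive constant — this is precisely the content already used implicitly via Lemma \ref{LEMME_CONV_LOG}'s sibling computations and can be made rigorous by the chain of inequalities on $|(\T_{s-t}^\alpha)^{-1}(R_{s,t}x-y)|$ appearing in the proof of Proposition \ref{CTR_SUM}; (iii) at most one of the two factors in the integrand carries the log, or if both do, the product $(1+\log a)(1+\log b)\le C(1+\log a)(1+\log(ab)/\log a)\le C(1+\log(ab))+\cdots$ is reabsorbed using that the convolution confines $z$ so that the two energies are comparable up to constants, whence only a single $\log$ survives.

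The second step is the induction on $n$. Write $q_n := \bar q_{\alpha,\Theta}(0,nT,\cdot,\cdot)$. Assuming $\int q_n(x,z)\,\bar q_{\alpha,\Theta}(nT,(n+1)T,z,y)\,dz \le C^{n+2}\,\bar q_{\alpha,\Theta}(0,(n+1)T,x,y)$ is exactly what we want, so the induction is on the statement that $\bar q_{\alpha,\Theta}(0,nT,x,y)$ convolves well; concretely I would prove by induction on $n$ that $\int_{\R^2}\bar q_{\alpha,\Theta}(0,nT,x,z)\,\bar q_{\alpha,\Theta}(nT,(n+1)T,z,y)\,dz\le C_{\ref{LEMME_CONV_LOG}}^{n+2}\bar q_{\alpha,\Theta}(0,(n+1)T,x,y)$, feeding the base case $n=0,1$ from the one-step estimate of the first step and the inductive step from the same one-step estimate applied with $(t,\tau,s)=(0,nT,(n+1)T)$ after having bounded $\bar q_{\alpha,\Theta}(0,nT,\cdot,\cdot)$ itself by a constant (depending on $n$, hence the $C^{n+2}$) times a \emph{genuine} kernel $\bar q_{\alpha,\Theta}$ on $[0,nT]$; here one iterates the one-step bound $n$ times, each iteration costing a factor $C_{\ref{LEMME_CONV_LOG}}$, which accounts for the exponential dependence on $n$. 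Since $\bar T_0$ is fixed and $T\in[0,\bar T_0)$, all time increments stay bounded and the constants remain uniform in the relevant parameters.

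The main obstacle is step (iii) above: controlling the product of two logarithmic corrections under convolution. Unlike pure Gaussian/stable kernels, the presence of the $\log$ breaks the clean semigroup identity, and one must genuinely exploit that the convolution of $\bar p_{\alpha,\Theta}(0,\tau,x,\cdot)$ with $\bar p_{\alpha,\Theta}(\tau,s,\cdot,y)$ concentrates mass on the set where $|(\T_\tau^\alpha)^{-1}(z-R_{\tau,0}x)|$ and $|(\T_{s-\tau}^\alpha)^{-1}(y-R_{s,\tau}z)|$ are each $\lesssim |(\T_s^\alpha)^{-1}(y-R_{s,0}x)| + O(1)$, so that both logs are simultaneously comparable to the single log at scale $(0,s)$; on the complementary set the stable tail decay of order $2+\alpha$ wins against the logarithm. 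This is the step where the restriction to $d=1,n=2$ (so that $nd+\alpha = d+1+\alpha$ and $\bar p_\alpha$ is a bona fide density with the semigroup property of Corollary \ref{SG_PROP}) is essential, and it is the reason the lemma is stated only in that regime; the remark in the statement that the argument parallels Lemma \ref{DeuxiemeCoup} indicates that the technical bookkeeping has already been carried out there and can be transcribed with the logarithm treated as an $\varepsilon$-power of the energy for the purpose of these majorizations.
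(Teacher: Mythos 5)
The paper itself gives no detailed argument for this lemma (it only remarks that the proof would follow the lines of Lemma \ref{DeuxiemeCoup}), and your list of ingredients -- the semigroup property of Corollary \ref{SG_PROP}, the Lipschitz transport of the H\"older prefactor by the resolvent flow, treating the logarithm as slowly varying (an $\varepsilon$-power, as in Lemma \ref{lemme2}), and the restriction $d=1,n=2$ -- is indeed the right toolbox. But your scheme has a structural gap. The statement is a \emph{single spatial} (Chapman--Kolmogorov type) convolution at the fixed intermediate time $nT$, between a kernel on the long interval $[0,nT]$ and a kernel on the short interval $[nT,(n+1)T]$. The length $nT$ is \emph{not} bounded by $\bar T_0$, so your concluding claim that ``all time increments stay bounded and the constants remain uniform'' is false; tracking how the constants grow with the horizon (Gronwall-type growth of the resolvent/flow bounds on $[0,(n+1)T]$, comparison of the scale matrices $\T^\alpha_{nT},\T^\alpha_{T},\T^\alpha_{(n+1)T}$) is exactly where the factor $C_{\ref{LEMME_CONV_LOG}}^{n+2}$ must come from, and your proposal never produces it. The induction you describe does not repair this: the left factor $\bar q_{\alpha,\Theta}(0,nT,\cdot,\cdot)$ is not an $n$-fold convolution of small-time kernels, so ``iterating the one-step bound $n$ times'' has nothing to act on, and the step ``bound $\bar q_{\alpha,\Theta}(0,nT,\cdot,\cdot)$ by a constant times a genuine kernel $\bar q_{\alpha,\Theta}$ on $[0,nT]$'' is vacuous. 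What is actually required is a direct estimate of the single convolution, transcribing the regime analysis of Lemma \ref{DeuxiemeCoup} (global diagonal vs.\ off-diagonal, slow vs.\ fast dominating component) with constants made explicit in $n$.

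The second gap is the treatment of the logarithms and of the two H\"older prefactors. The displayed manipulation $(1+\log a)(1+\log b)\le C(1+\log a)(1+\log(ab)/\log a)\le\cdots$ is not a valid chain of inequalities and cannot be the argument. The mechanism that works (and that Lemma \ref{DeuxiemeCoup} implements, in the harder situation with an extra $(T-\tau)^{-1}$ singularity) is: in each regime exactly one of the two factors is taken out of the integral, after its rescaled energy, its logarithm and its prefactor have been dominated by the global ones via the quasi-triangle inequality from Lemma \ref{ScalingLemma} and controls of the type \eqref{CTR_DEL_TAULETAU0_FIRSTDOM_PARTIAL_DEP}; the prefactor and logarithm of the remaining factor are then integrated away with Lemma \ref{lemme2} (log bounded by an $\varepsilon$-power, absorbed in the tail of order $2+\alpha$), which here is easier since there is no time singularity. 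Your plan never specifies which of the two prefactors $\delta\wedge\{\cdots\}^{\eta(\alpha\wedge 1)}$ is retained to reconstruct the prefactor of $\bar q_{\alpha,\Theta}(0,(n+1)T,x,y)$ on the right-hand side and in which regime; without this bookkeeping the estimate does not close, since in the regimes where both prefactors are integrated away one only obtains a contribution homogeneous to $\bar p_{\alpha,\Theta}(0,(n+1)T,x,y)$, which is not pointwise dominated by $\bar q_{\alpha,\Theta}(0,(n+1)T,x,y)$ near the diagonal and must be dealt with explicitly. As written, the proposal is a reasonable roadmap in the spirit the paper indicates, but not a proof.
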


Observe now that Theorem \ref{MART_PB_THM} yields that $(X_t)_{t\ge 0}$, the canonical process of $\P$, admits a Feller transition function.
On the other hand, when $d=1,n=2 $ we have from Proposition \ref{CTR_SUM} that the series appearing in equation \eqref{SG_PARAM} of Proposition \ref{serie parametrix} is absolutely convergent. This allows to derive that the Feller transition is absolutely continuous, which in particular means that  the process $(X_t)_{t\ge 0}$ admits for all $t>0$ a density, satisfying the bounds of Proposition \ref{CTR_SUM}.

\mysection{Proof of the uniqueness of the Martingale Problem associated with \eqref{EDS 1}.}
\label{uniqueness for MP}

In this section, $d$ and  $n$ satisfy the conditions $d(1-n)+1+\alpha>0 $. 
As a corollary to the bounds of Section \ref{CTR_ITER_KER}, specifically Lemmas \ref{CTR_KER_PTW} and \ref{lemme} (controls on the kernel and associated smoothing effect), we prove here Theorem \ref{MART_PB_THM}. The existence of a solution to the martingale problem can be derived by compactness arguments adapting the proof of Theorem 2.2 from \cite{stroock1975diffusion}, even though our coefficients are not bounded.

\begin{proof}[Uniqueness of the Martingale Problem associated with \eqref{GEN}.]
Suppose we are given two solutions $\mathbb \P^1$ and $\mathbb \P^2$ of the martingale problem associated with $(L_s)_{s\in [t,T]} $, 
starting in $x$ at time $t$. We can assume w.l.o.g. that $T\le T_0:=T_0(\H)$.
Define for a bounded Borel function $f:[0,T]\times \R^{nd}\rightarrow \R$, 
$$S^i f = \E^i \left( \int_t^T f(s,X_s) ds\right),\ i\in \{1,2\},$$
where $(X_s)_{s\in [t,T]}$ stands for the canonical process associated with $(\P^i)_{i\in \{ 1,2\}} $. Let us specify that $S^if$ is \textit{a priori} only a linear functional and not a function since $\P^i$ does not need to come from a Markov process.
We denote: 
$$S^\Delta f = S^1f-S^2f.$$ 
If $f \in 
C^{1,2}_0([0,T)\times \R^{nd},\R)$, since $(\P^i)_{i\in \{1,2\}}$ both solve the martingale problem, we have:
\begin{equation}
\label{REL_MART_P}
f(t,x) + \E^i\left(\int_t^T (\partial_s+L_s) f(s,X_s) ds \right) =0,\ i\in\{ 1,2\}.
\end{equation}
For a fixed point $y\in \R^{nd}$ and a given $\varepsilon\ge 0$, introduce now for all $ f\in C^{1,2}_0([0,T)\times \R^{nd},\R)$ the Green function:
$$
\forall (t,x) \in [0,T)\times\R^{nd},  G^{\varepsilon,y}f(t,x) = \int_t^T ds \int_{\R^{nd}} dz \tilde{p}_\alpha^{s+\varepsilon, y}(t,s, x,z) f(s,z).
$$
We recall here that $\tilde p_\alpha^{s+\varepsilon,y}(t,s,x,z) $ stands for the density at time $s$ and point $z$ of the process $\tilde X^{s+\varepsilon,y} $ defined in \eqref{ProcGel} starting from $x$ at time $t$. In particular, $\varepsilon $ can be equal to zero in the previous definition.
One now easily checks that:
\begin{equation}\label{relation differentielle}
\forall (t,x,z) \in [0,s) \times (\R^{nd})^2,  \Big(\partial_t + \tilde{L}^{s+\varepsilon, y}_t \Big) \tilde{p}_{\alpha}^{s+\varepsilon,y} (t,s,x,z) = 0, 
\lim_{s \downarrow t} \tilde{p}_\alpha^{s+\varepsilon,y}(t,s,x, \cdot) = \delta_x(.).
\end{equation}
Introducing for all $f\in C^{1,2}_0([0,T)\times \R^{nd},\R) $ the quantity:
\begin{equation}
M^{\varepsilon,y}_{t,x} f(t,x) =\int_t^Tds \int_{\R^{nd}} dz \tilde{L}_t^{s+\varepsilon,y} \tilde{p}_\alpha^{s+\varepsilon,y}(t,s,x,z)f(s,z),
\end{equation}
we derive from \eqref{relation differentielle} and the definition of $G^{\varepsilon,y} $ that the following equality holds:
\begin{equation}\label{relation differentielle 2}
\partial_t G^{\varepsilon,y} f(t,x)+ M^{\varepsilon,y}_{t,x}f(t,x) = -f(t,x),\ \forall (t,x)\in [0,T)\times \R^{nd}.
\end{equation}

Now, let  $h \in C^{1,2}_0([0,T)\times \R^{nd},\R)$ be an arbitrary function and define for all $(t,x)\in [0,T)\times \R^{nd} $:
\begin{eqnarray*}
\phi^{\varepsilon,y} (t,x) := \tilde{p}_\alpha^{t+\varepsilon,y}(t,t+\varepsilon,x,y)h(s,y), 
\Psi_\varepsilon(t,x):= \int_{\R^{nd}} dy G^{\varepsilon,y}(\phi^{\varepsilon,y})(t,x).
\end{eqnarray*}
Then, by semigroup property, we have:
\begin{eqnarray*}
\Psi_\varepsilon(t,x) &=& \int_{\R^{nd}} dy \int_t^T ds \int_{\R^{nd}}dz \tilde{p}_\alpha^{s+\varepsilon,y}(t,s,x,z) \tilde{p}_\alpha^{s+\varepsilon,y}(s,s+\varepsilon,z,y)h(s,y) \\
&=& \int_{\R^{nd}}dy \int_t^T ds \tilde{p}_\alpha^{s+\varepsilon,y}(t,s+\varepsilon,x,y) h(s,y).
\end{eqnarray*}
Hence, 
\begin{eqnarray*}
(\partial_t+L_t)\Psi_\varepsilon(t,x) &=& \int_{\R^{nd}}dy (\partial_t+L_t)(G^{\varepsilon,y} \phi^{\varepsilon,y})(t,x)\\
&=&  \int_{\R^{nd}}dy \{\partial_t G^{\varepsilon,y} \phi^{\varepsilon,y}(t,x) + M^{\varepsilon,y}_{t,x} \phi^{\varepsilon,y}(t,x)\}\\
&&+ \int_{\R^{nd}} dy \{L_tG^{\varepsilon,y} \phi^{\varepsilon,y}(t,x) -  M^{\varepsilon,y}_{t,x} \phi^{\varepsilon,y}(t,x)\}\\
&\overset{\eqref{relation differentielle 2}}{=}& -\int_{\R^{nd}} dy\phi^{\varepsilon,y}(t,x) +  \int_{\R^{nd}} dy \{L_tG^{\varepsilon,y} \phi^{\varepsilon,y}(t,x) -  M^{\varepsilon,y}_{t,x} \phi^{\varepsilon,y}(t,x)\}\\
&=& I^\varepsilon_1 + I^\varepsilon_2.
\end{eqnarray*}
We now need the following lemma whose proof is postponed to
the end of Section \ref{Estimates on the frozen density}.
\begin{lemma}\label{convergence_dirac}
For all bounded continuous function $f:\R^{nd}\rightarrow \R, x\in \R^{nd}$:
\begin{equation}
\left| \int_{\R^{nd}} f(y) \tilde{p}_{\alpha}^{T,y}(t,T,x,y)dy -f(x) \right| \underset{T\downarrow t} {\longrightarrow} 0.
\end{equation}
\end{lemma}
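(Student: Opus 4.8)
\textbf{Plan of proof for Lemma \ref{convergence_dirac}.}

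The plan is to use the Fourier representation of the frozen density from Proposition \ref{FROZEN_DENSITY} together with the scaling matrices $\T_u^\alpha$ and $\M_u$, and to exploit the fact that, as $T\downarrow t$, the frozen density concentrates around $R_{T,t}x$, which itself converges to $x$ by continuity of the resolvent. First I would reduce to the case $t=0$ and write, with $s=T-t$ and using \eqref{density},
\begin{equation*}
\int_{\R^{nd}} f(y)\,\tilde p_\alpha^{T,y}(t,T,x,y)\,dy
=\int_{\R^{nd}} f(y)\,\frac{\det(\M_{s})^{-1}}{(2\pi)^{nd}}\int_{\R^{nd}} e^{-i\langle q,(\M_s^\alpha)^{-1}(y-R_{T,t}x)\rangle}\,\widehat{\nu}_S(q)\,dq\,dy,
\end{equation*}
where $\widehat{\nu}_S(q)=\exp\!\bigl(-s\int_{\R^{nd}}\{1-\cos\langle q,\xi\rangle\}\,\nu_S(d\xi)\bigr)$. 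The subtlety flagged in the excerpt is that the freezing terminal point $y$ here coincides with the integration variable, so one cannot directly invoke dominated convergence on a fixed kernel; hence I would first work with the crude bound $\tilde p_\alpha^{T,y}(t,T,x,y)\le C\,\bar p_\alpha(t,T,x,y)$ from Proposition \ref{EST_DENS_GEL} (or \ref{EST_DENS_GEL_T} under \textbf{[HT]}), noting that the right-hand side no longer depends on $y$ as a freezing parameter and is a bona fide probability density (up to the constant $C$) in $y$ once $d(1-n)+1+\alpha>0$, with total mass uniformly bounded in $s$.

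The key steps, in order, would be: (i) Change variables $y\mapsto w:=(\T_s^\alpha)^{-1}(y-R_{T,t}x)$ (equivalently use $\M_s$), so that the mass of $\tilde p_\alpha^{T,y}$ concentrates, as $s\to 0$, on a shrinking neighbourhood of $w=0$, i.e. of $y=R_{T,t}x$; the Jacobian $\det(\T_s^\alpha)$ exactly cancels the prefactor. (ii) Split $f(y)=f(R_{T,t}x)+\bigl(f(y)-f(R_{T,t}x)\bigr)$. For the first piece, use that $\int_{\R^{nd}}\tilde p_\alpha^{T,y}(t,T,x,y)\,dy\to 1$ as $s\to0$ — this normalization can be obtained from the Fourier representation by letting $q$-scaling absorb $s$, or more robustly by a direct argument showing the ``defect of mass'' is $O(s^\beta)$ for some $\beta>0$ using the tail bound $\bar p_\alpha$ — together with $f(R_{T,t}x)\to f(x)$ by continuity of $s\mapsto R_{T,t}$ and boundedness of $f$. (iii) For the second piece, bound $|f(y)-f(R_{T,t}x)|$ by $2\|f\|_\infty \wedge \omega_f(|y-R_{T,t}x|)$ with $\omega_f$ the modulus of continuity of $f$, and integrate against $C\bar p_\alpha(t,T,x,y)$; after the rescaling this becomes $C\int_{\R^{nd}} (2\|f\|_\infty\wedge \omega_f(|\T_s^\alpha w|))\,\bar p_1(w)\,dw$ where $\bar p_1$ is the rescaled (integrable) profile, and since $|\T_s^\alpha w|\le s^{1/\alpha}(1+|w|^{n-1+1/\alpha})$-type bounds $\to 0$ for fixed $w$, dominated convergence (dominating function $2\|f\|_\infty\bar p_1(w)$, integrable by the dimension constraint) gives that this term vanishes as $s\to0$.

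The main obstacle I expect is step (ii), the normalization $\int \tilde p_\alpha^{T,y}(t,T,x,y)\,dy\to 1$: because the freezing point moves with the integration variable, $\int \tilde p_\alpha^{T,y}(t,T,x,y)\,dy$ is \emph{not} identically $1$ (only $\int \tilde p_\alpha^{T,y_0}(t,T,x,y)\,dy=1$ for each \emph{fixed} $y_0$). One must quantify that changing the freezing point from $y$ to $R_{T,t}x$ over the relevant range costs only $o(1)$. The cleanest route is to write $\tilde p_\alpha^{T,y}-\tilde p_\alpha^{T,R_{T,t}x}$ via the Fourier formula, where the two symbols differ only through $\sigma(u,R_{u,T}y)$ versus $\sigma(u,R_{u,T}R_{T,t}x)$ inside $\nu_S$; using \textbf{[H-1]} (Hölder continuity of $\sigma$) and the Lipschitz property of the resolvent flow, this difference of symbols is $O\bigl(s\,|y-R_{T,t}x|^{\eta(\alpha\wedge1)}\bigr)$ on the effective frequency range, which after rescaling contributes a term that is $O(s^{\eta(1/\alpha\wedge1)})\to 0$. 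Combining this control with the exact identity $\int \tilde p_\alpha^{T,R_{T,t}x}(t,T,x,y)\,dy=1$ closes the normalization and hence the lemma.
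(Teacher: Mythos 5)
Your overall structure is sound, and your step (iii) (dominated convergence against the uniform bound $C\bar p_\alpha$, which does not depend on the freezing parameter) is a perfectly good, even slightly more elementary, way to handle the $f(y)-f(R_{T,t}x)$ part. The genuine gap is in step (ii), the defect of mass $\int_{\R^{nd}}\bigl(\tilde p_\alpha^{T,y}-\tilde p_\alpha^{T,R_{T,t}x}\bigr)(t,T,x,y)\,dy$. The Fourier/H\"older mechanism you invoke (difference of the symbols controlled through \textbf{[H-1]} and the Lipschitz flow, then integration in $p$ against the exponential decay of the symbol) only produces a \emph{pointwise-in-$y$} bound of the form $C\det(\T_{T-t}^\alpha)^{-1}\bigl(|y-R_{T,t}x|^{\eta(\alpha\wedge1)}+(T-t)\bigr)$: Fourier inversion gives an $L^\infty$ estimate with no spatial decay in $y$. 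Integrated over all of $\R^{nd}$ this diverges, so the claimed conclusion ``$O(s^{\eta(1/\alpha\wedge1)})$ after rescaling'' does not follow as stated; the effective $y$-region is not of the intrinsic size $\T_{T-t}^\alpha$, it is the whole space.

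What is missing is a localization at an intermediate scale: split $\R^{nd}$ into $D_1=\{|(\T_{T-t}^\alpha)^{-1}(y-R_{T,t}x)|\le K(T-t)^{-\beta}\}$ and its complement $D_2$, for some $\beta>0$ to be tuned. On $D_2$ both densities are in the off-diagonal regime and are each bounded by $C\bar p_\alpha(t,T,x,y)$, so the tail integral gives a contribution $O\bigl((T-t)^{\beta((1-n)d+1+\alpha)}\bigr)$ --- this is exactly where the dimension constraint $d(1-n)+1+\alpha>0$ is needed. On $D_1$ your Fourier/H\"older bound applies, but the $y$-integration over the ball of rescaled radius $K(T-t)^{-\beta}$ costs an extra factor, yielding $O\bigl((T-t)^{\eta(\frac1\alpha\wedge1)-\beta(nd+\eta(\alpha\wedge1))}\bigr)$; one then chooses $0<\beta<\eta(\frac1\alpha\wedge1)/(nd+\eta(\alpha\wedge1))$ so that both contributions vanish as $T\downarrow t$. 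This two-regime argument (which is how the paper treats the corresponding term, with a general bounded $f$ in place of your mass defect) is indispensable; without it your normalization step, and hence the proof, does not close. A minor additional remark: since $f$ is only assumed continuous, avoid the global modulus of continuity $\omega_f$ in step (iii); your fallback to pointwise convergence plus dominated convergence is the correct formulation.
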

We emphasize that the above lemma is not a direct consequence of the convergence of the law of the frozen process towards the Dirac mass when $T\downarrow t$. Indeed, the integration parameter is also the freezing parameter which makes things more subtle.
Lemma \ref{convergence_dirac} yields $ I^\varepsilon_1\underset{\varepsilon \rightarrow 0}{\longrightarrow} -h(t,x) $.
On the other hand, we have the following identity:
\begin{eqnarray*}
I^\varepsilon_2 &=& \int_t^Tds \int_{\R^{nd}} dy ( L_t - \tilde{L}^{s+\varepsilon}_t )\tilde{p}_\alpha^{s+\varepsilon,y}(t,s+\varepsilon,x,y)h(s,y)\\
&=&\int_t^T ds \int_{\R^{nd}} dy H(t,s+\varepsilon,x,y)h(s,y).
\end{eqnarray*}
The bound of Lemmas \ref{CTR_KER_PTW} and \ref{lemme} now yield:
\begin{eqnarray*}
|I^\varepsilon_2 |&\leq&C\int_t^T ds\int_{\R^{nd}} dy \frac{\delta \wedge |x-R_{t,s+\varepsilon}y|^{\eta(\alpha \wedge 1)}}{s+\varepsilon - t}(\bar{p}_\alpha+\breve{p}_\alpha )(t,s+\varepsilon,x,y)|h(s,y)|\\
&\leq& C|h|_{\infty} \int_t^T(s+\varepsilon-t)^{\eta(\frac{1}{\alpha}\wedge1)-1} ds\le C|h|_{\infty} [(T-t)\vee \varepsilon]^{\eta(\frac{1}{\alpha}\wedge 1)}.
\end{eqnarray*}
Hence, we may choose $T$ and $\varepsilon$ small enough to obtain
\begin{equation}
\label{boundI2}
|I^\varepsilon_2|\leq 1/2 |h|_{\infty}.
\end{equation}
Observe now that \eqref{REL_MART_P} gives $S^\Delta \Big( (\partial_\cdot+L_\cdot) \Psi_\varepsilon\Big) =0 $ so that $|S^\Delta(I_1^\varepsilon)|=|S^\Delta(I_2^\varepsilon)| $.
From Lemma \ref{convergence_dirac} and \eqref{boundI2} we derive:
$$
|S^\Delta h| = \lim_{\varepsilon \rightarrow 0} |S^\Delta I^\varepsilon_1 | = 
\lim_{\varepsilon \rightarrow 0} |S^\Delta I^\varepsilon_2 | \leq \|S^\Delta\| \limsup_{\varepsilon \rightarrow 0}| I^\varepsilon_2 | \leq  1/2 \|S^\Delta\| |h|_{\infty},\ \|S^\Delta\|:=\sup_{|f|_\infty\le 1}|S^\Delta f|.
$$
By a monotone class argument, the previous inequality still holds for bounded Borel functions 
 $h$ compactly supported in $[0,T)\times \R^{nd} $.
Taking the supremum over $|h|_\infty\le 1$ leads to $\|S^\Delta\| \leq 1/2 \|S^\Delta\|$. Since  $\|S^\Delta\| \leq T-t$, we deduce that $\|S^\Delta\|=0$ which proves the result on $[0,T] $. Regular conditional probabilities allow to extend the result on $\R^+ $, see e.g. Theorem 4, Chapter II, \S 7, in \cite{shir:96}, see also Chapter 6 in \cite{stro:vara:79} and \cite{stroock1975diffusion}. 

\end{proof}

\mysection{Proof of the results involving the Frozen process.}
\label{TEC_SEC1}
Introduce for a given $t>0$ and all $s\ge t$ the process: 
\begin{eqnarray}
\label{DEF_LAMBDA}
\Lambda_s:=\int_t^s R_{s,u}B\sigma_u dZ_u, 
\end{eqnarray}
solving $d\Lambda_s=A_s\Lambda_s ds+B\sigma_s dZ_s,\ Z_t=0 $, i.e. $\Lambda_s$ can be viewed as the process of the iterated integrals of $Z$ weighted by the entries of the resolvent. In \eqref{DEF_LAMBDA}, $(\sigma_u)_{u\ge t} $ is a deterministic $ \R^d\otimes\R^d$-valued function s.t. $(\sigma_u\sigma_u^*)_{u\ge t} $ satisfies [\textbf{H-2}] (uniform ellipticity). It can be seen from Proposition \ref{PROP_ProcGel} that the frozen process will have a density if and only if $\Lambda $ does for $s>t$.
This is what we establish through Fourier inversion.
 The structure of the resolvent is crucial: it gives the multi-scale behaviour of the frozen process and allows to prove in Proposition \ref{LAB_EDFR} that the Fourier transform is integrable. Recalling as well that $B $ stands for the embedding matrix from $\R^d $ into $\R^{nd} $, we observe that only the first  $d$ columns of the resolvent 
 are taken into account in \eqref{DEF_LAMBDA}. Reasoning by blocks we rewrite: $R_{s,t}=\left( \begin{array}{ccc}
 R_{s,t}^{1,1}& \cdots &R_{s,t}^{1,n}\\
 \vdots & \ddots &  \vdots\\
 R_{s,t}^{n,1}& \cdots & R_{s,t}^{n,n}
 \end{array}
 \right)$, where the entries $(R_{s,t}^{i,j})_{(i,j)\in \leftB 1,n\rightB^2} $ belong to $\R^d\otimes \R^d $.

\subsection{Analysis of the Resolvent.}

\begin{lemma}[\textbf{Form of the Resolvent}]\label{Form of the resolvent}
Let $0\le t\le s\le T\le T_0:=T_0(\H)\le 1$. We can write the first column of the resolvent in the following way:
\begin{equation}\label{R1}
R_{s,t}^{\cdot,1} = 
\begin{pmatrix}
\bar R^{1}_{s,t}\\
(s-t)\bar R^2_{s,t}\\
\vdots\\
\frac{(s-t)^{n-1}}{(n-1)!} \bar R_{s,t}^n
\end{pmatrix},
\end{equation}
where the $(\bar R_{s,t}^i)_{i\in \leftB 1,n\rightB}$ are non-degenerate and bounded matrices in $\R^d\otimes  \R^d$, i.e.  $ \exists C:=C(\H,T_0)$ s.t. for all $\xi \in S^{d-1},\ C^{-1}\le |\bar R_{s,t}^i \xi|\le C $.
%
\end{lemma}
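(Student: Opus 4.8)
The strategy is to exploit the band (sub-diagonal-plus-upper-triangular) structure of $A_t$ together with the boundedness of all its entries from \textbf{[H-3]}, and to track the powers of $(s-t)$ that appear in each block of $R_{s,t}^{\cdot,1}$ via the Volterra/Picard expansion of the resolvent. Concretely, I would start from the integral equation $R_{s,t}=I_{nd\times nd}+\int_t^s A_u R_{u,t}\,du$ and iterate it, so that $R_{s,t}=\sum_{k\ge 0}\int_{t\le u_k\le\cdots\le u_1\le s} A_{u_1}\cdots A_{u_k}\,du_k\cdots du_1$; this series converges in operator norm since $\|A_u\|\le C$ uniformly by \textbf{[H-3]}, and the $k$-th term is bounded by $(C(s-t))^k/k!$. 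Reading this in blocks, the $(i,1)$ block of $R_{s,t}$ is a sum over products of blocks $a_u^{i_1,i_2}\cdots$ of $A_u$ linking row $i$ to column $1$; because $A_u$ only has nonzero blocks on the diagonal, the first sub-diagonal, and the last column, any such chain from row $i$ down to column $1$ must use the sub-diagonal blocks $a^{2,1},a^{3,2},\ldots,a^{i,i-1}$ at least $i-1$ times (there is no other way to decrease the row index by one from $j$ to $j-1$ except through $a^{j,j-1}$, since $A$ is lower-Hessenberg). Hence every term contributing to the $(i,1)$ block carries at least $i-1$ time integrations "for free", producing the factor $(s-t)^{i-1}/(i-1)!$ after factoring, which is exactly the prefactor claimed in \eqref{R1}.

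Having extracted $\frac{(s-t)^{i-1}}{(i-1)!}$, I would define $\bar R_{s,t}^i$ to be the remaining matrix, i.e. $R^{i,1}_{s,t}=\frac{(s-t)^{i-1}}{(i-1)!}\bar R^i_{s,t}$, and show it is bounded and non-degenerate. Boundedness is immediate: $\bar R^i_{s,t}$ is again given by an absolutely convergent series of iterated integrals of products of the bounded blocks of $A_u$, so $\|\bar R^i_{s,t}\|\le C$ uniformly for $0\le t\le s\le T_0\le 1$. For non-degeneracy, the key observation is that the leading term of $\bar R^i_{s,t}$ as $s\downarrow t$ is $a_t^{i,i-1}a_t^{i-1,i-2}\cdots a_t^{2,1}$ (the unique chain of length exactly $i-1$), up to the combinatorial normalization; all other contributions to $\bar R^i_{s,t}$ carry at least one extra power of $(s-t)$. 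By \textbf{[H-3]} each $a_t^{j,j-1}$ satisfies $\underline\alpha|\xi|^2\le\langle a_t^{j,j-1}\xi,\xi\rangle$, so in particular each is invertible with $\|(a_t^{j,j-1})^{-1}\|\le\underline\alpha^{-1}$; hence the product $a_t^{i,i-1}\cdots a_t^{2,1}$ is invertible with inverse bounded uniformly in $t$, giving a uniform lower bound $|a_t^{i,i-1}\cdots a_t^{2,1}\xi|\ge c|\xi|$ for $\xi\in S^{d-1}$. Choosing $T_0$ small enough that the sum of the remainder terms is bounded by $c/2$ in operator norm, a triangle inequality gives $|\bar R^i_{s,t}\xi|\ge c/2$ for all $\xi\in S^{d-1}$, while the upper bound gives $|\bar R^i_{s,t}\xi|\le C$. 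This $T_0$ is the one referred to throughout the paper, and this is where the claim "the choice of $T_0$ will be clear from the proof of Lemma \ref{Form of the resolvent}" is made precise.

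The main obstacle is the bookkeeping in the block-chain argument: one must argue rigorously that \emph{every} monomial $a_{u_1}^{i_1,i_2}\cdots a_{u_k}^{i_k,i_{k+1}}$ appearing in the $(i,1)$ entry of the $k$-fold iterated integral, with $i_1=i$ and $i_{k+1}=1$, uses the sub-diagonal transitions enough times to guarantee the factor $(s-t)^{i-1}$, and that the lowest-order-in-$(s-t)$ monomial is exactly the "straight descent" $a^{i,i-1}\cdots a^{2,1}$. This is a purely combinatorial fact about walks on the index set $\{1,\dots,n\}$ whose steps are $+{}$anything (via the last column, but the last column only feeds \emph{into} index $n$ in a way that still must descend afterwards), $0$ (diagonal), or $-1$ (sub-diagonal); a walk from $i$ to $1$ must take at least $i-1$ net downward steps, and since only $-1$ steps decrease the index, at least $i-1$ sub-diagonal blocks are used, each accompanied by its own time integration. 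Once this structural lemma is in place, the boundedness and non-degeneracy estimates are routine given \textbf{[H-3]} and the smallness of $T_0$. I would present the combinatorial step carefully and then wave at the (standard) convergence and Gronwall-type estimates for the Volterra series.
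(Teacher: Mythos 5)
Your route is genuinely different from the paper's: you expand $R_{s,t}$ in the Picard/Volterra series and do the power counting combinatorially on the block indices, whereas the paper argues by induction on $n$, partially integrating the system through the resolvents $\Gamma^i$ of the diagonal blocks and invoking Gronwall's lemma. The combinatorial part of your plan is sound: since $a^{i,j}\equiv 0$ exactly when $j<i-1$ (the matrix is block lower Hessenberg; your description of the sparsity pattern as ``diagonal, first sub-diagonal and last column'' is inaccurate -- row $i$ is full from column $(i-1)\vee 1$ on -- but only the Hessenberg property enters your walk argument), every chain from row $i$ to column $1$ must use at least $i-1$ sub-diagonal blocks, so the $(i,1)$ block of the $k$-fold iterated integral vanishes for $k<i-1$, the unique chain of length $i-1$ is the straight descent, and the terms with $k\ge i$ sum to $O((s-t)^i)$. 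This gives \eqref{R1} and the upper bound on $\bar R^i_{s,t}$ correctly.

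The genuine gap is in your non-degeneracy step. You identify the leading term of $\bar R^i_{s,t}$ as $s\downarrow t$ with the frozen-time product $a_t^{i,i-1}a_t^{i-1,i-2}\cdots a_t^{2,1}$, but the coefficients are only assumed bounded and \emph{measurable} in time, so $a^{j,j-1}_u$ need not converge to $a^{j,j-1}_t$ as $u\downarrow t$ and no such frozen-time leading matrix is available. What your expansion actually yields as the candidate for $\bar R^i_{s,t}$, up to an $O(s-t)$ remainder, is the normalized simplex average
\begin{equation*}
\frac{(i-1)!}{(s-t)^{i-1}}\int_{t\le u_{i-1}\le\cdots\le u_1\le s} a^{i,i-1}_{u_1}a^{i-1,i-2}_{u_2}\cdots a^{2,1}_{u_{i-1}}\,du_{i-1}\cdots du_1,
\end{equation*}
and the uniform invertibility of each individual product (which you do get from \textbf{[H-3]}, since $|a^{j,j-1}_u\xi|\ge\underline{\alpha}|\xi|$) does not imply invertibility of the average: averages of invertible matrices can be singular. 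This is precisely the point the paper's proof is built around: the coercivity in \textbf{[H-3]} is a quadratic-form lower bound, which \emph{is} stable under averaging against nonnegative weights, so for instance $\frac{1}{s-t}\int_t^s a^{2,1}_u\,du$ still satisfies $\langle\cdot\,\xi,\xi\rangle\ge\underline{\alpha}|\xi|^2$; the paper exploits this for the first sub-diagonal block and then propagates non-degeneracy through its induction on $n$, with $T_0$ chosen small to absorb the remainders. Your argument as written only goes through if you additionally assume the $a^{j,j-1}$ continuous in time (or if $d=1$, where all products are scalars in $[\underline{\alpha}^{i-1},\bar\alpha^{i-1}]$ and averaging is harmless); under the paper's standing assumptions you must replace ``leading term at frozen time $t$'' by a direct proof that the simplex average above is uniformly non-degenerate, and that is where the real work of the lemma lies.
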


\begin{proof}
We are going to prove the result by induction. Let us first consider the case $n=2$.
We have, for $i\in \{ 1,2\}$:
$$
\frac{d}{ds}R_{s,t}^{1,1} = a_s^{1,1}R_{s,t}^{1,1} +a_s^{1,2}R_{s,t}^{2,1},\ \frac{d}{ds}R_{s,t}^{2,1} = a_s^{2,1}R_{s,t}^{1,1} +a_s^{2,2}R_{s,t}^{2,1}.
$$
In order to obtain,  for $i\in \{1,2\}$, a semi-integrated representation of the entry $R_{s,t}^{i,1} $, we use the resolvent $\Gamma_{u,v}^i $ satisfying $\frac{d}{du}\Gamma_{u,v}^i = a_u^{i,i} \Gamma_{u,v}^i,\ \Gamma_{v,v}^i=I_{d\times d}$.
This yields:
\begin{eqnarray*}
R_{s,t}^{1,1}=\Gamma_{s,t}^1  + \int_t^s \Gamma_{s,u}^1 a_u^{1,2}R_{u,t}^{2,1}du,\
R_{s,t}^{2,1}=\int_t^s \Gamma_{s,u}^2 \biggl \{ a_u^{2,1}R_{u,t}^{1,1}\biggr \} du.
\end{eqnarray*}
Hence for all $0\le t\le s \le T $:
\begin{eqnarray*}
R_{s,t}^{1,1}&=&\Gamma_{s,t}^1  + \int_t^s \Gamma_{s,u}^1 a_u^{1,2}\biggl\{\int_t^u \Gamma_{u,v}^2 \biggl \{ a_v^{2,1}R_{v,t}^{1,1}\biggr \} dv\biggr\}
du,\\
|R_{s,t}^{1,1}|&\le&  C_T(1+\int_t^s |R_{v,t}^{1,1}|(s-t)dv) \le C_T,\ |R_{s,t}^{2,1}|\le C_T(s-t),
\end{eqnarray*}
using Gronwall's lemma for the last but one inequality. This in particular yields
\begin{eqnarray*}
R_{s,t}^{2,1}=\int_t^s \Gamma_{s,u}^2 a_u^{2,1}(\Gamma_{u,t}^1+O((u-t)^2))du.
\end{eqnarray*}
From the non-degeneracy of $a^{2,1}$ (H\"ormander like assumption \textbf{[H-3]}) and the resolvents on a compact set we derive that for $T$ small enough $R_{s,t}^{2,1}=(t-s)\bar R_{s,t}^2 $ where $\bar R_{s,t}^2 $ is non-degenerate and bounded. Rewriting 
$R_{s,t}^{1,1}= \Gamma_{s,t}^1+O((s-t)^2)$ we derive similarly that $R_{s,t}^{1,1}=\bar R_{s,t}^1 $, $\bar R_{s,t}^1 $ being non-degenerate and bounded. This proves \eqref{R1} for $n=2$. Let us now assume that \eqref{R1} holds for a given $n\ge 2$ and let us prove it for $n+1$.

We first need to introduce some notations to keep track of the induction hypothesis. To this end, we denote by $A_t^{n+1}:=A_t $ and $R_{s,t}^{n+1}:=R_{s,t} $ the matrices in $\R^{(n+1)d}\otimes \R^{(n+1)d} $ associated with the linear system $\frac{d}{ds} R_{s,t}=A_t R_{s,t},\ R_{t,t}=I_{(n+1)d\times (n+1)d} $. Observe now that:
\begin{eqnarray*}
A_t^{n+1}=\left(
\begin{array}{c|ccc}
a_t^{1,1} &\cdots & \cdots & a_t^{1,n+1}\\
\hline
a_t^{2,1}& & &\\
0 & & &\\
  \vdots & &A_t^n&\\
0 &&&\\
\end{array}
 \right),
\end{eqnarray*}
where $A_t^n$ is an $\R^{nd}\otimes \R^{nd} $ matrix satisfying \textbf{[H-3]}. Hence, denoting by $R_{s,t}^n$ the associated resolvent, i.e. $\frac{d}{ds}R_{s,t}^n=A_s^nR_{s,t}^n,\ R_{t,t}^n=I_{nd\times nd} $, $R_{s,t}^n $ satisfies \eqref{R1} from the induction hypothesis, so that
$$\forall i\in \leftB 1,n\rightB,\ \forall 0\le t\le s\le T,\ (R_{s,t}^n)^{i,1}=\frac{(s-t)^{i-1}}{(i-1)!}\bar R_{s,t}^{i,n},$$
where the $(\bar R_{s,t}^{i,n})_{i\in \leftB 1,n\rightB} $ are non-degenerate and bounded. Let us now observe that the differential dynamics of $(R_{s,t}^{n+1})^{2:n+1,1}:=\bigl( (R_{s,t}^{n+1})^{2,1},\cdots,(R_{s,t}^{n+1})^{n+1,1}\bigl)^* $ writes:
\begin{eqnarray*}
\frac{d}{d s } (R_{s,t}^{n+1})^{2:n+1,1}=A_s^n(R_{s,t}^{n+1})^{2:n+1,1}+G_{s,t}^{n+1},\ G_{s,t}^{n+1}:=\left(  a_s^{2,1}(R_{s,t}^{n+1})^{1,1}\ 
0_{n\times n}\
\cdots
\ 0_{n\times n}\right)^*,
\end{eqnarray*}
where 
\begin{equation}
\label{LIEN_PREM}
(R_{s,t}^{n+1})^{1,1}=\Gamma_{s,t}^{n+1,1}+\int_{t}^s \Gamma_{s,u}^{n+1,1}\left\{ \sum_{j=2}^{n+1} a_u^{1,j} (R_{u,t}^{n+1})^{j,1}\right\} du, 
\end{equation}
$\Gamma^{n+1,1} $ standing for the resolvent associated with $a^{1,1} $. Using now the resolvent $R_{s,t}^n $, the above equation can be integrated. We get:
\begin{eqnarray}
\label{RES_ECHELLE}
\bigl(R_{s,t}^{n+1}\bigr)^{2:n+1,1}=\int_t^s R_{s,u}^n G_{u,t}^{n+1}du.
\end{eqnarray}
From the above representation, using the induction assumption, \eqref{LIEN_PREM} and Gronwall's lemma we derive:
\begin{eqnarray*}
|(R_{s,t}^{n+1})^{n+1,1}|\le C_T\int_t^s \frac{(s-u)^{n-1}}{(n-1)!}\biggl\{1+\int_t^u \sum_{j=2}^{n} |(R_{v,t}^{n+1})^{j,1}|dv\biggr\} du.
\end{eqnarray*}
By induction one also derives for all $i\in \leftB 2,n+1\rightB $:
\begin{eqnarray*}
|(R_{s,t}^{n+1})^{i,1}|\le C_T\int_t^s \frac{(s-u)^{i-2}}{(i-2)!}\biggl\{1+\int_t^u \sum_{j=2}^{i-1} |(R_{v,t}^{n+1})^{j,1}|dv\biggr\} du,
\end{eqnarray*}
up to modifications of $C_T$ at each step. These controls yield that for all $i\in \leftB 2,n\rightB $, $0\le t\le s \le T $:
\begin{equation}
\label{OR_RESTES}
|(R_{s,t}^{n+1})^{i,1}|=O((s-t)^{i-1}).
\end{equation}
Now from \eqref{RES_ECHELLE}, \eqref{LIEN_PREM} and the induction assumption, we obtain, for all $i\in \leftB 2,n\rightB $, $0\le t\le s \le T $:
\begin{eqnarray*}
(R_{s,t}^{n+1})^{i,1}=\int_t^s\frac{(s-u)^{i-2}}{(i-2)!}\bar R_{s,u}^{i-1,n}a_u^{2,1}\{\Gamma_{u,t}^{n+1,1}+\int_t^u \Gamma_{u,v}^{n+1,1}\bigl\{ \sum_{j=2}^{n+1} a_v^{1,j}(R_{v,t}^{n+1})^{j,1} dv\bigr\}\} du.
\end{eqnarray*}
From the non degeneracy of $\bar R^{i-1,n},a^{2,1},\Gamma^{n+1,1} $ and \eqref{OR_RESTES}, we can conclude as for the case $n=2 $. 
\end{proof}
We can also mention some related analysis, emphasizing various specific time-scales, in Chaleyat-Maurel and Elie p. 255-279 in \cite{azen:etal:81}, Kolokoltsov \cite{kolo:mono:00} and \cite{D&M}. These procedures were  performed to derive small time asymptotics of the covariance matrix of, possibly perturbed,  Gaussian hypoelliptic diffusions.  


To conclude our analysis of the resolvent $R_{s,t}$, we give here a technical lemma that will be useful for the controls of Section \ref{SECTION_TECH}. 
\begin{lemma}[\textbf{Scaling Lemma}]\label{ScalingLemma} Under \textbf{[H-3]}, the resolvent $(R_{s,T})_{s\in [t,T]} $, for $0\le t < T$  associated with the linear system $\frac{d}{ds}R_{s,T}=A_sR_{s,T},\ R_{T,T}=I_{nd\times nd}$ can be written as
$$
R_{s,T}= \T_{T-t}^\alpha \hat{R}_{\frac{s-t}{T-t}}^{t,T} (\T_{T-t}^\alpha)^{-1},
$$
where $\hat{R}_{\frac{s-t}{T-t}}^{t,T}$  is non-degenerate and bounded uniformly on $s\in [t,T] $ with constants depending on $T$.
\end{lemma}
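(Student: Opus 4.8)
The plan is to realize $\hat R^{t,T}$ explicitly as the solution of a rescaled linear ODE whose coefficient matrix is bounded uniformly in the small parameter $\delta := T-t$, and then to apply Gronwall's lemma both to this equation and to the equation satisfied by its inverse, obtaining the two-sided bound. Fix $0\le t<T$, set $\delta := T-t$, and for $v\in[0,1]$ define
\begin{equation*}
\hat R_v^{t,T} := (\T_\delta^\alpha)^{-1}\, R_{t+v\delta,\,T}\, \T_\delta^\alpha .
\end{equation*}
Since $R_{T,T}=I_{nd\times nd}$ and $v=1$ corresponds to $s=T$, we have $\hat R_1^{t,T}=I_{nd\times nd}$, and the claimed identity $R_{s,T}=\T_\delta^\alpha\,\hat R_{(s-t)/\delta}^{t,T}\,(\T_\delta^\alpha)^{-1}$ holds by construction; it remains only to bound $\hat R_v^{t,T}$ and its inverse uniformly in $v\in[0,1]$ and $\delta\in(0,T]$.

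Differentiating in $v$ and using $\frac{d}{ds}R_{s,T}=A_s R_{s,T}$ gives the linear ODE
\begin{equation*}
\frac{d}{dv}\hat R_v^{t,T} = \hat A_v^{t,T}\,\hat R_v^{t,T},\qquad \hat A_v^{t,T} := \delta\,(\T_\delta^\alpha)^{-1} A_{t+v\delta}\,\T_\delta^\alpha .
\end{equation*}
Reasoning by $d\times d$ blocks exactly as in the proof of Lemma \ref{Form of the resolvent}, the $(i,j)$ block of $\hat A_v^{t,T}$ equals $\delta^{\,j-i+1}\,a^{i,j}_{t+v\delta}$, the scalar factor $\delta^{1/\alpha}$ common to all diagonal blocks of $\T_\delta^\alpha$ cancelling in the conjugation. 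The key point — and this is precisely where the (lower Hessenberg) structure of $A_t$ enters — is that $a^{i,j}_\cdot$ vanishes unless $j\ge (i-1)\vee 1$, so that $j-i+1\ge 0$ for every nonzero block; since $\delta\le T$, each such factor $\delta^{\,j-i+1}$ is then bounded in terms of $n$ and $T$, and combining this with $\|a^{i,j}\|\le\overline{\alpha}$ from \textbf{[H-3]} gives
\begin{equation*}
\sup_{v\in[0,1]}\ \|\hat A_v^{t,T}\|\le C,
\end{equation*}
with $C$ depending only on $\overline{\alpha}$, $n$, $d$ and $T$.

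With $C$ as above, the integral form $\hat R_v^{t,T}=I_{nd\times nd}-\int_v^1 \hat A_w^{t,T}\hat R_w^{t,T}\,dw$ and Gronwall's lemma give $\|\hat R_v^{t,T}\|\le e^{C}$ for all $v\in[0,1]$. For the reverse inequality, observe that $S_v := (\hat R_v^{t,T})^{-1}=(\T_\delta^\alpha)^{-1} R_{T,\,t+v\delta}\,\T_\delta^\alpha$ is well defined, since the resolvent is invertible with $R_{s,T}^{-1}=R_{T,s}$, and that it solves $\frac{d}{dv}S_v=-S_v\hat A_v^{t,T}$ with $S_1=I_{nd\times nd}$; the same Gronwall estimate gives $\|(\hat R_v^{t,T})^{-1}\|\le e^{C}$. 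Combining the two bounds, $e^{-C}|\xi|\le|\hat R_v^{t,T}\xi|\le e^{C}|\xi|$ for all $\xi\in\R^{nd}$, which, taking $v=(s-t)/(T-t)$ with $s\in[t,T]$, is the boundedness and non-degeneracy asserted in the Lemma. The only genuinely delicate step is the sign bookkeeping for the exponents $j-i+1$; once the structure of $A_t$ guarantees these are nonnegative, everything reduces to a routine Gronwall argument, so I do not expect any real obstacle.
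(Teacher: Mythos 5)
Your proof is correct and follows essentially the same route as the paper: you define $\hat R_v^{t,T}=(\T_{T-t}^{\alpha})^{-1}R_{t+v(T-t),T}\T_{T-t}^{\alpha}$ and differentiate to obtain the rescaled ODE with coefficient $\hat A_v^{t,T}=(T-t)(\T_{T-t}^{\alpha})^{-1}A_{t+v(T-t)}\T_{T-t}^{\alpha}$, exactly as in the paper's argument. The only difference is that you make explicit what the paper leaves implicit, namely the blockwise computation showing the exponents $j-i+1\ge 0$ thanks to the structure of $A_t$ in \textbf{[H-3]}, and the Gronwall bounds for $\hat R_v^{t,T}$ and its inverse yielding the two-sided estimate.
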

\begin{proof}
The proof of the above statement follows from the structure of the matrix $A_t$ (Assumption \textbf{[H-3]}), setting for all $u\in [0,1],\ \hat R_u^{t,T}:=(\T_{T-t}^{\alpha})^{-1}R_{t+u(T-t),T} \T _{T-t}^{\alpha}$ and differentiating:
\begin{eqnarray*}
\partial_u \hat R_u^{t,T}&=&(T-t)(\T_{T-t}^{\alpha})^{-1}A_{t+u(T-t)}R_{t+u(T-t),T} \T _{T-t}^{\alpha}\\
&=&\biggl((T-t)(\T_{T-t}^{\alpha})^{-1}A_{t+u(T-t)}\T_{T-t}^{\alpha}\biggr)\hat R_u^{t,T}:=A_u^{t,T} \hat R_u^{t,T}.
\end{eqnarray*}
\end{proof}

\begin{remark}
Let us observe that the scaling Lemma already gives the right orders for the entries $(R_{t,s}^{i,1})_{i\in \leftB 1,n\rightB} $ of the resolvent. However for the analysis of the Fourier transform of $\Lambda $, we explicitly need that those entries  write in the form of equation \eqref{R1}.
\end{remark}

\subsection{Estimates on the frozen density.}
\label{Estimates on the frozen density}


\subsubsection{Existence and first estimates.}

The main result of this section is the following.
\begin{proposition}[\textbf{Existence of the density}]
\label{LAB_EDFR}
Let $ T_0:=T_0(\H)$ be as in Lemma \ref{Form of the resolvent}. The process $(\Lambda_s)_{s \in[ t,t+T_0]},\ t\ge 0$, defined in \eqref{DEF_LAMBDA} has for all $s\in (t,t+T_0]$ a density $p_{\Lambda_s}$ given for all $z\in \R^{nd} $ by:
\begin{eqnarray*}
p_{\Lambda_s}(z) &=&  \frac{\det ( \M_{s-t}  )^{-1}  }{(2 \pi) ^{nd}}\int_{\R^{nd}}  e^{-i\langle q, ( \M_{s-t}  )^{-1}  z \rangle}
  \exp \left( -(s-t)\int_{\R^{nd}}\{1- \cos(\langle q,\xi \rangle)\} 
  \nu_S(d\xi) \right) dq,
\end{eqnarray*}
where $\nu_S:=\nu_S(t,T,s,\sigma)$ is a symmetric measure on $ S^{nd-1}$ s.t. uniformly in $s\in (t,t+T_0]$ for all $A\subset \R^{nd} $:
\begin{equation}
\label{DOMI_NU}
\nu_S(A)\le \int_{\R^+}\frac{d\rho}{\rho^{1+\alpha}}\int_{S^{nd-1}}\I_{A}(\rho\eta) g(c\rho)\bar \mu(d\eta),
\end{equation}
where $\bar \mu $ satisfies \textbf{[H-4]}
 and ${\rm dim}({\rm supp}(\bar \mu))=d $. As a consequence of this representation,
we get the following global (diagonal) estimate: 
\begin{equation}
\label{first-diag}
\exists  C:=C(\H,T_0),\ \forall s\in (t,t+T_0],\ \forall z\in \R^{nd},\ p_{\Lambda_s}(z)\le C\det (\T^{\alpha}_{s-t} )^{-1} .
\end{equation}
\end{proposition}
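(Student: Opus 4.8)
The plan is to obtain the representation by a direct Fourier computation and then read the bound off it. First, since $(\sigma_u)_{u\ge t}$ and the resolvent are deterministic, $\Lambda_s=\int_t^s R_{s,u}B\sigma_u\,dZ_u$ is infinitely divisible, and approximating the stochastic integral by Riemann sums (equivalently, using the exponential formula for integrals of deterministic integrands against a L\'evy process) gives, for all $p\in\R^{nd}$,
$$
\E\bigl[e^{i\langle p,\Lambda_s\rangle}\bigr]=\exp\Bigl(\int_t^s\psi_Z\bigl(\sigma_u^* B^* R_{s,u}^* p\bigr)\,du\Bigr),\qquad \psi_Z(w)=\int_{\R^d}\bigl(\cos\langle w,z\rangle-1\bigr)g(|z|)\,\nu(dz),
$$
the compensating term from \eqref{GEN_TEMP} disappearing because $\nu$ and $g$ are symmetric (with $\psi_Z(w)=-\int_{S^{d-1}}|\langle w,\varsigma\rangle|^\alpha\mu(d\varsigma)$ in the stable case). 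Substituting $p=\M_{s-t}^{-1}q$, decomposing $\nu(dz)=\rho^{-1-\alpha}d\rho\,\tilde\mu(d\varsigma)$ with $z=\rho\varsigma$, writing $v_u(\varsigma):=\M_{s-t}^{-1}R_{s,u}B\sigma_u\varsigma$ and rescaling $r=\rho|v_u(\varsigma)|$, the exponent becomes $-(s-t)\int_{\R^{nd}}(1-\cos\langle q,\xi\rangle)\,\nu_S(d\xi)$, where $\nu_S$ is the symmetric measure (symmetry inherited from that of $\tilde\mu$) determined by
$$
(s-t)\,\nu_S(A)=\int_t^s\int_{S^{d-1}}\int_0^\infty \I_{A}\Bigl(r\,\tfrac{v_u(\varsigma)}{|v_u(\varsigma)|}\Bigr)\,g\Bigl(\tfrac{r}{|v_u(\varsigma)|}\Bigr)\,|v_u(\varsigma)|^\alpha\,\frac{dr}{r^{1+\alpha}}\,\tilde\mu(d\varsigma)\,du .
$$
Once the characteristic function is shown to be integrable (last step), Fourier inversion yields the announced formula for $p_{\Lambda_s}$.

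For the domination \eqref{DOMI_NU}, the key point is that $|v_u(\varsigma)|$ is comparable to $1$ uniformly. By Lemma \ref{Form of the resolvent} the $i$-th $d\times d$ block of $R_{s,u}B=R_{s,u}^{\cdot,1}$ is $\tfrac{(s-u)^{i-1}}{(i-1)!}\bar R^i_{s,u}$ with the $\bar R^i_{s,u}$ non-degenerate and bounded, while $\M_{s-t}$ has $i$-th block $(s-t)^{i-1}I_{d\times d}$, so the $i$-th block of $v_u(\varsigma)$ equals $\tfrac{1}{(i-1)!}\bigl(\tfrac{s-u}{s-t}\bigr)^{i-1}\bar R^i_{s,u}\sigma_u\varsigma$; combined with \textbf{[H-2]} and $0\le\tfrac{s-u}{s-t}\le1$ this gives $c\le|v_u(\varsigma)|\le C$ uniformly for $u\in[t,s]$, $\varsigma\in S^{d-1}$, $s-t\le T_0$. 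Then $|v_u(\varsigma)|^\alpha\le C$ and, controlling $g(r/|v_u(\varsigma)|)$ via the doubling and monotonicity properties of $\theta$ in \textbf{[T]} (this is trivial in the stable case $g\equiv1$), one obtains \eqref{DOMI_NU}, where $\bar\mu$ is the suitably rescaled push-forward of $\tilde\mu$ under $(u,\varsigma)\mapsto v_u(\varsigma)/|v_u(\varsigma)|$. Hence ${\rm supp}(\bar\mu)$ is $d$-dimensional ($d-1$ directions from $\varsigma\in S^{d-1}$ at fixed $u$, plus one from $u\in[t,s]$) and inherits \textbf{[H-4]} on its support from the smoothness and non-degeneracy of $u\mapsto R_{s,u}$ and $\sigma_u$.

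Finally, to prove integrability of the Fourier transform and \eqref{first-diag}, set $\Phi(q):=\int_{\R^{nd}}(1-\cos\langle q,\xi\rangle)\nu_S(d\xi)\ge0$. Using \eqref{ND} (for $\tilde\mu$, proportional to $\mu$), then the change of variable $u=s-(s-t)w$ and Lemma \ref{Form of the resolvent} together with \textbf{[H-2]},
$$
(s-t)\,\Phi(q)\ \ge\ c\int_t^s\bigl|\sigma_u^*B^*R_{s,u}^*\M_{s-t}^{-1}q\bigr|^\alpha du\ \ge\ c'\,(s-t)\int_0^1\Bigl|\sum_{j=1}^n \tfrac{w^{j-1}}{(j-1)!}\bigl(\bar R^j_{s,u(w)}\bigr)^*q^j\Bigr|^\alpha dw .
$$
The integrand is (up to the mild $w$-dependence of the $\bar R^j$) an $\R^d$-valued polynomial in $w\in[0,1]$ of degree $\le n-1$ with coefficients of size $\asymp|q^j|$ uniformly in $w$; by equivalence of norms on this finite-dimensional polynomial space, and using the non-degeneracy of the $\bar R^j_{s,u}$ to exclude cancellation between the distinct monomials, the last integral is $\ge c|q|^\alpha$, so that $\Phi(q)\ge c(|q|^\alpha\wedge|q|^2)$ (the $|q|^2$ regime for small $q$ comes from $1-\cos\langle q,\xi\rangle\ge c\langle q,\xi\rangle^2$ together with the positivity of the truncated covariance of $\nu_S$, and only contributes an $O(1)$ mass). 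In particular $e^{-(s-t)\Phi(\cdot)}\in L^1(\R^{nd})$, which legitimizes the inversion, and, since $\det(\T^\alpha_u)=u^{nd/\alpha}\det(\M_u)$ and $T_0\le1$,
$$
p_{\Lambda_s}(z)\ \le\ \frac{\det(\M_{s-t})^{-1}}{(2\pi)^{nd}}\int_{\R^{nd}}e^{-c(s-t)(|q|^\alpha\wedge|q|^2)}\,dq\ \le\ C\,(s-t)^{-nd/\alpha}\det(\M_{s-t})^{-1}\ =\ C\det(\T^\alpha_{s-t})^{-1},
$$
which is \eqref{first-diag}.

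The main obstacle is precisely this non-degeneracy estimate: the components of $\Lambda_s$ become non-degenerate only after integration in $u$ — the weak H\"ormander mechanism — so one must quantify how the distinct intrinsic time scales $(s-u)^{i-1}$ appearing in \eqref{R1} prevent the different blocks $(\bar R^j)^*q^j$ from cancelling over the interval $[0,1]$, which is exactly why the refined form of the resolvent given by Lemma \ref{Form of the resolvent}, rather than merely the scaling Lemma \ref{ScalingLemma}, is needed; the $w$-dependence of the $\bar R^j$ must also be absorbed, typically by a perturbative argument around the constant-coefficient polynomial case. A secondary, purely technical difficulty is transporting the tempering function $g$ through the change of variables in \eqref{DOMI_NU}, for which the doubling property in \textbf{[T]} is used, together with $|v_u(\varsigma)|\asymp1$.
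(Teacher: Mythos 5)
Your overall route is the same as the paper's: compute the characteristic function of $\Lambda_s$ through Riemann sums for the deterministic-integrand stochastic integral, factor out $\M_{s-t}$ using the block structure of $R_{s,u}^{\cdot,1}$ from Lemma \ref{Form of the resolvent}, construct $\nu_S$ as a symmetrized push-forward in polar coordinates with the domination \eqref{DOMI_NU} coming from $|\M_{s-t}^{-1}R_{s,u}B\sigma_u\varsigma|\asymp 1$ and the properties of $g,\theta$, and conclude by Fourier inversion once the exponent is bounded below by $c\,|\M_{s-t}p|^\alpha$, which after the change of variables $q=\T^\alpha_{s-t}p$ (and up to the harmless factor coming from the tempering correction) gives \eqref{first-diag}. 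Up to minor cosmetic differences (your $|q|^2$ small-frequency regime versus the paper's $g(0)$-splitting), this is the paper's proof.

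The one step where you depart from the paper, namely the anisotropic non-degeneracy bound $\int_0^1|\bar R_v^*\M_{s-t}p|^\alpha dv\ge c\,|\M_{s-t}p|^\alpha$ (Lemma \ref{bound}), is not actually established by your argument, and this is a genuine gap. The map $w\mapsto\sum_{j=1}^n\frac{w^{j-1}}{(j-1)!}\bigl(\bar R^j_{s,u(w)}\bigr)^*q^j$ is not a polynomial, and its $w$-dependent coefficients are not uniformly close to constants: since the $a^{i,j}_t$ are only bounded measurable in time, one has for instance (case $n=2$) $\bar R^2_{s,u}\simeq\frac1{s-u}\int_u^s a^{2,1}_r\,dr+O(T_0)$, whose leading part varies with $u$ (hence with $w$) at order one no matter how small $T_0$ is chosen. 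So neither the equivalence of norms on a finite-dimensional polynomial space nor a perturbative argument around the constant-coefficient case applies as you state it; the obstacle you correctly identify as the main one is real, but your proposed fix does not close it. The paper proves Lemma \ref{bound} differently, by compactness and contradiction: the infimum over $\theta\in S^{nd-1}$ of $\int_0^1|\bar R_v^*\theta|^\alpha dv$ is attained; if it vanished, continuity of $v\mapsto\bar R_v$ would force $\bar R_v^*\theta_0=0$ for every $v\in[0,1]$, and letting $v\to0$ in the first block, then dividing successively by $v,v^2,\dots$ and using the blockwise non-degeneracy of the $\bar R_v^j$, one gets $\theta_0^1=\cdots=\theta_0^n=0$, contradicting $\theta_0\in S^{nd-1}$. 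This argument uses only continuity, boundedness and non-degeneracy of the blocks together with the strictly increasing powers of $v$ — exactly what Lemma \ref{Form of the resolvent} supplies — and requires no closeness to constant coefficients. With Lemma \ref{bound} proved this way, the remainder of your proposal coincides with the paper's proof.
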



\begin{remark}
\label{REM_MS}
The previous result emphasizes that the process $(\Lambda_s)_{s\in[ t,t+T_0]}$ can actually be seen as a possibly tempered $\alpha$-stable symmetric process in dimension $nd$, with non-degenerate spectral measure, (left) multiplied by the intrinsic scale factor $(\M_{s-t})_{s\in[ t,t+T_0]} $ .
\end{remark}
\begin{proof}
The proof is divided into two steps:
\begin{trivlist}
\item[-]The first step is to compute the Fourier transform.
\end{trivlist}
Starting from the 
representation \eqref{DEF_LAMBDA}, we write the integral as a limit of its increments. Let $\tau_n:=\{(t_i)_{i\in \leftB 0,n\rightB}; t=t_0<t_1<\cdots<t_n=s \}$ be a subdivision of $[t,s]$, whose mesh $|\tau_n|:=\max_{i\in \leftB 0,n-1\rightB}|t_{i+1}-t_i| $ tends to zero when $n\rightarrow \infty$. Write now for all $p\in \R^{nd} $:
\begin{eqnarray*}
\langle p , \Lambda_s \rangle &=& \lim_{|\tau_n| \rightarrow 0} \sum_{i=0}^{n-1} \langle p , R_{s,t_i} B \sigma_{t_i}(Z_{t_{i+1}}-Z_{t_i}) \rangle
=
\lim_{|\tau_n| \rightarrow 0} \sum_{i=0}^{n-1} 
\langle  \sigma_{t_i}^*B^*R_{s,t_i}^* p , (Z_{t_{i+1}}-Z_{t_i}) \rangle.
\end{eqnarray*}
Since $Z$ has independent increments, we get from \eqref{GEN_TEMP} and the bounded convergence theorem that:
\begin{eqnarray}
\label{TF}
\forall p\in \R^{nd},\ \varphi_{\Lambda_s}(p):=\E(e^{i \langle p , \Lambda_s \rangle}) =  \exp\left(\int_t^s \int_{\R^d} \{ \cos( \langle p , R_{s,u}B\sigma_u z\rangle)-1\}g(|z|)\nu(dz) du\right).
\end{eqnarray}

\begin{trivlist}
\item[-]The second one is to prove its integrability.
\end{trivlist}
Setting $v = (s-u)/(s-t)$ and denoting $u(v):=s-v(s-t) $, the exponent in \eqref{TF} writes:
$$\int_t^s \int_{\R^d} \{ \cos( \langle p , R_{s,u}B\sigma_u z\rangle)-1\}g(|z|)\nu(dz) du=
(t-s) \int_0^1 \int_{\R^d}    \{ \cos(\langle p,R_{s,u(v)}^{\cdot,1}\sigma_{u(v)}z \rangle)-1\} g(|z|) \nu(dz)dv.
$$
Now, from Lemma \ref{Form of the resolvent}, we have the identity
$$
R_{s,u(v)}^{\cdot,1} = \M_{s-t} \bar R_v,
$$
setting with a slight abuse of notation 
$\bar R_v = \begin{pmatrix}\bar R_v^1\\ v \bar R_v^2 \\ \vdots \\ \frac{v^{n-1}}{(n-1)!}\bar R_v^n\end{pmatrix}$, where the $(\bar R_v^k)_{k\in \leftB 1,n\rightB} \in \R^d\otimes \R^d$ are non-degenerate and bounded.
The exponent in \eqref{TF} thus rewrites:
\begin{eqnarray}
\int_t^s \int_{\R^d} \{ \cos( \langle p , R_{s,u}B\sigma_u z\rangle)-1\}g(|z|)\nu(dz) du
&=& (t-s) \int_0^1 \int_{\R^d}    \{ \cos(\langle \M_{s-t}p,\bar R_{v}\sigma_{u(v)}z \rangle)-1\} g(|z|) \nu(dz)dv\nonumber \\
&=&(t-s) \int_0^1 \int_{\R^d}    \{ \cos(\langle \sigma_{u(v)}^* \bar R_{v}^* \M_{s-t}p,z \rangle)-1\} g(|z|) \nu(dz)dv.\nonumber\\
\label{PREAL_REP_DENS}
\end{eqnarray}
Observe now from \textbf{[H-4]} and \textbf{[T]} (recall that $g$ is $C^1$ for $\alpha\in (0,1) $ or $C^2$ for $\alpha\in [1,2) $, in a neighborhood of $0$) that:
\begin{eqnarray}
\int_t^s \int_{\R^d} \{ \cos( \langle p , R_{s,u}B\sigma_u z\rangle)-1\}g(|z|)\nu(dz) du=\nonumber\\
(t-s) \int_0^1 \int_{\R^d}    \{ \cos(\langle \sigma_{u(v)}^* \bar R_{v}^* \M_{s-t}p,z \rangle)-1\} g(0) \nu(dz)dv+\nonumber\\
(t-s) \int_0^1 \int_{\R^d}    \{ \cos(\langle \sigma_{u(v)}^* \bar R_{v}^* \M_{s-t}p,z \rangle)-1\} (g(|z|)-g(0)) \nu(dz)dv\nonumber\\
\le c(t-s)\{-\int_0^1 |\sigma_{u(v)}^* \bar R_v^* \M_{s-t} p |^\alpha dv+1\}\le c(t-s)\{-\int_0^1 |\bar R_v^* \M_{s-t} p |^\alpha dv+
1\},\ c:=c(\H),
\label{DOMI_STABLE_PROV}
\end{eqnarray}
using the uniform ellipticity of $\sigma$ in assumptions {\H} for the last inequality. In the above computations,  introducing $g(0)$ allows to exploit the explicit expression for the integral of the Fourier exponent of the stable L\'evy measure $\nu $ and to do Taylor expansions in a neighborhood of $0$ for the term $g(|z|)-g(0) $ thanks to the smoothness of $g$.
Now, the lower bound of the following lemma, whose proof is postponed to Subsection \ref{SUBSECPROOF},  gives that $\varphi_{\Lambda_s}\in L^1(\R^{nd})$ and therefore yields the existence of the density. 
\begin{lemma}\label{bound}
There exists a constant $C_{\ref{bound}}:=C_{\ref{bound}}(\H,T_0)>0$,
 such that for all $s\in [t,t+T_0] $:
\begin{equation}\label{lower bound}
 \int_0^1 | \bar R_v^* \M_{s-t} p |^\alpha dv \geq C_{\ref{bound}} |\M_{s-t} p|^\alpha.
\end{equation}
\end{lemma}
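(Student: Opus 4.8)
The plan is to first peel the matrix $\M_{s-t}$ off the statement. Setting $w:=\M_{s-t}p\in\R^{nd}$, the inequality \eqref{lower bound} reads $\int_0^1|\bar R_v^* w|^\alpha\,dv\ge C_{\ref{bound}}|w|^\alpha$, which no longer involves $\M_{s-t}$; since $\M_{s-t}$ is invertible, it is equivalent to prove this for every $w\in\R^{nd}$. So the whole matter reduces to a uniform coercivity estimate for the $nd\times d$ matrix-valued curve $v\mapsto\bar R_v=(\bar R_v^1,\,v\bar R_v^2,\dots,\frac{v^{n-1}}{(n-1)!}\bar R_v^n)^*$, using only that each block $\bar R_v^k=\bar R_{s,u(v)}^k$ is non-degenerate and bounded, uniformly over $v\in[0,1]$ and $s\in[t,t+T_0]$ (Lemma \ref{Form of the resolvent}). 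By homogeneity one may take $|w|=1$, and it is convenient to establish the stronger fact that there exist $c_0,c_1>0$, depending only on $n,d$ and the non-degeneracy constant of the blocks, such that $\mathrm{Leb}\{v\in[0,1]:\ |\bar R_v^* w|\ge c_0|w|\}\ge c_1$ for every admissible curve and every $w$; integrating immediately yields $\int_0^1|\bar R_v^* w|^\alpha\,dv\ge c_0^\alpha c_1|w|^\alpha$, i.e.\ \eqref{lower bound}.

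This measure lower bound I would prove by induction on $n$. For $n=1$ it is trivial, since $|\bar R_v^* w|=|(\bar R_v^1)^* w|\ge C^{-1}|w|$ for all $v$. For the inductive step, write $w=(w^1,w')$ with $w'=(w^2,\dots,w^n)$ and use the Duhamel-type factorization $\bar R_v^* w=(\bar R_v^1)^* w^1+v\,\tilde R_v^* w'$, where $\tilde R_v$ is the $(n-1)d\times d$ curve with blocks $\frac1j\bar R_v^{j+1}$ and the standard powers $\frac{v^{j-1}}{(j-1)!}$ --- still admissible for the $(n-1)$-problem. Then split into two regimes according to a threshold $\varepsilon\in(0,\tfrac12]$ to be fixed at the end of the step. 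If $|w^1|\ge\varepsilon|w|$, the first term is $\ge C^{-1}\varepsilon|w|$ while the second is at most $v$ times a fixed multiple of $|w|$, so $|\bar R_v^* w|\gtrsim\varepsilon|w|$ on a fixed interval $[0,v_0]$ with $v_0\sim\varepsilon$. If $|w^1|<\varepsilon|w|$, then $|w'|\ge\tfrac12|w|$, the inductive hypothesis applied to $\tilde R_v$ and $w'/|w'|$ produces a set $E\subset[0,1]$ with $\mathrm{Leb}(E)\ge c_1^{(n-1)}$ on which $|\tilde R_v^* w'|\gtrsim|w'|$, and on the part of $E$ where $v$ exceeds a quantity $v_1(\varepsilon)\to0$ the contribution $v\,\tilde R_v^* w'$ dominates $|(\bar R_v^1)^* w^1|\le C\varepsilon|w|$, giving $|\bar R_v^* w|\gtrsim\varepsilon|w|$ there; choosing $\varepsilon$ small enough that $v_1(\varepsilon)\le\tfrac12 c_1^{(n-1)}$ ensures this subset still has measure $\ge\tfrac12 c_1^{(n-1)}$. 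Taking the worse of the two pairs of constants closes the induction. Since all constants produced this way depend only on $\H$, $T_0$, $n$, $d$ (and on $\alpha$ at the final integration step), the bound is uniform in $s\in[t,t+T_0]$ and $t\ge0$, as required by Proposition \ref{LAB_EDFR}.

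The main obstacle is exactly the $v$-dependence of the blocks $\bar R_v^k$. If those blocks were independent of $v$, then $v\mapsto\bar R_v^* w$ would be a vector-valued polynomial of degree $<n$; it could vanish on a positive-measure set only if identically zero, hence only if $(\bar R^k)^* w^k=0$ for each $k$, which by non-degeneracy forces $w=0$ --- and a routine compactness argument over unit $w$ and over blocks satisfying the two-sided bound would turn this into a quantitative lower bound. With genuinely $v$-dependent blocks one cannot invoke non-vanishing of polynomials, and a weak-$L^2$ compactness substitute fails because oscillating rotations may converge weakly to a degenerate matrix. The induction above is tailored to bypass both difficulties: it uses only the pointwise uniform non-degeneracy of the $\bar R_v^k$ together with the triangular structure $\bar R_v^* w=(\bar R_v^1)^* w^1+v\,\tilde R_v^* w'$, which isolates the effect of $w^1$ near $v=0$ and defers the remaining components to the lower-dimensional statement.
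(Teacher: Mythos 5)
Your proof is correct, but it follows a genuinely different route from the paper's. The paper reduces to $\theta\in S^{nd-1}$, uses compactness of the sphere and continuity in $\theta$ to see that $\inf_{\theta}\int_0^1|\bar R_v^*\theta|^\alpha dv$ is attained, and then argues by contradiction: if the minimum were zero, then $\bar R_v^*\theta_0\equiv 0$ for all $v$, and evaluating at points $v_i\downarrow 0$ (dividing out successive powers of $v$) peels off the components, forcing $\theta_0^1=\cdots=\theta_0^n=0$, contradicting $|\theta_0|=1$. You instead prove a quantitative superlevel-set estimate, ${\rm Leb}\{v:|\bar R_v^*w|\ge c_0|w|\}\ge c_1$, by induction on $n$, splitting according to whether $|w^1|\ge\varepsilon|w|$ (where the non-degenerate block $\bar R_v^1$ dominates near $v=0$) or not (where you recurse on the reduced curve $v\,\tilde R_v^*w'$ away from $v=0$). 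Both arguments rest on the same structural input — the triangular form of $\bar R_v$ with increasing powers of $v$ and uniformly non-degenerate, bounded square blocks from Lemma \ref{Form of the resolvent} (and both tacitly use that the lower bound $|\bar R_v^i\xi|\ge C^{-1}|\xi|$ transfers to the adjoint, which is legitimate for square matrices). The paper's argument is shorter but qualitative: as written, the compactness is over $\theta$ for a fixed curve, i.e.\ fixed $(t,s)$, so the uniformity of $C_{\ref{bound}}$ over $0\le t<s\le t+T_0$, which the statement requires, is left implicit. Your induction delivers constants depending only on $n$, $d$, $\alpha$ and the uniform block bounds, hence uniform in $(t,s)$ by construction, and the measure bound is in fact stronger than the integral inequality. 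The only point to state carefully is that the induction hypothesis must be quantified over the whole class of curves with prescribed two-sided block bounds, since the reduced curve has blocks $\frac1j\bar R_v^{j+1}$ with (harmlessly) degraded constants — which you do note, so there is no gap.
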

Since  $\varphi_{\Lambda_s}$ is integrable, we can write by \eqref{PREAL_REP_DENS} and Fourier inversion that for all $z\in \R^{nd} $:
\begin{eqnarray*}
 p_{\Lambda_s}(z) &=& \frac{1}{(2 \pi) ^{nd}}\int_{\R^{nd}} dp e^{-i\langle p,z \rangle} \exp \left(- (t-s) \int_0^1 \int_{\R^d}  \{1- \cos(\langle \M_{s-t}p,\bar R_{v}\sigma_{u(v)}z \rangle)\} g(|z|)  \nu(dz)dv\right)\nonumber\\
 &\le & \frac{1}{(2 \pi) ^{nd}}\int_{\R^{nd}} dp \exp(-c(t-s)\{C_{\ref{lower bound}}|\M_{s-t}p|^\alpha +1\}),
 \end{eqnarray*}
 using \eqref{DOMI_STABLE_PROV} and \eqref{lower bound} for the last inequality. This readily gives the global (diagonal) upper bound for the density.
Now, let us also write from \eqref{POLA_MES} and \eqref{GEN_TEMP}
 \begin{eqnarray}
 \label{density_lambda}
 p_{\Lambda_s}(z)&=& \frac{1}{(2 \pi) ^{nd}}\int_{\R^{nd}} dp e^{-i\langle p,z \rangle} \exp \left(- (t-s) \int_{\R^+} \frac{d \rho}{\rho^{1+\alpha}} \int_0^1 \int_{S^{d-1}}  \{1- \cos(\langle \M_{s-t}p,\bar R_{v}\sigma_{u(v)}\rho \varsigma \rangle)\} g(\rho)  \tilde \mu(d\varsigma)dv\right)\nonumber\\
&=& \frac{1}{(2 \pi) ^{nd}}\int_{\R^{nd}} dp e^{-i\langle p,z \rangle} \exp \left(- (t-s) \int_{\R^+} \frac{d \tilde \rho}{\tilde \rho^{1+\alpha}} \int_0^1 \int_{S^{d-1}}  \{1- \cos(\langle \M_{s-t}p,\frac{\bar R_{v}\sigma_{u(v)}\varsigma}{|\bar R_{v}\sigma_{u(v)}\varsigma|}\tilde \rho  \rangle)\}\right. \nonumber\\
&&\times \left. g\left(\frac{\tilde \rho}{|\bar R_{v}\sigma_{u(v)}\varsigma|} \right)  |\bar R_{v}\sigma_{u(v)}\varsigma|^\alpha\tilde \mu(d\varsigma)dv\right).
\end{eqnarray}
We now define the function 
$$
\begin{array}{cccc}
f: &[0,1] \times S^{d-1} &\longrightarrow &S^{nd-1}\\
&(v,\varsigma) &\longmapsto &\frac{\bar R_v \sigma_{u(v)} \varsigma}{|\bar R_v \sigma_{u(v)} \varsigma|},
\end{array}
$$
and on $[0,1]\times S^{d-1} $ the measure:
$$
m_{\alpha,\tilde \rho}(dv,d\varsigma) =  g\left(\frac{\tilde \rho}{|\bar R_{v}\sigma_{u(v)}\varsigma|} \right)  |\bar R_v \sigma_{u(v)} \varsigma|^\alpha \tilde \mu(d\varsigma)dv.
$$
The exponent in \eqref{density_lambda} thus rewrites: 
\begin{eqnarray*}
\int_0^1 \int_{\R^d}  \{1- \cos(\langle \M_{s-t}p,\bar R_{v}\sigma_{u(v)}z \rangle)\} g(|z|)  \nu(dz)dv
&=&
 \int_{\R^+} \frac{d \tilde \rho}{\tilde \rho^{1+\alpha}} \int_0^1 \int_{S^{d-1}}  \{1- \cos(\langle \M_{s-t}p,f(v,\varsigma)\tilde \rho\rangle )\} m_{\alpha,\tilde \rho}(dv,d\varsigma)\\
 &=&\int_{\R^+} \frac{d \tilde \rho}{\tilde \rho^{1+\alpha}}\int_{S^{nd-1}} \{1- \cos(\langle \M_{s-t}p,\eta \tilde \rho\rangle )\} \mu_{\tilde \rho}^*(d\eta), 
\end{eqnarray*}
denoting by $\mu_{\tilde \rho}^*$ the image measure of $m_{\alpha,\tilde \rho}$ by $f$ (which is a measure on $S^{nd-1}$).
Symmetrizing $\mu_{\tilde \rho}^*$ introducing
$\mu^*_{S,\tilde \rho}(A) = \frac{\mu_{\tilde \rho}^*(A)+\mu_{\tilde \rho}^*(-A)}{2}$, by parity of the cosine, we can write the exponent as:
$$
\int_0^1 \int_{\R^d}  \{1- \cos(\langle \M_{s-t}p,\bar R_{v}\sigma_{u(v)}z \rangle)\} g(|z|)  \nu(dz)dv
 =\int_{\R^+} \frac{d \tilde \rho}{\tilde \rho^{1+\alpha}}\int_{S^{nd-1}} \{1- \cos(\langle \M_{s-t}p,\eta \tilde \rho\rangle )\} \mu_{S,\tilde \rho}^*(d\eta).
$$
We eventually derive:
\begin{eqnarray}
p_{\Lambda_s}(z) &=& \frac{1}{(2 \pi) ^{nd}}\int_{\R^{nd}} dp e^{-i\langle p,z \rangle} \exp \left(- (t-s)\int_{\R^+} \frac{d \tilde \rho}{\tilde \rho^{1+\alpha}}\int_{S^{nd-1}} \{1- \cos(\langle \M_{s-t}p,\eta \tilde \rho\rangle )\} \mu_{S,\tilde \rho}^*(d\eta)\right)\nonumber\\
&=&\frac{1}{(2 \pi) ^{nd}\det(\M_{s-t})}\int_{\R^{nd}} dp e^{-i\langle p,\M_{s-t}^{-1}z \rangle} \exp \big((t-s)\int_{\R^{nd}} \{\cos(\langle p,\xi \rangle)-1\}\nu_{S}(d\xi)\big),\label{density_Lambda}
\end{eqnarray}
where $\nu_S $ is a symmetric measure on $\R^{nd}$.
Also, from \eqref{density_lambda} and Lemma \ref{bound} we get that there exists a symmetric bounded measure $\bar \mu $ on $S^{nd-1}$ and a constant $c>0$  s.t. for all $A\subset \R^{nd} $:
\begin{equation}
\label{DOMI_ALA_SZTONYK}
\nu_S(A)\le \int_{\R^+}\frac{d\rho}{\rho^{1+\alpha}}\int_{S^{nd-1}}\I_{A}(s\xi) g(c\rho)\bar \mu(d\xi),
\end{equation}
where $\bar \mu $ satisfies \textbf{[H-4]} and ${\rm dim}({\rm supp}(\bar \mu))=d  $, recalling for this last property that $\tilde \mu $ is absolutely continuous w.r.t. the Lebesgue measure of $S^{d-1}$. In the stable case, corresponding to $g=1 $ the equality holds in  \eqref{DOMI_ALA_SZTONYK}, and $\bar \mu $ is the spherical part of $ \nu_S $. In that case $\bar  \mu=\mu_{S,\tilde \rho}^*:=\mu_{S} $ since the measure $\mu_{S,\tilde \rho}^*$ introduced above would not depend on $\rho $. In the general case, the domination in \eqref{DOMI_ALA_SZTONYK} can be simply derived from the fact that in \eqref{density_lambda} one has $g\left(\frac{\tilde \rho}{|\bar R_{v}\sigma_{u(v)}\varsigma|} \right)\ge g\left(c \tilde \rho \right),\ \forall (v,\varsigma)\in [0,1]\times S^{d-1}  $.

\end{proof}

\subsubsection{Final derivation of the density bounds.}

\textit{Diagonal controls.} We first consider the case $|(\T_{s-t}^\alpha)^{-1}(R_{s,t}x-y)|\le K $. The upper-bound in \eqref{BORNE_COMPACTE} has already been proven. To obtain the lower-bound we perform computations rather similar to the ones in \cite{kolo:00} which are recalled in Appendix \ref{DIAG_LB}.\\

\noindent\textit{Off-diagonal controls.} We now consider the case $|(\T_{s-t}^\alpha)^{-1}(R_{s,t}x-y)|>K $.
We begin this paragraph recalling some results of Watanabe \cite{wata:07}. The striking and subtle thing with multi-dimensional stable
 processes is that their large scale asymptotics highly depend on the spectral measure. Namely, for a given symmetric spectral measure $\bar \mu $ on $S^{nd-1}$ satisfying \textbf{[H-4]}, implying that the associated symmetric stable process $(\bar S_t)_{t\ge 0}$ has a density on $\R^{nd} $ for $t>0$, the tail asymptotics of $\bar S_1$ can behave, when $|x|\rightarrow +\infty $,  as $p_{\bar S}(1,x)\asymp |x|^{-b} $ for $b\in [(1+\alpha),nd(1+\alpha)] $.
 Indeed, the behavior in $|x|^{-(1+\alpha)} $ would correspond to the decay of a scalar stable process and can appear if $\bar \mu=\sum_{i=1}^{nd} c_i(\delta_{e_i}+\delta_{-e_i})$, where the $(c_i)_{i\in \leftB 1,nd\rightB} $ are positive and $(e_i)_{i\in \leftB 1,nd\rightB} $ stand for the vectors of the canonical basis of $\R^{nd}$, when considering the asymptotics along \textbf{one} direction. On the other hand, the fastest possible decay of $|x|^{-nd(1+\alpha)} $ is also associated with this kind of spectral measure when investigating the large asymptotics for \textbf{all} the directions. Generally speaking, in the current framework, if $\bar \mu $ has support of dimension $k
 \in \leftB 0,nd-1\rightB$ the asymptotics of $\bar S_1 $ satisfy that there exists $\bar C\ge 1$ s.t.:
\begin{equation}
\label{ASYMP_WATA}
\frac{\bar C^{-1}}{|x|^{nd(1+\alpha)}}  \le p_{\bar S}(1,x)\le \frac{\bar C}{|x|^{k+1+\alpha}}.
 \end{equation}
We refer to Theorem 1.1 points \textit{i)} and \textit{iii)} in \cite{wata:07} for the proof of these results. 
The strategy to derive those bounds consists in carefully splitting the small and large jumps. This approach turns out to be very useful for us to investigate the kernel $H$  and is thoroughly exploited in Appendix \ref{controles_phi}.

From the representation \eqref{density} of the density of $\tilde X_s^{t,T,x,y} $ and \eqref{ASYMP_WATA} we readily get the indicated controls in the stable case. We refer to Appendix \ref{controles_phi} for a thorough discussion on the general case.

\subsubsection{Proof of Lemma \ref{bound}.}\label{SUBSECPROOF}
It is enough to show that there exists $C_{\ref{bound}}:=C_{\ref{bound}}(\H,T_0) $, s.t. for any $\theta \in S^{nd-1}$, 
$
 \int_0^1 | \bar R_v^* \theta |^\alpha dv \geq C_{\ref{bound}}
$.
We define  
$$\bar C:= \inf_{\theta \in S^{nd-1}} \int_0^1 |\bar R_v^* \theta |^\alpha dv.$$
By continuity of the involved functions and compactness of $S^{nd-1}$, the infimum is actually a minimum.
We need to show that this quantity is not zero. We proceed by contradiction. Assume that $\bar C=0$.
Then, there exists $\theta_0 \in S^{nd-1}$ such that for almost all $v \in [0,1]$, 
$| \bar R_v^* \theta_0 |=0$.
But since $\bar R_v^*$ is a continuous function in $v$, the previous statement holds for all $v \in[0,1]$, i.e.
$ \exists \theta_0 \in S^{nd-1}, \forall v \in[0,1],  |\bar R_v^* \theta_0 | =0$, or equivalently, that
$ \exists \theta_0 \in S^{nd-1}, \forall v \in[0,1],  \theta_0 \in Ker(\bar R_v^*) $.
Take now arbitrary $(v_i)_{i\in \leftB 1,n\rightB}$ in $[0,1]$. We have for each $i \in \leftB1,n \rightB $: 
$$
    \begin{pmatrix}
    (\bar R_{v_i}^1)^* & v_i (\bar R_{v_i}^2)^* & \cdots & \frac{v_i^n}{(n-1)!} (\bar R_{v_i}^n)^* \\
    \end{pmatrix}
        \begin{pmatrix}
   \theta_0^1\\  \vdots \\ \theta_0^n \\
    \end{pmatrix}
     = 0_{\R^d}.
$$
This 
equivalently  writes in matrix form:
$$
    \begin{pmatrix}
    (\bar R_{v_1}^1)^* & v_1(\bar  R_{v_1}^2)^* & \cdots & \frac{v_1^n}{(n-1)!} (\bar R_{v_1}^n)^* \\
    \vdots &\vdots&&\vdots\\
       (\bar  R_{v_n}^1)^* & v_n (\bar R_{v_n}^2)^* & \cdots & \frac{v_n^n}{(n-1)!} (\bar  R_{v_n}^n)^* \\
    \end{pmatrix}
        \begin{pmatrix}
   \theta_0^1\\  \vdots \\ \theta_0^n \\
    \end{pmatrix}
     = 0_{\R^{nd}}.
$$
Now,  taking $v_1 \rightarrow 0$ in the first line yields $(\bar R_{v_1}^1)^*\theta_0^1 = 0_{\R^d}$. Since the $(\bar R_v^i)_{i\in \leftB 1,n\rightB}$ are from Lemma \ref{Form of the resolvent} non degenerate, we have that $\theta_0^1 = 0_{\R^d}$.
Hence, the second line becomes:
$$
v_2 (\bar R_{v_2}^2)^*\theta_0^2 + \cdots + \frac{v_2^n}{(n-1)!} (\bar R_{v_2}^n)^*\theta_0^n  = 0_{\R^d}.
$$
Dividing by $v_2$, and taking $v_2 \rightarrow 0$, we get  $(\bar R_{v_2}^2)^*\theta_0^2= 0_{\R^d}$. Hence, $\theta_0^2 = 0_{\R^d}$.
By induction, we have that all components $\theta_0^i = 0_{\R^d}$, but this contradicts $\theta_0 \in S^{nd-1}$.
This yields $\bar C:=C_{\ref{bound}} >0$, which concludes the proof.\\
\phantom{BOUOU}\hfill $\square $

\begin{remark}
In the previous argument, the fact that the powers are increasing plays a key-role.
Indeed, we rely on the multi-scale property reflected by the scale matrix $\T^\alpha $. 
\end{remark}

\subsubsection{Proof of Lemma \ref{convergence_dirac}.}
Let us write:
\begin{eqnarray*}
 \int_{\R^{nd}} f(y) \tilde{p}_{\alpha}^{T,y}(t,T,x,y)dy -f(x) 
  &=&  \int_{\R^{nd}} f(y) \Big(\tilde{p}_{\alpha}^{T,y}(t,T,x,y) -  \tilde{p}_{\alpha}^{T,R_{T,t}x}(t,T,x,y)\Big)dy \\
  &&+  \int_{\R^{nd}} f(y) \Big(\tilde{p}_{\alpha}^{T,R_{T,t}x}(t,T,x,y) \Big)dy-f(x).
 \end{eqnarray*}
From Proposition \ref{FROZEN_DENSITY}, the second term tends to zero as $T$ tends to $t$.
Let us discuss the first term. Define:
\begin{equation}\label{conv}
\Delta= \int_{\R^{nd}} f(y) \Big(\tilde{p}_{\alpha}^{T,y}(t,T,x,y) -  \tilde{p}_{\alpha}^{T,R_{T,t}x}(t,T,x,y)\Big)dy.
\end{equation}
For a given threshold $K>0$ and a certain $\beta >0$ to be specified, we split $\R^{nd}$ into $D_1\cup D_2$ where:
$$
D_1=\{ y \in \R^{nd}; |(\T_{T-t}^\alpha)^{-1}(y-R_{T,t}x)| \leq K (T-t)^{-\beta} \},
$$
$$
D_2=\{ y \in \R^{nd}; |(\T_{T-t}^\alpha)^{-1}(y-R_{T,t}x)| > K (T-t)^{-\beta} \}.
$$
From Propositions \ref{EST_DENS_GEL}, \ref{EST_DENS_GEL_T}, the two densities in \eqref{conv} are upper-bounded by 
$
\dfrac{C\det(\T_{T-t}^\alpha)^{-1}}{K \vee  |(\T_{T-t}^\alpha)^{-1}(y-R_{T,t}x)|^{d+1+\alpha}}$. The idea is that on $D_2$ they are both in the \textit{off-diagonal} regime so that tail estimates can be used. On the other hand, we will explicitly exploit the compatibility between the spectral measures and the Fourier transform on $D_1$. 
Set for  $i\in \{1,2\},\ \Delta_{D_i}:=\int_{D_i} f(y) \Big(\tilde{p}_{\alpha}^{T,y}(t,T,x,y) -  \tilde{p}_{\alpha}^{T,R_{T,t}x}(t,T,x,y)\Big)dy $. We derive:
\begin{eqnarray*}
|\Delta_{D_2}|&\leq & C|f|_\infty \int_{D_2}  \frac{\det(\T_{T-t}^\alpha)^{-1}}{K \vee  |(\T_{T-t}^\alpha)^{-1}(y-R_{T,t}x)|^{d+1+\alpha}} dy = C|f|_\infty \int_{K (T-t)^{-\beta}}^{+\infty} dr \frac{r^{nd-1}}{K \vee r^{d+1+\alpha}}\\ 
&\leq &C (T-t)^{\beta((1-n)d+1+\alpha)}.
\end{eqnarray*}
Thus, for $\beta>0$, $\Delta_{D_2}\underset{T\downarrow t}{\longrightarrow}0 $.
On $D_1$, we will start from the inverse Fourier representation of $\tilde{p}_{\alpha}^{T,w}$ deriving from \eqref{density_lambda}, for $w=y$ or  $R_{T,t}x$.
Namely,
\begin{eqnarray*}
\tilde{p}_{\alpha}^{T,w}(t,T,x,y)=\frac{1}{\det(\M_{T-t})(2\pi)^{nd}} \int_{\R^{nd}} dp e^{-i \langle p ,\M_{T-t}^{-1} (y - R_{T,t}x) \rangle}
\exp\left( 
F_{T-t}(p,w)\right),
\end{eqnarray*}
where the Fourier exponent writes: 
$$\forall (p,w)\in (\R^{nd})^2,\ F_{T-t}(p,w)= -(T-t)\int_0^1\int_{\R^d} \{1-\cos(\langle p,\bar R_v\sigma(u(v),R_{u(v),T}w)z\rangle)g(|z|)\nu(dz) \}.
$$
We thus rewrite:
\begin{eqnarray*}
\Big(\tilde{p}_{\alpha}^{T,y}-  \tilde{p}_{\alpha}^{T,R_{T,t}x}\Big)(t,T,x,y)
=\frac{1}{\det(\M_{s-t})(2\pi)^{nd}} \int_{\R^{nd}}  dpe^{-i \langle p , \M_{s-t}^{-1}(y - R_{T,t}x) \rangle}\\
\int_0^1 d\lambda 
\Big(F_{T-t}(p,y)-F_{T-t}(p,R_{T,t}x)\Big)e^{(\lambda F_{T-t}(p,y) +(1-\lambda)F_{T-t}(p,R_{T,t}x))}.
\end{eqnarray*}
The key point is now to observe that from {\textbf{[H-2]}} the proof of Proposition \ref{LAB_EDFR} and the bound of Lemma \ref{bound}, we have:
\begin{eqnarray*}
\forall (p,w)\in (\R^{nd})^2,\ F_{T-t}(p,w)  
\leq C_{\ref{bound}}(T-t)(-| p|^\alpha+1). 
\end{eqnarray*}
Hence, $\exp(\lambda F_{T-t}(p,y) +(1-\lambda)F_{T-t}(p,R_{T,t}x))\leq \exp(C_{\ref{bound}}(T-t)\{-|  p|^\alpha+1\})$, independently on $\lambda\in [0,1]$.
Now, the smoothness of the tempering function $g$ in \textbf{[T]} yields:
\begin{eqnarray*}
|F_{T-t}(p,y) - F_{T-t}(p,R_{T,t}x)|  
 \\
\le (T-t)\int_0^1 \left|\int_{\R^{d}} \cos(\langle \sigma(u(v),R_{u(v),T}y)^*\bar R_v^* p,z\rangle)-\cos(\langle\sigma(u(v),R_{u(v),t}x)^* \bar R_v^* p,z\rangle)   g(|z|) \nu(dz) \right|dv\\
\le c (T-t)\big\{\int_0^1 \int_{S^{d-1}}\Big| | \langle p, \bar{R}_v \sigma(u(v),R_{u(v),T}y)\varsigma \rangle|^{\alpha} -  | \langle  p, \bar{R}_{v} \sigma(u(v),R_{u(v),t}x)\varsigma \rangle|^{\alpha} \Big|\mu(d\varsigma) dv
 +1\big\},
\end{eqnarray*}
using the notations of the proof of Proposition \ref{LAB_EDFR}.
On the other hand, since $\sigma$ is $\eta $-H\"older continuous in its second variable (see {\textbf{[H-1]}}), we have:
\begin{eqnarray*}
|F(p,y) - F(p,R_{T,t}x)|  
 \\
\le c(T-t)\{\int_0^1  |p|^\alpha |R_{u(v),T}y-R_{u(v),t}x|^{\eta(\alpha\wedge 1)}dv+1\}  
\leq C(T-t)\{| p|^\alpha |y-R_{T,t}x|^{\eta (\alpha\wedge 1)}+1\},
\end{eqnarray*}
using the Lipschitz property of the flow for the last inequality. 

To summarize, we get in all cases:
\begin{eqnarray*}
|\Delta_{D_1}| &\leq& |f|_\infty\int_{D_1}dy \left| \tilde{p}_{\alpha}^{T,y}(t,T,x,y) -  \tilde{p}_{\alpha}^{T,x}(t,T,x,y) \right| \\
&\leq& C  
\frac{1}{\det(\M_{T-t})}\int_{D_1}dy \int_{\R^{nd}}dp    (T-t)\{|  p|^\alpha |y-R_{T,t}x|^{\eta(\alpha\wedge 1)}+1\} e^{-C_{\ref{bound}} (T-t)|  p|^\alpha}.
\end{eqnarray*}
Changing variables, and integrating over $p$ yields
\begin{eqnarray*}
|\Delta_{D_1}| &\leq& \frac{C}{\det(\T_{T-t}^\alpha)}
 \int_{\{  |(\T_{T-t}^\alpha)^{-1}(y-R_{T,t}x)| \leq K (T-t)^{-\beta} \}}dy \{|y-R_{T,t}x|^{\eta(\alpha\wedge 1)}+(T-t)\}\\
&\leq&C \int_{\{  |Y| \leq K (T-t)^{-\beta} \}}dY \{|\T_{T-t}^\alpha Y|^{\eta(\alpha\wedge 1)}+(T-t)\} \leq C(T-t)^{\eta(\frac 1\alpha\wedge 1)  -\beta (nd+\eta(\alpha \wedge 1))}.
\end{eqnarray*}
Choosing now $\frac{\eta(\frac 1\alpha \wedge 1)}{nd+\eta(\alpha \wedge 1)} > \beta>0 $ gives that $|\Delta_{D_1}|\underset{T\downarrow t}{\longrightarrow} 0$, which concludes the proof. \hfill $\square $

\subsection{Estimates on the convolution kernel $H$.}
\label{Estimates on the kernel} 



In order to derive pointwise bounds on the kernel $H(t,T,x,y):=(L_t-\tilde L_t^{T,y})\tilde p_\alpha^{T,y}(t,T,x,y)$, it is convenient, since $\tilde p_\alpha^{T,y} $ is given in terms of Fourier inversion, to compute the symbols of the operators $ L_t,\tilde L_t^{T,y}$. Precisely, we denote by $l_t(p,x) $ (resp. $\tilde l_t^{T,y}(p,x) $) the functions of $(p,x)\in (\R^{nd})^2 $ s.t.
\begin{eqnarray*}
\forall \varphi \in C_0^2(\R^{nd}), \ \forall x\in \R^{nd},\ L_t\varphi(x)&=& \frac{1}{(2\pi)^{nd}}\int_{\R^{nd}}dp\exp(-i\langle p,x\rangle)l_t(p,x)\hat \varphi(p),\\
\tilde L_t^{T,y}\varphi(x)&=& \frac{1}{(2\pi)^{nd}}\int_{\R^{nd}}dp\exp(-i\langle p,x\rangle)\tilde l_t^{T,y}(p,x)\hat \varphi(p).
\end{eqnarray*} 
We refer to Jacob \cite{jacob} for further properties of the symbols associated with an integro-differential operator.
From usual properties of the (inverse) Fourier transform, 
we derive the following expressions.
\begin{lemma}\label{expression des symboles}
Let $(p,x)\in(\R^{nd})^2 $ be given. Recalling that 
 $B$ stands for the injection matrix of $\R^d$ into $\R^{nd}$, we have:
\begin{eqnarray*}
l_t(p,x) &=& \langle p,A_tx \rangle + \int_{\R^{d}} \{\cos( \langle p, B\sigma(t,x)z \rangle) -1 \}g(|z|) \nu(dz),
\\
\tilde{l}_t^{T,y}(p,x) &=& \langle p,A_t x \rangle + \int_{\R^{d}} \{\cos( \langle p, B\sigma(t,R_{t,T}y)z \rangle) -1 \}g(|z|) \nu(dz). \label{symbole tilde}
\end{eqnarray*}
\end{lemma}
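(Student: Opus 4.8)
The plan is to identify $l_t$ and $\tilde l^{T,y}_t$ by letting the operators act on the oscillatory exponentials $e_p(x):=e^{-i\langle p,x\rangle}$, $p\in\R^{nd}$, and then plugging in the Fourier inversion formula $\varphi(x)=(2\pi)^{-nd}\int_{\R^{nd}}e^{-i\langle p,x\rangle}\hat\varphi(p)\,dp$. Since $L_t$ and $\tilde L^{T,y}_t$ differ only through the replacement of $\sigma(t,x)$ by $\sigma(t,R_{t,T}y)$, I would only treat $L_t$. For the transport part, $\langle A_t x,\nabla e_p(x)\rangle=-i\langle p,A_t x\rangle\,e_p(x)$, which reproduces the first summand of $l_t(p,x)$ up to the $\pm i$ factor absorbed into the normalisation of the inversion formula. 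For the non-local part one substitutes $e_p$ and factors out $e_p(x)$ to obtain
\[
e_p(x)\int_{\R^d}\Bigl(e^{-i\langle p,B\sigma(t,x)z\rangle}-1+\tfrac{i\langle p,B\sigma(t,x)z\rangle}{1+|z|^2}\Bigr)g(|z|)\,\nu(dz).
\]

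The key elementary observation is that $g(|z|)\,\nu(dz)$ is invariant under $z\mapsto-z$: the function $g(|z|)$ is radial and $\nu$, being the L\'evy measure of a \emph{symmetric} stable process (see \eqref{POLA_MES}), is symmetric. Hence the two odd integrands $z\mapsto\langle p,B\sigma(t,x)z\rangle/(1+|z|^2)$ and $z\mapsto\sin(\langle p,B\sigma(t,x)z\rangle)$ integrate to zero, so that, writing $e^{-i\langle p,B\sigma(t,x)z\rangle}=\cos(\langle p,B\sigma(t,x)z\rangle)-i\sin(\langle p,B\sigma(t,x)z\rangle)$, the displayed quantity reduces to $e_p(x)\int_{\R^d}\{\cos(\langle p,B\sigma(t,x)z\rangle)-1\}g(|z|)\,\nu(dz)$, i.e.\ the compensator-free non-local symbol announced in the lemma. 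The identity $\tilde L^{T,y}_t e_p(x)=\tilde l^{T,y}_t(p,x)e_p(x)$ follows verbatim, with $\sigma(t,x)$ replaced by $\sigma(t,R_{t,T}y)$.

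It remains to turn this pointwise-in-$p$ computation into the claimed integral representation, that is, to justify exchanging the $dp$-integration with the operator (which itself involves the $\nu(dz)$-integration and a gradient). First, the symbol integral converges: near $z=0$, $|1-\cos\langle p,B\sigma(t,x)z\rangle|\le\tfrac12\|\sigma(t,x)\|^2|p|^2|z|^2$ is $\nu$-integrable because $\int_{|z|\le1}|z|^2\nu(dz)<\infty$ for $\alpha\in(0,2)$ and $g$ is locally bounded, while near infinity the integrand is bounded and $\int_{|z|\ge1}g(|z|)\nu(dz)<\infty$ since $g$ is bounded; splitting at $|z|=|p|^{-1}$ and invoking \textbf{[H-2]} (and \textbf{[H-4]} in the stable case, \textbf{[T]} in the tempered case) yields $|l_t(p,x)|\le C(1+|x|)(1+|p|)$ locally uniformly in $x$. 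Consequently, for $\varphi\in C_0^\infty(\R^{nd},\R)$, for which $\hat\varphi$ is rapidly decreasing, the double integral in $(p,z)$ is absolutely convergent, Fubini applies, and the representation follows by linearity of $L_t$; one then extends it to every $\varphi\in C^2_0(\R^{nd},\R)$ by density of $C_0^\infty$ in $C_0^2$ and continuity of $L_t,\tilde L^{T,y}_t$. The only genuinely technical point is this last interchange of integrals; the symmetry cancellation producing the compensator-free form is routine but essential, since it is exactly this cancellation that renders the subsequent comparison of $l_t$ and $\tilde l^{T,y}_t$ in the kernel $H$ tractable.
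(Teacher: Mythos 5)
Your proposal is correct and follows essentially the same (standard) route that the paper leaves implicit behind ``usual properties of the inverse Fourier transform'': act with the operators on the exponentials $e^{-i\langle p,\cdot\rangle}$, use the symmetry of $g(|z|)\,\nu(dz)$ to eliminate the compensator and the sine and obtain the compensator-free cosine form, then justify the interchange of the $dp$- and $\nu(dz)$-integrations via the quadratic bound near $z=0$ and the rapid decay of $\hat\varphi$. One small precision: for $\alpha\in[1,2)$ the sine term and the compensator $\langle p,B\sigma(t,x)z\rangle/(1+|z|^2)$ are not individually $\nu$-integrable near $z=0$, so the symmetry cancellation should be applied to their combination (the imaginary part of the full integrand, which is odd and absolutely integrable since it is $O(|z|^2)$ at the origin) rather than to each piece separately.
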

From Lemma \ref{expression des symboles} we rewrite:
\begin{eqnarray*}
H(t,T,x,y) &=& \frac{1}{(2\pi)^{nd}} \int_{\R^{nd}} dp e^{-i\langle p, y- R_{T,t}x \rangle}
 \left\{ 
 \int_{\R^{d}} \{\cos( \langle p, B\sigma(t,x)z \rangle) -
 \cos( \langle p, B\sigma(t,R_{t,T}y)z \rangle)
  \}g(|z|) \nu(dz)
  \right\}\\
&&\times \exp \left( -\int_t^T du\int_{\R^d} \{1- \cos(\langle p, R_{T,u}^{1,\cdot}\sigma(u,R_{u,T}(y)\tilde z \rangle)\} g(|\tilde z|) \nu(d\tilde z) \right).
\end{eqnarray*}
\begin{remark}
Observe the interesting fact that since the drift is linear, it disappears in the difference of the generators.
\end{remark}
Let us now derive the diagonal bounds on the kernel, i.e. when $|(\T_{T-t}^\alpha)^{-1}(y-R_{T,t}x)|\le K $.
Observe first from the proof of Proposition \ref{LAB_EDFR} that we can write:
\begin{eqnarray*}
|H(t,T,x,y)|
\le C
\int_{\R^{nd}} dp 
 \left| \int_{\R^{d}} \{\cos( \langle p, B\sigma(t,x)z \rangle) -
 \cos( \langle p, B\sigma(t,R_{t,T}y)z \rangle)
  \}g(|z|) \nu(dz)
  \right|
 \exp\left( -c |\T_{T-t}^{\alpha} p|^\alpha  \right).
\end{eqnarray*}
Assume first that $\alpha \in (0,1)$. We then perform a first order Taylor expansion in the variable $z=\rho\varsigma$ associated with a radial cut-off at threshold $1/\{|p^1|\Delta \sigma(t,x,R_{t,T}y)\},\ \Delta \sigma(t,x,R_{t,T}y):= |\sigma(t,x)-\sigma(t,R_{t,T}y)| $. Recalling that $\sigma $ is $\eta $-H\"older continuous, we obtain:
\begin{eqnarray*}
|H(t,T,x,y)|\le C \int_{\R^{nd}} dp \left\{ \int_{|z|\le 1/\{|p^1|\Delta \sigma(t,x,R_{t,T}y)\}} |p^1| \Delta \sigma(t,x,R_{t,T}y) 
\rho \tilde \mu(d\varsigma)\frac{d\rho}{\rho^{1+\alpha}}\right.\\
+\left.2\int_{\rho>1/\{|p^1|\Delta \sigma(t,x,R_{t,T}y)\}}\frac{d\rho}{\rho^{1+\alpha}}\right\}
 \exp\left( -c |\T_{T-t}^{\alpha} p|^\alpha  \right)\\
 \le C\int_{\R^{nd}}dp |p^1|^\alpha \{\Delta \sigma(t,x,R_{t,T}y)\}^\alpha\exp\left(-c |\T_{T-t}^{\alpha} p|^\alpha\right)\\
 \le C\frac{\delta\wedge |x-R_{t,T}y|^{\alpha\eta}}{T-t}\int_{\R^{nd}}dp (T-t)|p^1|^{\alpha}\exp\left(-c |\T_{T-t}^{\alpha} p|^\alpha\right)\\
 \le C\frac{\delta\wedge |x-R_{t,T}y|^{\alpha\eta}}{T-t} \det(\T_{T-t}^\alpha)^{-1}=C\frac{\delta\wedge |x-R_{t,T}y|^{\alpha\eta}}{T-t}\bar p_\alpha(t,T,x,y).
\end{eqnarray*}
The case $\alpha\in (1,2) $ can be handled as above performing a Taylor expansion at order $2$ for the small jumps and $1$ for the large ones if   for the threshold $1/|p^1| $. The case $\alpha=1 $ is direct in the stable case and can be extended to the tempered one performing a first order Taylor expansion for the small jumps using the smoothness of $g$ around the origin.  

This gives 
the claim of Lemma \ref{CTR_KER_PTW} in the diagonal regime. The off-diagonal case is much more involved and leads to consider a quite tricky phenomenon of \textit{rediagonalization}. These aspects are considered in Appendix \ref{controles_phi}. 

\begin{remark}\label{drift borne}
We emphasize here that we could also consider an additional bounded drift term in the first $d $ components when $\alpha>1 $. Denoting this term by $b:\R^+\times\R^{nd}\rightarrow \R^d $, we could still use the previous frozen process as proxy. Exploiting the above symbol representation,  the additional term coming from the difference of the generators would write 
\begin{eqnarray*}
\langle b(t,x),\nabla_{x^1}\tilde p_\alpha(t,T,x,y)\rangle =\frac{1}{(2\pi)^{nd}} \int_{\R^{nd}} dp e^{-i\langle p, y- R_{T,t}x \rangle}
   \langle b(t,x),p^1\rangle \\
 \times\exp
\left( -\int_t^T du \int_{\R^{nd}} \{1-\cos(\langle p, R_{T,u}^{1,\cdot}\sigma(u,R_{u,T}y)z \rangle)\} g(|z|)\nu(dz) \right),
\end{eqnarray*} 
where $\nabla_{x^1} $ stands for the derivative w.r.t. to the first $d$ components. Observe that $|p^1|(T-t)^{1/\alpha} $ is homogeneous to the the contributions associated with $p^1$ in the exponential. This actually yields:
$$|\langle b(t,x),\nabla_{x^1}\tilde p_\alpha(t,T,x,y)\rangle|\le \frac{|b|_\infty}{(T-t)^{1/\alpha}}\bar p_\alpha (t,T,x,y), $$
on the diagonal which for $\alpha>1$ gives an integrable singularity in time. The off-diagonal case can be handled as in Appendix \ref{controles_phi}.
\end{remark}

\mysection{Controls of the convolutions.
}
\label{SECTION_TECH}
In this section we assume w.l.o.g. that $T\le T_0 = T_0(\H)\le 1$, as in Lemma \ref{Form of the resolvent}.
We first prove
Lemma \ref{lemme} that 
 emphasizes how the spatial contribution in the r.h.s. of \eqref{majoration H} yields, once integrated, a regularizing effect in time. 

\subsection{Proof of Lemma \ref{lemme}.}

We prove the first estimate only, the other one is obtained similarly.
Let us  naturally split the space according to the regimes of $\bar {p}_\alpha$ and $\breve p_\alpha $. With the notations of Proposition \ref{EST_DENS_GEL} we introduce the partition:
$$
D_1=\{ z \in \R^{nd} ; | (\T_{T-\tau}^\alpha)^{-1} (y-R_{T,\tau}z)| \leq K  \},\
D_2=\{ z \in \R^{nd} ; |(\T^{\alpha}_{T-\tau})^{-1} (y-R_{T,\tau}z)| > K  \}.
$$
On $D_1$, the diagonal control holds for $\bar {p}_\alpha+\breve{p}_\alpha$, that is, for $z\in D_1$ and recalling the definition of $\T_{T-\tau}^\alpha $ in Theorem \ref{MTHM}:
$$
(\bar {p}_\alpha+\breve{p}_\alpha)(\tau,T,z,y) \leq C_{\ref{EST_DENS_GEL}} \det ({  \T^{\alpha}_{T-\tau}  })^{-1}  =C_{\ref{EST_DENS_GEL}} (T-\tau)^{-d(\frac n \alpha+\frac{n(n-1)}2)}.
$$
On the other hand, denoting by $\| \cdot \| $ the matricial norm, we have from the scaling Lemma \ref{ScalingLemma}:
\begin{eqnarray*}
|z-R_{\tau,T}y|^{\eta(\alpha \wedge 1)} &\leq &
\|R_{\tau,T}\|^{\eta(\alpha \wedge 1)} \|\T_{T-\tau}^\alpha\|^{\eta(\alpha \wedge 1)} |(  \T^{\alpha}_{T-\tau}  )^{-1} (y- R_{T,\tau}z)|^{\eta(\alpha \wedge 1 )} 
\leq C (T-\tau)^{\eta(\frac 1 \alpha \wedge 1)},
\end{eqnarray*}
where the last inequality follows from the boundedness of the resolvent on compact sets and the definition of  $\T_{T-\tau}^\alpha $.
 
Besides, the Lebesgue measure of the set $D_1$ is bounded by $C\det (\T_{T-\tau}^\alpha)$, compensating exactly the time singularity appearing in the bound of $\tilde{p}_\alpha+\breve{p}_\alpha$.
In conclusion, we obtained on $D_1$:
$$
\int_{D_1} \delta \wedge |z-R_{\tau,T}y|^{\eta(\alpha \wedge 1)}(\bar{p}_\alpha+\breve{p}_\alpha)(\tau,T,z,y) dz \le  C(T-\tau)^{\eta(\frac{1}{\alpha}\wedge 1)}.
$$
Similarly, for $z\in D_2$, the off-diagonal bound holds for $\bar{p}_\alpha$ and $\breve p_\alpha $, i.e.:
\begin{eqnarray*}
(\bar{p}_\alpha+\breve{p}_\alpha)(\tau,T,z,y) \leq  C\bigg\{ \frac{\det( \T^{\alpha}_{T-\tau}  )^{-1} }{|(\T^{\alpha}_{T-\tau} )^{-1} (y-R_{T,\tau}z)|^{d+1+\alpha}}\\
+\frac{\I_{|(z-R_{\tau,T}y)^1|/(T-\tau)^{1/\alpha}| \asymp |\T_{(T-\tau)}^{-\alpha}(z-R_{\tau,T}y)|}}{(T-\tau)^{d/\alpha}(1+\frac{|(z-R_{\tau,T}y)^1|}{(T-\tau)^{1/\alpha}})^{d+\alpha}}\times \frac{1}{(T-\tau)^{\frac{(n-1)d}\alpha+\frac{n(n-1)d}2}(1+|(\T_{T-\tau}^{-\alpha}(z-R_{\tau,T}y)^{2:n}|)^{1+\alpha}} \bigg\}.
\end{eqnarray*}

From the scaling Lemma \ref{ScalingLemma} we derive $|z-R_{\tau,T}y|^{\eta(\alpha \wedge 1)} \le C|y-R_{T,\tau}z|^{\eta(\alpha \wedge 1)}\le C (T-\tau)^{\eta(\frac{1}{\alpha}\wedge 1)}|(\T^{\alpha}_{T-\tau})^{-1} (y-R_{T,\tau}z)|^{\eta(\alpha \wedge 1)}$. Hence setting $\xi:=|(\T^{\alpha}_{T-\tau})^{-1} (y-R_{T,\tau}z)|$ we first derive
\begin{eqnarray}
\label{INTERM_D2}
\int_{D_2} \delta \wedge |z-R_{\tau,T}y|^{\eta(\alpha \wedge 1)}\bar{p}_\alpha(\tau,T,z,y) dz\le C\int_{\xi>K}\bigl(\delta \wedge [ (T-\tau)^{\eta(\frac{1}{\alpha}\wedge 1)} \xi^{\eta(\alpha \wedge 1)} ]\bigr) \xi^{nd-1} \frac{d\xi}{\xi^{d+1+\alpha}}.
\end{eqnarray}
Now if $\beta:=(1-n)d+2+\alpha-\eta (\alpha\wedge 1)>1$, we directly get $\int_{\xi>K}\bigl(\delta \wedge[(T-\tau)^{\eta(\frac{1}{\alpha}\wedge 1)} \xi^{\eta(\alpha \wedge 1)}]\bigr) \frac{d\xi}{\xi^{(1-n)d+2+\alpha}}\le (T-\tau)^{\eta(\frac{1}\alpha \wedge 1)}\int_{\xi>K}\frac{d\xi}{\xi^{\beta
}}\le C (T-\tau)^{\eta(\frac{1}\alpha \wedge 1)}$.
When $\beta\le1$ 
we have to be more subtle.
We refine the partition introducing:
\begin{eqnarray*}
D_{2,1} = \{ \xi \in \R; K\leq \xi \leq K(T-\tau)^{-1/\alpha}\},\
D_{2,2} = \{ \xi \in \R; \xi > K(T-\tau)^{-1/\alpha}\}.
\end{eqnarray*}
On $D_{2,1}$, writing $\delta \wedge[(T-\tau)^{\eta(\frac{1}{\alpha}\wedge 1)}\xi^{\eta(\alpha \wedge 1)}]\leq  [(T-\tau)^{\eta(\frac{1}{\alpha}\wedge 1)}\xi^{\eta(\alpha \wedge 1)}]$ we get:
$(T-\tau)^{\eta(\frac{1}{\alpha}\wedge 1)}\int_{\xi \in D_{2,1}} d\xi \xi^{-\beta}\le C\{ (T-\tau)^{((1-n)d+1+\alpha)/\alpha}\I_{\beta<1}+(T-t)^{\eta(\frac{1}{\alpha}\wedge 1)}|\log(T-\tau)|\I_{\beta=1}\}.$
On $D_{2,2}$, using $\delta \wedge [(T-\tau)^{\eta(\frac 1\alpha \wedge 1)}\xi^{\eta(\alpha \wedge 1)}]\leq\delta$ we derive $\int_{\xi\in D_{2,2}}\frac{d\xi}{\xi^{(1-n)d+2+\alpha}}\le C_\delta(T-\tau)^{((1-n)d+1+\alpha)/\alpha} $.
Plugging the above controls in \eqref{INTERM_D2} yields the stated control. Let us now turn to:
\begin{eqnarray*}
\int_{D_2} \delta \wedge |z-R_{\tau,T}y|^{\eta(\alpha \wedge 1)}\breve{p}_\alpha(\tau,T,z,y) dz\le C\int_{|\zeta|>K}\bigl(\delta \wedge [ (T-\tau)^{\eta(\frac{1}{\alpha}\wedge 1)} |\zeta|^{\eta(\alpha \wedge 1)} ]\bigr) \frac{\I_{|\zeta^1|\asymp |\zeta|}}{(1+|\zeta^1|)^{d+\alpha}} \frac{d\zeta}{(1+|\zeta^{2:n}|)^{1+\alpha}},
\end{eqnarray*} 
where we have set $\zeta:=(\T^{\alpha}_{T-\tau})^{-1} (y-R_{T,\tau}z) $. We can now somehow \textit{tensorize} the two contributions. We obtain on the considered events:
\begin{eqnarray*}
\int_{D_2} \delta \wedge |z-R_{\tau,T}y|^{\eta(\alpha \wedge 1)}\breve{p}_\alpha(\tau,T,z,y) dz\le C\big\{\int_{|\zeta^1|>cK}\bigl(\delta \wedge [ (T-\tau)^{\eta(\frac{1}{\alpha}\wedge 1)} |\zeta^1|^{\eta(\alpha \wedge 1)} ]\bigr)  \frac{d\zeta^1}{|\zeta^1|^{d+\alpha}}\\
+\int_{|\zeta|>K}\bigl(\delta \wedge [ (T-\tau)^{\eta(\frac{1}{\alpha}\wedge 1)} |\zeta^{2:n}|^{\eta(\alpha \wedge 1)} ]\bigr) \frac{\I_{|\zeta^1|\asymp |\zeta|}}{(1+|\zeta^1|)^{d+\alpha}} \frac{d\zeta}{(1+|\zeta^{2:n}|)^{1+\alpha}}\big\}:=\breve{T}_1+\breve{T}_2.
\end{eqnarray*} 
For the term $\breve{T}_1 $, we directly have $\breve{T}_1\le C (T-\tau)^{\eta(\frac 1\alpha \wedge 1)} $ provided $\alpha>\eta(\alpha\wedge 1) $. Otherwise, i.e. the only possible case is $\alpha=\eta(\alpha\wedge 1) $, considering the partition $|\zeta^1|\in D_{2,1}\cup D_{2,2} $ as above replacing $K$ by $cK$, one can reproduce the previous arguments. Namely, on $D_{2,1}$, $(T-\tau)^{\eta(\frac 1\alpha \wedge 1)}\int_{D_{2,1}}r^{-(1+\alpha)+\eta(\alpha\wedge 1)}dr\le  C\{(T-\tau)^{\eta(\frac{1}{\alpha}\wedge 1)}|\log(T-\tau) |\}$. On the other hand, on $D_{2,2} $, $ \int_{D_{2,2}} (\delta \wedge [ (T-\tau)^{\eta(\frac{1}{\alpha}\wedge 1)} |\zeta^1|^{\eta(\alpha \wedge 1)}])\frac{d\zeta^1}{|\zeta^1|^{d+\alpha}}\le \delta\int_{r>(T-\tau)^{-1/\alpha}Kc}\frac{dr}{r^{1+\alpha}} \le C(T-\tau)$. For $\breve{T}_2 $, on $\{ |\zeta^{2:n}|\le K\} $ we directly get the estimate. Now, for  $\{ |\zeta|^{2:n}>K\} $ we get:
\begin{eqnarray*}
\int_{|\zeta^{2:n}|>K\cap |\zeta|>K}\bigl(\delta \wedge [ (T-\tau)^{\eta(\frac{1}{\alpha}\wedge 1)} |\zeta^{2:n}|^{\eta(\alpha \wedge 1)} ]\bigr) \frac{\I_{|\zeta^1|\asymp |\zeta|}}{(1+|\zeta^1|)^{d+\alpha}} \frac{d\zeta}{(1+|\zeta^{2:n}|)^{1+\alpha}}\big\}\\
\le (T-\tau)^{\eta(\frac 1\alpha \wedge 1)}\int_{|\zeta^1|>\bar c K}\frac{d\zeta^1}{(1+|\zeta^1|)^{d+\alpha}}\int_{ c |\zeta^1|\ge  |\zeta^{2:n}|\ge K}|\zeta^{2:n}|^{\eta(\alpha\wedge 1)}
\frac{d\zeta^{2:n}}{(1+|\zeta^{2:n}|)^{1+\alpha}}\\
\le C(T-\tau)^{\eta(\frac 1\alpha \wedge 1)}\int_{|\zeta^1|>\bar c K}\frac{d\zeta^1}{(1+|\zeta^1|)^{d+\alpha}} \{|\zeta^1|^{\eta(\alpha\wedge 1)+(n-1)d-1-\alpha}\I_{ \beta<1}+\log(|\zeta^1|)\I_{ \beta=1}\},
\end{eqnarray*}
for $\beta$ as above. Thus 
\begin{eqnarray*}
\breve T_2\le C(T-\tau)^{\eta(\frac 1\alpha \wedge 1)}\{ 1+\int_{r>\bar c K}dr \{r^{- (d(1-n)+2+2\alpha-\eta(\alpha\wedge 1))}\I_{ \beta<1}+r^{-(1+\alpha)}\log(r)\I_{ \beta=1}\}\le C(T-\tau)^{\eta(\frac 1\alpha \wedge 1)}, 
\end{eqnarray*}
using again the condition $d(1-n)+1+\alpha>0 $ for the last inequality.
The smoothing bounds of equations \eqref{lemme backward spec}, \eqref{lemme forward spec}
for $d=1,n=2 $ when the fast component is considered 
can be derived similarly.
\hfill $\square $

A useful extension of the previous result is the following lemma involving an additional logarithmic contribution which is \textit{explosive} in the off-diagonal regime. This anyhow does not affect \textit{much} the smoothing effect.

\begin{lemma}\label{lemme2}
There exists $C_{\ref{lemme2}}:=C_{\ref{lemme2}}(\H,T_0)>0 $ s.t. for all $T\in (0,T_0], (x,y)\in (\R^{nd})^2$, $\tau \in (t,T)$:
\begin{eqnarray*}
\int_{\R^{nd}} \log(K \vee |(\T_{T-\tau}^{\alpha})^{-1}(y-R_{T,\tau}z)|)\bigl\{ \delta \wedge |(z-R_{\tau,T}y)^2|^{\eta(\alpha \wedge 1)}\bigr\}(\bar{p}_\alpha+\breve{p}_\alpha) (\tau,T,z,y) dz \\
\le  C_{\ref{lemme2}} (T-\tau)^{(1+\frac1\alpha)\eta(\alpha
 \wedge1)},\\
\int_{\R^{nd}} \log(K \vee |(\T_{\tau-t}^{\alpha})^{-1}(z-R_{\tau,t}x)|) \bigl\{\delta \wedge [(\tau-t)|(z-R_{\tau,t}x)^1|+|(z-R_{\tau,t}x)^2|]^{\eta(\alpha\wedge 1)}\bigr\}
\bar{p}_{\alpha,\Theta}
(t,\tau,x,z) dz \\
\le  C_{\ref{lemme2}} (\tau-t)^{(1+\frac 1 \alpha)\eta(\alpha
 \wedge1)}. 
\end{eqnarray*}
\end{lemma}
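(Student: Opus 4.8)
## Proof Plan for Lemma \ref{lemme2}

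The plan is to reduce both estimates to the computations already carried out in the proof of Lemma \ref{lemme}, treating the extra logarithmic factor as a mild perturbation. The key observation is that $\log(K \vee \xi) \le C_\varepsilon \xi^\varepsilon$ for any $\varepsilon > 0$ once $\xi \ge K$, so in the off-diagonal regime the logarithm can be absorbed at the price of an arbitrarily small polynomial loss, while in the diagonal regime the logarithm is simply bounded by $\log(2K)$, a constant. This means the only place the logarithm genuinely interferes is in the integrals over the off-diagonal region $D_2$, where one must check that the tail integrands remain integrable and that the resulting power of $(T-\tau)$ (resp. $(\tau-t)$) is not worse than the claimed exponent $(1+\frac1\alpha)\eta(\alpha\wedge1)$.

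First I would fix the first (backward) estimate and recall that here $d=1$, $n=2$ (since the fast-component smoothing weight $|(z-R_{\tau,T}y)^2|^{\eta(\alpha\wedge1)}$ only appears in that case). Split $\R^{2}$ into $D_1 = \{|(\T_{T-\tau}^\alpha)^{-1}(y-R_{T,\tau}z)| \le K\}$ and $D_2$ its complement, exactly as in the proof of Lemma \ref{lemme}. On $D_1$ the logarithm is bounded by $\log(2K)$ and the bound $C_{\ref{lemme}}(T-\tau)^{(1+1/\alpha)\eta(\alpha\wedge1)}$ follows verbatim from the argument for \eqref{lemme backward spec}, since the fast component satisfies $|(z-R_{\tau,T}y)^2|^{\eta(\alpha\wedge1)} \le C(T-\tau)^{(1+1/\alpha)\eta(\alpha\wedge1)}|(\T_{T-\tau}^\alpha)^{-1}(y-R_{T,\tau}z)|^{\eta(\alpha\wedge1)}$ by the Scaling Lemma \ref{ScalingLemma} and the structure of $\T_{T-\tau}^\alpha$, and $|D_1|$ is of order $\det(\T_{T-\tau}^\alpha)$. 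On $D_2$ I would change variables to $\xi = |(\T_{T-\tau}^\alpha)^{-1}(y-R_{T,\tau}z)| > K$, use the off-diagonal forms of $\bar p_\alpha$ and $\breve p_\alpha$ from Proposition \ref{EST_DENS_GEL}, \ref{EST_DENS_GEL_T} and Lemma \ref{CTR_KER_PTW}, and arrive at a one-dimensional radial integral of the type
\begin{equation*}
(T-\tau)^{(1+1/\alpha)\eta(\alpha\wedge1)}\int_{\xi>K}\log(\xi)\,\bigl(\delta \wedge \xi^{\eta(\alpha\wedge1)}\bigr)\,\xi^{nd-1}\frac{d\xi}{\xi^{d+1+\alpha}}
\end{equation*}
together with an analogous integral for the $\breve p_\alpha$ contribution handled by the same \textit{tensorization} device (separating the slow coordinate $\zeta^1$ from the fast one $\zeta^{2:n}$) used for $\breve T_1, \breve T_2$ in the proof of Lemma \ref{lemme}. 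Since $d(1-n)+1+\alpha>0$ is in force, the exponent $\beta = (1-n)d+2+\alpha-\eta(\alpha\wedge1)$ satisfies $\beta>1$ generically, and the factor $\log(\xi)$ does not destroy convergence; when $\beta \le 1$ one refines into $D_{2,1} = \{K \le \xi \le K(T-\tau)^{-1/\alpha}\}$ and $D_{2,2} = \{\xi > K(T-\tau)^{-1/\alpha}\}$ exactly as before, picking up at worst a $|\log(T-\tau)|$ on $D_{2,1}$ which is absorbed into the surplus power of $(T-\tau)$ (note $(1+1/\alpha)\eta(\alpha\wedge1) > \eta(1/\alpha\wedge1)$, leaving room). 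The second (forward) estimate is handled identically, now with the weight $\delta \wedge [(\tau-t)|(z-R_{\tau,t}x)^1|+|(z-R_{\tau,t}x)^2|]^{\eta(\alpha\wedge1)}$, which by the Scaling Lemma is again controlled by $(\tau-t)^{(1+1/\alpha)\eta(\alpha\wedge1)}|(\T_{\tau-t}^\alpha)^{-1}(z-R_{\tau,t}x)|^{\eta(\alpha\wedge1)}$ — this is precisely the combination appearing in \eqref{lemme forward spec}; the $\bar p_{\alpha,\Theta}$ bound differs from $\bar p_\alpha$ only by the bounded, non-increasing factor $\Theta \le \Theta(0)$, which can be dropped, so the same radial computation applies.

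The main obstacle I anticipate is the bookkeeping in the borderline case $\beta \le 1$ (equivalently when $\eta(\alpha\wedge1)$ is close to $\alpha$ and $d(1-n)+1+\alpha$ is small), where the logarithm multiplies an integrand that is barely integrable. There one must be careful that the extra $\log$ combined with the $|\log(T-\tau)|$ arising from the $D_{2,1}$ split does not accumulate beyond $(1+1/\alpha)\eta(\alpha\wedge1)$; the saving comes from the strict gap between $(1+1/\alpha)\eta(\alpha\wedge1)$ and $\eta(1/\alpha\wedge1)$, so that $(T-\tau)^{(1+1/\alpha)\eta(\alpha\wedge1) - \eta(1/\alpha\wedge1)}|\log(T-\tau)|^2 \to 0$ and is in particular bounded. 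Apart from this, the proof is a routine adaptation of Lemma \ref{lemme}, and I would simply write ``the smoothing bounds follow by repeating the proof of Lemma \ref{lemme}, using additionally that $\log(K\vee\xi)\le C_\varepsilon(K\vee\xi)^\varepsilon$ for the off-diagonal contributions and $\log(K\vee\xi)=\log K$ on the diagonal set.'' \hfill $\square$
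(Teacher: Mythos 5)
Your plan is correct and follows essentially the same route as the paper's own (very short) proof: the logarithm is constant in the diagonal regime and, off-diagonal, is dominated by $C_\varepsilon \xi^{\varepsilon}$ with $\varepsilon$ chosen so that $d(1-n)+1+\alpha-\varepsilon>0$, after which the computation of Lemma \ref{lemme} goes through unchanged. The extra bookkeeping you describe for the borderline case (the $D_{2,1}$, $D_{2,2}$ refinement and the absorbed $|\log(T-\tau)|$ factors) is consistent with what the paper implicitly relies on.
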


\begin{proof}
The proof does not change much from the previous one.
Observe first that, from the supremum in the logarithm, the only difference arises for \textit{off-diagonal}  regimes, that is, for $z\in D_2$ referring to the partition in the previous proof.
The argument in the logarithm is however the same as the denominator of the \textit{off-diagonal estimate}. After changing variables to $\xi $ or $\zeta $ with the notations of the previous proof, it suffices to observe that  
 for any $\varepsilon\in (0,\alpha)$, there exists $C_\varepsilon>0$ s.t. for all $ r>K$:
$\log(K \vee r)\leq C_\varepsilon r^\varepsilon$. 
Taking $\varepsilon>0 $ s.t. $d(1-n)+1+\alpha-\varepsilon>0 $  allows to proceed as in the proof of Lemma \ref{lemme}.
\end{proof}

We now state a key lemma for our analysis. It gives a control for the first convolution between the frozen density $\tilde p_\alpha $ and the parametrix kernel $H $.
The result differs here from the expected one: we get an additional logarithmic factor, w.r.t. 
the bounds established for this quantity in \cite{D&M} for the Gaussian degenerate case, or \cite{kolo:00} for the stable non-degenerate case, as well as another contribution coming from the \textit{rediagonalization} phenomenon.




 \begin{lemma}[\textbf{First Step Convolution}.]\label{premier coup} Assume $d=1,n=2$.
There exist $C_{\ref{premier coup}}:=C_{\ref{premier coup}}(\H)>0,\ \omega:=\omega(\H)\in (0,1]$ s.t.
for all $T\in (0,T_0],\ T_0:=T_0(\H)\le 1, (x,y)\in (\R^{nd})^2$, $t\in [0,T)$,
 \begin{eqnarray*}
|\tilde p_\alpha\otimes H |(t,T,x,y) \le C_{\ref{premier coup}}  \bigg( \bar p_\alpha (t,T,x,y)
\Big ((T-t)^\omega+\\
 \delta \wedge |x-R_{t,T}y|^{\eta(\alpha \wedge 1)}(1 
+\log(K \vee |(\T_{T-t}^\alpha)^{-1}(y-R_{T,t}x)|))\Big) 
+[\delta \wedge |x-R_{t,T}y|^{\eta(\alpha \wedge 1)}] \check p_\alpha (t,T,x,y) \bigg), 
\end{eqnarray*}
with $\check p $ as in \eqref{CHECK_P}.
Suppose now that 
\textbf{[HT]} holds, that $\sigma(t,x)=\sigma(t,x^2) $ and $ \eta>1/[(\alpha \wedge 1)(1+\alpha)]$. We can then improve the previous bound and derive:
%
\begin{eqnarray}\label{pxH}
|\tilde p_\alpha\otimes H|(t,T,x,y) 
&\le& C_{\ref{premier coup}} \Big( (T-t)^\omega \bar{p}_{\alpha,\Theta}(t,T,x,y)  + \bar{q}_{\alpha,\Theta}(t,T,x,y) \Big) ,\label{SPATIAL_CONV}
\end{eqnarray}
where we denote:
\begin{eqnarray*}
\bar{q}_{\alpha,\Theta}(t,T,x,y) &=& \delta \wedge \{(T-t)|(x-R_{t,T}y)^1|+|(x-R_{t,T}y)^2|\}^{\eta(\alpha \wedge 1)} \\
&&\times \big[\bar{p}_{\alpha,\Theta}(t,T,x,y)
\left(  1 + \log \Big[K \vee |(\T_{T-t}^\alpha)^{-1}(y-R_{T,t}x)| \Big]  \right)
\big].
\end{eqnarray*}
\end{lemma}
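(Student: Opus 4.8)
The plan is to estimate
$$
\tilde p_\alpha \otimes H(t,T,x,y)=\int_t^T du\int_{\R^{nd}} \tilde p_\alpha(t,u,x,z)\,H(u,T,z,y)\,dz
$$
by splitting the time interval at the midpoint $(t+T)/2$ and, on each half, combining the pointwise bound for $H$ from Lemma~\ref{CTR_KER_PTW} with the density estimates for the frozen process (Propositions~\ref{EST_DENS_GEL}, \ref{EST_DENS_GEL_T}) and the ``semigroup'' property of Corollary~\ref{SG_PROP}, which is available precisely because $d=1,n=2$. The kernel bound reads $|H(u,T,z,y)|\le C\frac{\delta\wedge|z-R_{u,T}y|^{\eta(\alpha\wedge1)}}{T-u}(\bar p_\alpha+\breve p_\alpha)(u,T,z,y)$, so the nature of the two contributions $\bar p_\alpha$ and $\breve p_\alpha$ must be treated separately; the $\breve p_\alpha$ term is the source of the $\check p_\alpha$ correction in the first (general) bound. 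The first, time singularity $1/(T-u)$ is what the spatial factor $\delta\wedge|z-R_{u,T}y|^{\eta(\alpha\wedge1)}$ must absorb via Lemma~\ref{lemme}.

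\textbf{Main estimate, general case.} First I would handle the $\bar p_\alpha$ part. On the second half $u\in[(t+T)/2,T]$ I would use that $\tilde p_\alpha(t,u,x,\cdot)$ is essentially constant in $u$ at the scale $T-t$ and integrate $z$ first: by the convolution inequality \eqref{SG_DIFF_SPEC} and Lemma~\ref{lemme} (applied to the backward variable, with $\tau=u$), $\int_{\R^{nd}}\tilde p_\alpha(t,u,x,z)(\delta\wedge|z-R_{u,T}y|^{\eta(\alpha\wedge1)})\bar p_\alpha(u,T,z,y)\,dz\le C(T-u)^{\omega}\bar p_\alpha(t,T,x,y)$ up to a logarithmic loss coming from the fact that the argument $|z-R_{u,T}y|$ must be re-expressed through the global energy $|(\T^\alpha_{T-t})^{-1}(y-R_{T,t}x)|$ (this is Lemma~\ref{lemme2}, which produces the $\log(K\vee|(\T^\alpha_{T-t})^{-1}(y-R_{T,t}x)|)$ factor). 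Dividing by $T-u$ and integrating $\int_{(t+T)/2}^T (T-u)^{\omega-1}du\le C(T-t)^{\omega}$ gives the first piece. On the first half $u\in[t,(t+T)/2]$ the time singularity $1/(T-u)\le 2/(T-t)$ is harmless; there I would bound the spatial factor by $\delta\wedge|x-R_{t,T}y|^{\eta(\alpha\wedge1)}$ up to Lipschitz-flow corrections (as in the diagonal estimate at the end of Section~\ref{Estimates on the kernel}), pull it out, integrate $z$ using \eqref{SG_DIFF_SPEC}, and integrate $u$ to get the $(T-t)\cdot\frac{1}{T-t}=O(1)$ contribution multiplying $\delta\wedge|x-R_{t,T}y|^{\eta(\alpha\wedge1)}\bar p_\alpha(t,T,x,y)(1+\log(\dots))$. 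The $\breve p_\alpha$ term is handled the same way but its convolution against $\tilde p_\alpha$ does not collapse to a multiple of $\bar p_\alpha$ (no semigroup property for $\breve p_\alpha$); instead one gets $\check p_\alpha(t,T,x,y)$ as in \eqref{CHECK_P}, obtained by estimating the convolution $\int\tilde p_\alpha(t,u,x,z)\breve p_\alpha(u,T,z,y)dz$ and then taking the infimum over the intermediate time $u\in[t,T]$ of the transported energy $R_{u,t}x-R_{u,T}y$. This yields the first displayed bound.

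\textbf{The improvement under [HT] with $\sigma(t,x)=\sigma(t,x^2)$.} Here the key point is that the difference of generators in $H$ now only involves $\sigma(u,R_{u,T}y^2)-\sigma(u,z^2)$, so the spatial factor sharpens from $\delta\wedge|z-R_{u,T}y|^{\eta(\alpha\wedge1)}$ to $\delta\wedge|(z-R_{u,T}y)^2|^{\eta(\alpha\wedge1)}$ — effectively only the fast component enters. Then I would invoke the stronger smoothing estimates \eqref{lemme backward spec}, \eqref{lemme forward spec} of Lemma~\ref{lemme} and their logarithmic analogue in Lemma~\ref{lemme2}, which give the better exponent $\tilde\omega=(1+1/\alpha)\eta(\alpha\wedge1)$; the hypothesis $\eta>1/[(\alpha\wedge1)(1+\alpha)]$ is exactly what makes $\tilde\omega>1$, so that the time-singularity $1/(T-u)$ is overcompensated and — crucially — the troublesome $\breve p_\alpha$/$\check p_\alpha$ contribution becomes integrable and can be absorbed into $\bar q_{\alpha,\Theta}$ rather than left as a separate term. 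On the first half I would again freeze the spatial factor at $z=R_{u,T}y$, re-expressed through $(x-R_{t,T}y)$ by the scaling Lemma~\ref{ScalingLemma} (this is where the combination $(T-t)|(x-R_{t,T}y)^1|+|(x-R_{t,T}y)^2|$ appears: transporting the fast-component increment backward mixes in a $(T-t)$-weighted slow component), producing $\bar q_{\alpha,\Theta}(t,T,x,y)$; on the second half the smoothing produces the extra $(T-t)^\omega\bar p_{\alpha,\Theta}$ factor, with $\omega=\tilde\omega-1>0$. Throughout, the tempering factor $\theta$ is propagated through the convolutions using its doubling property \textbf{[T]} and the fact that $\Theta=(1+\cdot)\theta$ is preserved under the ``semigroup'' operation (Corollary~\ref{SG_PROP} as stated with a possibly tempered parametrix), so $\bar p_{\alpha}$ gets replaced by $\bar p_{\alpha,\Theta}$ everywhere.

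\textbf{Main obstacle.} The hard part will be the careful bookkeeping of the off-diagonal (large-deviation) regimes when integrating $z$: one must split $\R^{nd}$ according to which of $\tilde p_\alpha(t,u,x,z)$ and $(\bar p_\alpha+\breve p_\alpha)(u,T,z,y)$ is diagonal, and in the rediagonalization region the worst-case estimate for $\breve p_\alpha$ forces the $\check p_\alpha$ term — controlling it uniformly in the intermediate time $u$ and showing that under the sharpened hypotheses it merges into $\bar q_{\alpha,\Theta}$ is the delicate computation. The logarithmic loss, traced to re-expressing $|z-R_{u,T}y|$ through the global energy in Lemma~\ref{lemme2}, is unavoidable with only Hölder coefficients and is what prevents a clean $\bar p_{\alpha,\Theta}$ bound; one simply carries it along as the $(1+\log(K\vee|(\T^\alpha_{T-t})^{-1}(y-R_{T,t}x)|))$ factor in $\bar q_{\alpha,\Theta}$.
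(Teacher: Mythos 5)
Your scaffolding (midpoint time split, the kernel bound of Lemma \ref{CTR_KER_PTW}, the semigroup property of Corollary \ref{SG_PROP}, the smoothing Lemmas \ref{lemme}--\ref{lemme2}, separate treatment of $\breve p_\alpha$, and the sharpened H\"older factor $\delta\wedge|(z-R_{u,T}y)^2|^{\eta(\alpha\wedge1)}$ when $\sigma=\sigma(t,x^2)$) matches the paper's, but the core of the proof is missing. In the global off-diagonal regime with the \emph{slow} component dominating, your key step on $I_2$ --- ``integrate $z$ first by \eqref{SG_DIFF_SPEC} and Lemma \ref{lemme} to get $C(T-u)^{\omega}\bar p_\alpha(t,T,x,y)$ up to a logarithmic loss'' --- does not go through: you cannot both consume the two densities in the semigroup inequality and keep the smoothing factor, and you cannot pull $\bar p_\alpha(t,T,x,y)$ out of $\tilde p_\alpha(t,u,x,\cdot)$ without a case analysis, because the off-diagonal bound through the slow component carries a \emph{negative} power of time ($\gamma(1)=-1$ in the paper's notation). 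This is exactly why the paper introduces the spatial partition $D_1/D_2$ weighted by $(T-\tau)^{\beta}$ (with $\beta<\omega/(2+\alpha)$), the cutoff time $\tau_0=T-\bigl(\delta_0/|(\T^\alpha_{T-t})^{-1}(y-R_{T,t}x)|\bigr)^{1/\beta}$, and the comparison \eqref{CTR_DEL_TAULETAU0_FIRSTDOM} showing that on $D_2$ the H\"older increment transfers to the endpoints. In particular the logarithm is \emph{not} produced by Lemma \ref{lemme2} (that lemma integrates a logarithm away and is only needed in the second convolution, Lemma \ref{DeuxiemeCoup}); it arises as the time integral $\int^{\tau_0}d\tau/(T-\tau)$ of the uncompensated kernel singularity on $D_2$, after pulling out $\delta\wedge|x-R_{t,T}y|^{\eta(\alpha\wedge1)}$ and applying Corollary \ref{SG_PROP}. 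None of this machinery appears in your sketch, so the first displayed bound (and a fortiori the $\check p_\alpha$ term, which comes from the same region applied to $\breve p_\alpha$) is not actually established.

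Your account of the improvement under \textbf{[HT]} also contains a quantitative error that points to a misunderstanding of where the hypothesis on $\eta$ is used. The condition $\eta>1/[(\alpha\wedge1)(1+\alpha)]$ is \emph{not} equivalent to $\tilde\omega=(1+1/\alpha)\eta(\alpha\wedge1)>1$ (for $\alpha>1$ it is strictly weaker, for $\alpha<1$ strictly stronger); it is equivalent to $\tilde\omega>1/\alpha$. The singularity to beat is not the kernel's $1/(T-u)$ but the worse one created by the rediagonalization term $\breve p_\alpha$: after the H\"older splitting of the two fast-variable time factors as in \eqref{HOLDER}, the time integral produces $(T-t)^{-1/\alpha}$, and the net exponent $1+(1+\tfrac1\alpha)(\eta(\alpha\wedge1)-1)$ is positive precisely under the stated condition on $\eta$. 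Likewise, the combination $(T-t)|(x-R_{t,T}y)^1|+|(x-R_{t,T}y)^2|$ in $\bar q_{\alpha,\Theta}$ comes from the endpoint transfer \eqref{CTR_DEL_TAULETAU0_FIRSTDOM_PARTIAL_DEP} on $D_2$, not merely from the scaling Lemma \ref{ScalingLemma}. So while your outline identifies the right ingredients, the argument as written would not close: the off-diagonal/slow-component analysis and the H\"older treatment of $\breve p_\alpha$ have to be carried out explicitly.
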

\begin{remark}
The first part of the Lemma gives the bound of Lemma \ref{CTRL_PRELIM_NON_FONCTIONNEL}. Let us emphasize
that this bound is not sufficient to derive the convergence of the parametrix series \eqref{parametrix}. 
The difficulty comes from the term in $\check p $ deriving from the rediagonalization phenomenon that induces a possible loss of concentration in the stable case and also prevents from a regularizing property in the tempered one if $\sigma $ depends on both variables. Namely, the additional time singularity in $\check p $ can be compensated if $\sigma $ only depends on the fast variable, which gives a higher order smoothing effect, but does not seem to be easily handleable in the general setting.
The control \eqref{pxH} is actually sufficient to imply the convergence of the parametrix series when $d=1, n=2,\ \sigma(t,x)=\sigma(t,x^2)$ under the indicated condition on $\eta $. It gives the first statement in Lemma \ref{LEMME_IT_KER}.
\end{remark}
\begin{proof}
 
 
To perform the analysis, we first bound $H$ using \eqref{majoration H}. We thus obtain:
\begin{equation}
\label{BD_PREAL}
|\tilde p_\alpha\otimes H|(t,T,x,y)\le C\int_t^T d\tau \int_{\R^{nd}}\bar p_\alpha(t,\tau,x,z)\frac{\delta \wedge |z-R_{\tau,T}y|^{\eta(\alpha \wedge 1)}}{T-\tau}(\bar p_\alpha+\breve p_\alpha)(\tau,T,z,y)dz.
\end{equation}
For the proof it will be convenient to split the time interval $[t,T] $ into two subintervals $I_1:=[t,\frac{t+T}2],I_2:=[\frac{t+T}2,T] $.
We observe that for  $\tau \in I_1$, $T-\tau \asymp T-t $ whereas for $\tau\in I_2,\ \tau-t\asymp T-t $.\\

The leading idea for the proof is to partition the space in order to say that one of the 
densities involved in \eqref{BD_PREAL} is homogeneous to the \textit{global one} $\bar p_\alpha(t,T,x,y) $, and to get some \textit{regularization} from the other contribution, using thoroughly Lemma \ref{lemme}.\\
\\
\textbf{Diagonal Estimates.}
When the global diagonal regime holds, i.e. $|(\T_{T-t}^{\alpha})^{-1}(R_{T,t}x-y)|\le K $, we will prove the following global diagonal estimate:
\begin{equation}
\label{EST_CONV_DIAG}
|\tilde p_\alpha \otimes H|(t,T,x,y) 
\le C \Big( (T-t)^\omega+\delta \wedge |x-R_{t,T}y|^{\eta(\alpha \wedge 1)} \Big)\bar p_\alpha(t,T,x,y).
\end{equation} 
Indeed, on $I_1$,  if $|(  \T^{\alpha}_{T-\tau}  )^{-1} (y-R_{T,\tau}z) | \leq K$, from Proposition \ref{EST_DENS_GEL} the diagonal estimate holds for $\bar{p}_\alpha(\tau,T,z,y)$. Since $T-\tau \asymp T-t$, we have:
$$\bar{p}_\alpha(\tau,T,z,y) \leq C\det(\T_{T-\tau}^\alpha)^{-1}\le C \det(\T_{T-t}^\alpha)^{-1}\le C\bar p_\alpha(t,T,x,y).$$
On the other hand, if  $|(  \T^{\alpha}_{T-\tau}  )^{-1} (y-R_{T,\tau}z) | > K$, the off-diagonal expansion holds for $\bar{p}_\alpha(\tau,T,z,y)$ and from Proposition \ref{EST_DENS_GEL}:
$$
\bar{p}_\alpha(\tau,T,z,y) \le C \frac{\det(\T^{\alpha}_{T-\tau} )^{-1} }{|(\T_{T-\tau}^\alpha)^{-1}(y-R_{T,\tau}z)| ^{d+1+\alpha}}\leq C \det( \T^{\alpha}_{T-\tau}  )^{-1}   \leq C \det (\T^{\alpha}_{T-t}  )^{-1}  \leq C \bar{p}_\alpha(t,T,x,y)\footnote{Observe that we could have used here that the diagonal control is a global bound. We introduced the dichotomy on the regime to emphasize that it is a crucial argument in this section.}.
$$
Additionally, the boundedness of the resolvent yields:
\begin{equation}\label{InegTriangRes}
|z- R_{\tau,T}y| \leq |z- R_{\tau,t}x|+ |R_{\tau,t}x- R_{\tau,T}y|\leq C \Big( |z- R_{\tau,t}x|+ |x- R_{t,T}y| \Big).
\end{equation}
On the other hand, on $I_1$:
\begin{equation}
\label{DIAG_BREVE_P}
\breve p(\tau,T,z,y)\le C\det(\T_{T-t}^\alpha)^{-1}\le C\bar p_\alpha(t,T,x,y).
\end{equation}
Denoting by  $\otimes_{|I_1}$ the time-space convolution, where the time parameter is restricted to the interval $I_1$, we have from  \eqref{BD_PREAL}, \eqref{InegTriangRes}, \eqref{DIAG_BREVE_P} and Lemma \ref{lemme}:
\begin{eqnarray}
|\tilde p_\alpha \otimes_{|I_1} H|(t,T,x,y)&\le&  C\bar{p}_\alpha(t,T,x,y) \int_{I_1}d\tau \int_{\R^{nd}} \bar{p}_\alpha(t,\tau,x,z)
\left( \frac{\delta \wedge |z- R_{\tau,t}x|^{\eta(\alpha \wedge 1)}}{\tau-t}\right.\nonumber\\
&&\left.+ \frac{\delta \wedge |x- R_{t,T}y|^{\eta(\alpha \wedge 1)}}{T-t}+1 \right)dz\nonumber \\
 &\le&  C \bar{p}_\alpha(t,T,x,y) \int_{I_1}d\tau \left( (\tau-t)^{\omega-1} +
\frac{\delta \wedge |x-R_{t,T}y|^{\eta(\alpha\wedge 1)}}{T-t} +1\right)\nonumber\\
 &\le&  C \bar{p}_\alpha(t,T,x,y) ( (T-t)^\omega + \delta \wedge |x-R_{t,T}y|^{\eta(\alpha \wedge 1)}).\label{INEG_TRIANG_TEST}
\end{eqnarray}

Now, when $\tau \in I_2$, we have
$\bar{p}_\alpha(t,\tau,x,z) \leq \bar{p}_\alpha(t,T,x,y)$, so that from Lemma \ref{lemme}:
\begin{eqnarray*}
|\tilde p_\alpha \otimes_{|I_2} H|(t,T,x,y)&\le&  C\bar{p}_\alpha(t,T,x,y) \int_{I_2} d\tau \int_{\R^{nd}} \frac{\delta \wedge |z-R_{\tau,T}y|^{\eta(\alpha \wedge 1)}}{T-\tau}(\bar{p}_\alpha+\breve p_\alpha)(\tau,T,z,y) dz\\
 &\le&  C \bar{p}_\alpha(t,T,x,y) \int_{I_2} d\tau(T-\tau)^{\omega-1} \le C (T-t)^\omega \bar{p}_\alpha(t,T,x,y).
\end{eqnarray*}

\textbf{Off-Diagonal Estimates.}
We consider here the case $|  (\T^{\alpha}_{T-t}  )^{-1} (y-R_{T,t}x) | \geq K$. Since we will need in the proof to exploit the \textit{semigroup} property of Corollary \ref{SG_PROP} we restrict for the off-diagonal estimates to the case $d=1,n=2 $.\\

\textit{Contributions involving $\bar p_\alpha(t,T,x,y) $.} 

We first consider the contributions involving  $\bar{p}_\alpha(t,T,x,y)$ which is in the \textit{off-diagonal} regime.
In our current degenerate setting, several scales are involved in the term  $|( \T^{\alpha}_{T-t}  )^{-1} (y-R_{T,t}x) |$. The \textit{slow} time scale,  associated with the first component of the process, induces in the off-diagonal regime additional time singularities in the density w.r.t. to the non-degenerate case. We thus need to be very careful when comparing the two contributions in $\bar p_\alpha $ appearing  in the convolution $\tilde p_\alpha \otimes H$. 
Observe anyhow from the scaling Lemma \ref{ScalingLemma} that:
\begin{eqnarray}
|(\T^\alpha_{T-t})^{-1}(y-R_{T,t}x)| &\leq&  |(\T^\alpha_{T-t})^{-1}(y-R_{T,\tau}z)| + |(\T^\alpha_{T-t})^{-1}(\T_{T-t}^\alpha \hat R_{\frac{\tau-t}{T-t}}^{t,T}(\T_{T-t}^{\alpha})^{-1}\{z-R_{\tau,t}x\})| \nonumber \\
&\le &  |(\T^\alpha_{T-t})^{-1}(y-R_{T,\tau}z)|+C |(\T_{T-t}^{\alpha})^{-1}(z-R_{\tau,t}x)|\nonumber \\
&\le &  |(\T^\alpha_{T-\tau})^{-1}(y-R_{T,\tau}z)|+C |(\T_{\tau-t}^{\alpha})^{-1}(z-R_{\tau,t}x)|,\ C:=C(\H,T_0) . \label{INEG_TRIANG_MS}
\end{eqnarray}
Hence, at least one of the two densities involved in the convolution is off-diagonal. As emphasized below, the main difficulty w.r.t.  the non degenerate case consists in suitably controlling the multi-scale effects that prevent from  handling directly the time singularity of $H$ in the convolution $\tilde p_\alpha \otimes H $, see e.g. Proposition 3.2  in  Kolokoltsov \cite{kolo:00}. 
Assume now that the component number $k\in \{1,2\}$ dominates in  $\bar p_\alpha(t,T,x,y) $ when considering the flow at the current time $ \tau$ of the convolution, the off-diagonal estimate becomes:

\begin{eqnarray*}
\bar{p}_\alpha(t,T,x,y) 
&\le& C \frac{({\rm det}(\T_{T-t}^\alpha))^{-1}}{|(\T_{T-t}^{\alpha})^{-1}(R_{\tau,t}x-R_{\tau,T}y)|^{2+\alpha}} \le C\frac{(T-t)^{-\zeta(k)} }{|R^k_{\tau,t}x-R^k_{\tau,T}y|^{2+\alpha}},\\
\zeta(k)&=&(\frac {2}\alpha+1)-((k-1)+\frac{1}{\alpha})(2+\alpha).
\end{eqnarray*}
According to the sign of the power of $T-t$, two cases arise. 
Set for $k\in \{1,2\},\ \gamma(k):=\zeta(k)-1 $. For the second, or \textit{fast}, component,  the exponent $\gamma(2)=  \zeta(2)-1=1+\alpha$ is non negative. For the first, \textit{slow} component $\gamma(1)=  -1$. This  is the aforementioned \textit{slow/fast} dichotomy.
 
 \begin{trivlist}
\item[-]When the fast component dominates, as the off-diagonal estimates are not singular in time anymore, no major problem arises. We refine \eqref{INEG_TRIANG_MS} in the following sense:
$$
K(T-t)^{1+\frac{1}{\alpha}} \leq |R^2_{\tau,T}y-R^2_{\tau,t}x| \leq  |R^2_{\tau,T}y-z^2| + |z^2-R^2_{\tau,t}x|.
$$
Thus, at least one of the two densities in \eqref{BD_PREAL} is off-diagonal through a fast component.
On the one hand, if $ 1/2 |R^2_{\tau,T}y-R^2_{\tau,t}x| \leq  |z^2-R^2_{\tau,t}x|$,
\begin{eqnarray*}
\bar{p}_\alpha (t,\tau, x,z) &\le& C \frac{\det(\T_{\tau-t}^\alpha)^{-1}}{| (\T_{\tau-t}^\alpha)^{-1}(z- R_{\tau,t}x)|^{2+\alpha}} \leq C\frac{(\tau-t)^{\gamma(2)+1}}{|z^k-R^k_{\tau,t}x|^{2+\alpha}} \\
&\leq& C
\frac{(T-t)^{\gamma(2) +1}}{|R^2_{\tau,t}x-R^2_{\tau,T}y|^{2+\alpha}}.
\end{eqnarray*}
On the other hand, if $ 1/2 |R^2_{\tau,T}y-R^2_{\tau,t}x| \leq |z^2-R^2_{\tau,T}y| $,
$$
\frac{1}{T-\tau}\bar {p}_\alpha (\tau,T, z,y) \leq C\frac{(T-\tau)^{\gamma(2)}}{|z^2-R^2_{\tau,T}y|^{2+\alpha}} \leq 
\frac{C}{T-t}\frac{(T-t)^{\gamma(2) +1}}{|R^2_{\tau,T}y-R^2_{\tau,t}x|^{2+\alpha}}.
$$
 In both cases, we are in position to apply Lemma \ref{lemme}, directly in the first case, similarly to \eqref{INEG_TRIANG_TEST} in the second one. The proof is then the same as in Kolokoltsov \cite{kolo:00}. 
Observe that in the second case, we have compensated the singularity associated with the contribution 
$\bar p_\alpha $ in the kernel $H$, independently of the position of the time parameter $\tau$. 

\item[-]We now focus on the second case, that is when the slow component dominates so that $\gamma(1)$ is negative.
We consider the partition $[t,T]= I_1\cup I_2$ and  start with $\tau \in I_2$. In this case, we have $T-t \asymp \tau-t$.
In other words, this is the case where the singularity induced by the kernel $H$ is the worst.
\end{trivlist}
We split  $\R^{2}$ into 
\begin{eqnarray}
D_1:= \{z \in \R^{2} ; (T-\tau)^\beta |(\T_{T-t}^\alpha)^{-1}( y-R_{T,t}x)| \leq 
 |(\T_{\tau-t}^\alpha)^{-1}(z-R_{\tau,t}x)| \},\nonumber\\
D_2:= \{z \in \R^{2} ; (T-\tau)^\beta |(\T_{T-t}^\alpha)^{-1}(y-R_{T,t}x)| >
  |(\T_{\tau-t}^\alpha)^{-1}( z-R_{\tau,t}x)| \},\label{SPATIAL_PART}
\end{eqnarray}
for a parameter $ \beta>0$ to be specified later on.
We define accordingly, for $i\in \{1,2\}$:
\begin{eqnarray}
\label{I2DI}
\bar A_{\alpha,I_2,D_i}	(t,T,x,y)  :=  \int_{I_2}d\tau \int_{D_i}\bar{p}_\alpha(t,\tau,x,z) \frac{\delta \wedge |z-R_{\tau,T}y|^{\eta(\alpha \wedge 1)}}{T-\tau} \bar{p}_\alpha(\tau,T,z,y)dz.
\end{eqnarray}
Let us first deal with $z\in D_1$. Since $\tau \in I_2$, we have:
$$
\bar{p}_\alpha(t,\tau,x,z)\leq C\frac{\det(\T^\alpha_{\tau-t})^{-1} }{ |(\T_{\tau-t}^\alpha)^{-1}(z-R_{\tau,t}x)|^{2+\alpha} } \leq C\frac{\det(\T_{T-t}^\alpha)^{-1}}{ (T-\tau)^{\beta(2+\alpha)}|(\T_{T-t}^\alpha)^{-1}(y-R_{T,t}x)|^{2+\alpha}}.
$$
Hence, as we did in the first part of the proof, we take out $\bar{p}_\alpha(t,\tau,x,z)$ off the integral \eqref{I2DI}. This is done here up to the additional singular coefficient $(T-\tau)^{-\beta(2+\alpha)}$.
Still from Lemma \ref{lemme}, we get:
$$
\bar A_{\alpha,I_2,D_1} (t,T,x,y) \leq C\bar{p}_\alpha(t,T,x,y) \int_{I_2} d\tau(T-\tau)^{\omega - \beta(2+\alpha)-1}.
$$
Then, in order to get an integrable bound, we must take:
\begin{equation}\label{def_beta}
0<\beta < \frac{\omega}{2+\alpha}.
\end{equation}

On $D_2$, we have to be more subtle. From the previous partition, the idea is to say that if $\tau \in [\tau_0,T]$ for $\tau_0 $ \textit{close enough} to $T$, then the diagonal bound holds for the first density on $D_2$. In such cases we manage to get the global expected bound in the convolution. However, the previous $\tau_0 $
will highly depend on the global off-diagonal estimate $|(\T_{T-t}^{\alpha})^{-1}(R_{T,t}x-y)| $, and for  $\tau \in I_2,\ \tau\le \tau_0 $, we did not succeed to do better than integrating the singularity in $(T-\tau)^{-1} $ yielding the logarithmic contribution.
%
\begin{trivlist}
\item[$\bullet$] Let us fix $\delta_0\in (0,K) $. Observe that for  fixed  $ (t,T,x,y)$, if $\tau\ge  \tau_0 := T-\left( \frac{\delta_0}{|(\T^\alpha_{T-t})^{-1}(y-R_{T,t}x)|}\right)^{\frac{1}{\beta}}$ then $\delta_0 \geq (T-\tau)^\beta |(\T^\alpha_{T-t})^{-1}(y-R_{T,t}x)|$.
Then, since $z\in D_2$, we have $\delta_0 \geq |(\T^\alpha_{\tau-t})^{-1}(z-R_{\tau,t}x)|$, and the diagonal estimate holds for $\bar{p}_\alpha(t,\tau,x,z)$. We write:
\begin{eqnarray*}
\bar A_{\alpha ,I_2\cap\{\tau\ge \tau_0\}, D_2} (t,T,x,y) &\leq& C\int_{I_2\cap \{\tau\ge \tau_0 \}}\hspace*{-.5cm} d\tau \det(\T_{\tau-t}^\alpha)^{-1}  \int_{D_2}  \frac{\delta \wedge |z-R_{\tau,T}y|^{\eta(\alpha \wedge 1)} }{T-\tau}\bar{p}_\alpha(\tau,T,z,y)dz \\
&\overset{Lemma \ \ref{lemme}}{\le}& C \int_{I_2\cap \{\tau\ge \tau_0 \}}\hspace*{-.5cm} d\tau\det(\T_{\tau-t}^{\alpha})^{-1} (T-\tau)^{\omega -1 } .
\end{eqnarray*}
Now $\delta_0^{2+\alpha} \geq (T-\tau)^{\beta(2+\alpha)}  |(\T^\alpha_{T-t})^{-1}(y-R_{T,t}x)|^{2+\alpha}$, so that:
$$
\bar A_{\alpha,I_2\cap\{ \tau\ge \tau_0\}, D_2} (t,T,x,y) \leq  \int_{I_2} d\tau\det(\T_{T-t}^{\alpha})^{-1} (T-\tau)^{\omega-\beta(2+\alpha) -1}  \frac{\delta_0^{2+\alpha} }{  |(\T^\alpha_{T-t})^{-1}(y-R_{T,t}x)|^{2+\alpha}}.
$$
Thus, as long as $\beta$ satisfies \eqref{def_beta}, $\bar A_{\alpha,I_2\cap\{\tau\ge \tau_0 \},D_2} (t,T,x,y) \leq (T-t)^{\bar \omega}\bar{p}_\alpha(t,T,x,y),\ \bar \omega:=\omega-\beta(2+\alpha).$

\item[$\bullet$] Assume now that $\tau<  \tau_0 = T-\left( \frac{\delta_0}{|(\T^\alpha_{T-t})^{-1}(y-R_{T,t}x)|}\right)^{\frac{1}{\beta}}$.
The singularity induced by $H$ is then integrable, and yields the logarithmic contribution.
Specifically:
\begin{eqnarray*}
\bar A_{\alpha,I_2\cap\{\tau<\tau_0 \},D_2} (t,T,x,y) &\leq&  C\int_{I_2} d\tau  \ind_{\tau \leq \tau_0} \int_{D_2}\bar p_\alpha(t,\tau,x,z)\frac{\delta \wedge |z-R_{\tau,T}(y)|^{\eta(\alpha \wedge 1)  }}{T-\tau}\bar p_\alpha(\tau,T,z,y)dz.\\
\end{eqnarray*}


Now, the key-point to get a smoothing effect is to keep the $\delta \wedge |x-R_{t,T}y|^{\eta(\alpha \wedge 1)}$ part in the control of the convolution. In order to keep track of this term, we need to determine which component dominates in $|x-R_{t,T}y|$. This can be rather intricate
in the multi-scale setting.
In the case $n=2$, the only \textit{slow} component is the first one.
Saying that it dominates at a given integration time $\tau$ is asking:
\begin{equation}
\label{THE_FIRST_COM_TAU}
|R^2_{\tau,T}y-R^2_{\tau,t}x| \leq  (T-t) |R^1_{\tau,T}y-R^1_{\tau,t}x|.
\end{equation}
Furthermore, we can write:
\begin{eqnarray*}
|R_{T,t}^1x-y^1|&\ge &|R_{\tau,t}^1x-R_{\tau,T}^1y|-\|R_{T,\tau}-I\||R_{\tau,t}x-R_{\tau,T}y|.
\end{eqnarray*}
From Lemma \ref{Form of the resolvent}, and observing from its proof that we could also establish that $\sum_{j=1}^2\|(R_{T,\tau}-I)^{j,2}\|+\|(R_{T,\tau}-I)^{1,1}\|\le C(T-\tau),\ C:=C(\H,T_0),\ T_0\le 1 $ we get using \eqref{THE_FIRST_COM_TAU}:
\begin{eqnarray*}
|R_{T,t}^1x-y^1|&\ge & |R_{\tau,t}^1x-R_{\tau,T}^1y|(1-C(T-\tau)).
\end{eqnarray*}
Thus, for $T$ small enough we get: $(T-t)|R_{T,t}^1x-y^1|\ge \frac {T-t}2 |R_{\tau,t}^1x-R_{\tau,T}^1y|\overset{\eqref{THE_FIRST_COM_TAU}}{\ge}\frac{1}2|R_{\tau,t}^2x-R_{\tau,T}^2y|  $. We then derive similarly that:
\begin{eqnarray*}
|R_{\tau,t}^2x-R_{\tau,T}^2y|&\ge& |R_{T,t}^2x-y^2|-\|R_{\tau,T}-I\||R_{T,t}x-y|\\
&\ge & \frac{|R_{T,t}^2x-y^2|}{2}-C(T-\tau)|R_{T,t}^1x-y^1|.
\end{eqnarray*}
This finally yields that 
\begin{equation}
\label{EQUIV_FIRST_COMP}
(T-t)|R_{T,t}^1x-y^1|\ge \frac{|R_{T,t}^2x-y^2|}{4(1+C)}, 
\end{equation}
that is, the first component dominates in the contribution $|(\T_{T-t}^\alpha)^{-1}(R_{T,t}x-y)| $ appearing in $D_2$.
Write now:
\begin{equation}\label{cond_prealable}
|z-R_{\tau,T}y| \leq |z^1-R^1_{\tau,t}x|+|z^2-R^2_{\tau,t}x| + |R_{\tau,t}x-R_{\tau,T}y|.
\end{equation}
\begin{trivlist}
\item[$\diamond$] Suppose first that $(\tau-t)|z^1-R^1_{\tau,t}x|\leq |z^2-R^2_{\tau,t}x|$. Since $z \in D_2$, we have from \eqref{EQUIV_FIRST_COMP}: 
$$
|z^2 -R^2_{\tau,t}x| \leq C(\tau-t)(T-\tau)^\beta |R_{T,t}^1x -y^1|.
$$
Consequently, plugging the last two inequalities into \eqref{cond_prealable}, we get:
\begin{eqnarray*}
|z-R_{\tau,T}y| &\leq& \left( \frac{1}{\tau-t}+1 \right)|z^2-R^2_{\tau,t}x| + |R_{\tau,t}x-R_{\tau,T}y|\\
&\leq& \Big(1+(\tau-t) \Big)(T-\tau)^\beta|R^1_{T,t}x-y^1| + |R_{\tau,t}x-R_{\tau,T}y|\\
&\leq& C|x-R_{t,T}y|,
\end{eqnarray*}
using the Lipschitz property of the flow for the last inequality.

\item[$\diamond$] Assume now that $|z^2-R^2_{\tau,t}x|\leq (\tau-t)|z^1-R^1_{\tau,t}x|\leq |z^1-R^1_{\tau,t}x|$. We exploit that $z \in D_2$ and \eqref{EQUIV_FIRST_COMP} to write: 
$$|z^1 -R^1_{\tau,t}x| \leq C(T-\tau)^\beta |R_{T,t}^1x -y^1|. $$
Plugging the last two inequalities into \eqref{cond_prealable} yields:
\begin{eqnarray*}
|z-R_{\tau,T}y| &\leq& 2|z^1-R^1_{\tau,t}x| + |R_{\tau,t}x-R_{\tau,T}y|\\
&\leq& 2C(T-\tau)^\beta|R_{T,t}^1x-y^1| + |R_{\tau,t}x-R_{\tau,T}y|
\leq
C|x-R_{t,T}y|,
\end{eqnarray*}
using again the Lipschitz property of the flow for the last inequality.
\end{trivlist}
Thus, in both cases, 
\begin{equation}
\label{CTR_DEL_TAULETAU0_FIRSTDOM}
 |z-R_{\tau,T}y| \leq C|x-R_{t,T}y| \Rightarrow \delta\wedge |z-R_{\tau,T}y|^{\eta(\alpha \wedge 1)} \leq C\delta\wedge |x-R_{t,T}y|^{\eta(\alpha \wedge 1)}.
 \end{equation}
 It could similarly be shown that when $\sigma(t,x):=\sigma(t,x^2) $:
\begin{eqnarray}
\label{CTR_DEL_TAULETAU0_FIRSTDOM_PARTIAL_DEP}
 \delta\wedge |(z-R_{\tau,T}y)^2|^{\eta(\alpha \wedge 1)} &\leq & C\delta\wedge \{(T-t)|(x-R_{t,T}y)^1| +|(x-R_{t,T}y)^2|\}^{\eta(\alpha \wedge 1)}\nonumber\\
 &\le &C\delta\wedge \{(T-t)|(R_{T,t}x-y)^1| +|(R_{T,t}x-y)^2|\}^{\eta(\alpha \wedge 1)},
 \end{eqnarray}
 using a direct modification of Lemma \ref{ScalingLemma} for the last inequality.
Taking out this contribution from the spatial integral we get:
\begin{eqnarray*}
\bar A_{\alpha,I_2\cap\{\tau\le \tau_0 \}, D_2} (t,T,x,y)  &\leq& C \int_{I_2}d\tau \frac{\delta\wedge |x-R_{t,T}y|^{\eta(\alpha \wedge 1)} }{T-\tau} \ind_{\tau \leq \tau_0} \int \bar{p}_\alpha(t,\tau,x,z )\bar{p}_\alpha(\tau,T,z,y) dz\\ 
&\leq& C \delta\wedge |x-R_{t,T}y|^{\eta(\alpha \wedge 1)}  \log \Big(K \vee |(\T_{T-t}^\alpha)^{-1}(y-R_{T,t}x)| \Big) \bar{p}_\alpha(t,T,x,y),
\end{eqnarray*}
using the semigroup property of Corollary \ref{SG_PROP} for the last inequality.\\

To complete the analysis for this contribution, it remains to consider the case $\tau \in I_1$.
In this case, $T-t\asymp T-\tau$, and we have by triangle inequality:
$$
\delta \wedge |z-R_{\tau,T}y|^{\eta(\alpha \wedge 1)} \leq 
C \left( \delta \wedge |z-R_{\tau,t}x|^{\eta(\alpha \wedge 1)} + \delta \wedge |x-R_{t,T}y|^{\eta (\alpha \wedge 1)} \right).
$$
Recalling that  $T-\tau$ is not singular and splitting the integrals 
accordingly yields: 
\begin{eqnarray*}
\bar A_{\alpha, I_1}(t,T,x,y) \leq C\int_{I_1} d\tau \int_{\R^{nd}} dz \bar{p}_\alpha(t,\tau,x,z) \frac{\delta \wedge |x-R_{t,\tau}z|^{\eta(\alpha \wedge 1)}}{\tau-t} \bar{p}_\alpha(\tau,T,z,y)\\
 + C\delta \wedge |x-R_{t,T}y|^{\eta(\alpha \wedge 1)} \bar{p}_\alpha(t,T,x,y),
\end{eqnarray*}
where we used the semigroup property of Corollary \ref{SG_PROP} for the last term in the r.h.s. 
Now, for the first term in the above r.h.s., the previous arguments apply. Similarly to \eqref{INEG_TRIANG_MS} one of the two terms $|(\T_{\tau-t}^{\alpha})^{-1}(R_{\tau,t}x-z)| $, $|(\T_{T-\tau}^{\alpha})^{-1}(R_{T,\tau}z-y)| $ is in the \textit{off-diagonal} regime. If it is the second one, then $\bar  p_\alpha(\tau,T,z,y)\le C \bar p_\alpha(t,T,x,y)$ and we conclude using Lemma \ref{lemme}. If it is the first term, then we can still perform the previous dichotomy along the dominating component in $|(\T_{\tau-t}^\alpha)^{-1}(R_{\tau,t}x-z)|$. If the fast component dominates, the density is not singular. When the first component dominates, we modify the previous partition $(D_i)_{i\in\{1,2\}}$, considering:
 $$
D_1= \{z \in \R^{nd} ; (\tau-t)^\beta |(\T_{T-t}^\alpha)^{-1}( y-R_{T,t}x)| \leq 
 |(\T_{T-\tau}^\alpha)^{-1}(z-R_{\tau,T}y)| \},
$$
$$
D_2= \{z \in \R^{nd} ; (\tau-t)^\beta|(\T_{T-t}^\alpha)^{-1}(y-R_{T,t}x)| >
  |(\T_{T-\tau}^\alpha)^{-1}( z-R_{\tau,T}y)| \}.
$$
From this point on, the proof is similar: on $D_1$, we compensate the singularity, as long as $\beta$ is like in \eqref{def_beta}.
When $z\in D_2$, we subdivide along 
$\delta_0$ $\leq$ or $>$ $(\tau-t)^\beta |(\T^\alpha_{T-t})^{-1}(y-R_{T,t}x)|$.
The first case is dealt as above. In the second case, we can integrate the time singularity.\\

\textit{Contributions involving $\breve p_\alpha(t,T,x,y) $.} We first  focus on the contribution
\begin{eqnarray*}
\breve A_{\alpha,I_2}&:=&\int_{I_2} d\tau \int \bar p_\alpha(t,\tau,x,z) \frac{\delta\wedge |z-R_{\tau,T }y|^{\eta(\alpha \wedge 1)}}{T-\tau} \breve p_\alpha(\tau,T,z,y)dz\\
&\le &\int_{I_2}d\tau \int \bar p_\alpha(t,\tau,x,z)\delta\wedge |z-R_{\tau,T }y|^{\eta(\alpha \wedge 1)} \I_{\frac{|(z-R_{\tau,T}y)^1|}{(T-\tau)^{1/\alpha}}\asymp |(\T_{T-\tau}^\alpha)^{-1} (z-R_{\tau,T}y)|\ge K}\\
&&\times \frac1{|(z-R_{\tau,T}y)^1|^{1+\alpha}} \theta(|\M_{T-t}^{-1}(z-R_{\tau,T}y)|)
\frac{dz}{(T-\tau)^{1+\frac1\alpha}(1+\frac{|(z-R_{\tau,T}y)^2|}{(T-\tau)^{1+\frac 1 \alpha}})^{1+\alpha}}.
\end{eqnarray*}
Using again the partition in equation \eqref{SPATIAL_PART}, we readily get from Lemma \ref{lemme}, similarly to the previous paragraph, that:
\begin{eqnarray*}
\breve A_{\alpha,I_2,D_1}&:=&\int_{I_2} d\tau \int_{D_1} \bar p_\alpha(t,\tau,x,z) \frac{\delta\wedge |z-R_{\tau,T }y|^{\eta(\alpha \wedge 1)}}{T-\tau} \breve p_\alpha(\tau,T,z,y)dz\\
&\le & C(T-\tau)^\omega \bar p_\alpha(t,T,x,y).
\end{eqnarray*}
On $D_2$ the previous arguments also apply for $\{\tau\ge \tau_0\} $, with the same definition of $\tau_0 $. Hence:
\begin{eqnarray*}
\breve A_{\alpha,I_2\cap \{ \tau>\tau_0\},D_2}\le C(T-\tau)^\omega \bar p_\alpha(t,T,x,y).
\end{eqnarray*}
The only remaining case to handle is when the slow component dominates at the current time $\tau$, i.e. $|(R_{\tau,t}x-R_{\tau,T}y)^1|\ge c_0(T-t)|(R_{\tau,t}x-R_{\tau,T}y)^2| $.

On the considered set, it has previously been proven on $D_2$ (see \eqref{CTR_DEL_TAULETAU0_FIRSTDOM}) that $\delta\wedge |z-R_{\tau,T }y|^{\eta(\alpha \wedge 1)}\le C \delta\wedge |x-R_{t,T }y|^{\eta(\alpha \wedge 1)}  $ which can be taken out of the integral. 
Thus on the considered set, recalling that $ |(z-R_{\tau,T}y)^1|\ge c |(R_{\tau,t}x-R_{\tau,T}y)^1|$:
\begin{eqnarray*}
\breve A_{\alpha,I_2\cap \{ \tau\le \tau_0\},D_2}\le C\delta\wedge |x-R_{t,T }y|^{\eta(\alpha \wedge 1)} \int_{I_2}d\tau \I_{\tau\le \tau_0}\int_{D_2}\bar p_\alpha(t,\tau,x,z)\times \I_{\frac{|(z-R_{\tau,T}y)^1|}{(T-\tau)^{1/\alpha}} \asymp |(\T_{T-\tau}^\alpha)^{-1} (z-R_{\tau,T}y)|\ge K}\\
\times \frac1{|(z-R_{\tau,T}y)^1|^{1+\alpha}} 
\frac{dz}{(T-\tau)^{1+\frac1\alpha}(1+\frac{|(z-R_{\tau,T}y)^2|}{(T-\tau)^{1+\frac 1 \alpha}})^{1+\alpha}}\theta(|\M_{T-\tau}^{-1}(R_{\tau,T}y-z)|)\\
\le C \delta\wedge |x-R_{t,T }y|^{\eta(\alpha \wedge 1)} \theta(|\M_{T-t}^{-1}(R_{t,T}y-x)|)\int_{I_2} d\tau \frac{\I_{\tau\le \tau_0}}{|(R_{\tau,t}x-R_{\tau,T}y)^1|^{1+\alpha}}\\
\times \int_{D_2} \frac{dz}{(\tau-t)^{\frac{2}\alpha+1} (1+\frac{|(R_{\tau,t}x-z)^1|}{(\tau-t)^{1/\alpha}}+\frac{|(R_{\tau,t}x-z)^2|}{(\tau-t)^{1+1/\alpha}} )^{2+\alpha}}\times 
\frac{\I_{\frac{|(z-R_{\tau,T}y)^1|}{(T-\tau)^{1/\alpha}} \asymp |(\T_{T-\tau}^\alpha)^{-1} (z-R_{\tau,T}y)|\ge K}}{(T-\tau)^{1+\frac1\alpha}(1+\frac{|(z-R_{\tau,T}y)^2|}{(T-\tau)^{1+\frac 1 \alpha}})^{1+\alpha}}\\
\le C \delta\wedge |x-R_{t,T }y|^{\eta(\alpha \wedge 1)}\frac{\theta(|\M_{T-t}^{-1}(R_{t,T}y-x)|)}{(T-t)^{1/\alpha}(1+|(\T_{T-t}^\alpha)^{-1}(R_{T,t}x-y)|)^{1+\alpha}}\\
\int_{I_2} d\tau \frac{\I_{\tau \le \tau_0}}{T-t}\int  \frac{dz_2}{(\tau-t)^{\frac{1}\alpha+1} (1+\frac{|(R_{\tau,t}x-z)^2|}{(\tau-t)^{1+1/\alpha}} )^{1+\alpha}}\frac{1}{(T-\tau)^{1+\frac1\alpha}(1+\frac{|(z-R_{\tau,T}y)^2|}{(T-\tau)^{1+\frac 1 \alpha}})^{1+\alpha}}\\
\le C \delta\wedge |x-R_{t,T }y|^{\eta(\alpha \wedge 1)}\frac{\theta(|\M_{T-t}^{-1}(R_{t,T}y-x)|)}{(T-t)^{1/\alpha}(1+|(\T_{T-t}^\alpha)^{-1}(R_{T,t}x-y)|)^{1+\alpha}}\\
\int_{I_2} d\tau \frac{\I_{\tau \le \tau_0}}{T-t} \frac{1}{(T-t)^{\frac{1}\alpha+1} (1+\frac{|(R_{\tau,t}x-R_{\tau,T}y)^2|}{(T-t)^{1+1/\alpha}} )^{1+\alpha}}.
\end{eqnarray*}
From this last inequality we deduce that if $|(R_{\tau,t}x-R_{\tau,T}y)^2|\ge  c_1 (T-t)|(R_{\tau,t}x-R_{\tau,T}y)^1|$, i.e. the components are equivalent, we get the expected control, which could have already been deduced from the fact that the fast component is equivalent to the global energy.
If such an equivalence does not hold, the natural control is:
\begin{eqnarray*}
\breve A_{\alpha,I_2\cap \{ \tau\le \tau_0\},D_2}\le
C \delta\wedge |x-R_{t,T }y|^{\eta(\alpha \wedge 1)}\frac{\theta(|\M_{T-t}^{-1}(R_{t,T}y-x)|)}{(T-t)^{1/\alpha}(1+|(\T_{T-t}^\alpha)^{-1}(R_{T,t}x-y)|)^{1+\alpha}}\\
\times \frac{1}{(T-t)^{\frac{1}\alpha+1} (1+\inf_{\tau\in [t,T]}\frac{|(R_{\tau,t}x-R_{\tau,T}y)^2|}{(T-t)^{1+1/\alpha}} )^{1+\alpha}}.
\end{eqnarray*}
Now in the stable case \textbf{[HS]}, we obtain:
\begin{eqnarray*}
\breve A_{\alpha,I_2\cap \{ \tau\le \tau_0\},D_2}
\le
C \delta\wedge |R_{\tau^*,t}x-R_{\tau^*,T }y|^{\eta(\alpha \wedge 1)}\frac{1}{(T-t)^{1/\alpha}(1+|(\T_{T-t}^\alpha)^{-1}(R_{\tau^*,t}x-R_{\tau^*,T}y)|)^{1+\alpha}}\\
\times \frac{1}{(T-t)^{\frac{1}\alpha+1} (1+\frac{|(R_{\tau^*,t}x-R_{\tau^*,T}y)^2|}{(T-t)^{1+1/\alpha}} )^{1+\alpha}},
\end{eqnarray*}
where $\tau^* $ achieves the minimum.
In the tempered case \textbf{[HT]}, the control reads:
\begin{eqnarray*}
\breve A_{\alpha,I_2\cap \{ \tau\le \tau_0\},D_2}\le 
 C \delta\wedge |x-R_{t,T }y|^{\eta(\alpha \wedge 1)}\bar p_{\alpha,\Theta}(t,T,x,y)
\times \frac{1}{(T-t)^{\frac{1}\alpha} (1+\frac{|(R_{\tau^*,t}x-R_{\tau^*,T}y)^2|}{(T-t)^{1+1/\alpha}} )^{1+\alpha}}.
\end{eqnarray*}
Similar controls could be established by symmetry for $\breve A_{\alpha,I_1} $.
These bounds thus yield in both cases an additional time-singularity in $(T-t)^{-1/\alpha}$ if $|(R_{\tau^*,t}x-R_{\tau^*,T}y)^2|\le K (T-t)^{1+1/\alpha}$ and a possible loss of concentration in the stable case. They also turn out to be difficult to exploit in order to iterate in the series to establish the existence of the density and related bounds.

\end{trivlist}

Now if $\sigma(t,x):=\sigma(t,x_2) $ we can get rid of the additional singularity in the tempered case, writing:
\begin{eqnarray*}
\breve A_{\alpha, \{ \tau\le \tau_0\},D_2}\le C \int_t^{T-\tau_0}d\tau \int_{D_2}\bar p_\alpha(t,\tau,x,z)  \frac{\delta\wedge |(z-R_{\tau,T}y)^2|^{\eta(\alpha\wedge 1)}}{|(z-R_{\tau,T}y)^1|^{1+\alpha}}\theta(|\M_{T-\tau}^{-1}(z-R_{\tau,T}y)|)\times\\
\frac{1}{(T-\tau)^{1+\frac 1\alpha}}\frac{dz}{(1+\frac{|(z-R_{\tau,T}y)^2|}{(T-\tau)^{1+\frac1 \alpha }})^{1+\alpha} }\\
\le C\frac{\theta(|\M_{T-t}^{-1}(R_{T,t}x-y)|)}{|(R_{T,t}x-y)^1|^{1+\alpha}}
\times C \int_t^{T-\tau_0}d\tau \int \frac{1}{(\tau-t)^{1+\frac 1 \alpha}(1+\frac{|(R_{\tau,t}x-z)^2|}{(\tau-t)^{1+\frac 1 \alpha}})^{1+\alpha}} 
\\\delta\wedge |(z-R_{\tau,T}y)^2|^{\eta(\alpha\wedge 1)} 
\frac{1}{(T-\tau)^{1+\frac 1\alpha}}\frac{dz^2}{(1+\frac{|(z-R_{\tau,T}y)^2|}{(T-\tau)^{1+\frac1 \alpha }})^{1+\alpha} }.
\end{eqnarray*}
Observe now from \textbf{[HT]} that we have the control:
\begin{eqnarray*}
\frac{\theta(|\M_{T-t}^{-1}(R_{T,t}x-y)|)}{|R_{T,t}x-y|^{1+\alpha}}\le \frac{|(R_{T,t}x-y)^1|\theta(|\M_{T-t}^{-1}(R_{T,t}x-y)|)}{|(R_{T,t}x-y)^1|^{2+\alpha}}\le \frac{\Theta(|\M_{T-t}^{-1}(R_{T,t}x-y)|)}{|R_{T,t}x-y|^{1+\alpha}}=\bar p_{\alpha,\Theta}(t,T,x,y),
\end{eqnarray*}
on the considered case (i.e. the first component dominates in the off-diagonal regime). Hence:
\begin{eqnarray}
\breve A_{\alpha, \{ \tau\le \tau_0\},D_2}
\le C\bar p_{\alpha,\Theta}(t,T,x,y)\int_{t}^{T-\tau_0}d\tau\left\{ (\tau-t)^{-(1+\frac 1\alpha)(q-1)}\int dz^2 \frac{1}{(\tau-t)^{1+\frac 1 \alpha}} \frac{1}{(1+ \frac{|(z-R_{\tau,t}x)^2|}{(\tau-t)^{1+\frac 1\alpha}})^{(1+\alpha)q}} \right\}^{1/q}\nonumber \\
\times \left\{(T-\tau)^{-\{(1+\frac 1\alpha)(p-1)\}} \int [\delta\wedge |(z-R_{\tau,T}y)^2|^{\eta(\alpha\wedge 1)}]^p
\frac{1}{(T-\tau)^{1+\frac 1\alpha}}
\frac{dz^2}{(1+\frac{|(z-R_{\tau,T}y)^2|}{(T-\tau)^{1+\frac1 \alpha }})^{(1+\alpha)p} }
\right\}^{1/p}\label{HOLDER}\\
\le C\bar p_{\alpha,\Theta}(t,T,x,y)\int_{t}^{T-\tau_0}d\tau (\tau-t)^{-(1+\frac 1\alpha)\frac{1}{p}}\times (T-\tau)^{-\{(1+\frac 1\alpha)\frac{1}{q}\}}\{(T-t)^{(1+\frac 1\alpha) \eta (\alpha\wedge 1)}\}\nonumber,
\end{eqnarray}
where $p,q>1,p^{-1}+q^{-1}=1$ and s.t.  $p>1+\frac 1\alpha $ for $\tau\in [t,\frac{t+T}2]$ and $q>1+\frac 1\alpha$ for $\tau\in [\frac{t+T}{2},T] $. 
Also, the regularizing term $(T-t)^{(1+\frac1\alpha)\eta(\alpha \wedge 1)} $ in the last control can be derived following the proof of Lemma \ref{lemme}.
We thus derive:
\begin{eqnarray*}
\breve A_{\alpha, \{ \tau\le \tau_0\},D_2} \le C\bar p_{\alpha,\Theta}(t,T,x,y)(T-t)^{(1+\frac 1 \alpha)\eta(\alpha \wedge 1)}(T-t)^{1-(1+\frac1\alpha)}
\le C\bar p_{\alpha,\Theta}(t,T,x,y)(T-t)^{1+(1+\frac{1}{\alpha})(\eta(\alpha\wedge 1)-1)}.
\end{eqnarray*}
Therefore, the last contribution gives a smoothing effect provided:
\begin{eqnarray*}
(1+\frac{1}{\alpha})(\eta(\alpha\wedge 1)-1)>-1 \iff \eta>\frac{1}{(\alpha \wedge 1)(1+\alpha)}.
\end{eqnarray*}
The controls associated with $\bar p_{\alpha} $, when $\sigma(t,x)=\sigma(t,x^2) $, yielding the contribution in $\bar q_{\alpha,\Theta} $   in the Lemma, could be easily deduced in the current case from the previous analysis, exploiting \eqref{CTR_DEL_TAULETAU0_FIRSTDOM_PARTIAL_DEP} instead of \eqref{CTR_DEL_TAULETAU0_FIRSTDOM}. 

\end{proof}


The convergence of the parametrix series \eqref{parametrix} will now follow from controls involving the convolutions of $H$ with the last term $\bar{q}_{\alpha,\Theta}(t,T,x,y)$. The following lemma completes the proof of Lemma \ref{LEMME_IT_KER}.

\begin{lemma}\label{DeuxiemeCoup} Assume $d=1,n=2,\sigma(t,x)=\sigma(t,x_2)$ and $ \eta>\frac{1}{(\alpha \wedge 1)(1+\alpha)}$.
There exist $C_{\ref{DeuxiemeCoup}}:=C_{\ref{DeuxiemeCoup}}(\H)>0,\ \omega:=\omega(\H)\in (0,1]$ s.t.
for all $T\in (0,T_0],\ T_0:=T_0(\H)\le 1,\ (x,y)\in (\R^{nd})^2$, $t\in [0,T)$,

\begin{eqnarray*}
| \bar{q}_{\alpha,\Theta} \otimes H|(t,T,x,y)\le
C (T-t)^\omega\Big( \bar{p}_{\alpha,\Theta}(t,T,x,y) \\
+ \delta \wedge \{ (T-t)| (x-R_{t,T}y)^1|+|(x-R_{t,T}y)^2|\}^{\eta(\alpha\wedge 1)} \log \Big(K \vee |(\T_{T-t}^\alpha)^{-1}(y-R_{T,t}x)| \Big)\bar{p}_{\alpha,\Theta}(t,T,x,y) \Big).
\end{eqnarray*}
\end{lemma}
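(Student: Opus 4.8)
\textbf{Proof plan for Lemma \ref{DeuxiemeCoup}.}

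The plan is to estimate $|\bar q_{\alpha,\Theta}\otimes H|(t,T,x,y)$ by bounding $H$ with \eqref{majoration H}, writing out the time-space convolution, and then carefully splitting according to the global regime (diagonal versus off-diagonal) and to the position of the intermediate time $\tau$ in $I_1=[t,\tfrac{t+T}2]$ or $I_2=[\tfrac{t+T}2,T]$, exactly as in the proof of Lemma \ref{premier coup}. The key structural input is that $\bar q_{\alpha,\Theta}(t,\tau,x,z)$ is, up to the logarithmic factor, the product of the spatial weight $\delta\wedge\{(\tau-t)|(x-R_{t,\tau}z)^1|+|(x-R_{t,\tau}z)^2|\}^{\eta(\alpha\wedge1)}$ with $\bar p_{\alpha,\Theta}(t,\tau,x,z)$; so after taking one of the two densities out of the integral in its global regime (using the scaling Lemma \ref{ScalingLemma}, the inequality \eqref{INEG_TRIANG_MS}, the semigroup property of Corollary \ref{SG_PROP}, and \eqref{SG_DIFF_SPEC}), the remaining integral is of the type controlled by Lemma \ref{lemme}, equations \eqref{lemme backward spec}--\eqref{lemme forward spec}, or by the logarithmic refinement in Lemma \ref{lemme2}. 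Concretely, first I would handle the diagonal global regime $|(\T_{T-t}^\alpha)^{-1}(y-R_{T,t}x)|\le K$: there the logarithm in $\bar q_{\alpha,\Theta}$ disappears, on $I_1$ one extracts $\bar p_{\alpha,\Theta}(\tau,T,z,y)\le C\det(\T^\alpha_{T-t})^{-1}\le C\bar p_{\alpha,\Theta}(t,T,x,y)$ and applies \eqref{lemme forward spec} to the forward convolution of the spatial weight against $\bar p_{\alpha,\Theta}(t,\tau,x,\cdot)$, picking up $(\tau-t)^{\tilde\omega}$ with $\tilde\omega=(1+1/\alpha)\eta(\alpha\wedge1)$; on $I_2$ one extracts $\bar p_{\alpha,\Theta}(t,\tau,x,z)\le C\bar p_{\alpha,\Theta}(t,T,x,y)$ and applies \eqref{lemme backward spec}. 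This yields $C(T-t)^\omega\bar p_{\alpha,\Theta}(t,T,x,y)$ with $\omega$ a positive power (something like $\omega=\min(\tilde\omega,\eta(1/\alpha\wedge1))$, using $\tilde\omega>0$ because $\eta>0$).

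For the off-diagonal regime $|(\T_{T-t}^\alpha)^{-1}(y-R_{T,t}x)|>K$ I would reuse the slow/fast dichotomy of Lemma \ref{premier coup}. When the fast (second) component dominates in $|(\T_{T-t}^\alpha)^{-1}(R_{\tau,t}x-R_{\tau,T}y)|$, the off-diagonal density is not time-singular, so the triangle inequality $|z-R_{\tau,T}y|\le C(|z-R_{\tau,t}x|+|R_{\tau,t}x-R_{\tau,T}y|)$ together with \eqref{INEG_TRIANG_MS} lets one put the globally off-diagonal density outside and apply \eqref{lemme backward spec}/\eqref{lemme forward spec}, exactly as in Kolokoltsov \cite{kolo:00}, with the regularizing power absorbed and the $\log$ re-created from $\bar q_{\alpha,\Theta}$ via Lemma \ref{lemme2}. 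When the slow (first) component dominates, I introduce the spatial partition $D_1,D_2$ of \eqref{SPATIAL_PART} with the parameter $\beta$ satisfying \eqref{def_beta} (with $\omega$ replaced by the current regularizing exponent): on $D_1$ one compensates the slow-scale singularity $(T-\tau)^{-\beta(2+\alpha)}$ against the $(T-\tau)^{\tilde\omega-1}$ gained from Lemma \ref{lemme}, giving an integrable power; on $D_2$ one splits further at $\tau_0=T-(\delta_0/|(\T^\alpha_{T-t})^{-1}(y-R_{T,t}x)|)^{1/\beta}$. For $\tau\ge\tau_0$ the first density is forced into its diagonal regime and one argues as in Lemma \ref{premier coup}; for $\tau\le\tau_0$ the $(T-\tau)^{-1}$ singularity integrates to the factor $\log(K\vee|(\T_{T-t}^\alpha)^{-1}(y-R_{T,t}x)|)$, and the key point — established in the proof of Lemma \ref{premier coup}, equation \eqref{CTR_DEL_TAULETAU0_FIRSTDOM_PARTIAL_DEP} — is that on $D_2$ with the slow component dominating, $\delta\wedge|(z-R_{\tau,T}y)^2|^{\eta(\alpha\wedge1)}\le C\,\delta\wedge\{(T-t)|(R_{T,t}x-y)^1|+|(R_{T,t}x-y)^2|\}^{\eta(\alpha\wedge1)}$, which is exactly the spatial prefactor of $\bar q_{\alpha,\Theta}(t,T,x,y)$; taking it out of the integral and applying \eqref{SG_DIFF_SPEC} to the remaining $\int\bar p_{\alpha,\Theta}(t,\tau,x,z)\bar p_{\alpha,\Theta}(\tau,T,z,y)\,dz$ produces precisely the second term in the claimed bound.

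The main obstacle I expect is the bookkeeping of the logarithmic factors under the spatial partition: both $\bar q_{\alpha,\Theta}$ (from the input) and the $\tau\le\tau_0$ integration (from the slow-component singularity) contribute a $\log(K\vee|\cdot|)$, so a priori one risks producing $(\log)^2$, which is not what the statement allows. The resolution is the same trick as in Lemma \ref{lemme2}: a surviving power $(T-t)^{\omega'}$ with $\omega'>0$ can absorb one logarithm since $\log(K\vee r)\le C_\varepsilon r^\varepsilon$ for small $\varepsilon$, provided $\varepsilon$ is chosen smaller than the margin in $d(1-n)+1+\alpha-\varepsilon>0$ (here $=2+\alpha-\varepsilon>0$). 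So one of the two logarithms is traded against an arbitrarily small power loss, and one keeps only the single $\log$ displayed in the statement, at the cost of possibly shrinking $\omega$. A secondary technical point is that when the slow component dominates and $\tau\in I_1$ (so $T-\tau\asymp T-t$ is not singular), one must, as in Lemma \ref{premier coup}, re-split along whichever of $|(\T^\alpha_{\tau-t})^{-1}(R_{\tau,t}x-z)|$ or $|(\T^\alpha_{T-\tau})^{-1}(R_{T,\tau}z-y)|$ is off-diagonal and possibly re-apply the $D_1/D_2$ partition with $\tau-t$ in place of $T-\tau$; this is routine once the $I_2$ case is done. Collecting all pieces and choosing $T_0$ small enough to make the surviving powers of $T-t$ dominate yields the stated estimate.
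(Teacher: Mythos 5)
Your overall architecture is the paper's: split according to the global regime, the time intervals $I_1,I_2$, the spatial partition $D_1,D_2$ with the cutoff $\tau_0$, transfer of the kernel's fast-variable weight into the global weight via \eqref{CTR_DEL_TAULETAU0_FIRSTDOM_PARTIAL_DEP}, smoothing through Lemmas \ref{lemme} and \ref{lemme2}, and the resolution of the potential $(\log)^2$ by keeping the incoming logarithm inside the spatial integral and trading it against a small power there (this is exactly how Lemma \ref{lemme2} is used in the paper, so that point is fine). The genuine gap is in the critical regime: global off-diagonal with the slow component dominating, $z\in D_2$, $\tau\le\tau_0$. There the kernel bound \eqref{majoration H} contains $\breve{p}_\alpha$ as well as $\bar p_\alpha$, and in that regime $\breve{p}_\alpha(\tau,T,z,y)$ is neither dominated by $\bar p_\alpha(\tau,T,z,y)$ nor covered by the semigroup property of Corollary \ref{SG_PROP}, so ``applying \eqref{SG_DIFF_SPEC} to the remaining $\int \bar p_{\alpha,\Theta}(t,\tau,x,z)\bar p_{\alpha,\Theta}(\tau,T,z,y)dz$'' does not treat it. The paper's proof devotes a separate computation to this piece (the term $\Gamma_2$): one uses $|(z-R_{\tau,T}y)^1|\ge c|(R_{\tau,t}x-R_{\tau,T}y)^1|$ on the rediagonalization indicator, the tempering inequality $\theta(r)/r^{1+\alpha}\le \Theta(r)/r^{2+\alpha}$ to reconstruct the global $\bar p_{\alpha,\Theta}(t,T,x,y)$, and then a H\"older splitting in the fast variable, as in \eqref{HOLDER}, of the singularities $(\tau-t)^{-(1+1/\alpha)}$ and $(T-\tau)^{-(1+1/\alpha)}$ against the fast-variable smoothing weights; it is precisely here that the hypotheses $\sigma(t,x)=\sigma(t,x^2)$ and $\eta>1/((\alpha\wedge1)(1+\alpha))$ are consumed. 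Your plan never says how $\breve p_\alpha$ is handled in that regime, and this is the technical heart of the lemma.

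A second, related problem: in the same critical regime your bookkeeping loses the prefactor $(T-t)^\omega$ on the logarithmic term. If you extract the kernel's weight as the global weight, bound the input weight crudely and then apply the semigroup property to $\int \bar p_{\alpha,\Theta}(t,\tau,x,z)\bar p_{\alpha,\Theta}(\tau,T,z,y)dz$, the time integral of $(T-\tau)^{-1}$ up to $\tau_0$ produces only the logarithm, with no gain in $T-t$ --- i.e.\ you merely reproduce the bound of Lemma \ref{premier coup}, not the improvement claimed here. The whole point of convolving $\bar q_{\alpha,\Theta}$ (rather than $\tilde p_\alpha$) with $H$ is that the input carries its own weight $\delta\wedge\{(\tau-t)|(z-R_{\tau,t}x)^1|+|(z-R_{\tau,t}x)^2|\}^{\eta(\alpha\wedge1)}$ (and possibly its own log), which must be integrated against $\bar p_{\alpha,\Theta}(t,\tau,x,\cdot)$ via \eqref{lemme forward spec}, respectively Lemma \ref{lemme2}, to produce $(\tau-t)^{\tilde\omega}$ before the singular time integration; only then does $\int_{(t+T)/2}^{\tau_0}(\tau-t)^{\tilde\omega}(T-\tau)^{-1}d\tau\le C(T-t)^{\tilde\omega}\log\bigl(K\vee|(\T_{T-t}^\alpha)^{-1}(y-R_{T,t}x)|\bigr)$ yield the factor $(T-t)^\omega$ multiplying the log term, which is indispensable for the geometric convergence of the parametrix series. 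As written, your sketch discards the input weight exactly where it is needed.
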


\begin{proof}

Recall that $\bar{q}_{\alpha,\Theta}(t,T,x,y)$ writes as the sum of 
$$q_{\alpha,\Theta} (t,T,x,y) := \delta \wedge \{ (T-t)| (x-R_{t,T}y)^1|+|(x-R_{t,T}y)^2|\}^{\eta(\alpha \wedge 1)} \bar{p}_{\alpha,\Theta}(t,T,x,y)$$
and 
$$\rho_{\alpha,\Theta}(t,T,x,y) := \delta \wedge \{(T-t)| (x-R_{t,T}y)^{1}|+| (x-R_{t,T}y)^{2}|\}^{\eta(\alpha \wedge 1)} \log \Big(K \vee |(\T_{T-t}^\alpha)^{-1}(y-R_{T,t}x)| \Big)\bar{p}_{\alpha,\Theta}(t,T,x,y).$$

Though the lines of the proof are similar to those of Lemma \ref{premier coup}, we treat the two convolutions separately, to emphasize the difficulties induced by the rediagonalization and the logarithmic factor.
First, for $|q_{\alpha,\Theta} \otimes H|(t,T,x,y)$, we bound $|H|$ using  Lemma \ref{CTR_KER_PTW}, to get:
 \begin{eqnarray*}
|q_{\alpha,\Theta} \otimes H|(t,T,x,y)\leq C\int_t^T d\tau \int_{\R^{nd}} \delta\wedge\{(\tau-t)|(z-R_{\tau,t}x)^1|+|(z-R_{\tau,t}x)^2|\}^{\eta(\alpha \wedge 1)} \bar{p}_{\alpha,\Theta}(t,\tau,x,z) \\
\times \frac{\delta\wedge|(z-R_{\tau,T}y)^2|^{\eta(\alpha \wedge 1)}}{T-\tau} (\bar{p}_{\alpha}+\breve p_\alpha)(\tau,T,z,y).
 \end{eqnarray*}
 The above contribution can be handled as in Lemma \ref{premier coup}, in the \textit{diagonal} case $|(\T_{T-t}^\alpha)^{-1}(y-R_{T,t}x)|\leq K$, or in the \textit{off-diagonal} case 
$|(\T_{T-t}^\alpha)^{-1}(y-R_{T,t}x)|> K$ when for a given integration time $\tau\in [t,T] $ the \textit{fast component} dominates, i.e. 
$|R^2_{\tau,T}y-R^2_{\tau,t}x|\ge (T-t)|R^1_{\tau,T}y-R^1_{\tau,t}x|$.
The only difference is that we do not need to use the triangle inequality in order to apply Lemma \ref{lemme}. Indeed,  regularizing terms $\delta \wedge |(z-R_{\tau,T}y)^2|^{\eta(\alpha \wedge 1)}$,  $\delta \wedge \{(\tau-t) |(z-R_{\tau,t}x)^1|+|(z-R_{\tau,t}x)^2| \}^{\eta (\alpha \wedge 1)}$  already appear for both densities.

When $|(\T_{T-t}^\alpha)^{-1}(y-R_{T,t}x)|> K$ and 
$|R^2_{\tau,T}y-R^2_{\tau,t}x|\le (T-t)|R^1_{\tau,T}y-R^1_{\tau,t}x|$, we split as in the previous proof the time interval into $I_1\cup I_2:=[t,\frac{T+t}{2}]\cup [\frac{T+t}{2},T] $.
Suppose $\tau \in I_2$. We consider the spatial partition introduced in \eqref{SPATIAL_PART}.

For $z\in D_1$, we have $\bar{p}_{\alpha,\Theta}(t,\tau,x,z) \leq C(T-\tau)^{-\beta(2+\alpha)} \bar{p}_{\alpha,\Theta}(t, T ,x,y)$.
This yields a regularization property from Lemma \ref{lemme} when $\beta$ satisfies \eqref{def_beta}. 
For $z\in D_2$ and a given $\delta_0>0 $, we use again the partition $(T-\tau)^\beta |(\T_{T-t}^\alpha)^{-1}( y-R_{T,t}x)| $ $\geq$ or $<$ $\delta_0$.
The case $(T-\tau)^\beta |(\T_{T-t}^\alpha)^{-1}( y-R_{T,t}x)| \leq \delta_0$ yields a regularization in time similarly to the previous proof.

In order for $(T-\tau)^\beta |(\T_{T-t}^\alpha)^{-1}( y-R_{T,t}x)| $ to exceed $\delta_0$, we see that $\tau$ must be lower than $\tau_0:=T-\left( \frac{\delta_0}{|(\T^\alpha_{T-t})^{-1}(y-R_{T,t}x)|}\right)^{\frac{1}{\beta}}
$. In that case, the time singularity is still logarithmically explosive but integrable. We are led to consider:
\begin{equation}\label{CasCritique}
\begin{split}
\Gamma:=\int_{I_2}d\tau\frac{1}{T-\tau}\ind_{\tau\le \tau_0}\int_{D_2} \delta\wedge\{(\tau-t)|(z-R_{\tau,t}x)^1|+|(z-R_{\tau,t}x)^2|\}^{\eta(\alpha \wedge 1)} \bar{p}_{\alpha,\Theta}(t,\tau,x,z)\\
\times \delta\wedge|(z-R_{\tau,T}y)^2|^{\eta(\alpha \wedge 1)} (\bar{p}_\alpha+\breve p_\alpha)(\tau,T,z,y) dz.
\end{split}
\end{equation}
Using iteratively the scaling Lemma \ref{ScalingLemma} we derive:
\begin{eqnarray*}
\frac{|y^1-R_{T,\tau}^1z|}{(T-t)^{\frac1\alpha}}+ \frac{|y^2-R_{T,\tau}^2z|}{(T-t)^{1+\frac{1}{\alpha}}}\ge c_2|(\T_{T-t}^{\alpha})^{-1}(y-R_{T,\tau}z)|\\
\ge c_2 C^{-1}\left\{ |(\T_{T-t}^{\alpha})^{-1}(R_{\tau,t}x-R_{\tau,T}y)|-|(\T_{T-t}^{\alpha})^{-1}(z-R_{\tau,t}x)|\right\}\\
\ge c_2 \{C^{-1}|(\T_{T-t}^{\alpha})^{-1}(R_{\tau,t}x-R_{\tau,T}y)|-C^{-1}(T-\tau)^\beta|(\T_{T-t}^{\alpha})^{-1}(R_{T,t}x-y)|\}\\
\ge c_2 \{C^{-1} -(T-\tau)^\beta \}|(\T_{T-t}^{\alpha})^{-1}(R_{\tau,t}x-R_{\tau,T}y)|, \ c_2>0,C:=C(T)\ge 1,
\end{eqnarray*}
recalling that $z\in D_2$ for the last but one inequality.
Thus, for $T$ small enough and up to a modification of $C$, we have either $|y^1-R_{T,\tau}^1z| \geq C |R_{\tau,t}^1x-R_{\tau,T}^1y|$, or $|y^2-R_{T,\tau}^2z| \geq C (T-t)|R_{\tau,t}^1x-R_{\tau,T}^1y|$. In both cases,  $\bar{p}_\alpha(\tau,T,z,y) \leq \frac{C}{|R_{\tau,t}^1x-R_{\tau,T}^1y|^{2+\alpha}}\theta(|\M_{T-t}^{-1}(R_{\tau,t}^1x-R_{\tau,T}^1y)|)$.
 This yields from Proposition \ref{EST_DENS_GEL} $\bar p_\alpha(\tau,T,z,y)\le C \bar p_\alpha(t,T,x,y) $.
In our current case, we then derive from \eqref{CTR_DEL_TAULETAU0_FIRSTDOM_PARTIAL_DEP} 
that:
\begin{eqnarray*}
\delta \wedge |(z-R_{\tau,T}y)^2|^{\eta(\alpha \wedge 1)} \bar{p}_\alpha (\tau,T,z,y) \leq \delta \wedge \{(T-t)|(x-R_{t,T}y)^1|+|(x-R_{t,T}y)^2|\}^{\eta(\alpha \wedge 1)} \bar{p}_\alpha (t,T,x,y). 
\end{eqnarray*}
Consequently, we can bound \eqref{CasCritique} by:
\begin{eqnarray*}
\Gamma\le C(T-t)^{\omega}
\bar{p}_\alpha (t,T,x,y)\\
+\int_{I_2}d\tau\frac{1}{T-\tau}\ind_{\tau\le \tau_0}\int_{D_2} \delta\wedge\{(\tau-t)|(z-R_{\tau,t}x)^1|+|(z-R_{\tau,t}x)^2|\}^{\eta(\alpha \wedge 1)} \bar{p}_{\alpha,\Theta}(t,\tau,x,z)\\
\times \delta\wedge|(z-R_{\tau,T}y)^2|^{\eta(\alpha \wedge 1)} \breve p_\alpha(\tau,T,z,y) dz:=\Gamma_1+\Gamma_2.
\end{eqnarray*}
It thus remains to handle $\Gamma_2$ which derives from the rediagonalization. We write:
\begin{eqnarray*}
\Gamma_2\le C\bar p_{\alpha,\Theta}(t,T,x,y)
\int_t^T d\tau \int_{\R^2} \bar p_{\alpha,\Theta}(t,\tau,x,z)\{\delta \wedge \{(\tau-t)|(z-R_{\tau,t}x)^1|\}^{\eta(\alpha \wedge 1)}+\delta \wedge |(z-R_{\tau,t}x)^2|^{\eta(\alpha\wedge 1)} \}\\
\times\frac{1}{(T-\tau)^{1+\frac 1 \alpha}}\frac{\delta\wedge |(z-R_{\tau,T}y)^2|^{\eta(\alpha \wedge 1)}}{(1+\frac{|(z-R_{\tau,T}y)^2|}{(T-\tau)^{1+\frac 1\alpha}})^{1+\frac 1\alpha}}dz\\
\le C\bar p_{\alpha,\Theta}(t,T,x,y)\int_t^T d\tau  \int dz_2 \frac{1}{(T-\tau)^{1+\frac 1\alpha}}\frac{\delta\wedge |(z-R_{\tau,T}y)^2|^{\eta(\alpha \wedge 1)}}{(1+\frac{|(z-R_{\tau,T}y)^2|}{(T-\tau)^{1+\frac 1\alpha}})^{1+\frac 1\alpha}}\{(\tau-t)^{(1+\frac 1\alpha)[\eta(\alpha \wedge 1)-1]}+ \\
\frac{\delta \wedge |(R_{\tau,t} x-z)^2|^{\eta(\alpha \wedge 1)}}{(\tau-t)^{1+\frac1 \alpha}(1+\frac{|(z-R_{\tau,t}x)^2|}{(\tau-t)^{1+\frac1\alpha}})^{1+\alpha}}\}\\
\le C\bar p_{\alpha,\Theta}(t,T,x,y)\int_t^T d\tau \big\{(T-\tau)^{(1+\frac 1\alpha)\eta(\alpha \wedge 1)}(\tau-t)^{(1+\frac 1\alpha)[\eta(\alpha \wedge 1)-1]}\\
+(\tau-t)^{(1+\frac 1 \alpha)\{\eta(\alpha \wedge 1)-1/2\}}(T-\tau)^{(1+\frac 1 \alpha)\{\eta(\alpha \wedge 1)-1/2\}}\big\}\le C\bar p_{\alpha,\Theta}(t,T,x,y)(T-t)^{2(1+\frac 1 \alpha)\eta(\alpha\wedge 1)-\frac 1 \alpha},
\end{eqnarray*}
proceeding as in \eqref{HOLDER} and using Lemma \ref{lemme} for the last but one inequality.
This indeed gives a regularizing effect recalling that we have assumed $1\ge \eta >\frac{1}{(\alpha \wedge 1)(1+\alpha)} $.
Note that the case $\tau\in I_1 $ could be handled similarly, see Lemma \ref{premier coup}.
The controls become:
\begin{equation}\label{qxH}
q_{\alpha,\Theta} \otimes |H|(t,T,x,y) \leq C(T-t)^\omega \left( \bar{p}_{\alpha,\Theta}(t,T,x,y) + 
 \log \Big(K \vee |(\T_{T-t}^\alpha)^{-1}(y-R_{T,t}x)| \Big) q_{\alpha,\Theta}(t,T,x,y) \right).
\end{equation}
We point out that the important contribution in the above equation is the factor $(T-t)^\omega$, whose power will grow at each iteration. This key feature gives the convergence of the series \eqref{parametrix}.

Now, for $\rho_{\alpha,\Theta} \otimes |H|(t,T,x,y)$, we still bound $|H|$ using Lemma \ref{CTR_KER_PTW}:
\begin{equation}
\begin{split}
\rho_{\alpha,\Theta} \otimes |H|(t,T,x,y)\leq C\int_t^T d\tau \int_{\R^{2}} dz \log\Big(K\vee |(\T_{\tau-t}^{\alpha})^{-1}(z-R_{\tau,t}x)|\Big) \\
\times \delta \wedge \{(\tau-t)|(z-R_{\tau,t}x)^1|+|(z-R_{\tau,t}x)^2|\}^{\eta(\alpha \wedge 1)} \bar{p}_{\alpha,\Theta}(t,\tau,x,z)
\times \frac{\delta\wedge|z-R_{\tau,T}y|^{\eta(\alpha \wedge 1)}}{T-\tau} (\bar{p}_{\alpha}+\breve{p}_{\alpha})(\tau,T,z,y).
\end{split}
\end{equation}
W.r.t. the previous contribution, the main difference comes from the logarithm.
However, the lines of the proof remain the same. Suppose first that 
 $|(\T_{T-t}^\alpha)^{-1}(y-R_{T,t}x)|\leq K$.
 Depending on the time parameter $\tau$, we can show that we always have either
$\bar{p}_{\alpha,\Theta}(t,\tau,x,z) \leq C\bar {p}_{\alpha,\Theta}(t,T,x,y)$ 
or $\bar{p}_{\alpha}(\tau,T,z,y)\leq C\bar{p}_{\alpha}(t,T,x,y)\le C\bar{p}_{\alpha,\Theta}(t,T,x,y)$.
The second case occurs when $\tau \in I_1$. Using the notations of the previous proof, this yields:
\begin{eqnarray*}
\rho_{\alpha,\Theta} \otimes_{|I_1} |H|(t,T,x,y) &\leq& C\bar{p}_{\alpha,\Theta}(t,T,x,y)\int_{I_1}d\tau\int_{\R^{2}} \log\Big(K\vee |(\T_{\tau-t}^{\alpha})^{-1}(z-R_{\tau,t}x)|\Big) \\
&&\times \frac{\delta \wedge \{(\tau-t)|(z-R_{\tau,t}x)^1|+|(z-R_{\tau,t}x)^2|\}^{\eta(\alpha\wedge 1) }}{\tau-t}\bar{p}_{\alpha,\Theta}(t,\tau,x,z) dz,
\end{eqnarray*}
and we conclude by Lemma \ref{lemme2}.
In the case when $\bar{p}_{\alpha,\Theta}(t,\tau,x,z) \leq C\bar{p}_{\alpha,\Theta}(t,T,x,y)$, which happens for $\tau \in I_2$,  we have:
$$
|(\T_{\tau-t}^\alpha)^{-1}(z-R_{\tau,t}x)| \leq C(|(\T_{\tau-t}^\alpha)^{-1}(y-R_{T,t}x)|+|(\T_{\tau-t}^\alpha)^{-1}(y-R_{T,\tau}z)|)\leq C(K+|(\T_{T-\tau}^\alpha)^{-1}(y-R_{T,\tau}z)|).
$$
Plugging this inequality into the logarithm and taking out the first density, we can bound:
\begin{eqnarray*}
\rho_{\alpha,\Theta} \otimes_{|I_2} |H|(t,T,x,y) &\leq &C \bar{p}_{\alpha,\Theta}(t,T,x,y)\int_{I_2}  d\tau\int_{\R^{2}} \log\Big(K\vee |(\T_{T-\tau}^\alpha)^{-1}(y-R_{T,\tau}z)|\Big)\\ 
&&\times \frac{\delta \wedge \{ (T-\tau)|(z-R_{\tau,T}y)^1|+|(z-R_{\tau,T}y)^2| \}^{\eta(\alpha \wedge 1)} }{T-\tau}(\bar{p}_{\alpha,\Theta} +\breve{p}_\alpha)(\tau,T,z,y) dz,
\end{eqnarray*}
and once again, we conclude by Lemma \ref{lemme2}. Thus, we have so far managed to show that in the global diagonal regime, $\rho_{\alpha,\Theta} \otimes |H|(t,T,x,y) \le C(T-t)^\omega \bar{p}_{\alpha,\Theta}(t,T,x,y)$.

It remains to deal with the case when $|(\T_{T-t}^\alpha)^{-1}(y-R_{T,t}x)|\geq K$.
Suppose first that $\tau\in I_2$, and that the first component dominates in the global action $|(\T_{T-t}^\alpha)^{-1}(y-R_{T,t}x)|$, i.e. $|(\T_{T-t}^\alpha)^{-1}(y-R_{T,t}x)|\asymp \frac{|y^1-R_{T,t}^1x|}{(T-t)^{1/\alpha}} $. 
We still consider the partition in \eqref{SPATIAL_PART}.

When $z \in D_1$, we can bound
\begin{equation}\label{BORNE_DENS_D1}
\bar{p}_{\alpha,\Theta}(t,\tau,x,z) \le C(T-\tau)^{-\beta(2+\alpha)}\bar{p}_{\alpha,\Theta}(t,T,x,y).
\end{equation}

On the other hand, the triangle inequality and the scaling Lemma \ref{ScalingLemma} yield:
$$
 |(\T_{\tau-t}^\alpha)^{-1}(z-R_{\tau,t}x)| \leq C \Big(
|(\T_{\tau-t}^\alpha)^{-1}(y-R_{T,\tau}z)|+ |(\T_{\tau-t}^\alpha)^{-1}( y-R_{T,t}x)| \Big).
$$
Consequently, up to a modification of $C$, we have either:
$$
 |(\T_{\tau-t}^\alpha)^{-1}(z-R_{\tau,t}x)| \leq C 
 |(\T_{\tau-t}^\alpha)^{-1}( y-R_{T,t}x)|
\mbox{ or }
 |(\T_{\tau-t}^\alpha)^{-1}(z-R_{\tau,t}x)| \leq C 
 |(\T_{\tau-t}^\alpha)^{-1}(y-R_{T,\tau}z)|. 
$$
Define accordingly, 
$$D_{1,1}= \{ z \in D_1;  |(\T_{\tau-t}^\alpha)^{-1}(z-R_{\tau,t}x)| \leq C 
 |(\T_{\tau-t}^\alpha)^{-1}( y-R_{T,t}x)| \},$$
 $$D_{1,2} = \{ z \in D_1;   |(\T_{\tau-t}^\alpha)^{-1}(z-R_{\tau,t}x)| \leq C 
 |(\T_{\tau-t}^\alpha)^{-1}(y-R_{T,\tau}z)| \}.$$
Observe that with this definition, $D_{1,1}$ and $D_{1,2}$ is not a partition of $D_1$. However, $D_1 \subset D_{1,1} \cup D_{1,2} $.

When $z \in D_{1,1}$, we can bound 
$$
\log\Big(K\vee  |(\T_{\tau-t}^\alpha)^{-1}(z-R_{\tau,t}x)| \Big) \leq \log\Big(K\vee  |(\T_{T-t}^\alpha)^{-1}(y-R_{T,t}x)|\Big)+C.
$$
On the other hand, for $\tau\in I_2$, we get from the definition of $D_{1,1} $:
\begin{eqnarray*}
\delta\wedge \{ (\tau-t)|(z-R_{\tau,t}x)^1|+|(z-R_{\tau,t}x)^2|\}^{\eta (\alpha \wedge 1)} \\
\le C (\delta \wedge\{(T-t) |(x-R_{t,T}y)^1|+|(x-R_{t,T}y)^2|\}^{\eta(\alpha \wedge 1)}) .
\end{eqnarray*}

From \eqref{BORNE_DENS_D1}, we thus have:
\begin{eqnarray*}
\rho_{\alpha,\Theta} \otimes_{|I_2,D_{1,1}} |H| (t,T,x,y) 
\le
C \left(\log\Big(K\vee  |(\T_{T-t}^\alpha)^{-1}(y-R_{T,t}x)|\Big) +1\right)\\
\delta \wedge \{(T-t)|(x-R_{t,T}y)^1|+|(x-R_{t,T}y)^2|\}^{\eta(\alpha \wedge 1)} \\
\times \int_{I_2}d\tau \int_{D_{1,1}} \bar{p}_{\alpha,\Theta}(t,\tau,x,z) \frac{\delta\wedge |(z-R_{\tau,T}y)^2|^{\eta(\alpha \wedge 1)}}{T-\tau}(\bar{p}_\alpha+\breve{p}_\alpha)(\tau,T,z,y)dz\\
\le
(T-t)^\omega (\rho_\alpha+q_\alpha)(t,T,x,y),
\end{eqnarray*}
choosing $\beta $ satisfying \eqref{def_beta}.

When $z\in D_{1,2}$, we can bound:
$$
\log\Big(K\vee  |(\T_{\tau-t}^\alpha)^{-1}(z-R_{\tau,t}x)| \Big) \leq \log\Big(K\vee  |(\T_{\tau-t}^\alpha)^{-1}(y-R_{T,\tau}z)|\Big)+C.
$$
Bounding also roughly $\delta \wedge \{(\tau-t)|(z-R_{\tau,t}x)^1|+|(z-R_{\tau,t}x)^2|\}^{\eta(\alpha \wedge 1)}\le \delta$, and using the
bound \eqref{BORNE_DENS_D1}, we can write:
\begin{eqnarray*}
\rho_{\alpha,\Theta} \otimes_{|I_2,D_{1,2}} |H| (t,T,x,y) 
\le
 C\int_{I_2}d\tau\int_{D_{1,2}} \bar{p}_{\alpha,\Theta}(t,\tau,x,z)\left(\log\Big(K\vee  |(\T_{\tau-t}^\alpha)^{-1}(y-R_{T,\tau}z)|\Big) +1\right) \\
\times\frac{\delta\wedge |(z-R_{\tau,T}y)^2|^{\eta(\alpha \wedge 1)}}{T-\tau}(\bar{p}_\alpha+\breve{p}_\alpha)(\tau,T,z,y)dz\\
\le
C \bar{p}_{\alpha,\Theta}(t,T,x,y) \int_{I_2}d\tau  (T-\tau)^{-\beta(2+\alpha)}\\
\times
\hspace*{-.2cm}\int_{D_{1,2}} 
\left(\log\Big(K\vee  |(\T_{\tau-t}^\alpha)^{-1}(y-R_{T,\tau}z)|\Big)+1\right)
\frac{\delta\wedge |z-R_{\tau,T}y|^{\eta(\alpha \wedge 1)}}{T-\tau}(\bar{p}_\alpha+\breve p_\alpha)(\tau,T,z,y) dz.
\end{eqnarray*}
Thus, using Lemma \ref{lemme2}, we have $\rho_{\alpha,\Theta} \otimes_{|I_2,D_{1,2}} |H| (t,T,x,y) \le (T-t)^\omega \bar{p}_\alpha(t,T,x,y)$.

We have to deal with $z \in D_2$.
In this case, \textbf{and because $d=1$}, $\bar{p}_\alpha(\tau,T,z,y) \le C\bar{p}_\alpha(t,T,x,y)$.
As above, we split for a given $\delta_0>0 $, the time interval $I_2$ in 
$(T-\tau)^\beta  |(\T_{T-t}^\alpha)^{-1}(y-R_{T,t}x)| \ge \delta_0$ and $(T-\tau)^\beta  |(\T_{T-t}^\alpha)^{-1}(y-R_{T,t}x)| < \delta_0$.

Assume first that
$(T-\tau)^\beta  |(\T_{T-t}^\alpha)^{-1}(y-R_{T,t}x)| \le \delta_0$.
Then, taking $\delta_0 \le K$ gives that the first density is diagonal.
Hence, the logarithm part disappears, and we have to deal with:

\begin{eqnarray*}
\rho_{\alpha,\Theta} \otimes_{I_2,D_2} |H|(t,T,x,y)  &\le& C\int_{\tau_0}^T d\tau\int_{D_2} \frac{1}{(T-t)^{\frac 2\alpha+1}} \frac{\delta\wedge |(z-R_{\tau,T}y)^2|^{\eta(\alpha \wedge 1)}}{T-\tau} (\bar{p}_\alpha+\breve{p}_\alpha)(\tau,T,z,y)dz\\
&\overset{Lemma \ \ref{lemme}}{\le}& \frac{ \delta_0^{2+\alpha} }{(T-t)^{\frac 2\alpha+1} |(\T_{T-t}^\alpha)^{-1}(y-R_{T,t}x)|^{2+\alpha}}\int_{\tau_0}^T d\tau (T-\tau)^{(1+\frac 1 \alpha)\eta(\alpha \wedge 1) -1 -\beta(2+\alpha)}\\
&\le& (T-t)^\omega \bar{p}_\alpha(t,T,x,y).
\end{eqnarray*}

Finally, we have to deal with the case $(T-\tau)^\beta  |(\T_{T-t}^\alpha)^{-1}(y-R_{T,t}x)| \ge \delta_0$.
Observe that, on $I_2$, this imposes that $\tau \in [\frac{T+t}{2}, \tau_0]$, with $\tau_0$ defined above.
In the considered set, we have from \eqref{CTR_DEL_TAULETAU0_FIRSTDOM}:
\begin{eqnarray*}
|(z-R_{\tau,T}y)^2|&\le& |(z-R_{\tau,t}x)^2| + C\{(T-t)|(x-R_{t,T}y)^1|+|(x-R_{t,T}y)^2|\} \\
&\le& C(1+ (T-\tau)^\beta)\{(T-t)|(x-R_{t,T}y)^1| +|(x-R_{t,T}y)^2|\}.
\end{eqnarray*}
Plugging this estimate into the convolution and recalling for $z\in D_2$, $\bar{p}_\alpha(\tau,T,z,y) \le C\bar{p}_\alpha(t,T,x,y)$, we obtain from Lemma \ref{lemme2} and the previous controls for the contribution in $\breve p_\alpha $:

\begin{eqnarray*}
\rho_{\alpha,\Theta} \otimes_{I_2,D_2} |H|(t,T,x,y)  \le C\bar{p}_{\alpha,\Theta}(t,T,x,y)\\
\left((\delta\wedge \{(T-t)|(x-R_{t,T}y)^1|+|(x-R_{t,T}y)^2|\}^{\eta(\alpha \wedge 1)}) \int_{\frac{T+t}{2}}^{\tau_0} d\tau \frac{1}{T-\tau} (\tau-t)^{\omega}+(T-t)^\omega\right).
\end{eqnarray*}
Hence, integrating over $\tau$ yields the logarithmic contribution:
$$
\rho_{\alpha,\Theta} \otimes_{I_2,D_2} |H|(t,T,x,y) \le C(T-t)^\omega \rho_{\alpha,\Theta}(t,T,x,y).
$$

In order to complete the proof, we have to specify how to proceed in the remaining cases, that is  when $\tau \in I_2$ and the second component dominates or when $\tau\in I_1$.
When a fast component dominates, as we have seen in the previous proof, we can compensate the singularities brought by the kernel $H$, and conclude directly with Lemmas \ref{lemme} and \ref{lemme2}. When $\tau \in I_1$, we can adapt the previous strategy following the procedure described in Lemma \ref{premier coup}.
\end{proof}

Using the previous lemmas, we get the following result.

\begin{corollary}\label{KER_ITER_ET_CONV_SERIE} Under the Assumptions of Lemma \ref{DeuxiemeCoup},
there exists $C_{\ref{KER_ITER_ET_CONV_SERIE}}:=4C_{\ref{LEMME_IT_KER}}>0$, s.t.
for all $T\in (0,T_0],\ T_0=T_0(\H)\le 1,\  (x,y)\in (\R^{2})^2$, $t\in [0,T)$, $\forall k\in \N $:
\begin{eqnarray*}
|\tilde p_\alpha\otimes H^{(2k)}(t,T,x,y)|\le C_{\ref{KER_ITER_ET_CONV_SERIE}}^{2k} (T-t)^{k\omega} \Big( (T-t)^{k\omega} \bar{p}_{\alpha,\Theta}(t,T,x,y) + (\bar{p}_{\alpha,\Theta} + \bar{q}_{\alpha,\Theta})(t,T,x,y) \Big)  \\
|\tilde p_\alpha\otimes H^{(2k+1)}(t,T,x,y)|\le C_{\ref{KER_ITER_ET_CONV_SERIE}}^{2k+1}(T-t)^{k\omega} \Big( (T-t)^{(k+1)\omega} \bar{p}_{\alpha,\Theta}+ (T-t)^\omega(\bar{p}_{\alpha,\Theta} + \bar{q}_{\alpha,\Theta})+ \bar{q}_{\alpha,\Theta} \Big)(t,T,x,y).
\end{eqnarray*}
\end{corollary}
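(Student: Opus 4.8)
The plan is to prove the two estimates simultaneously by induction on $k$, using as building blocks the convolution inequalities contained in (the proofs of) Lemmas \ref{premier coup} and \ref{DeuxiemeCoup}, collected in Lemma \ref{LEMME_IT_KER}. Write for brevity $P:=\bar p_{\alpha,\Theta}$, $Q:=\bar q_{\alpha,\Theta}$, $C_1:=C_{\ref{LEMME_IT_KER}}$, and let $\mathcal{K}$ denote convolution in absolute value with the kernel, $\mathcal{K}g(t,T,x,y):=\int_t^T du\int_{\R^2}g(t,u,x,z)|H(u,T,z,y)|dz$. The three inputs are: $\tilde p_\alpha(t,u,x,z)\le \bar C\,P(t,u,x,z)$ (from Proposition \ref{EST_DENS_GEL_T} together with $\theta\le\Theta$); $\mathcal{K}P\le C_1((T-t)^\omega P+Q)$ (this is the estimate of Lemma \ref{premier coup}, whose proof uses only an upper bound for the density sitting in the first slot, hence applies verbatim with $P$ in place of $\tilde p_\alpha$, exactly as already invoked in the proof of Proposition \ref{CTR_SUM}); and $\mathcal{K}Q\le C_1(T-t)^\omega(P+Q)$ (Lemma \ref{DeuxiemeCoup}, the $\rho_{\alpha,\Theta}$ contribution there being dominated by $Q$). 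Two elementary remarks are used throughout: $\otimes|H|$ is monotone in its first argument, and since $T\le T_0\le 1$ and $\omega>0$, any prefactor $(u-t)^{j\omega}$ with $t\le u\le T$ satisfies $(u-t)^{j\omega}\le (T-t)^{j\omega}$ and may thus be pulled out of the time integral before applying $\mathcal{K}$.

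The base cases are immediate: $\tilde p_\alpha\otimes H^{(0)}=\tilde p_\alpha\le \bar C P$, which is of the asserted form for $k=0$ after taking $C_{\ref{KER_ITER_ET_CONV_SERIE}}\ge \bar C$, while $\tilde p_\alpha\otimes H^{(1)}$ is the first line of Lemma \ref{LEMME_IT_KER}, whose right-hand side is bounded by $C_1\big((T-t)^\omega P+(T-t)^\omega(P+Q)+Q\big)$, i.e. the asserted bound for $k=0$ in the odd case. Assume now the two bounds hold up to order $r$. If $r=2k$, the inductive hypothesis, monotonicity and the pull-out remark give
\begin{equation*}
|\tilde p_\alpha\otimes H^{(2k+1)}|\le C_{\ref{KER_ITER_ET_CONV_SERIE}}^{2k}(T-t)^{k\omega}\Big((T-t)^{k\omega}\mathcal{K}P+\mathcal{K}P+\mathcal{K}Q\Big)(t,T,x,y);
\end{equation*}
substituting $\mathcal{K}P\le C_1((T-t)^\omega P+Q)$ and $\mathcal{K}Q\le C_1(T-t)^\omega(P+Q)$, bounding each occurring power $(T-t)^{j\omega}\le 1$ except the two leading ones, and regrouping the (at most six) resulting monomials into a numerical constant times $(T-t)^{(k+1)\omega}P+(T-t)^\omega(P+Q)+Q$ yields the asserted bound for $\tilde p_\alpha\otimes H^{(2k+1)}$, the prefactor $C_1$ times that constant being at most $4C_1=C_{\ref{KER_ITER_ET_CONV_SERIE}}$.

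If $r=2k+1$, the same manipulation applied to
\begin{equation*}
|\tilde p_\alpha\otimes H^{(2k+2)}|\le C_{\ref{KER_ITER_ET_CONV_SERIE}}^{2k+1}(T-t)^{k\omega}\Big((T-t)^{(k+1)\omega}\mathcal{K}P+(T-t)^\omega(\mathcal{K}P+\mathcal{K}Q)+\mathcal{K}Q\Big)(t,T,x,y)
\end{equation*}
produces, after substitution, an extra overall factor $(T-t)^\omega$ (each monomial carries at least one $(T-t)^\omega$), leaving $(T-t)^{(k+1)\omega}P+(P+Q)$ up to a numerical constant; this gives the asserted bound for $\tilde p_\alpha\otimes H^{(2k+2)}=\tilde p_\alpha\otimes H^{(2(k+1))}$ and closes the induction. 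The only genuinely delicate point is verifying that $C_{\ref{KER_ITER_ET_CONV_SERIE}}=4C_{\ref{LEMME_IT_KER}}$ is large enough to absorb the cross terms at every step: this is a finite mechanical count (in the even step one finds a numerical prefactor $\le 3<4$, and the odd step is exactly calibrated to produce $4C_1$), so beyond the monotonicity of $s\mapsto s^{j\omega}$ and the three convolution inequalities above no further idea is needed; the one subtlety worth stressing is that the inequality $\mathcal{K}P\le C_1((T-t)^\omega P+Q)$ is not literally the statement of Lemma \ref{LEMME_IT_KER} but is read off from the proof of Lemma \ref{premier coup}, which only uses an upper bound on the first density.
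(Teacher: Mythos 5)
Your argument is correct and follows essentially the same route as the paper: an induction on $k$ whose only inputs are the two convolution bounds gathered in Lemma \ref{LEMME_IT_KER} (i.e. Lemmas \ref{premier coup} and \ref{DeuxiemeCoup}), the monotonicity of $(T-t)^{j\omega}$ on $(0,T_0]$, and a finite count of monomials showing that the constant $4C_{\ref{LEMME_IT_KER}}$ absorbs the cross terms at each step. Your explicit remark that $|\bar p_{\alpha,\Theta}\otimes H|\le C_{\ref{LEMME_IT_KER}}\bigl((T-t)^\omega\bar p_{\alpha,\Theta}+\bar q_{\alpha,\Theta}\bigr)$ is read off from the proof of Lemma \ref{premier coup}, which only uses an upper bound on the density occupying the first slot, is exactly the (implicit) step the paper also relies on, so no genuinely different idea is involved.
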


\begin{proof}
We prove the estimate by induction. 
The idea is to use the controls of Lemmas \ref{premier coup} and \ref{DeuxiemeCoup} gathered in Lemma \ref{LEMME_IT_KER} to get from an estimate to the following one.
The bounds may not be very precise, as we will sometimes bound $(T-t)^{k\omega} \le 1$, but they are sufficient to prove the convergence of the Parametrix series \eqref{parametrix}.

\textbf{Initialization:}\\
Since $(T-t)^\omega(\bar p_{\alpha,\Theta} + \bar q_{\alpha,\Theta})\geq 0$, we clearly have:
$$
|\tilde p_\alpha\otimes H(t,T,x,y) | \leq C_{\ref{LEMME_IT_KER}} \Big((T-t)^\omega \bar{p}_{\alpha,\Theta}+ \bar q_{\alpha,\Theta} + (T-t)^\omega(\bar p_{\alpha,\Theta} + \bar q_{\alpha,\Theta}) \Big)(t,T,x,y).
$$
Now, using Lemmas \ref{premier coup} and \ref{DeuxiemeCoup}, we have:
\begin{eqnarray*}
|\tilde{p}_\alpha \otimes H^{(2)}(t,T,x,y)| &\leq&  C_{\ref{LEMME_IT_KER}} \Big((T-t)^\omega| \bar{p}_{\alpha,\Theta}\otimes H| + |\bar q_{\alpha,\Theta}  \otimes H| \Big)(t,T,x,y)\\
&\le& C_{\ref{LEMME_IT_KER}} \Big(C_{\ref{LEMME_IT_KER}}(T-t)^{2\omega}\bar{p}_{\alpha,\Theta} + C_{\ref{LEMME_IT_KER}}(T-t)^\omega \bar{q}_{\alpha,\Theta} + C_{\ref{LEMME_IT_KER}}(T-t)^\omega (\bar{p}_{\alpha,\Theta} + \bar{q}_{\alpha,\Theta}) \Big)(t,T,x,y)\\
&\le& (2C_{\ref{LEMME_IT_KER}})^2(T-t)^\omega  \Big((T-t)^{\omega}\bar{p}_{\alpha,\Theta} + (\bar{p}_{\alpha,\Theta} + \bar{q}_{\alpha,\Theta}) \Big)(t,T,x,y).
\end{eqnarray*}

\textbf{Induction}:\\
Suppose that the estimate for $2k$ holds. Let us prove the estimate for $2k+1$.
\begin{eqnarray*}
|\tilde{p}_\alpha \otimes H^{(2k+1)}|(t,T,x,y) &\leq& (4C_{\ref{LEMME_IT_KER}} )^{2k} (T-t)^{k\omega} \Big( (T-t)^{k\omega} |\bar{p}_{\alpha,\Theta} \otimes H| (t,T,x,y) + |(\bar{p}_{\alpha,\Theta} + \bar{q}_{\alpha,\Theta})\otimes H |(t,T,x,y) \Big)\\
&\le& (4C_{\ref{LEMME_IT_KER}})^{2k} (T-t)^{k\omega} \Big( C_{\ref{LEMME_IT_KER}}(T-t)^{k\omega} ( (T-t)^\omega \bar{p}_{\alpha,\Theta} +\bar{q}_{\alpha,\Theta} )(t,T,x,y)\\
&& + C_{\ref{LEMME_IT_KER}}((T-t)^\omega \bar{p}_{\alpha,\Theta} +\bar{q}_{\alpha,\Theta} )(t,T,x,y)  + C_{\ref{LEMME_IT_KER}}(T-t)^\omega ( \bar{p}_{\alpha,\Theta}+ \bar{q}_{\alpha,\Theta})(t,T,x,y) \Big).
\end{eqnarray*}
Recalling that $T-t\leq 1$, we have $(T-t)^{k \omega}\bar{q}_{\alpha,\Theta} \leq (T-t)^{\omega}\bar{q}_{\alpha,\Theta}$.
Thus:
\begin{eqnarray*}
|\tilde{p}_\alpha \otimes H^{(2k+1)}|(t,T,x,y) \\
\le (4C_{\ref{LEMME_IT_KER}})^{2k} (T-t)^{k\omega} \Big( C_{\ref{LEMME_IT_KER}}
(T-t)^{(k+1)\omega} \bar{p}_{\alpha,\Theta}  +2C_{\ref{LEMME_IT_KER}}(T-t)^\omega ( \bar{p}_{\alpha,\Theta} +\bar{q}_{\alpha,\Theta} ) 
+ C_{\ref{LEMME_IT_KER}}\bar{q}_{\alpha,\Theta})
\Big)(t,T,x,y)\\
\le (4C_{\ref{LEMME_IT_KER}})^{2k}(2C_{\ref{LEMME_IT_KER}}) (T-t)^{k\omega} \Big( 
(T-t)^{(k+1)\omega} \bar{p}_{\alpha,\Theta}  +(T-t)^\omega ( \bar{p}_{\alpha,\Theta} +\bar{q}_{\alpha,\Theta} )+\bar{q}_{\alpha,\Theta})
\Big)(t,T,x,y),
\end{eqnarray*}
which gives the announced estimate.\\
Suppose now that the estimate for $2k+1$ holds. Let us prove the estimate for $2k+2$.
\begin{eqnarray*}
|\tilde p_\alpha\otimes H^{(2k+2)}(t,T,x,y)|\\
\le (4C_{\ref{LEMME_IT_KER}})^{2k+1}(T-t)^{k\omega} \Big( (T-t)^{(k+1)\omega} |\bar{p}_{\alpha,\Theta} \otimes H |+ (T-t)^\omega |(\bar{p}_{\alpha,\Theta} + \bar{q}_{\alpha,\Theta})\otimes H|+ |\bar{q}_{\alpha,\Theta}\otimes H| \Big)(t,T,x,y)\\
\le (4C_{\ref{LEMME_IT_KER}})^{2k+1}(T-t)^{k\omega} \Big(  C_{\ref{LEMME_IT_KER}} (T-t)^{(k+1)\omega} [(T-t)^\omega \bar{p}_{\alpha,\Theta} +\bar{q}_{\alpha,\Theta}]\\
 +C_{\ref{LEMME_IT_KER}}(T-t)^\omega[ \{(T-t)^\omega \bar{p}_{\alpha,\Theta} + \bar{q}_{\alpha,\Theta}\} +  C_{\ref{LEMME_IT_KER}}(T-t)^\omega(\bar{p}_{\alpha,\Theta} + \bar{q}_{\alpha,\Theta}) ] +C_{\ref{LEMME_IT_KER}}(T-t)^\omega(\bar{p}_{\alpha,\Theta} + \bar{q}_{\alpha,\Theta})
 \Big)(t,T,x,y)\\
 \le (4C_{\ref{LEMME_IT_KER}})^{2k+2}(T-t)^{(k+1)\omega} \Big( (T-t)^{(k+1)\omega}\bar{p}_{\alpha,\Theta} + (\bar{p}_{\alpha,\Theta} + \bar{q}_{\alpha,\Theta}) \Big)(t,T,x,y),
\end{eqnarray*}
where to get to the last equation, we used the fact that $(T-t)^\omega \bar{p}_{\alpha,\Theta} \le \bar{p}_{\alpha,\Theta} $, and  $(T-t)^{k \omega}\bar{q}_{\alpha,\Theta} \leq\bar{q}_{\alpha,\Theta}$. 
\end{proof}

\begin{appendix}

\mysection{Proof of the diagonal lower bound for the frozen density.}

\label{DIAG_LB}

In this section we prove the diagonal lower bound for the frozen density.
Recall from Proposition \ref{LAB_EDFR}, that the frozen density $p_{\Lambda_s}$ writes for all $z\in \R^{nd} $ as:
\begin{eqnarray*}
p_{\Lambda_s}(z) &=&  \frac{\det ( \M_{s-t}  )^{-1}  }{(2 \pi) ^{nd}}\int_{\R^{nd}}  e^{-i\langle q, ( \M_{s-t}  )^{-1}  z \rangle}
  \exp \left( -(s-t) \int_{\R^{nd}} \{1-\cos(\langle q, \xi \rangle)\}  \nu_S(d\xi) \right) dq\\
 &=&  \frac{\det ( \T_{s-t}^\alpha  )^{-1}  }{(2 \pi) ^{nd}}\int_{\R^{nd}}  e^{-i\langle q, ( \T_{s-t}^\alpha  )^{-1}  z \rangle}
  \exp \left( -(s-t) \int_{\R^{nd}} \{1-\cos(\langle \frac{q}{(s-t)^{\frac 1\alpha}}, \xi \rangle)\}  \nu_S(d\xi) \right) dq.
\end{eqnarray*}
The complex exponential can be written as a cosine.
Denoting $\bar{x}$ the projection of $x\in \R^{nd}$ on the sphere, we change variable to the polar coordinates by setting $q = |q| \bar q$, where $(|q|,\bar q)\in \R^+ \times S^{nd-1}$.
Also, we take a parametrization of the sphere by setting $\bar q= (\theta, \phi) \in [0,\pi] \times S^{nd-2}$, along the axis defined by $(\T_{s-t}^\alpha)^{-1}  z$.
Set finally $\tau = \cos(\theta)$, the density writes:
\begin{eqnarray}\label{DENS_DIAG_DEV_PREAL}
p_{\Lambda_s}(z) &=&  \frac{\det ( \T_{s-t}^{\alpha}  )^{-1}  }{(2 \pi) ^{nd}}\int_0^{+\infty}d|q| |q|^{nd-1}\int_{-1}^1d\tau (1-\tau^2)^{\frac{nd-3}{2}} \int_{S^{nd-2}}d\phi   \nonumber \\
&& \qquad \cos( |q| | (\T^{\alpha}_{s-t}  )^{-1}  z | \tau)
  \exp \left( -(s-t) \int_{\R^{nd}}\{1-\cos(\langle\frac{ \bar q |q|}{(s-t)^{1/\alpha}}, \xi \rangle)\}  \nu_S(d\xi) \right) .
\end{eqnarray}
The idea is the following: since  $| (\T^{\alpha}_{s-t}  )^{-1}  z |$ is small, we can expand the cosine
and show that the first term is positive, giving the two-sided diagonal estimate. We focus on the diagonal lower bound.

\begin{proposition}[\textbf{Diagonal Lower bound}]\label{diagonal expansion}
For $K $ sufficiently small, there exists $C_K$ s.t. for all $z\in \R^{nd},\ |{ ( \T^{\alpha}_{s-t} ) }^{-1}  z|\le K$:
$$
 p_{\Lambda_s}(z) \ge C_K \det {  (\T^{\alpha}_{s-t})  }^{-1}.
$$
%
\end{proposition}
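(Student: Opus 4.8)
The plan is to start from the representation \eqref{DENS_DIAG_DEV_PREAL} of $p_{\Lambda_s}(z)$ and split the radial integration into a near region $|q|\le \delta_0 (s-t)^{1/\alpha}$ and a far region $|q|> \delta_0 (s-t)^{1/\alpha}$, for a small $\delta_0>0$ to be fixed. The key analytic input is the control on the Fourier exponent coming from Proposition \ref{LAB_EDFR} and Lemma \ref{bound}: after the change of variables used in \eqref{DENS_DIAG_DEV_PREAL} one has, for the symmetrized L\'evy measure $\nu_S$,
$$
c^{-1}|q|^\alpha \le (s-t)\int_{\R^{nd}}\bigl\{1-\cos\bigl(\langle \tfrac{\bar q|q|}{(s-t)^{1/\alpha}},\xi\rangle\bigr)\bigr\}\,\nu_S(d\xi)\le c|q|^\alpha,
$$
uniformly in $\bar q\in S^{nd-1}$ and in $s\in(t,t+T_0]$, using [\textbf{H-4}] (non-degeneracy of the spectral measure, which holds for the image measure $\bar\mu$) together with the fact that $\nu_S$ dominates and is dominated by a genuine $\alpha$-stable L\'evy measure with spectral density bounded above and below. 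This two-sided bound is what makes the $q$-integral behave like that of a non-degenerate $nd$-dimensional stable density once the scaling matrix $\T^\alpha_{s-t}$ has been factored out; it is essentially the content of Remark \ref{REM_MS}.

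First I would treat the far region: since the Fourier exponent is $\ge c^{-1}|q|^\alpha$ there, the contribution is bounded in absolute value by $\det(\T^\alpha_{s-t})^{-1}\int_{|q|>\delta_0}|q|^{nd-1}e^{-c^{-1}|q|^\alpha}dq$, which is $\det(\T^\alpha_{s-t})^{-1}$ times a quantity that is $o(1)$ as $\delta_0\to 0$ (and can be made arbitrarily small); here I also use $|(\T^\alpha_{s-t})^{-1}z|\le K$ and $|\cos(\cdot)|\le 1$ to discard the oscillating cosine factor. Next, in the near region, I would expand $\cos\bigl(|q|\,|(\T^\alpha_{s-t})^{-1}z|\,\tau\bigr)=1+O\bigl(|q|^2 K^2\bigr)$; the $O$-term again yields a contribution bounded by $\det(\T^\alpha_{s-t})^{-1}$ times $C K^2\int_{|q|\le\delta_0}|q|^{nd+1}e^{-c^{-1}|q|^\alpha}dq\le CK^2 \det(\T^\alpha_{s-t})^{-1}$. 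The leading term, after integrating the constant $1$ over $\tau\in[-1,1]$ and over $\phi\in S^{nd-2}$ (giving a fixed positive geometric constant), is
$$
C_{nd}\,\det(\T^\alpha_{s-t})^{-1}\int_0^{\delta_0}|q|^{nd-1}\exp\Bigl(-(s-t)\int_{\R^{nd}}\{1-\cos(\langle\tfrac{\bar q|q|}{(s-t)^{1/\alpha}},\xi\rangle)\}\nu_S(d\xi)\Bigr)d|q|,
$$
and using the upper bound $c|q|^\alpha$ on the exponent this is $\ge C_{nd}\det(\T^\alpha_{s-t})^{-1}\int_0^{\delta_0}|q|^{nd-1}e^{-c\delta_0^\alpha}d|q|\ge c_0\,\delta_0^{nd}\det(\T^\alpha_{s-t})^{-1}$ for a fixed $c_0>0$. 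Collecting the three pieces, $p_{\Lambda_s}(z)\ge \bigl(c_0\delta_0^{nd}-CK^2-o_{\delta_0}(1)\bigr)\det(\T^\alpha_{s-t})^{-1}$; fixing $\delta_0$ first (large enough to beat the far-region remainder for that $\delta_0$), then $K$ small enough that $CK^2$ is, say, half of $c_0\delta_0^{nd}/2$, gives the claim with $C_K=c_0\delta_0^{nd}/2$.

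The main obstacle is purely bookkeeping rather than conceptual: one must be careful that all the estimates on the Fourier exponent are genuinely uniform in the direction $\bar q$ and in $s-t\in(0,T_0]$ — this is where the structural information on $\nu_S$ from Proposition \ref{LAB_EDFR} (domination \eqref{DOMI_NU} by a stable measure whose spherical part satisfies [\textbf{H-4}]) is essential, since without the lower bound in \eqref{ND} the near-region leading term could fail to dominate. A secondary point to handle cleanly is the interchange of the (absolutely convergent, thanks to $e^{-c^{-1}|q|^\alpha}$) $q$-integral with the expansion of the cosine and with the angular integrations; this is justified by Fubini once the exponential bound is in force. I would also remark that the same computation with the roles of the bounds on the exponent reversed (and keeping the $O(K^2)$ correction with its sign) reproduces the matching diagonal upper bound, so that in fact $p_{\Lambda_s}(z)\asymp\det(\T^\alpha_{s-t})^{-1}$ on $|(\T^\alpha_{s-t})^{-1}z|\le K$, consistent with \eqref{BORNE_COMPACTE}; but only the lower bound is needed here.
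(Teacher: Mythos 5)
Your proposal is correct in substance and follows essentially the same route as the paper: start from the polar representation \eqref{DENS_DIAG_DEV_PREAL}, exploit the two-sided control of the Fourier exponent coming from the domination \eqref{DOMI_NU} together with \textbf{[H-4]} for $\bar \mu$ (upper bound $\bar c|q|^\alpha$ for all $q$, lower bound $\bar c^{-1}|q|^\alpha$ for $|q|$ bounded away from $0$), and expand the cosine in the small quantity $|(\T^{\alpha}_{s-t})^{-1}z|\le K$. The paper performs a Taylor expansion to order $N$ (even/odd terms handled with the two exponent bounds, as in \eqref{EXPANSION_DENS_DIAG}), whereas you truncate at order zero with an $O(K^2)$ remainder and a near/far split in $|q|$; this is a harmless variant of the same argument.

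Two small repairs are needed in your bookkeeping. First, the split should simply be at $|q|\le \delta_0$ in the rescaled variable (the factor $(s-t)^{1/\alpha}$ in your first sentence is already absorbed by the change of variables), and the far tail $\int_{|q|>\delta_0}|q|^{nd-1}e^{-c^{-1}|q|^\alpha}dq$ is small as $\delta_0\to+\infty$, not as $\delta_0\to 0$. Second, and more seriously, your near-region lower bound ``$c_0\delta_0^{nd}$'' hides a factor $e^{-c\delta_0^\alpha}$ inside $c_0$, so it also tends to $0$ as $\delta_0\to\infty$, and since $c\ge 1\ge c^{-1}$ it may do so faster than the far tail; hence ``fix $\delta_0$ large enough to beat the far-region remainder'' is not justified with the bound as written. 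The fix is immediate: do not evaluate the exponential at the endpoint, but keep $\int_0^{\delta_0}r^{nd-1}e^{-c r^\alpha}dr\ge \int_0^{1}r^{nd-1}e^{-c r^\alpha}dr>0$, a constant independent of $\delta_0\ge 1$; then choosing $\delta_0$ large (far tail small) and afterwards $K$ small ($CK^2$ small) yields the stated bound. Finally, note that in the tempered case the lower bound on the exponent holds only up to an additive constant (cf.\ the proof of Proposition \ref{LAB_EDFR}), i.e.\ for $|q|$ away from $0$ or with a harmless multiplicative factor in the far-region estimate, which is exactly how it is used.
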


\begin{proof}
There is no difference with the non degenerate case for the diagonal expansion, see \cite{kolo:00}.
For small $|{ ( \T^{\alpha}_{s-t} ) }^{-1}  z|$, we use Taylor's formula to expand $\cos(|q| |{ ( \T^{\alpha}_{s-t} ) }^{-1}  z| \tau)$ in equation \eqref{DENS_DIAG_DEV_PREAL}:

\begin{eqnarray}
p_{\Lambda_s}(z)&=& \frac{\det {(  \T^{\alpha}_{s-t})  }^{-1} }{(2 \pi) ^{nd}}
 \int_0^{+\infty}  d|q| |q|^{nd-1}
 \int_{-1}^{1} d\tau (1-\tau^2)^{\frac{nd-3}{2}} \left( \sum_{k=0}^{N} \frac{(-1)^k}{(2k)!}|q|^{2k}|{ ( \T^{\alpha}_{s-t} ) }^{-1}  z|^{2k} \tau^{2k} + \tilde{R}_N(|{ ( \T^{\alpha}_{s-t} ) }^{-1}  z|)\right)\nonumber\\
&& \times \int_{S^{nd-2}} d\phi
  \exp \left( -  (s-t)\int_{\R^{nd}}\{ 1-\cos( \langle \frac{\bar{q}|q|}{(s-t)^{1/\alpha}}, \xi \rangle )\} \nu_S(d \xi) \right)\nonumber\\
 &=&  \frac{\det (\T^\alpha_{s-t})^{-1}}{(2 \pi) ^{nd}}
 \sum_{k=0}^{N} \frac{(-1)^k}{(2k)!} |{ ( \T^{\alpha}_{s-t} ) }^{-1}  z|^{2k}  \int_0^{+\infty}  d|q| |q|^{2k+nd-1}
 \int_{-1}^{1} d\tau (1-\tau^2)^{\frac{nd-3}{2}}\tau^{2k}\nonumber\\
&& \times \int_{S^{nd-2}} d\phi
  \exp \left( -  (s-t)\int_{\R^{nd}}\{ 1-\cos( \langle \frac{\bar{q}|q|}{(s-t)^{1/\alpha}}, \xi \rangle )\} \nu_S(d \xi) \right) + R_N(|{ ( \T^{\alpha}_{s-t} ) }^{-1}  z|).  \label{EXPANSION_DENS_DIAG}
\end{eqnarray}

The estimate on the coefficient also serves to estimate the remainder $R_N(|{ ( \T^{\alpha}_{s-t} ) }^{-1}  z|)$.
To bound the coefficient, we use the domination condition in \eqref{DOMI_NU} and the property that $g $ is non-increasing:
\begin{eqnarray*}
\exp \left(-(s-t)\int_{\R^{nd}}\{ 1-\cos( \langle \frac{\bar{q}|q|}{(s-t)^{1/\alpha}}, \xi \rangle )\} \nu_S(d \xi) \right)\\
\ge \exp\left(-(s-t)g(0)\int_{\R^+} \frac{d\rho}{\rho^{1+\alpha}}\int_{S^{nd-1}}\{ 1-\cos( \langle \frac{\bar{q}|q|}{(s-t)^{1/\alpha}}, \rho \eta \rangle )\bar \mu(d\eta)\}\right)\\
=\exp\left(-g(0)\int_{\R^+} \frac{d\rho}{\rho^{1+\alpha}}\int_{S^{nd-1}}  \{1-\cos(\langle\bar{q}|q|, \rho \eta \rangle)\} \bar \mu(d\eta)\}\right)
=\exp\left(-c_\alpha g(0)|q|^\alpha\int_{S^{nd-1}}|\langle \bar q,\eta \rangle |^\alpha\bar \mu(d\eta)\right)\\
\ge \exp\left(-\bar c |q|^\alpha \right), \bar c:=\bar c(\alpha,\H)\ge 1,
\end{eqnarray*}
using that $\bar \mu $ satisfies \textbf{[H-4]} for the last inequality. The above control can be used to give a lower bound 
for the even terms in the previous expansion \eqref{EXPANSION_DENS_DIAG}. On the other hand, similarly to the proof of Proposition \ref{LAB_EDFR}, we get
$$\exp \left(-(s-t)\int_{\R^{nd}}\{ 1-\cos( \langle \frac{\bar{q}|q|}{(s-t)^{1/\alpha}}, \xi \rangle )\} \nu_S(d \xi) \right)\le \exp\left(-\bar c^{-1} |q|^\alpha \right), $$
for $|q|>1 ,$
which can be used to derive lower bound for the odd terms of the expansion \eqref{EXPANSION_DENS_DIAG}.
Note that the coefficient $a_k( \overline{{ ( \T^{\alpha}_{s-t} ) }^{-1}  z} )$ depends on $ \overline{{ ( \T^{\alpha}_{s-t} ) }^{-1}  z}$ because of the choice of the parametrization of the sphere $S^{nd-2}$.

\end{proof}

\mysection{Off-diagonal Estimates on the Kernel $H$.}
\label{controles_phi}
We thoroughly exploit the decomposition of the density used by Watanabe \cite{wata:07} in the stable case followed by Sztonyk \cite{szto:10} in the tempered one.
From the identity \eqref{density_Lambda}, we have:
\begin{equation}
\label{SCALE_STAB}
\forall z\in \R^{nd},\ p_{\Lambda_T}(z) =\det(\M_{T-t}^{\alpha})^{-1} p_{S}(T-t, (\M_{s-t}^{\alpha})^{-1}z),
\end{equation}
where $ (S_u)_{u\ge 0}$ has L\'evy measure $\nu_{S} $.

For a fixed $T-t $ we can write $S_{T-t} = M_{T-t}+N_{T-t}$ where $(M_u)_{u\ge 0} $ and $(N_u)_{u\ge 0} $ are two independent processes with respective generators:
\begin{eqnarray*}
{\mathcal L}^M \varphi(z)&=&\int_{\R^{nd}}(\varphi(z+\xi)-\varphi(z)-\frac{\langle \nabla \varphi(z),\xi\rangle}{1+|\xi|^2} ) \I_{|\xi|\le (T-t)^{1/\alpha}}\nu_{S}(d\xi ),  \\
{\mathcal L}^N\varphi(z)&=&\int_{\R^{nd}}(\varphi(z+\xi)-\varphi(z)-\frac{\langle \nabla \varphi(z),\xi\rangle}{1+|\xi|^2} ) 
\I_{|\xi|> (T-t)^{1/\alpha}}
\nu_{S}(d\xi ),
\end{eqnarray*}
for all $z\in \R^{nd}$ and $\varphi \in C_0^2(\R^{nd},\R) $.
We have separated the jumps that are at the typical scales, i.e. $(T-t)^{1/\alpha} $, from the big ones which induce a compound Poisson process. It can be proved similarly to Proposition \ref{LAB_EDFR} that $M_{T-t} $ has a density, intuitively the \textit{small} jumps generate the density.
We therefore disintegrate $p_{S}(T-t,.)$ in the following way:
\begin{equation}
\label{DESINT}
\forall z\in \R^{nd},\ p_{S}(T-t,z) = \int_{\R^{nd}} p_M(T-t,z-\bar z) P_{N_{T-t}}(d\bar z),
\end{equation}
where $P_{N_{T-t}}$ stands for the law of $N_{T-t} $.
Now, the following properties hold for the L\'evy-It\^o decomposition. 
\begin{lemma}[Density estimate on the Martingale part and associated derivatives.]\label{EST_DENS_MART}
For all $m\ge 1$, there exists $C_m\ge 1$ s.t. for all $T-t> 0, z\in\R^{nd}$, 
$$
p_M(T-t,z) \le C_m (T-t)^{-nd/\alpha} \left( 1+ \frac{|z|}{(T-t)^{1/\alpha}}\right)^{-m}.
$$
Also, for all $m\ge 1 $ and all multi-index $\beta ,\ |\beta|\le 2$,
$$|\partial_z^\beta p_M(T-t,z)|\le C_m (T-t)^{-(nd+|\beta|)/\alpha} \left( 1+ \frac{|z|}{(T-t)^{1/\alpha}}\right)^{-m}.$$
\end{lemma}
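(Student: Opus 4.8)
The plan is to establish Lemma \ref{EST_DENS_MART} by exploiting the scaling structure of the truncated process $M$ and then deriving Gaussian-type (in fact, arbitrarily fast polynomial) decay from the smoothness and exponential integrability of its characteristic function. First I would reduce to the unit time horizon: writing $T-t=h$, the scaling relation $\nu_S$ inherits from \eqref{DOMI_ALA_SZTONYK} shows that $M_h$, whose L\'evy measure is $\nu_S$ restricted to $\{|\xi|\le h^{1/\alpha}\}$, satisfies $M_h \overset{(d)}{=} h^{1/\alpha}\bar M_1$ where $\bar M_1$ is the truncation at threshold $1$ of a process with L\'evy measure dominated (in the stable case, equal) by a fixed stable measure $\bar\mu$ with $\mathrm{supp}$ of dimension $d$. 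Hence $p_M(h,z)=h^{-nd/\alpha}p_{\bar M_1}(h^{-1/\alpha}z)$ and $\partial_z^\beta p_M(h,z)=h^{-(nd+|\beta|)/\alpha}(\partial^\beta p_{\bar M_1})(h^{-1/\alpha}z)$, so it suffices to prove the bound for $h=1$, i.e. $|\partial^\beta p_{\bar M_1}(w)|\le C_m(1+|w|)^{-m}$ for all $m$ and $|\beta|\le 2$.

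Next I would compute the characteristic function of $\bar M_1$: $\varphi_{\bar M_1}(p)=\exp\big(\int_{|\xi|\le 1}(\cos\langle p,\xi\rangle-1)\tilde\nu_S(d\xi)\big)$ where $\tilde\nu_S$ is the rescaled (truncated) measure. The key analytic point is a lower bound on the real part of the exponent: using that $\bar\mu$ satisfies \textbf{[H-4]} (the non-degeneracy \eqref{ND}) together with the inequality $1-\cos r\ge cr^2$ for $|r|\le 1$, one gets, for $|p|\le 1$, $\int_{|\xi|\le 1}(1-\cos\langle p,\xi\rangle)\tilde\nu_S(d\xi)\ge c|p|^\alpha$ by the same polar-coordinates computation as in the proof of Proposition \ref{LAB_EDFR} (splitting the radial integral at $1/|p|$ and at $1$, and invoking \textbf{[H-4]} for the angular part). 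For $|p|\ge 1$ the truncation at threshold $1$ removes the heavy tail, so the exponent grows at worst like $|p|^2\wedge(\text{const})$; in any case $\mathrm{Re}\big(\log\varphi_{\bar M_1}(p)\big)\le -c(|p|^\alpha\wedge|p|^2)+C$, which is integrable against any polynomial in $p$. Consequently $\varphi_{\bar M_1}\in L^1(\R^{nd})$ together with all the functions $p\mapsto p^\beta\varphi_{\bar M_1}(p)$ for $|\beta|\le 2$, giving by Fourier inversion the existence and boundedness of $p_{\bar M_1}$ and its derivatives up to order $2$.

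To upgrade boundedness to decay of order $(1+|w|)^{-m}$ for arbitrary $m$, I would show that $p\mapsto p^\beta\varphi_{\bar M_1}(p)$ is $C^m$ with all derivatives in $L^1$; then integrating by parts $m$ times in the Fourier inversion formula $\partial^\beta p_{\bar M_1}(w)=(2\pi)^{-nd}\int e^{-i\langle p,w\rangle}(\pm ip)^\beta\varphi_{\bar M_1}(p)\,dp$ against $w^\gamma$ for $|\gamma|=m$ yields $|w^\gamma\partial^\beta p_{\bar M_1}(w)|\le C_m$, which is the claim. The required smoothness and integrability of derivatives of $\varphi_{\bar M_1}$ follows because each $p$-derivative of the exponent $\int_{|\xi|\le 1}(\cos\langle p,\xi\rangle-1)\tilde\nu_S(d\xi)$ brings down a factor $\xi$ (bounded by $1$ on the domain of integration) and is controlled by $\int_{|\xi|\le 1}|\xi|^{k}\tilde\nu_S(d\xi)<\infty$ for every $k\ge 1$ since the radial integral $\int_0^1\rho^{k}\rho^{-1-\alpha}\,d\rho$ converges for $k\ge 2$ (and for $k=1$ after using the principal-value symmetry), so $\varphi_{\bar M_1}\in C^\infty$ with each derivative a polynomial-in-$p$ times $\varphi_{\bar M_1}$, hence still dominated by $e^{-c(|p|^\alpha\wedge|p|^2)}$ up to polynomial factors and thus integrable.

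The main obstacle is the angular degeneracy: because $\mathrm{supp}(\bar\mu)$ has dimension $d<nd$ (for $n\ge 2$), the process $\bar M_1$ is genuinely anisotropic and one cannot simply quote a rotationally invariant heat-kernel bound; one must rely on \textbf{[H-4]} to guarantee the $|p|^\alpha$ lower bound on the exponent \emph{in every direction}, exactly as in Lemma \ref{bound} and the proof of Proposition \ref{LAB_EDFR}. Once that non-degeneracy is in hand, however, the truncation at the typical scale makes the characteristic function super-exponentially well-behaved at infinity, and the argument is the standard Fourier-inversion-plus-integration-by-parts scheme; in the tempered case \textbf{[HT]} the only change is the harmless domination $\tilde\nu_S\le$ (stable measure) from \eqref{DOMI_ALA_SZTONYK}, which does not affect any of the above estimates since the tempering function $g$ is bounded. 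I would also remark that the uniformity of $C_m$ in $T-t$ is automatic from the exact scaling identity, so no additional work is needed there.
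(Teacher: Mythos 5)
Your overall strategy coincides with the paper's: after rescaling by $(T-t)^{1/\alpha}$, show that the characteristic function of the truncated part is smooth, with every derivative dominated by a polynomial times $e^{-c|q|^\alpha}$ (non-degeneracy from \textbf{[H-4]} and Lemma \ref{bound}, finiteness of the moments of the truncated measure), and conclude by Fourier inversion and integration by parts that the rescaled density and its derivatives up to order two decay faster than any polynomial; the time prefactors then come from the change of variables. Most of the steps you describe are exactly those of the paper.

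The weak point is the one step that carries the whole lemma: the coercivity $\mathrm{Re}\,\log\varphi_{\bar M_1}(p)\le -c|p|^\alpha+C$ for \emph{large} $|p|$. As written, you prove the lower bound $c|p|^\alpha$ on the truncated exponent only ``for $|p|\le 1$'', and for $|p|\ge 1$ you state that the exponent ``grows at worst like $|p|^2\wedge \mathrm{const}$'' before asserting the needed bound ``in any case''; a merely bounded exponent would destroy the integrability of $\varphi_{\bar M_1}$ and the whole inversion argument, so this regime cannot be waved through. Moreover, the ingredients you invoke in the tempered case --- the domination \eqref{DOMI_ALA_SZTONYK} and the boundedness of $g$ --- only control $\nu_S$, hence the truncated exponent, from \emph{above}; a direct lower bound would require a lower bound on $\nu_S$ on the truncated region (positivity of $g$ near the origin plus the non-degeneracy of the image spectral measure, i.e.\ retracing Lemma \ref{bound}), which you do not supply. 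The paper's mechanism avoids this: with $h:=T-t$,
\begin{align*}
\int_{|\xi|\le h^{1/\alpha}}\{1-\cos\langle p,\xi\rangle\}\,\nu_S(d\xi)
\;\ge\; \int_{\R^{nd}}\{1-\cos\langle p,\xi\rangle\}\,\nu_S(d\xi)\;-\;2\,\nu_S\bigl(B(0,h^{1/\alpha})^c\bigr),
\end{align*}
the full exponent is coercive by \eqref{DOMI_STABLE_PROV} and Lemma \ref{bound}, and the discarded tail mass satisfies $\nu_S\bigl(B(0,h^{1/\alpha})^c\bigr)\le C/h$ by \eqref{DOMI_NU}, so after multiplication by $h$ the correction is a bounded constant, uniformly in $h$. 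Relatedly, your reduction to unit time via an exact identity $M_h\,{\buildrel (d) \over =}\,h^{1/\alpha}\bar M_1$ holds only under \textbf{[HS]}; under \textbf{[HT]} the rescaled L\'evy measure depends on $h$, so uniformity of $C_m$ in $T-t$ is not ``automatic'' but must be extracted from bounds uniform in $h$ --- which is precisely why the paper keeps $T-t$ and works with $\hat f_{T-t}$ directly. With the coercivity step repaired as above (and the $k=1$ moment handled via $|\sin r|\le |r|$ rather than symmetry), your argument becomes the paper's proof.
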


\begin{proof}
Similarly to the proof of Proposition \ref{LAB_EDFR} we write:
\begin{eqnarray*}
p_M(T-t,z)= \frac{1}{(2\pi)^{nd}} \int_{\R^{nd}}dpe^{-i\langle p,z \rangle} \exp \left( - (T-t)\int_{\R^{nd}} \{1- \cos( \langle p, \xi \rangle)\}   \ind_{\{|\xi|\le (T-t)^{1/\alpha}\}} \nu_{S}(d\xi)\right).
\end{eqnarray*}
Changing variables in $(T-t)^{1/\alpha}p =q$ yields:
\begin{eqnarray} \label{EXP_DENS_M}
p_M(T-t,z)= \frac{1}{(2\pi)^{nd}} (T-t)^{-nd/\alpha}\int_{\R^{nd}}dqe^{-i\langle q,\frac{z}{(T-t)^{1/\alpha}} \rangle}\\
\times \exp \left(-(T-t)  \int_{\R^{nd}}\{ 1-\cos(\langle q,\frac{\xi}{(T-t)^{1/\alpha}}\rangle) \}  \ind_{\{|\xi|\le (T-t)^{1/\alpha}\}} \nu_{S}(d\xi)\right).
\end{eqnarray}
Let us now denote 
$$\hat{f}_{T-t}(q) :=\exp \left((T-t)  \int_{\R^{nd}} \{\cos( \langle  q , \frac{\xi}{(T-t)^{1/\alpha}} \rangle ) -1\}\I_{|\xi|\le (T-t)^{1/\alpha}}  \nu_{S}(d\xi)\right).$$
Since the L\'evy measure in the above expression has finite support, we get from  Theorem 3.7.13 in Jacob \cite{Jacob1} that $\hat f_{T-t}$ is infinitely differentiable as a function of $q$.
Moreover,  
\begin{eqnarray*}
|\partial_q \hat f_{T-t}(q)|&\le& (T-t)\int_{\R^{nd}} \frac{|\xi|}{(T-t)^{\frac 1 \alpha}}| \sin(\langle   q , \frac{\xi}{(T-t)^{1/\alpha}}\rangle)|\I_{|\xi|\le (T-t)^{\frac1 \alpha}} \nu_S(d\xi) \\
&&\times
\exp \left((T-t)  \int_{\R^{nd}} \{\cos( \langle  q , \frac{\xi}{(T-t)^{1/\alpha}} \rangle ) -1\}\I_{|\xi|\le (T-t)^{1/\alpha}}  \nu_{S}(d\xi)\right).
\end{eqnarray*}
Write now:
\begin{eqnarray*}
&&(T-t)\int_{\R^{nd}} \frac{|\xi|}{(T-t)^{\frac 1 \alpha}}| \sin(\langle   q , \frac{\xi}{(T-t)^{1/\alpha}}\rangle)|\I_{|\xi|\le (T-t)^{\frac1 \alpha}} \nu_S(d\xi)\\
&\le& C(T-t)\int_{r\le (T-t)^{1/\alpha}} dr \frac{r^{nd-1}}{r^{d+1+\alpha}}\frac{r}{(T-t)^{\frac 1 \alpha}}   (\I_{\alpha<1}+ \I_{\alpha\ge 1}|q| \frac{r}{(T-t)^{1/\alpha}})\\
&\le & C(T-t)\int_{r\le (T-t)^{1/\alpha}} dr \frac{r^{-\alpha}}{(T-t)^{\frac 1 \alpha}}   (\I_{\alpha<1}+ \I_{\alpha\ge 1}|q| \frac{r}{(T-t)^{1/\alpha}})\le C(1+|q|).
\end{eqnarray*}
Thus:
\begin{eqnarray*}
|\partial_q \hat f_{T-t}(q)|
&\le & C(1+|q|) \exp\left(  (T-t)\int_{\R^{nd}} \{\cos( \langle  q , \frac{\xi}{(T-t)^{1/\alpha}} \rangle)  -1\}\nu_{S}(d\xi)\right) \\ 
&&\times \exp(2(T-t)\nu_{S}(B(0,(T-t)^{1/\alpha})^c))
\le 
C (1+|q|)\exp(-C^{-1}|q|^\alpha),\ C\ge 1,
\end{eqnarray*}
since from \eqref{DOMI_NU}, $\nu_{S}(B(0,(T-t)^{1/\alpha})^c) \le C/(T-t)$ and that the proof of Proposition \ref{LAB_EDFR} also yields that 
$$\exp\left((T-t)\int_{\R^{nd}} \{\cos( \langle  q , \frac{\xi}{(T-t)^{1/\alpha}} \rangle)  -1\}\nu_{S}(d\xi)\right)\le C\exp(-C^{-1}|q|^\alpha).$$
Similarly, for all $l\in \N $: 
\begin{eqnarray*}
|\partial_q^l \hat f_{T-t}(q)|&\le& 
C_l (1+|q|^l)\exp(-C^{-1}|q|^\alpha),\ C_l\ge 1.
\end{eqnarray*}
Thus, $\hat f_{T-t}$ belongs the Schwartz space. 
Denoting by $f_{T-t}$ its Fourier transform, we have:
$$
\forall m \ge 0, \ \forall z \in \R^{nd}, \exists C_m\ge 1 s.t.: |f_{T-t}(z)| \le C_m (1+|z|)^{-m}.
$$
Now since $p_M(T-t,z) = (T-t)^{-nd/\alpha} f_{T-t}(z/(T-t)^{1/\alpha})$, the announced bound follows. The control concerning the derivatives is derived similarly.

\end{proof}

Besides, the following control holds for the Poisson measure.

\begin{lemma}[Controls for the Poisson measure]\label{EXP_POISSON}
For all $T-t> 0$, $P_{N_{T-t}}$ is a Poisson measure.
Since ${\rm dim}({\rm supp}(\bar \mu))=d $ we have the following estimates. There exists a constant $C>0$ s.t. For all $ z \in \R^{nd}, r>0 $:
\begin{equation}
\label{EST_POISSON}
P_{N_{T-t}}(B(z,r))\le \frac{C}{\theta ((T-t)^{1/\alpha})}(T-t)r^{d+1} (1+\frac{r^\alpha}{T-t}\frac{\theta((T-t)^{1/\alpha})}{ \theta(r)})|z|^{-(d+1+\alpha)}\theta(|z|).
\end{equation}
\end{lemma}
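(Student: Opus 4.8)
\textbf{Plan of proof for Lemma \ref{EXP_POISSON}.}

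The plan is to use the explicit structure of the compound Poisson process $N_{T-t}$. First I would recall that, since the L\'evy measure $\nu_S$ restricted to $\{|\xi|>(T-t)^{1/\alpha}\}$ is finite, $N_{T-t}$ is a compound Poisson random variable. Its total jump intensity is $\lambda_{T-t}:=(T-t)\nu_S(B(0,(T-t)^{1/\alpha})^c)$, and from the domination \eqref{DOMI_NU} together with ${\rm dim}({\rm supp}(\bar\mu))=d$ and the doubling property \textbf{[T]} of $\theta$, one has
$$
\lambda_{T-t}\le C(T-t)\int_{(T-t)^{1/\alpha}}^{+\infty}\frac{d\rho}{\rho^{1+\alpha}}\rho^{d}g(c\rho)\le \frac{C}{\theta((T-t)^{1/\alpha})}(T-t)\int_{(T-t)^{1/\alpha}}^{+\infty}\rho^{d-1-\alpha}\Theta(c\rho)\,\frac{d\rho}{\rho},
$$
where I have written $g(c\rho)\le c'\theta(\rho)\le c''\Theta(\rho)/(1+\rho)$; integrability at infinity is ensured by $\Theta\to 0$. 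Then I would decompose $P_{N_{T-t}}=e^{-\lambda_{T-t}}\sum_{k\ge 0}\frac{(T-t)^k}{k!}\mu_{>}^{*k}$, where $\mu_{>}(d\xi):=\I_{|\xi|>(T-t)^{1/\alpha}}\nu_S(d\xi)$ is the (unnormalized) large-jump measure.

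The key step is to bound $P_{N_{T-t}}(B(z,r))$ by separating the $k=1$ term from the rest. For the mass coming from two or more large jumps ($k\ge 2$) I would use the crude bound $\mu_{>}^{*k}(B(z,r))\le \mu_{>}(\R^{nd})^{k-1}\sup_{w}\mu_{>}(B(w,r))$ together with $\sup_w \mu_{>}(B(w,r))\le C\frac{r^{d+1}}{\theta((T-t)^{1/\alpha})}\big(\frac{\theta((T-t)^{1/\alpha})}{\theta(r)}\vee 1\big)$ when $r\le (T-t)^{1/\alpha}$ (this follows again from \eqref{DOMI_NU}, the $d$-dimensionality of the support of $\bar\mu$, and the doubling property; the factor $r^{d+1}$ is the $\bar\mu$-measure of a spherical cap of radius $r$ times the radial integral $\int_{(T-t)^{1/\alpha}}^{\cdot}\rho^{-1-\alpha}d\rho$ restricted near the shell, giving the extra $r^\alpha/(T-t)$ correction). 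Summing the geometric-type series $\sum_{k\ge 2}\frac{(T-t)^k}{k!}\mu_>(\R^{nd})^{k}$ contributes a term controlled by $(T-t)\mu_>(\R^{nd})\cdot[\text{single-jump bound}]$, which is absorbed into the $(1+\frac{r^\alpha}{T-t}\frac{\theta((T-t)^{1/\alpha})}{\theta(r)})$ and $\theta(|z|)|z|^{-(d+1+\alpha)}$ factors after noticing $|z|$ must be comparable to a sum of jump sizes, each $>(T-t)^{1/\alpha}$.

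For the dominant $k=1$ term I would write $e^{-\lambda_{T-t}}(T-t)\mu_>(B(z,r))$ and estimate $\mu_>(B(z,r))$ directly: in polar coordinates $\xi=\rho\varsigma$, a ball $B(z,r)$ of small radius $r\le(T-t)^{1/\alpha}$ with center $z$ at distance $|z|>(T-t)^{1/\alpha}$ from the origin meets the cone over a spherical cap of $\bar\mu$-measure $\lesssim (r/|z|)^{d}$ and a radial slab of width $\sim r$ around $\rho=|z|$, so that $\mu_>(B(z,r))\le C (r/|z|)^{d}\cdot r\cdot |z|^{-1-\alpha}g(c|z|)\le C r^{d+1}|z|^{-(d+1+\alpha)}\theta(|z|)$, using $g(c|z|)\le c'\theta(|z|)$. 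Multiplying by $(T-t)$ and by $1/\theta((T-t)^{1/\alpha})\ge 1$ (since $\theta$ is non-increasing and bounded, and one may normalize $\sup\theta\le 1$, or absorb the bound in $C$) produces exactly the claimed right-hand side \eqref{EST_POISSON}. The main obstacle I anticipate is handling the spherical part carefully: since $\bar\mu$ is supported on a $d$-dimensional subset of $S^{nd-1}$ (not the whole sphere), one must verify that a ball $B(z,r)$ in $\R^{nd}$ indeed only captures a cap of $\bar\mu$-measure of order $(r/|z|)^{d}$, which requires controlling the geometry of the support of $\bar\mu$ and the Lipschitz regularity afforded by \textbf{[H-4]}; the doubling property \textbf{[T]} is then what lets one compare $\theta$ at the scales $r$, $(T-t)^{1/\alpha}$ and $|z|$ without losing powers.
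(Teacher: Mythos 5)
Your route is genuinely different from the paper's: the paper disposes of this lemma in two lines, by citing Lemma 3.1 of Watanabe \cite{wata:07} in the stable case ($\theta\equiv 1$) and Corollary 6 of Sztonyk \cite{szto:10} in the tempered case, so what you are really attempting is a from-scratch reproof of those external results via the compound Poisson expansion $P_{N_{T-t}}=e^{-\lambda_{T-t}}\sum_{k\ge 0}\frac{(T-t)^k}{k!}\mu_>^{*k}$. That is the right skeleton, but the proposal has a genuine gap exactly where those references do their work: the terms $k\ge 2$. Your bound $\mu_>^{*k}(B(z,r))\le \mu_>(\R^{nd})^{k-1}\sup_w\mu_>(B(w,r))$ is uniform in the center and therefore carries no decay in $|z|$ whatsoever; the subsequent claim that the resulting series "is absorbed into the $|z|^{-(d+1+\alpha)}\theta(|z|)$ factor after noticing $|z|$ must be comparable to a sum of jump sizes" is precisely the missing argument, not a remark. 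To keep the spatial decay one must propagate it through the convolutions: if $\xi_1+\cdots+\xi_k\in B(z,r)$ then some $|\xi_i|\gtrsim |z|/k$, which after conditioning gives a factor of order $k\,(|z|/k)^{-(d+1+\alpha)}\theta(|z|/k)\,r^{d+1}$ for that jump times the total mass of the remaining $k-1$ jumps; the polynomial-in-$k$ losses $k^{d+2+\alpha}$ and $\theta(|z|/k)\le D^{1+\log_2 k}\theta(|z|)$ (this is where the doubling property of $\theta$ is really needed) must then be shown summable against $1/k!$. None of this is carried out, and without it the stated bound \eqref{EST_POISSON} does not follow.

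Two further points need attention. First, the single-jump cap estimate $\bar\mu(\{\eta:\ |\eta-z/|z||\le Cr/|z|\})\le C(r/|z|)^d$ is not a consequence of ${\rm dim}({\rm supp}(\bar\mu))=d$ alone (a measure can be supported on a $d$-dimensional set and still charge arbitrarily small balls with mass $\gg s^d$); what is needed is an upper Ahlfors-type regularity $\bar\mu(B(\eta,s))\le Cs^d$, which in this paper has to be extracted from the construction of $\bar\mu$ as the image of $|\bar R_v\sigma_{u(v)}\varsigma|^\alpha\tilde\mu(d\varsigma)dv$ under $f(v,\varsigma)=\bar R_v\sigma_{u(v)}\varsigma/|\bar R_v\sigma_{u(v)}\varsigma|$, using \textbf{[H-4]}, the boundedness of the density of $\tilde\mu$ and the non-degeneracy of $\bar R_v\sigma_{u(v)}$ from Lemma \ref{Form of the resolvent}; you flag this but do not prove it, whereas in the cited results it is an assumption on the L\'evy measure. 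Second, the lemma asserts the bound for all $r>0$, while your geometric analysis is confined to $r\le (T-t)^{1/\alpha}$ and $|z|>(T-t)^{1/\alpha}$; the regime of large $r$, where the correction factor $\frac{r^\alpha}{T-t}\frac{\theta((T-t)^{1/\alpha})}{\theta(r)}$ is the one doing the work (the radial integral over a slab of width $r$ is no longer of order $r|z|^{-1-\alpha}\theta(|z|)$), requires a separate computation that is not sketched.
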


\begin{proof} In the stable case, i.e. $\theta=1 $,
this result is a consequence of Lemma 3.1 in \cite{wata:07} and the intrinsic stable scaling. In the tempered case, it follows from Corollary 6 in Sztonyk \cite{szto:10}.
\end{proof}

Let us observe that the above control also yields the upper-bound estimate for the density in Propositions \ref{EST_DENS_GEL}, \ref{EST_DENS_GEL_T} in the off-diagonal regime. Precisely from \eqref{SCALE_STAB}, \eqref{DESINT}, Lemma \ref{EST_DENS_MART} and \eqref{EST_POISSON} one gets:
\begin{eqnarray*}
\tilde p_\alpha(t,T,x,y)&\le& C_m\det(\M_{T-t})^{-1}(T-t)^{-nd/\alpha}\int_{\R^{nd}} (1+|\M_{T-t}^{-1}(R_{T,t}x-y) -\bar z|/(T-t)^{1/\alpha})^{-m} P_{N_{T-t}}(d\bar z)\\
&\le& C_m \det(\T_{T-t}^\alpha)^{-1}\int_{0}^1P_{N_{T-t}}(\{\bar z\in \R^{nd}:(1+(T-t)^{-1/\alpha}|\M_{T-t}^{-1}(R_{T,t}x-y)-\bar z|)^{-m} |>s  \})ds\\
&\le& C_m \det(\T_{T-t}^\alpha)^{-1}\int_{0}^1P_{N_{T-t}}(B(\M_{T-t}^{-1}(R_{T,t}x-y), s^{-1/m}(T-t)^{1/\alpha}))ds\\
&\le &C_m C \det(\T_{T-t}^\alpha)^{-1}  \frac{(T-t)^{1+(d+1)/\alpha}}{\theta((T-t)^{1/\alpha})}\int_0^1 s^{-(d+1)/m}
 (1+s^{-\alpha/m}\frac{\theta((T-t)^{1/\alpha})}{\theta((T-t)^{1/\alpha}s^{-1/m})} ) ds\\
&&\times|\M_{T-t}^{-1}(R_{T,t}x-y)|^{-(d+1+\alpha)}
\theta\left( |\M_{T-t}^{-1}(R_{T,t}x-y)|\right)\\
&\le & \frac{C_m}{\theta(1)}C \det(\T_{T-t}^\alpha)^{-1}(1+|(\T_{T-t}^{\alpha})^{-1}(R_{T,t}x-y)|)^{-(d+1+\alpha)}\theta\left( |\M_{T-t}^{-1}(R_{T,t}x-y)|\right)\\
&&\times \int_0^1 [s^{-(d+1)/m}+s^{-(d+1+\alpha+\tilde \eta)/m} ds],
\end{eqnarray*}
using for the last inequality that $\theta $ is non-increasing and exploiting  that the doubling condition in \textbf{[T]} is equivalent to the fact that there 
exists $c>0,\ \tilde \eta \ge 0$ s.t. $\frac{\theta(r)}{\theta(R)}\le c\left(\frac rR \right)^{-\tilde \eta},\ 0< r\le R $, see e.g. \cite{bass:95}.
Choosing  $m>d+1+\alpha+\tilde \eta $ then gives the result, i.e. there exists $C\ge 1$ s.t. for all $0\le t<T, \ (x,y)\in (\R^{nd})^2,\ \tilde p_\alpha(t,T,x,y)\le C\bar p_\alpha(t,T,x,y) $.

Moreover, the previous procedure, associated with Lemma
\ref{EST_DENS_MART}, allows to handle the small jumps in the estimation of $(L_t-\tilde L_t^{T,y})\tilde p_\alpha(t,T,x,y)=(L_t^M-\tilde L_t^{T,y,M})\tilde p_\alpha(t,T,x,y)+(L_t^N-\tilde L_t^N)\tilde p_\alpha(t,T,x,y)$.
Introducing $\nu(x,A):=\nu( \{z\in \R^d: \sigma(x)z\in A\}) $, we write for a given parameter $a\in (0,1\wedge K)$ and $x\in \R^{nd}$: 
\begin{eqnarray*}
(L_t^M-\tilde L_t^{T,y,M})\varphi(x)&=&\int_{\R^d} (\varphi(x+Bz)-\varphi(x)-\langle \nabla_{z^1} \varphi(x),z\rangle)\I_{|z|\le  a (T-t)^{1/\alpha}} 
(\nu(x,dz)-\nu(R_{t,T}y,dz)),\\
(L_t^N-\tilde L_t^{T,y,N})\varphi(x)&=&\int_{\R^d} (\varphi(x+Bz)-\varphi(x))\I_{|z|> a (T-t)^{1/\alpha}}(\nu(x,dz)-\nu(R_{t,T}y,dz)).
\end{eqnarray*}
The Lipschitz property of the density of the spectral measure $ \mu$ in \textbf{[H-4]}, the non-degeneracy and H\"older continuity of $\sigma $
and the properties concerning the tempering function in \textbf{[HT]} yield that $\nu(.,dz) $ is $\eta (\alpha \wedge 1) $ H\"older continuous w.r.t. its first parameter and there exists $ C\ge 1$ s.t. uniformly in $z\in \R^d$, $|(\nu(x,dz)-\nu(R_{t,T}y,dz))|\le C(\delta \wedge  |x-R_{t,T}y|^{\eta (\alpha \wedge 1)})\theta(|z|) |z|^{-(d+\alpha)} dz$. The condition that for all $r>0,\  r\sup_{ u \in [\kappa^{-1},\kappa]} g'(u r)\le c \theta(r)$ appearing in \textbf{[HT]} is needed here to control the difference on the tempering functions. 
We now get:
\begin{eqnarray*}
|(L_t^M-\tilde L_t^{T,y,M})\tilde p_\alpha(t,T,x,y)|=\\
\bigg|\int_{\R^d}\big(\tilde p_\alpha(t,T,x+Bz,y)-\tilde p_\alpha(t,T,x,y)-\langle \nabla_{x^1}p_\alpha(t,T,x,y),z\rangle \big) ((\nu(x,dz)-\nu(R_{t,T}y,dz)))\bigg|\\
\le  C \det(\M_{T-t})^{-1}[\delta \wedge |x-R_{t,T}y|^{\eta (\alpha \wedge 1)}] \int_{\R^d} \big |\int_{\R^{nd}} \big\{ p_M(T-t, \M_{T-t}^{-1}(R_{T,t}x+Bz-y) -\bar z)\\
-p_M(T-t, \M_{T-t}^{-1}(R_{T,t}x-y) 
-\bar z)- \langle \nabla_{x^1} p_M(T-t,\M_{T-t}^{-1}(R_{T,t}x-y)-\bar z ), z\rangle\I_{\alpha\ge 1}  \big\} \\
P_{N_{T-t}}( d\bar z)\big|\I_{|z|\le a(T-t)^{1/\alpha}} \theta(|z|)\frac{dz}{|z|^{d+\alpha}}.
\end{eqnarray*}
The idea is now to perform a Taylor expansion on $p_M $ to compensate the singularities in $z$. We assume for simplicity that $\alpha \in (0,1)$ which allows to perform the Taylor expansion at order $1$ only. It suffices to expand at order 2 to handle the case $\alpha  \in [1,2) $. We get from Lemma \ref{EST_DENS_MART}:
\begin{eqnarray}
|(L_t^M-\tilde L_t^{T,y,M})\tilde p_\alpha(t,T,x,y)|\le C\det(\M_{T-t})^{-1}[\delta \wedge |x-R_{t,T}y|^{\eta (\alpha \wedge 1)}] \nonumber \\
\int_{\R^d} \int_{\R^{nd}} \big\{ \sup_{|\tilde z|\in (0,a(T-t)^{1/\alpha}]}| \nabla_{x_1} p_M(T-t, \M_{T-t}^{-1}(R_{T,t}x+B\tilde z-y) -\bar z)|    |z| \big\}
  P_{N_{T-t}}(d\bar z)\I_{|z|\le a(T-t)^{1/\alpha}} \frac{dz}{|z|^{d+\alpha}}\nonumber \\
  \le \frac{CC_m \det(\T_{T-t}^\alpha)^{-1}}{(T-t)^{1/\alpha}}[\delta \wedge |x-R_{t,T}y|^{\eta (\alpha \wedge 1)}] \nonumber \\
  \int_{\R^d} \int_{\R^{nd}}  \sup_{|\tilde z|\in (0,a(T-t)^{1/\alpha}]} \left(1+\frac{|\M_{T-t}^{-1}(R_{T,t}x+B\tilde z-y)-\bar z )|}{(T-t)^{1/\alpha}}\right)^{-m}P_{N_{T-t}}(d \bar z)\I_{|z|\le a(T-t)^{1/\alpha}} |z|\frac{dz}{|z|^{d+\alpha}} \nonumber\\
 \le  \frac{CC_m\det(\T_{T-t}^\alpha)^{-1}}{(T-t)^{1/\alpha}}[\delta \wedge |x-R_{t,T}y|^{\eta (\alpha \wedge 1)}]\int_{0}^{at^{1/\alpha}} dr r^{-\alpha}\nonumber\\
 \times \int_{\R^{nd}}   \left( (1-a)+\frac{|\M_{T-t}^{-1}(R_{T,t}x-y)-\bar z )|}{(T-t)^{1/\alpha}}\right)^{-m}P_{N_{T-t}}(d\bar z)\nonumber \\
 \le \frac{C[\delta \wedge |x-R_{t,T}y|^{\eta (\alpha \wedge 1)}]}{T-t} \bar p_\alpha(t,T,x,y).\label{CTR_H_PETITS_SAUTS}
\end{eqnarray}
This therefore gives the expected control for the small jumps in the kernel, i.e. the operator $L_t^M-\tilde L_t^{T,y,M} $ acting on $\tilde p_\alpha(t,T,x,y) $ yields a bound homogeneous to the upper-bound $\bar p_\alpha(t,T,x,y) $ up to an additional multiplicative singularity of the form $\frac{C[\delta \wedge |x-R_{t,T}y|^{\eta  (\alpha \wedge 1)}]}{T-t}$.\\

The delicate part,  yielding the \textit{rediagonalization} phenomenon which might deteriorate the estimates in the degenerate framework, comes from the large jumps.  
We now specify how in the off-diagonal regime, when
$|(x-R_{t,T}y)^1|/(T-t)^{1/\alpha}\asymp |(\T_{T-t}^{\alpha})^{-1}(x-R_{t,T}y)| \overset{{\rm Lemma} \ \ref{ScalingLemma}}{\asymp} |(\T_{T-t}^{\alpha})^{-1}(R_{T,t}x-y)|$
, that is when the \textit{slow} component dominates, a \textit{bad} rediagonalization phenomenon can occur.
Let us now discuss the various possible cases. Fix $\varepsilon>0 $. 
\begin{trivlist}
\item[-] If $z\not \in B((R_{t,T}y-x)^1,\varepsilon |(x-R_{t,T}y)^1|):=B_{\varepsilon,t,T,x,y}$ then
 $|(\T_{T-t}^\alpha)^{-1} (x+Bz- R_{t,T}y)|\ge (|z-(R_{t,T}y-x)^1|)/(T-t)^{1/\alpha}\ge \varepsilon |(x-R_{t,T}y)^1|/(T-t)^{1/\alpha} $. Hence $\tilde p_\alpha(t,T,x+Bz,y) $ is off-diagonal and $ \tilde p_\alpha(t,T,x+Bz,y)\le C\bar p_\alpha(t,T,x,y)$. Thus:
\begin{eqnarray}
\label{CTR_LOIN_BOULE}
\int_{z\not \in 
B_{\varepsilon,t,T,x,y}} |\{\tilde p_\alpha(t,T,x+Bz,y)-\tilde p_\alpha(t,T,x,y)\}| \I_{|z|> a (T-t)^{1/\alpha}} |\nu(x,dz)-\nu(R_{t,T}y,dz)|\nonumber \\
\le C [\delta\wedge |x-R_{t,T}y|^{\eta (\alpha \wedge 1)}] \bar p_\alpha(t,T,x,y)\int_{\R^d}\I_{|z|>a (T-t)^{1/\alpha}} \frac{dz}{|z|^{d+\alpha}}\nonumber\\
\le \frac{C[\delta\wedge |x-R_{t,T}y|^{\eta (\alpha \wedge 1)}]}{T-t}\bar p_\alpha (t,T,x,y).
\end{eqnarray}
\item[-] If $z\in 
B_{\varepsilon,t,T,x,y}$ we can write:
\begin{eqnarray}
\int_{z\in B_{\varepsilon,t,T,x,y}}|\tilde p_\alpha(t,T,x+Bz,y)-\tilde p_\alpha(t,T,x,y)| \I_{|z|> a (T-t)^{1/\alpha}} |\nu(x,dz)-\nu(R_{t,T}y,dz)| \nonumber\\
\le C[\delta\wedge |x-R_{t,T}y|^{\eta (\alpha \wedge 1)}]  \left\{ \frac{\theta(|(x-R_{t,T}y)^1|)}{|(x-R_{t,T}y)^1|^{d+\alpha}}\int_{z\in B_{\varepsilon,t,T,x,y}} \tilde p_\alpha(t,T,x+Bz,y)dz +\frac{\bar p_\alpha(t,T,x,y)}{T-t}\right\}\nonumber\\
\le C[\delta\wedge |x-R_{t,T}y|^{\eta (\alpha \wedge 1)}]  \bigg\{ \frac{\bar p_\alpha(t,T,x,y)}{T-t} \nonumber\\
+ \frac{ \theta(|(x-R_{t,T}y)^1|)}{|(x-R_{t,T}y)^1|^{d+\alpha}}\int_{z\in B_{\varepsilon,t,T,x,y}} \det(\T_{T-t}^{\alpha})^{-1} (1+|(\T_{T-t}^{\alpha})^{-1}(x+Bz-R_{t,T}y)|)^{-(d+1+\alpha)}dz \bigg\}\label{REDIAG_INT}\\
\le C[\delta\wedge |x-R_{t,T}y|^{\eta (\alpha \wedge 1)}]  \bigg\{ \frac{\bar p_\alpha(t,T,x,y)}{T-t}\nonumber \\
+\frac{\theta(|(x-R_{t,T}y)^1|)}{|(x-R_{t,T}y)^1|^{d+\alpha}}\frac{1}{(T-t)^{\frac{(n-1)d}\alpha+\frac{n(n-1)d}2} (1+|\{(\T_{T-t}^{\alpha})^{-1}(x-R_{t,T}y)\}^{2:n}|)^{1+\alpha}} \bigg\}\nonumber\\
\le \frac{C}{T-t}[\delta\wedge |x-R_{t,T}y|^{\eta \alpha \wedge 1}]   (\bar p_\alpha+\breve{p}_\alpha  )(t,T,x,y),\label{CTR_REDIAG}
\end{eqnarray}
using Propositions \ref{EST_DENS_GEL}, \ref{EST_DENS_GEL_T}  and Lemma \ref{ScalingLemma} for the last but second inequality.

From \eqref{CTR_LOIN_BOULE} and \eqref{CTR_REDIAG} we derive:
$$|(L_t^N-\tilde L_t^{T,y,N})\tilde p_\alpha(t,T,x,y)|\le \frac{C}{T-t}[\delta\wedge |x-R_{t,T}y|^{\eta (\alpha \wedge 1)}]   (\bar p_\alpha+\breve{p}_\alpha  )(t,T,x,y), $$
which together with \eqref{CTR_H_PETITS_SAUTS} gives the statement of Lemma \ref{CTR_KER_PTW} in the off-diagonal regime.

\begin{remark}[About the rediagonalization]
Observe that a similar rediagonalization phenomenon occurs in the non-degenerate case as well. The fact is that, in that case we integrate a density in \eqref{REDIAG_INT} and not a marginal. The 
decay of the jump measure
gives in that case up to a multiplicative singularity in $(T-t)^{-1}$ the asymptotic behavior of the stable density. Namely when $n=1 $ we would have $d+\alpha $ instead of $d+1+\alpha $ in \eqref{REDIAG_INT} and in the off-diagonal regime:
$|x-R_{t,T}y|^{-(d+\alpha)}= \frac{1}{T-t}\times \frac{T-t}{|x-R_{t,T}y|^{d+\alpha}}\le \frac{C}{T-t}\frac{1}{(T-t)^{d/\alpha}\left(1+\frac{|x-R_{t,T}y|}{(T-t)^{1/\alpha}}\right)^{d+\alpha}}:=\frac{C}{T-t}\bar p_\alpha(t,T,x,y)$, where $\bar p_\alpha $ indeed corresponds to the upper bound for the large scale asymptotics of a stable process whose spectral measure is absolutely continuous, see again Proposition \ref{EST_DENS_GEL}. In that framework, our proof provides an alternative to the Fourier arguments employed in \cite{kolo:00}.
\end{remark}

\begin{remark}[Loss of Concentration in the stable case]
From equations \eqref{REDIAG_INT}-\eqref{CTR_REDIAG} we see that when $|\{(\T_{T-t}^{\alpha})^{-1}(x-R_{t,T}y)\}^{2:n}|\le K $, i.e. the fast component in the backward dynamics are diagonal, we have a loss of concentration w.r.t. to the worst asymptotic bounds given in Proposition \ref{EST_DENS_GEL}. Note also that in this case the lower bound in that proposition yields:
\begin{eqnarray*}
\left| \int_{\R^d} \tilde p(t,T,x+Bz,y)-\tilde p(t,T,x,y) \I_{|z|> a (T-t)^{1/\alpha}}\frac{dz}{|z|^{d+\alpha}}\right| \ge  -\frac{C}{(T-t)}\bar p_\alpha(t,T,x,y)\\
+\frac{C^{-1}}{|(x-R_{t,T}y)^1|^{d+\alpha}}\frac{1}{(T-t)^{\frac{(n-1)d}\alpha+\frac{n(n-1)d}2} (1+|\{(\T_{T-t}^{\alpha})^{-1}(x-R_{t,T}y)\}^{2:n}|)^{nd(1+\alpha)-d}}\\
\ge \frac{1}{(T-t)^{\frac{(n-1)d}{\alpha}+\frac{n(n-1)d}{2}  } }\big\{\frac{C^{-1}}{|(x-R_{t,T}y)^1|^{d+\alpha}(1+K)^{nd(1+\alpha)-d}}-\frac{C}{|(x-R_{t,T}y)^1|^{d+\alpha+1}}(T-t)^{1/\alpha}\big\}\\
\ge \frac{1}{2(T-t)^{\frac{(n-1)d}{\alpha}+\frac{n(n-1)d}{2}  } }\frac{C^{-1}}{|(x-R_{t,T}y)^1|^{d+\alpha}(1+K)^{nd(1+\alpha)-d}},
 \end{eqnarray*}
if $|(x-R_{t,T}y)^1|\ge \bar K(T-t)^{1/\alpha} $ for $\bar K$  large enough. Hence, if $d=1$ the previous bound is sharp provided $\sigma(t,x)-\sigma(t,R_{t,T}y)\ge \delta >0$. 
\end{remark}
\end{trivlist}

\section{Steps for the proof of Theorem \ref{MART_PB_THM} in the nonlinear case.}
\label{NON_LIN_MART}
We specify in this section how to modify the previous arguments to prove the well posedness of the martingale problem  for the generator of \eqref{NON_LIN_SDE}. We focus on the case $n=2 $. The constraint $d=1$ will appear clearly during the proof. 
The first step consists in choosing a suitable parametrix. This is done similarly to the Gaussian case in \cite{D&M}. Namely, we introduce for given $(T,y)\in \R^+\times \R^d$ and $(t,x)\in [0,T)\times \R^d $ the \textit{frozen process}:
\begin{eqnarray}
(\tilde X_s^{t,x,T,y})^1&=&x^1+\int_t^s  F_1(u,\phi_{u,T}(y))du +\int_t^s \sigma(u,\phi_{u,T}(y))dZ_u,\nonumber\\
(\tilde X_s^{t,x,T,y})^2&=&x^2+\int_t^s  \left\{F_2(u,\phi_{u,T}(y))+ \nabla_{x^1} F_2(u,\phi_{u,T}(y))\left((\tilde X_u^{t,x,T,y})^1-\phi_{u,T}(y)^1\right)\right\}du,\label{DYN_NON_LIN}
\end{eqnarray}
denoting $\phi_{u,T}(y) $ the solution to $\phi_{T,T}(y)=y,\ \dot \phi_{u,T}(y)=F(u,\phi_{u,T}(y)) $, i.e. backward flow associated with the deterministic differential system.
This is a linear dynamics which once integrated through the resolvent yields:
\begin{eqnarray}
\label{DYN_NL_INT}
\tilde X_s^{t,T,x,y}=\tilde \phi_{s,t}^{T,y}(x)+\int_{t}^s \tilde R_{s,u}^{T,y}B\sigma(u,\phi_{u,T}(y))dZ_u,
\end{eqnarray}
where recalling $F(t,x)=(F_1(t,x),F_2(t,x))^*$:
\begin{eqnarray*}
\tilde \phi_{s,t}^{T,y}(x)&=&x+\int_{t}^s F(u,\phi_{u,T}(y))du+\int_{t}^s \left(\begin{array}{cc}0 & 0\\
\nabla_{x^1} F_2(u,\phi_{u,T}(y)) & 0 \end{array}
\right) \left(\tilde \phi_{u,t}^{T,y}(x)-\phi_{u,T}(y)\right)du\\
&=&\tilde R_{s,t}^{T,y}(x)+\int_t^s \tilde R_{s,u}^{T,y}\left\{F(u,\phi_{u,T}(y)) -\left(\begin{array}{cc}0 & 0\\
\nabla_{x^1} F_2(u,\phi_{u,T}(y)) & 0 \end{array}
\right)\phi_{u,T}(y)\right\}du,\\
\partial_s \tilde R_{s,t}^{T,y}&=&\left(\begin{array}{cc}0 & 0\\
\nabla_{x^1} F_2(s,\phi_{s,T}(y)) & 0 \end{array}
\right)\partial_s \tilde R_{s,t}^{T,y},\ \tilde R_{t,t}^{T,y}=I_{2\times 2}.
\end{eqnarray*}
It can be shown, similarly to Section 2.3 in \cite{D&M} that:
\begin{eqnarray}
\label{equiv_FLOWS_NL}
|(\T_{T-t}^{\alpha})^{-1}(\tilde \phi_{T,t}^{T,y}(x)-y)|\asymp |(\T_{T-t}^{\alpha})^{-1}(x-\phi_{t,T}(y))| \asymp |(\T_{T-t}^{\alpha})^{-1}(\phi_{T,t}(x)-y)|.
\end{eqnarray}
It can now be derived from equations \eqref{DYN_NL_INT}, \eqref{equiv_FLOWS_NL} similarly to the proof of Proposition \ref{LAB_EDFR}
that 
\begin{eqnarray*}
\tilde p_\alpha(t,T,x,y)&\le &\frac{C}{(T-t)^{(1+\frac{2}{\alpha})d}}\frac{1}{(1+|(\T_{T-t}^\alpha)^{-1}(\tilde \phi_{T,t}^{T,y}(x)-y)|)^{d+1+\alpha}}\\
&\le &\frac{C}{(T-t)^{(1+\frac{2}{\alpha})d}}\frac{1}{(1+|(\T_{T-t}^\alpha)^{-1}(x-\phi_{t,T}(y))|)^{d+1+\alpha}}\\
&:=&C\bar p_{\alpha,\phi}(t,T,x,y).
\end{eqnarray*}
From the desintegration of the density  in Appendix \ref{controles_phi} (see equation \eqref{DESINT}, Lemma \ref{EST_DENS_MART} and estimate \eqref{EST_POISSON}) we also derive the \textit{global} gradient bounds:
\begin{eqnarray*}
|\partial_{x^1} \tilde p_\alpha(t,T,x,y)|\le  \frac{C}{(T-t)^{\frac1\alpha} 
}\bar p_{\alpha,\phi}(t,T,x,y),\ 
|\partial_{x^2}\tilde p_\alpha(t,T,x,y)|\le  \frac{C}{(T-t)^{1+\frac 1\alpha} 
}\bar p_{\alpha,\phi}(t,T,x,y).\\
\end{eqnarray*}
On the other hand, the control of Lemma \ref{CTR_KER_PTW} concerning the kernel $H$ now writes:
\begin{eqnarray}
\label{CTR_H_NL}
|H(t,T,x,y)|\le C\frac{\delta \wedge |x-\phi_{t,T}(y)|^{\eta(\alpha\wedge 1)}}{T-t} \left\{\bar p_{\alpha,\phi}(t,T,x,y)+\breve{p}_{\alpha,\phi}(t,T,x,y) \right\}
+\nonumber
\\C\left\{ \frac{
|x-\phi_{t,T}(y)|}{(T-t)^{\frac1\alpha} 
}+
\left[\frac{
|(x-\phi_{t,T}(y))^2|}{(T-t)^{1+\frac1\alpha} 
}+\frac{
|(x-\phi_{t,T}(y))^1|(\delta \wedge |(x-\phi_{t,T}(y))^1|^\eta)}{(T-t)^{1+\frac 1\alpha}}\right]
\right\}\bar p_{\alpha,\phi}(t,T,x,y), 
\end{eqnarray}
where
\begin{eqnarray*}
\breve{p}_{\alpha,\phi}(t,T,x,y)=\frac{\I_{|(x-\phi_{t,T}(y))^1|/(T-t)^{1/\alpha}| \asymp |(\T_{T-t}^{\alpha})^{-1}(x-\phi_{t,T}(y))|\ge K}}{(T-t)^{d/\alpha}(1+\frac{|(x-\phi_{t,T}(y))^1|}{(T-t)^{1/\alpha}})^{d+\alpha}}\\
\times \frac{1}{(T-t)^{\frac{(n-1)d}\alpha+\frac{n(n-1)d}2}(1+|((\T_{T-t}^{\alpha})^{-1}(x-\phi_{t,T}(y))^{2:n}|)^{1+\alpha}} 
).
\end{eqnarray*}
We emphasize that in the above controls on $\tilde p_\alpha,\nabla \tilde p_\alpha, H $, we have bounded the tempering function, appearing in the case {\HT},  by a constant. Indeed, this term is not useful to investigate the martingale problem. Also, 
the additional contribution in $H$ coming from the gradient term, which vanishes in the linear case, is derived writing:
\begin{eqnarray*}
|\langle F(t,x)-(F(t,\phi_{t,T}(y))+\left(\begin{array}{cc} 0& 0\\ \nabla_{x^1}F_2(t, \phi_{t,T}(y))& 0
\end{array}\right)
(x-\phi_{t,T}(y)), \nabla \tilde p_{\alpha}(t,T,x,y)\rangle| 
\\\le C\left\{ \frac{
|x-\phi_{t,T}(y)|}{(T-t)^{\frac1\alpha} 
}+
\left[\frac{
|(x-\phi_{t,T}(y))^2|}{(T-t)^{1+\frac1\alpha} 
}+\frac{
|(x-\phi_{t,T}(y))^1|(\delta \wedge |(x-\phi_{t,T}(y))^1|^\eta)}{(T-t)^{1+\frac 1\alpha}}\right]
\right\}\bar p_{\alpha,\phi}(t,T,x,y). 
\end{eqnarray*}
The contributions in \eqref{CTR_H_NL} coming from the non-local part of $H$ can be analyzed as previously. Let us now focus on the term
$$\frac{
|(x-\phi_{t,T}(y))^1|(\delta \wedge |(x-\phi_{t,T}(y))^1|^\eta)}{(T-t)^{1+\frac 1\alpha}}
\bar p_{\alpha,\phi}(t,T,x,y):=G(t,T,x,y),$$
which is the trickiest among the new contributions. 
Indeed, it involves the first component, which has typical scale in $(T-t)^{1/\alpha} $, renormalized by the singularity deriving from the sensitivity w.r.t. the second one, i.e. $(T-t)^{-(1+1/\alpha)} $. Following the proof of Lemma \ref{lemme} we write:
\begin{eqnarray*}
G(t,T,x,y)&\le & C\int_{\R^{2d}}  \frac { (T-t)^{\frac 1\alpha}|Z^1|(\delta \wedge [(T-t)^{\frac 1\alpha}|Z^1|]^\eta)}{(T-t)^{1+\frac 1 \alpha}} \frac{dZ}{(1+|Z|)^{d+1+\alpha}}\\
&\le & \frac{C}{T-t}\int_{\R^d} |Z^1|(\delta \wedge [(T-t)^{\frac 1\alpha}|Z^1|^\eta]))\frac{dZ^1}{(1+|Z^1|)^{1+\alpha}}\\
&\le & C\{(T-t)^{\frac{\eta}\alpha-1}+\frac{1}{T-t}\int_{|Z^1|>K} (\delta^{1/\eta} \wedge [(T-t)^{\frac 1\alpha}|Z^1|])^\eta)\frac{dZ^1}{|Z^1|^{\alpha}}\}\\
&\le & C\{(T-t)^{\frac{\eta}\alpha-1}+\frac{1}{T-t}\int_{|Z^1|>K} (\delta^{1/\eta} \wedge [(T-t)^{\frac 1\alpha}|Z^1|])^\varepsilon)\frac{dZ^1}{|Z^1|^{\alpha}}\},
\end{eqnarray*}
for any $\varepsilon\in [0,\eta] $.
Now, the above integral only converges if $d=1,\alpha>1$ and $\alpha-\varepsilon>1 $ giving 
$$ G(t,T,x,y)\le  C \{ (T-t)^{\frac{\eta}\alpha-1}+(T-t)^{\frac{\varepsilon}\alpha-1}\},$$
which, once integrated in time yields the needed smoothing effect. 

We conclude saying that it seems anyhow difficult to consider this case, i.e. a fully non linear unbounded drift, for the density estimate, since the additional contribution in \eqref{CTR_H_NL} gives non-integrable singularities which we cannot here compensate as for the diffusion with a dependence on the fast variable.  

However, it can be proved under {\HT} for $d=1,n=2,\alpha>1 $  and $F(t,x)=(F_1(t,x), \alpha_t x_1+\tilde F_2(t,x^2))^* $ where the coefficients are bounded measurable in time and s.t. $\alpha_t\in [c_0,c_0^{-1}],\ c_0\in (0,1] $, and $F_1,\tilde F_2 $ are Lipschitz continuous in space, that the density exists and that the estimates of Theorem \ref{MTHM} hold with the non-linear flow. In that case, the most singular term is linear and vanishes in $H$. Assumption \textbf{[HT]} is here crucial and would give instead of the previous control \eqref{CTR_H_NL} that:
\begin{eqnarray*}
|H(t,T,x,y)|\le C\bigg[\frac{\delta \wedge |x-\phi_{t,T}(y)|^{\eta(\alpha\wedge 1)}}{T-t} \left\{\bar p_{\alpha,\phi}(t,T,x,y)+\breve{p}_{\alpha,\phi}(t,T,x,y) \right\}
+\nonumber
\\\left\{ \frac{
|x-\phi_{t,T}(y)|}{(T-t)^{\frac1\alpha} 
}+
\frac{
|(x-\phi_{t,T}(y))^2|}{(T-t)^{1+\frac1\alpha} 
}\right\}\bar p_{\alpha,\phi}(t,T,x,y)\bigg]\theta({|\M_{T-t}^{-1}(x-\phi_{t,T}(y))|}).
\end{eqnarray*}
The first contribution can be analyzed as previously, see Section \ref{SECTION_TECH}. On the other hand the definition of  $\Theta(r):=(1+r)\theta(r),r >0 $ gives:
$$\left\{\frac{
|x-\phi_{t,T}(y)|}{(T-t)^{\frac1\alpha} 
}+
\frac{
|(x-\phi_{t,T}(y))^2|}{(T-t)^{1+\frac1\alpha} 
}\right\}\bar p_{\alpha,\phi}(t,T,x,y)\theta({|\M_{t,T}^{-1}(x-\phi_{t,T}(y))|})\le \frac{C}{(T-t)^{\frac 1 \alpha}}\bar p_{\alpha,\phi,\Theta}(t,T,x,y), $$
where $\bar p_{\alpha,\phi,\Theta}(t,T,x,y):= \bar p_{\alpha,\phi}(t,T,x,y)
 \Theta({|\M_{t,T}^{-1}(x-\phi_{t,T}(y))|})$.
\end{appendix}

\bibliography{MyLibrary.bib}
\end{document}